\DeclareMathAlphabet{\mathbbm}{U}{bbm}{m}{n}
\tikzset{
	MyPersp/.style={scale=1.8,x={(-0.8cm,0cm)},y={(0cm,.3cm)},
    z={(0cm,1cm)}},
MyPoints/.style={fill=white,draw=black,thick}
		}
\begin{document}
\newtheoremstyle{all}{11pt}{11pt}{\slshape}{}{\bfseries}{}{.5em}{}

\theoremstyle{all}
\newtheorem{itheorem}{Theorem}
\newtheorem{theorem}{Theorem}[section]
\newtheorem{proposition}[theorem]{Proposition}
\newtheorem{corollary}[theorem]{Corollary}
\newtheorem{lemma}[theorem]{Lemma}
\newtheorem{assumption}[theorem]{Assumption}
\newtheorem{definition}[theorem]{Definition}
\newtheorem{ques}[theorem]{Question}
\newtheorem{conjecture}[theorem]{Conjecture}

\theoremstyle{remark}
\newtheorem{remark}[theorem]{Remark}
\newtheorem{examplex}{Example}
\newenvironment{example}
  {\pushQED{\qed}\renewcommand{\qedsymbol}{$\clubsuit$}\examplex}
  {\popQED\endexamplex }
\renewcommand{\theexamplex}{{\arabic{section}.\roman{examplex}}}
\newcommand{\nc}{\newcommand}
\newcommand{\renc}{\renewcommand}
\numberwithin{equation}{section}
\renc{\theequation}{\arabic{section}.\arabic{equation}}

\newcounter{subeqn}
\renewcommand{\thesubeqn}{\theequation\alph{subeqn}}
\newcommand{\subeqn}{\refstepcounter{subeqn}\tag{\thesubeqn}}\makeatletter
\@addtoreset{subeqn}{equation}
\newcommand{\newseq}{\refstepcounter{equation}}
\nc{\corre}{\mathfrak{A}}
  \nc{\kac}{\kappa^C}
  \nc{\uf}{\mathring{U}}
  \nc{\ubf}{{}_U\mathring{B}_R}
  \nc{\fbu}{{}_R\mathring{B}_U}
\nc{\alg}{T}
\nc{\salg}{W}
\nc{\zero}{o}
\nc{\weights}{\iota}
\nc{\psalg}{\mathscr{W}}
\nc{\MaxSpec}{\operatorname{MaxSpec}}
\nc{\inter}{d}

\nc{\Isalg}{\mathscr{H}}
\nc{\Lco}{L_{\la}}
\nc{\qD}{q^{\nicefrac 1D}}
\nc{\ocL}{M_{\la}}
\nc{\excise}[1]{}
\nc{\Dbe}{D^{\uparrow}}
\nc{\Dfg}{D^{\mathsf{fg}}}
\nc{\frob}{\mathsf{f}}

\nc{\op}{\operatorname{op}}
\nc{\Sym}{\operatorname{Sym}}
\nc{\Symt}{S}
\nc{\tr}{\operatorname{tr}}
\newcommand{\Mirkovic}{Mirkovi\'c\xspace}
\nc{\tla}{\mathsf{t}_\la}
\nc{\llrr}{\langle\la,\rho\rangle}
\nc{\lllr}{\langle\la,\la\rangle}
\nc{\K}{\mathbbm{k}}
\nc{\Stosic}{Sto{\v{s}}i{\'c}\xspace}
\nc{\cd}{\mathcal{D}}
\nc{\cT}{\mathcal{T}}
\nc{\vd}{\mathbb{D}}
\nc{\Fp}{{\mathbb{F}_p}}
\nc{\lift}{\gamma}
\nc{\Proj}{\operatorname{Proj}}
\nc{\Alc}{\nabla}
\nc{\ProjC}{\mathbf{R}}
\nc{\cox}{h}
\nc{\Aut}{\operatorname{Aut}}
\nc{\R}{\mathbb{R}}
\renc{\wr}{\operatorname{wr}}
  \nc{\Lam}[3]{\La^{#1}_{#2,#3}}
  \nc{\Lab}[2]{\La^{#1}_{#2}}
  \nc{\Lamvwy}{\Lam\Bv\Bw\By}
  \nc{\Labwv}{\Lab\Bw\Bv}
  \nc{\nak}[3]{\mathcal{N}(#1,#2,#3)}
  \nc{\hw}{highest weight\xspace}
  \nc{\al}{\alpha}
  \nc{\gK}{K}
  \nc{\gk}{\mathfrak{k}}
  
\newcommand{\LLoc}{\mathbb{L}\!\operatorname{Loc}}
\newcommand{\Rsecs}{\mathbb{R}\Gamma_\bS}

\newlength{\dhatheight}
\newcommand{\doublehat}[1]{\settoheight{\dhatheight}{\ensuremath{\hat{#1}}}\addtolength{\dhatheight}{-0.35ex}\hat{\vphantom{\rule{1pt}{\dhatheight}}\smash{\hat{#1}}}}

\newcommand{\dgmod}{\operatorname{-dg-mod}}
  \nc{\be}{\beta}
  \nc{\bM}{\mathbf{m}}
  \nc{\Bu}{\mathbf{u}}

  \nc{\bkh}{\backslash}
  \nc{\Bi}{\mathbf{i}}
  \nc{\Bm}{\mathbf{m}}
  \nc{\Bj}{\mathbf{j}}
 \nc{\Bk}{\mathbf{k}}
  \nc{\Bs}{\mathbf{s}}
\newcommand{\bS}{\mathbb{S}}
\newcommand{\bT}{\mathbb{T}}
\newcommand{\bt}{\mathbbm{t}}

\nc{\hatD}{\widehat{\Delta}}
\nc{\bd}{\mathbf{d}}
\nc{\D}{\mathcal{D}}
\nc{\mmod}{\operatorname{-mod}}  
\nc{\fdmod}{\operatorname{-fdmod}}  
\nc{\AS}{\operatorname{AS}}
\newcommand{\red}{\mathfrak{r}}

\nc{\RAA}{R^\A_A}
  \nc{\Bv}{\mathbf{v}}
  \nc{\Bw}{\mathbf{w}}
\nc{\Id}{\operatorname{Id}}
\nc{\Cth}{S_h}
\nc{\Cft}{S_1}
\def\MHM{{\operatorname{MHM}}}

\newcommand{\cM}{\mathcal{M}}
\newcommand{\cD}{\mathcal{D}}
\newcommand{\LCP}{\operatorname{LCP}}
  \nc{\By}{\mathbf{y}}
\nc{\eE}{\EuScript{E}}
  \nc{\Bz}{\mathbf{z}}
  \nc{\coker}{\mathrm{coker}\,}
  \nc{\C}{\mathbb{C}}
\nc{\ab}{{\operatorname{ab}}}
\nc{\wall}{\mathbbm{w}}
  \nc{\ch}{\mathrm{ch}}
  \nc{\de}{\delta}
  \nc{\ep}{\epsilon}
  \nc{\Rep}[2]{\mathsf{Rep}_{#1}^{#2}}
  \nc{\Ev}[2]{E_{#1}^{#2}}
  \nc{\fr}[1]{\mathfrak{#1}}
  \nc{\fp}{\fr p}
  \nc{\fq}{\fr q}
  \nc{\fl}{\fr l}
  \nc{\fgl}{\fr{gl}}
  \nc{\Fr}{\operatorname{Fr}}
\nc{\rad}{\operatorname{rad}}
\nc{\ind}{\operatorname{ind}}
  \nc{\GL}{\mathrm{GL}}
\newcommand{\arxiv}[1]{\href{http://arxiv.org/abs/#1}{\tt arXiv:\nolinkurl{#1}}}
  \nc{\Hom}{\mathrm{Hom}}
  \nc{\im}{\mathrm{im}\,}
  \nc{\La}{\Lambda}
  \nc{\la}{\lambda}
  \nc{\mult}{b^{\mu}_{\la_0}\!}
  \nc{\mc}[1]{\mathcal{#1}}
  \nc{\om}{\omega}
\nc{\gl}{\mathfrak{gl}}
  \nc{\cF}{\mathcal{F}}
\nc{\cC}{\mathcal{C}}
  \nc{\Mor}{\mathsf{Mor}}
  \nc{\HOM}{\operatorname{HOM}}

  \nc{\sHom}{\mathscr{H}\text{\kern -3pt {\calligra\large om}}\,}
  \nc{\Ob}{\mathsf{Ob}}
  \nc{\Vect}{\operatorname{-Vect}}
\nc{\gVect}{\mathsf{gVect}}
  \nc{\modu}{\mathsf{-mod}}
\nc{\pmodu}{\mathsf{-pmod}}
  \nc{\qvw}[1]{\La(#1 \Bv,\Bw)}
  \nc{\van}[1]{\nu_{#1}}
  \nc{\Rperp}{R^\vee(X_0)^{\perp}}
  \nc{\si}{\sigma}
\nc{\sgns}{{\boldsymbol{\sigma}}}
  \nc{\croot}[1]{\al^\vee_{#1}}
\nc{\di}{\mathbf{d}}
  \nc{\SL}[1]{\mathrm{SL}_{#1}}
  
  \nc{\slhat}[1]{\mathfrak{\widehat{sl}}_{#1}}
  \nc{\sllhat}{\slhat{\ell}}
  \nc{\slnhat}{\slhat{n}}
    \nc{\slehat}{\slhat{e}}
    \nc{\rif}{B}
   
 \nc{\sle}{\mathfrak{sl}_e}
    \nc{\Th}{\theta}
  \nc{\vp}{\varphi}
  \nc{\wt}{\mathrm{wt}}
\nc{\te}{\tilde{e}}
\nc{\tf}{\tilde{f}}
\nc{\hwo}{\mathbb{V}}
\nc{\soc}{\operatorname{soc}}
\nc{\cosoc}{\operatorname{cosoc}}
 \nc{\Q}{\mathbb{Q}}
\nc{\LPC}{\mathsf{LPC}}
  \nc{\Z}{\mathbb{Z}}
  \nc{\Znn}{\Z_{\geq 0}}
  \nc{\ver}{\EuScript{V}}
  \nc{\Res}[2]{\operatorname{Res}^{#1}_{#2}}
  \nc{\edge}{\EuScript{E}}
  \nc{\Spec}{\operatorname{Spec}}
  \nc{\tie}{\EuScript{T}}
  \nc{\ml}[1]{\mathbb{D}^{#1}}
  \nc{\fQ}{\mathfrak{Q}}
        \nc{\fg}{\mathfrak{g}}
        \nc{\ft}{\mathfrak{t}}
        \nc{\fm}{\mathfrak{m}}
  \nc{\Uq}{U_q(\fg)}
        \nc{\bom}{\boldsymbol{\omega}}
\nc{\bla}{{\underline{\boldsymbol{\la}}}}
\nc{\bmu}{{\underline{\boldsymbol{\mu}}}}
\nc{\bal}{{\boldsymbol{\al}}}
\nc{\bet}{{\boldsymbol{\eta}}}
\nc{\rola}{X}
\nc{\wela}{Y}
\nc{\fM}{\mathfrak{M}}
\nc{\tfM}{\mathfrak{\tilde M}}
\nc{\fX}{\mathfrak{X}}
\nc{\fH}{\mathfrak{H}}
\nc{\fE}{\mathfrak{E}}
\nc{\fF}{\mathfrak{F}}
\nc{\fI}{\mathfrak{I}}
\nc{\qui}[2]{\fM_{#1}^{#2}}
\nc{\cL}{\mathcal{L}}
\nc{\ca}[2]{\fQ_{#1}^{#2}}
\nc{\wtG}{\widetilde{G((t))}}

\nc{\cat}{\mathcal{V}}
\nc{\cata}{\mathfrak{V}}
\nc{\catf}{\mathscr{V}}
\nc{\hl}{\mathcal{X}}
\nc{\hld}{\EuScript{X}}
\nc{\hldbK}{\EuScript{X}^{\bla}_{\bar{\mathbb{K}}}}
\nc{\Iwahori}{\mathrm{Iwa}}
\nc{\hE}{\mathfrak{E}^{(1)}}
\nc{\Eh}{\mathfrak{E}^{(2)}}
\nc{\hF}{\mathfrak{F}^{(1)}}
\nc{\Fh}{\mathfrak{F}^{(2)}}
\nc{\<}{\langle}
\renc{\>}{\rangle}

\nc{\pil}{{\boldsymbol{\pi}}^L}
\nc{\pir}{{\boldsymbol{\pi}}^R}
\nc{\cO}{\mathcal{O}}
\nc{\Ko}{\text{\Denarius}}
\nc{\Ei}{\fE_i}
\nc{\Fi}{\fF_i}
\nc{\fil}{\mathcal{H}}
\nc{\brr}[2]{\beta^R_{#1,#2}}
\nc{\brl}[2]{\beta^L_{#1,#2}}
\nc{\so}[2]{\EuScript{Q}^{#1}_{#2}}
\nc{\EW}{\mathbf{W}}
\nc{\rma}[2]{\mathbf{R}_{#1,#2}}
\nc{\Dif}{\EuScript{D}}\nc{\MDif}{\EuScript{E}}
\renc{\mod}{\mathsf{mod}}
\nc{\modg}{\mathsf{mod}^g}
\nc{\fmod}{\mathsf{mod}^{fd}}
\nc{\id}{\operatorname{id}}
\nc{\compat}{\EuScript{K}}
\nc{\DR}{\mathbf{DR}}
\nc{\End}{\operatorname{End}}
\nc{\Fun}{\operatorname{Fun}}
\nc{\Ext}{\operatorname{Ext}}
\nc{\Coh}{\operatorname{Coh}}
\nc{\tw}{\tau}
\nc{\second}{\tau}
\nc{\A}{\EuScript{A}}
\nc{\Loc}{\mathsf{Loc}}
\nc{\eF}{\EuScript{F}}
\nc{\LAA}{\Loc^{\A}_{A}}
\nc{\perv}{\mathsf{Perv}}
\nc{\gfq}[2]{B_{#1}^{#2}}
\nc{\qgf}[1]{A_{#1}}
\nc{\qgr}{\qgf\rho}
\nc{\tqgf}{\tilde A}
\nc{\Tr}{\operatorname{Tr}}
\nc{\Tor}{\operatorname{Tor}}
\nc{\cQ}{\mathcal{Q}}
\nc{\st}[1]{\Delta(#1)}
\nc{\cst}[1]{\nabla(#1)}
\nc{\ei}{\mathbf{e}_i}
\nc{\Be}{\mathbf{e}}
\nc{\Hck}{\mathfrak{H}}
\renc{\P}{\mathbb{P}}
\nc{\bbB}{\mathbb{B}}
\nc{\ssy}{\mathsf{y}}
\nc{\cI}{\mathcal{I}}
\nc{\cG}{\mathcal{G}}
\nc{\cH}{\mathcal{H}}
\nc{\coe}{\mathfrak{K}}
\nc{\pr}{\operatorname{pr}}
\nc{\bra}{\mathfrak{B}}
\nc{\rcl}{\rho^\vee(\la)}
\nc{\tU}{\mathcal{U}}
\nc{\dU}{{\stackon[8pt]{\tU}{\cdot}}}
\nc{\dT}{{\stackon[8pt]{\cT}{\cdot}}}
\nc{\BFN}{\EuScript{R}}
\nc{\sfB}{\ensuremath{\mathsf{B}}}
 \newcommand{\barLambda}{\bar{\Lambda}}
 \nc{\groupK}{K}
\nc{\sfA}{\ensuremath{\mathsf{A}}}
\nc{\sfAhat}{\ensuremath{\widehat{\mathsf{A}}}}

\nc{\RHom}{\mathrm{RHom}}
\nc{\tcO}{\tilde{\cO}}
\nc{\Yon}{\mathscr{Y}}
\nc{\sI}{{\mathsf{I}}}
\nc{\sptc}{X_*(T)_1}
\nc{\spt}{\ft_1}
\nc{\Bpsi}{u}
\nc{\acham}{\eta}
\nc{\hyper}{\mathsf{H}}
\nc{\AF}{\EuScript{Fl}}
\nc{\VB}{\EuScript{X}}
\nc{\OHiggs}{\cO_{\operatorname{Higgs}}}
\nc{\OCoulomb}{\cO_{\operatorname{Coulomb}}}
\nc{\tOHiggs}{\tilde\cO_{\operatorname{Higgs}}}
\nc{\tOCoulomb}{\tilde\cO_{\operatorname{Coulomb}}}
\nc{\indx}{\mathcal{I}}
\nc{\redu}{K}
\nc{\Ba}{\mathbf{a}}
\nc{\Bb}{\mathbf{b}}
\nc{\Bc}{\mathbf{c}}
\nc{\Lotimes}{\overset{L}{\otimes}}
\nc{\AC}{C}
\nc{\rAC}{rC}\nc{\defr}{\operatorname{def}}
\nc{\bbeta}{\boldsymbol{\beta}}
\nc{\bgamma}{\boldsymbol{\gamma}}

\nc{\rACp}{\mathsf{C}}
\nc{\ideal}{\mathscr{I}}
\nc{\ACs}{\mathscr{C}}
\nc{\Stein}{\mathscr{X}}
\nc{\pStein}{p\mathscr{X}}
\nc{\pSteinK}{\overline{\mathscr{X}}}
\nc{\No}{H}
\nc{\To}{Q}
\nc{\tNo}{\tilde{H}}
\nc{\tTo}{\tilde{Q}}
\nc{\gaugeG}{G}
\nc{\weylW}{W}
\nc{\matterV}{V}
\nc{\quiver}{\Gamma}
\nc{\Coulomb}{\fM}
\nc{\Rring}{\mathring{R}}
\nc{\hRring}{\mathring{R}^h}
\nc{\MQ}{\mathfrak{M}^{\To}}

\nc{\scrB}{\mathscr{B}}\nc{\scrT}{\mathscr{T}}
\nc{\Asph}{\EuScript{A}^{\operatorname{sph}}}
\nc{\What}{\widehat{\weylW}}
\nc{\tM}{\tilde{\fM}}
\nc{\flav}{\phi}
\nc{\tF}{\tilde{F}}
\newcommand{\cOg}{\mathcal{O}_{\!\operatorname{g}}}
\newcommand{\tcOg}{\mathcal{\tilde O}_{\!\operatorname{g}}}
\newcommand{\dOg}{D_{\cOg}}
\newcommand{\preO}{p\cOg}
\newcommand{\dpreO}{D_{p\cOg}}
\nc{\vertex}{\EuScript{V}}
\nc{\Wei}{\EuScript{W}}
\nc{\longi}{\boldsymbol{\ell}}
\nc{\supp}{\operatorname{supp}}
\nc{\efA}{\EuScript{A}}
\newcommand{\Sg}{\mathsf{G}}
\newcommand{\Sr}{\mathsf{R}}
\newcommand{\Sc}{\mathsf{C}}
\newcommand{\Sgr}{\mathsf{GR}}
\newcommand{\Sall}{\mathsf{CGR}}
\newcommand{\stck}[2]{\genfrac{}{}{0pt}{0}{#1}{#2}}
\newcommand{\Ab}{\mathbb{A}}
\newcommand{\tslabar}{\mathbin{
\setbox0=\hbox{/\!\!/\!\!/}\rule[0.4\ht0]{\wd0}{.3\dp0}\kern-\wd0\box0}}
\setcounter{tocdepth}{2}
\newcommand{\thetitle}{Coherent sheaves and quantum Coulomb branches II:\\ quiver gauge theories and knot homology}
\newcommand{\theshorttitle}{Coherent sheaves and quantum Coulomb branches II}
\renc{\theitheorem}{\Alph{itheorem}}

\excise{
\newenvironment{block}
\newenvironment{frame}
\newenvironment{tikzpicture}
\newenvironment{equation*}
}
\newcounter{dummy}
\baselineskip=1.1\baselineskip

 \usetikzlibrary{decorations.pathreplacing,backgrounds,decorations.markings,shapes.geometric,decorations.pathmorphing}
\tikzset{wei/.style={draw=red,double=red!40!white,double distance=1.5pt,thin}}
\tikzset{awei/.style={draw=blue,double=blue!40!white,double distance=1.5pt,thin}}
\tikzset{bdot/.style={fill,circle,color=blue,inner sep=3pt,outer sep=0}}
    \tikzset{ weyl/.style={decorate, decoration={snake}, draw=black!50!green, very thick}}
\tikzset{fringe/.style={gray,postaction={decoration=border,decorate,draw,gray, segment length=4pt,thick}}}
\tikzset{dir/.style={postaction={decorate,decoration={markings, mark=at position .8 with {\arrow[scale=1.3]{>}}}}}}
\tikzset{rdir/.style={postaction={decorate,decoration={markings, mark=at position .8 with {\arrow[scale=1.3]{<}}}}}}
\tikzset{edir/.style={postaction={decorate,decoration={markings, mark=at position .2 with {\arrow[scale=1.3]{<}}}}}}

\newcommand{\rectcolor}{red!40!white}

\newcommand{\braidup}{to[out=up,in=down]}
\newcommand{\identify}[4]{\draw[red,dashed] (#1,#2) -- (#1,#4);
    \draw[red,dashed] (#3,#2) -- (#3,#4)
}

\newcommand{\coupon}[3][0.15]{\filldraw[draw=black,fill=white] (#2) circle (#1);
    \node at (#2) {$\scriptscriptstyle{#3}$}}

\tikzset{anchorbase/.style={>=To,baseline={([yshift=-0.5ex]current bounding box.center)}}}
\begin{center}
\noindent {\large  \bf \thetitle}
\medskip

\noindent {\sc Ben Webster}\footnote{Supported by the NSERC through a Discovery Grant. This research was supported in part by Perimeter Institute for Theoretical Physics. Research at Perimeter Institute is supported in part by the Government of Canada through the Department of Innovation, Science and Economic Development Canada and by the Province of Ontario through the Ministry of Colleges and Universities.}\\  
Department of Pure Mathematics, University of Waterloo \& \\
 Perimeter Institute for Theoretical Physics\\
Waterloo, ON\\
Email: {\tt ben.webster@uwaterloo.ca}
\end{center}
\bigskip
{\small
\begin{quote}
\noindent {\em Abstract.}
We continue our study of noncommutative resolutions of Coulomb branches in the case of quiver gauge theories.  These include the Slodowy slices in type A and symmetric powers in $\mathbb{C}^2$ as special cases. These resolutions are based on vortex line defects in quantum field theory, but have a precise mathematical description, which in the quiver case is a modification of the formalism of KLRW algebras.  While best understood in a context which depends on the geometry of the affine Grassmannian and representation theory in characteristic $p$, we give a description of the Coulomb branches and their commutative and non-commutative resolutions which can be understood purely in terms of algebra.  

This allows us to construct a purely algebraic version of the knot homology theory defined using string theory by Aganagi\'c, categorifying the Reshetikhin-Turaev invariants for minuscule representations of type ADE Lie algebras.  We show that this homological invariant agrees with the categorification of these invariants previously defined by the author, and thus with Khovanov-Rozansky homology in type A.     
\end{quote}
}
\setcounter{section}{4}
\setcounter{itheorem}{3}

{\it Author's note}: As this is a continuation of the first part of this paper
\cite{WebcohI}, we will use the notation and constructions from that paper
without additional reference or comment. You can spot links to part I as they will have blue outline instead of red.  Note that in some PDF viewers, these links will not open correctly, due to a {\tt \#} getting converted to {\tt \%23}.  Using Adobe Acrobat seems to solve this issue, as does manually changing the {\tt \%23} in the URL back to {\tt \#}.

\section{(Re)introduction}
\label{sec:reintroduction}

In \cite{WebcohI,WebGT, websterKoszulDuality2019}, we developed a general theory of Coulomb
branches from an algebraic perspective.  We showed that the Coulomb
branch algebra itself, its extended category (of line operators) and
various related algebras, such as the noncommutative resolution
constructed through quantization by Bezrukavnikov and Kaledin
\cite{BKpos,KalDEQ, BezNon} and the category controlling their
Gelfand-Tsetlin modules, all have explicit combinatorial descriptions.  In this sequel to
these papers, we focus on understanding this construction in the
quiver gauge case, especially the noncommutative resolution and
corresponding geometric constructions with coherent sheaves.
Applications to the representation theory of quantum Coulomb branches
in characteristic 0 have already been covered extensively in
\cite{KTWWYO, WebGT, Webalt}, so we will only discuss these in
passing.

At the root of this perspective is a description of the Coulomb branch
algebra as linear combinations of paths in the space $ {T_{\R}/W}$, the quotient of the compact
torus of the gauge group $\gaugeG$ modulo the Weyl group $\weylW$, modulo certain
relations (see \cite[(2.5a--c)]{websterKoszulDuality2019}).  In the case
of $GL_n$, this space can be identified with the configuration of $n$
points on the circle $\R/\Z$ (allowing collisions), and thus a path in this space can be identified with a
diagram drawn on the cylinder.

\notation{\ensuremath{\quiver}}{A fixed quiver, often but not always a Dynkin diagram of type ADE.}
\notation{\ensuremath{\vertex}}{The vertex set of the quiver $\quiver$.}
\notation{\ensuremath{\edge}}{The edge set of the quiver $\quiver$.}
\notation{\ensuremath{\Bv}}{The dimension vector ${\vertex}\to
\Z_{\geq 0}$ which controls the rank of the factors of the gauge group.  In the cylindrical KLR algebra, this corresponds to the number of black strands with a given label.}
\notation{\ensuremath{\Bw}}{The dimension vector ${\vertex}\to
\Z_{\geq 0}$ which controls the dimension of the framing, or the number of edges connecting the new node in the Crawley-Boevey graph $\Gamma^{\Bw}$.  In the cylindrical KLR algebra, this corresponds to the number of red strands with a given label.}

Fix a quiver $\quiver$
with vertex set ${\vertex}(\Gamma)$, (oriented) edge set $\edge (\Gamma) $, and also fix dimension vectors $\Bv,\Bw\colon {\vertex}\to
\Z_{\geq 0}$ for this quiver.  We should emphasize that we do allow edge loops, though our focus will be on Dynkin diagrams.   By a {\bf quiver gauge theory} we mean
the $3d, \mathcal{N}=4$ gauge theory attached to the gauge group and matter $(G,V)$ given by: 
\begin{equation}
\gaugeG=\prod GL(\C^{v_i})\qquad \matterV=\Big(\bigoplus_{i\to j}
\Hom(\C^{v_i},\C^{v_j})\Big)\bigoplus \Big(\bigoplus_{i\in {\vertex}}
  \Hom(\C^{w_i},\C^{v_i}) \Big).\label{eq:quiver-gauge2}
\end{equation}
Similar to the approach in \cite{NaCoulomb} (where it is denoted $T^*V \tslabar G$), we do not define this theory as a precise mathematical object, but we will discuss mathematical constructions inspired by it.   
This group and representation may be familiar to readers since the hyperhamiltonian quotient of $T^*V$ by the compact form of $G$ is a Nakajima quiver variety \cite{nakajimaQuiverVarieties1998}.  In physics terminology, this quiver variety is the Higgs branch of this theory.
  
As described above, we can think of a path in $T_{\R}/W$ as a path in
a labeled configuration space where $v_i$ points have label $i$, that
is, as a string diagram on the cylinder where strands are labeled by
points in the Dynkin diagram.  When we translate the relations
\cite[(2.5a--c)]{websterKoszulDuality2019} into this framework, they suddenly become very
familiar to many practitioners of categorification---they are the local relations of the KLRW (weighted KLR) algebra, as presented in \cite{WebwKLR}.  The author and his
collaborators exploited this in \cite{KTWWYO} to study the
representation theory of shifted Yangians, but here we apply the same idea with a more geometric perspective.  Based on these ideas, we attach to the choice of $\Gamma,\Bv,\Bw$ and some additional auxiliary data a {\bf cylindrical flavored KLRW algebra}; these are defined by diagrams of red and black strands drawn on a cylinder subject to local relations like those of the {\bf weighted KLR algebra} \cite{WebwKLR}.

Recall that a {\bf noncommutative crepant resolution of singularities} for a commutative algebra $A$ (or the variety $\Spec A$) is an associative algebra $R$ such that $R\mmod$ behaves like the category of coherent sheaves on a crepant resolution of  $\Spec A$. For our purposes, we only need to consider the case where there is an idempotent $e$ such that $A=eRe$: in this case, $R$ is an NCCR if $eR$ is Cohen-Macaulay as an $A$-module, and $R$ has global dimension equal to the Krull dimension of $A$. Since a (usual) crepant resolution of a symplectic singularity is symplectic, we may as well call an NCCR of $A$ a {\bf noncommutative symplectic resolution} in the case where $\Spec A$ has symplectic singularities.

\notation{\ensuremath{\Coulomb}}{The Coulomb branch of the gauge theory with gauge group $\gaugeG$ and matter representation $\matterV$ (Definition \ref{I-def:Coulomb-branch}).}

Our main result is that:
\begin{itheorem}\label{ith:NCCR}
  If $\Coulomb$ is the Coulomb branch of a quiver gauge theory
  that admits a BFN resolution, then the cylindrical KLRW algebra $\Rring$ with
  the same underlying combinatorial data defines a noncommutative
  symplectic resolution of singularities of  $\Coulomb$.
\end{itheorem}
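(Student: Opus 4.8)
\medskip
\noindent\textit{Proof plan.} The plan is to verify, one at a time, the three requirements in the definition of a non-commutative symplectic resolution recalled just before the statement: an idempotent $e\in\Rring$ with $e\Rring e\cong\C[\Coulomb]$, the Cohen--Macaulayness of $e\Rring$ over $\C[\Coulomb]$, and the equality $\operatorname{gl.dim}\Rring=\dim\Coulomb$. Everything is powered by the BFN resolution $\pi\colon\tM\to\Coulomb$ we have assumed to exist: being a symplectic resolution it is projective, crepant, and an isomorphism over the smooth locus, so $\Coulomb$ has symplectic --- in particular Gorenstein canonical, with $\omega_{\Coulomb}\cong\cO_{\Coulomb}$, and rational --- singularities, and crepancy makes $\pi^{!}\cO_{\Coulomb}\cong\cO_{\tM}$. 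The genuinely new input I need is the promotion of the general coherent-sheaf picture of part~I (\cite{WebcohI}) to the quiver case: that $\tM$ carries a tilting bundle $\scrT=\cO_{\tM}\oplus\scrT'$ containing the structure sheaf as a summand, with $\End_{\tM}(\scrT)\cong\Rring$ and with $e$ the idempotent cutting out the $\cO_{\tM}$-summand. Granting this, $e\Rring e=\End_{\tM}(\cO_{\tM})=\Gamma(\tM,\cO_{\tM})=\Gamma(\Coulomb,\cO_{\Coulomb})=\C[\Coulomb]$ by normality of $\Coulomb$, which settles the first requirement and simultaneously recovers the identification of the cylindrical KLRW ``spherical'' subalgebra with the Coulomb branch algebra set up in \cite{WebSD}.

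\medskip
For the Cohen--Macaulay property I would compute, as $\C[\Coulomb]$-modules,
\[
e\Rring=\Hom_{\tM}(\scrT,\cO_{\tM})\cong\Gamma(\tM,\scrT^{\vee})=R\pi_{*}\scrT^{\vee},
\]
the last equality because $\scrT^{\vee}$ has no higher cohomology ($\scrT$ is tilting and $\cO_{\tM}$ is a summand) and the target $\Coulomb$ is affine. Grothendieck--Serre duality for $\pi$, using $\pi^{!}\cO_{\Coulomb}\cong\cO_{\tM}$ and $R\pi_{*}\cO_{\tM}\cong\C[\Coulomb]$ (rational singularities), then gives
\[
R\Hom_{\C[\Coulomb]}\bigl(e\Rring,\,\C[\Coulomb]\bigr)\;\cong\;R\Hom_{\C[\Coulomb]}\bigl(R\pi_{*}\scrT^{\vee},\,R\pi_{*}\cO_{\tM}\bigr)\;\cong\;R\Gamma(\tM,\scrT),
\]
where the first isomorphism is the identifications above and the second is duality; the right-hand side is concentrated in degree $0$, again by the tilting property, and equals $\Gamma(\tM,\scrT)\cong\Rring e$. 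Hence $\Ext^{i}_{\C[\Coulomb]}(e\Rring,\C[\Coulomb])=0$ for $i>0$, and since $\C[\Coulomb]$ is Gorenstein of dimension $\dim\Coulomb$, the local-duality characterization of maximal Cohen--Macaulay modules over a Gorenstein ring forces $e\Rring$ to be Cohen--Macaulay.

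\medskip
For the global dimension, the tilting bundle $\scrT$ on the smooth variety $\tM$ gives an equivalence $R\Hom_{\tM}(\scrT,-)\colon D^{b}(\Coh(\tM))\xrightarrow{\sim}D^{b}(\Rring\mmod)$. Smoothness of $\tM$ makes the source category have homological dimension exactly $d:=\dim\tM=\dim\Coulomb$: the vanishing $\Ext^{>d}_{\tM}=0$ holds for all coherent sheaves, and it is attained because $\Ext^{d}_{\tM}(\cO_{x},\cO_{\tM})\neq 0$ for a point $x$. Transporting this across the equivalence, and using that $\Rring$ is module-finite over the Noetherian ring $\C[\Coulomb]$ so that global dimension may be tested on finitely generated modules, yields $\operatorname{gl.dim}\Rring=d$, the third requirement. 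Thus $\Rring$ is a non-commutative crepant resolution of $\C[\Coulomb]$, and since $\Coulomb$ has symplectic singularities it is a non-commutative symplectic resolution.

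\medskip
The real work --- and the step I expect to be the main obstacle --- is constructing the tilting bundle $\scrT$ on $\tM$ with $\End_{\tM}(\scrT)\cong\Rring$; everything above is formal once this is available. Here the plan is to build the summands of $\scrT$ as the line bundles on $\tM$ indexed by the cocharacters (weights) labelling the idempotents of the cylindrical KLRW algebra --- equivalently, as the images under the part~I equivalence of the indecomposable projective $\Rring$-modules --- to compute $\Hom$-spaces between these summands by equivariant localization to the torus-fixed locus of $\tM$, which lands in exactly the ring of functions on $T_{\R}/W$ underlying the diagrammatic presentation, and then to match this with the generators-and-relations description of $\Rring$ from \cite{WebSD,WebwKLR}, reading off each cylindrical KLRW relation as an identity among these geometric endomorphisms. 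This last verification, that the cylindrical-diagram relations are precisely the relations holding among the geometric morphisms, is the technical heart of the paper: a finite but intricate check, and also what will make the subsequent identification with Khovanov--Rozansky homology possible.
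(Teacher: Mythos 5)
Your formal reduction of the NCCR claim to the existence of a tilting bundle $\scrT$ on $\tM$ with $\End_{\tM}(\scrT)\cong\Rring$ and $\cO_{\tM}$ as a summand is correct, and it matches the structure of the paper's argument: the paper establishes the tilting statement as Theorem~\ref{th:Q-equiv-2} (via Theorem~I-th:Q-equiv from part~I) and then cites Corollary~I-cor:A-nccr for exactly the deduction you spell out (tilting plus crepancy plus Grothendieck--Serre duality implies NCCR). Your Serre-duality computation of $\Ext^{i}_{\C[\Coulomb]}(e\Rring,\C[\Coulomb])$ is the standard one and is fine; the only small point worth verifying in the paper's setup is that the $e(a)$-summand of $\cQ_{\mathbf{b}}$ is literally $\cO_{\tM}$, which does hold because $e(a)M^{\geq 0}$ for $M=\Rring e(a)$ is the projective coordinate ring $\mathbf{A}^{\mathbf{d}}$ itself, and its localization is the structure sheaf.

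Where your plan genuinely diverges from the paper is in the step you yourself flag as the main obstacle: the construction of $\scrT$ and the identification of $\End_{\tM}(\scrT)$ with $\Rring$. You propose to take $\scrT$ to be a direct sum of \emph{line bundles} indexed by cocharacters and to compute the Hom-spaces by equivariant localization to the torus-fixed locus. That is not what the paper does, and it is not clear it could work as stated: the summands $\cQ_{\boldsymbol{\alpha}}=\cQ_{\Rring e(\boldsymbol{\alpha})}$ are vector bundles of higher rank in general (only the spherical summand is a line bundle), and the paper's description of the Hom-spaces goes through the noncommutative projective coordinate ring $\ProjC^{\mathbf{d}}$ and the twisting bimodules $\rif^{\mathbf{d}}(\sim)$, not through fixed-point localization. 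More fundamentally, the paper's proof that $\cQ_{\mathbf{b}}$ is a tilting \emph{generator} (i.e., its summands generate the derived category) is not a classical localization argument at all: it runs through characteristic~$p$ representation theory in the style of Bezrukavnikov--Kaledin. One first proves (Theorem~\ref{thm:pStein-2}, using Theorem~I-thm:pStein-equiv) that over $\mathbb{F}_{p}$ the cylindrical KLRW algebra controls integral finite-dimensional modules over the quantum Coulomb branch; one then uses the $p$-center/Frobenius-splitting machinery and asymptotic derived localization (Theorem~I-thm:asymptotic-derived) to get the derived equivalence $D^{b}(\Coh\tM)\cong D^{b}(\Rring\mmod)$ for $p\gg 0$, and transfers the result to characteristic~$0$. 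This is why the theorem is stated only in characteristic~$0$ or sufficiently large $p$: the route is not purely geometric. If you pursue the torus-localization plan, you should expect to get a presentation of the \emph{rational} endomorphism ring (the ``integral form'' problem is genuinely subtle here), and you would still need a separate argument that the bundle you build generates; that generation statement is precisely what the characteristic-$p$ route is designed to supply.
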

Since we'll be interested in considering versions of these varieties in positive characteristic, let us note that this result is only proven for these varieties over a field of characteristic 0 or sufficiently large (to the worried physicist: in particular, this result holds over $\C$), though we believe it actually is true in arbitrary characteristic.

By a {\bf BFN resolution}, we mean a symplectic resolution of singularities arising from the constructions of Braverman-Finkelberg-Nakajima in \cite{BFNline}.  In physics terms, this means that the Coulomb branch becomes smooth at a generic choice of mass parameters (and trivial FI parameters). 

The quiver gauge theories which admit BFN resolutions include those with cyclic or linear quivers (affine type A), and those for type D and E quivers with $w_i$ only non-zero on nodes with minuscule fundamental representations, in both cases chosen so they correspond to dominant weight spaces (these theories are often called ``good'' in the physics literature, to contrast them with ``bad'' and ``ugly'' theories).  
The most familiar examples of these
are resolved Slodowy slices (or more generally, S3 varieties) of type $A$ and the Hilbert scheme of points on $\C^2$ (or more generally,
the resolved Kleinian singularity $\C^2/\Z/\ell\Z$).   We'll discuss these examples in more detail in Appendix \ref{sec:slodowy-slices-type}.

 These noncommutative resolutions arise as the endomorphisms of tilting generators on symplectic resolutions of the same varieties; in fact, this is how we prove that they give noncommutative resolutions.
 \notation{\ensuremath{\tM}}{A BFN resolution of the Coulomb branch $\Coulomb$ (Definition \ref{def:BFN-resolution}).}
\begin{itheorem}\label{thm:D-equivalence}
  If $\Coulomb$ is the Coulomb branch of a quiver gauge
  theory, and $\tM$ a BFN resolution, then:
  \begin{enumerate}
  \item the homogeneous coordinate ring of $\tM$ is an algebra of
    twisted cylindrical KLR diagrams, modulo local relations.
  \item For each NCSR $R$ of Theorem \ref{ith:NCCR}, the variety $ \tM$ admits a tilting generator $\mathcal{T}$,
    described as an explicit module over the homogeneous coordinate
    ring, such that $\End(\mathcal{T})=R$; in particular, $\End(\mathcal{T})$ is a cylindrical flavored KLRW algebra $\Rring$.
  \item The wall-crossing functors relating different tilting
    generators are given by tensor product with explicit bimodules,
    modeled on the braiding functors of \cite[\S 6]{Webmerged}; more
    generally, the Schober connected to these functors can be constructed
    using the representation theory of related algebras.
  \end{enumerate}
\end{itheorem}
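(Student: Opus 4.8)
The plan is to prove (1) first, then deduce (2) (which also yields Theorem \ref{ith:NCCR}), and finally (3).

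\emph{Part (1).} A BFN resolution $\tM\to\Coulomb$ is, concretely, the relative $\Proj$ over $\Coulomb$ of a Rees-type algebra whose extra grading is cut out by a generic flavor cocharacter — equivalently a GIT quotient inside the Braverman--Finkelberg--Nakajima variety of triples against a generic character (cf.\ \cite{BFNline} and Definition \ref{def:BFN-resolution}) — so that its homogeneous coordinate ring is $\bigoplus_{n\geq 0}H^0(\tM,\cL^{\otimes n})$ for the corresponding relatively ample $\cL$. I would compute this ring by the same method used for the Coulomb branch algebra itself in part I and \cite{WebSD}: pushing forward to $T_\R/W$ — the configuration space of $v_i$ points of colour $i$ on a circle, in the case where $\gaugeG$ is a product of general linear groups — and reading off the weight data one graded piece at a time. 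The result is that $H^0(\tM,\cL^{\otimes n})$ acquires a basis of cylinder diagrams — black strands recording $\Bv$, red strands recording $\Bw$ — which are \emph{twisted}, i.e.\ sheared around the cylinder by the flavor cocharacter, the degree-$n$ piece corresponding to diagrams sheared by $n$ units; multiplication is stacking of diagrams, and the relations are the local weighted KLR relations of \cite{WebwKLR} together with the ones forced by the resolution. Showing these relations are exhaustive is a faithfulness statement of the same shape as the one behind the diagrammatic presentation of $\Coulomb$, and I would prove it by producing an explicit spanning set of twisted diagrams and a non-degenerate pairing on it.

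\emph{Part (2).} Part I shows that $\tM$, being a conic symplectic resolution carrying a Bezrukavnikov--Kaledin quantization, admits a tilting generator $\mathcal{T}\in\Coh(\tM)$ whose endomorphism algebra is an NCCR of $\Coulomb$, together with a derived equivalence $\RHom_{\tM}(\mathcal{T},-)\colon D^b(\Coh\tM)\simeq D^b(\End(\mathcal{T})\mmod)$, all of this over a field of characteristic $0$ or sufficiently large $p$ \cite{BKpos,KalDEQ,BezNon}. What (2) adds is to make this explicit: (a) using (1), $\mathcal{T}$ is the direct sum — indexed by the loadings, equivalently the idempotents of the cylindrical flavored KLRW algebra — of the rank-one-type modules over the homogeneous coordinate ring obtained by shearing $\cO_{\tM}$ by the various fractional flavor cocharacters; and (b) computing $\End(\mathcal{T})$ from that presentation and matching it relation-for-relation with the cylindrical flavored KLRW algebra $\Rring$. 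Since $\tM\to\Coulomb$ is crepant, $\mathcal{T}$ tilting, and $\Coulomb=e\Rring e$ for the idempotent $e$ picking out the straight diagram, $\Rring=\End(\mathcal{T})$ is an NCCR — and, because the singularities of $\Coulomb$ are symplectic, a non-commutative symplectic resolution — which is exactly Theorem \ref{ith:NCCR}; so (2) proves Theorem A along the way.

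\emph{Part (3).} Varying the auxiliary data — the cyclic order of the red strands, i.e.\ the chamber of the flavor parameter space — produces resolutions $\tM,\tM'$ of the same $\Coulomb$ related by flops and NCCRs $\Rring,\Rring'$. General results on derived equivalences of symplectic resolutions under flops (Kaledin, Bezrukavnikov--Kaledin, Halpern-Leistner, Namikawa) give $D^b(\Coh\tM)\simeq D^b(\Coh\tM')$, and transporting through the tilting generators $\mathcal{T},\mathcal{T}'$ of (2) turns the wall-crossing functor into $-\Lotimes_{\Rring}B$ with $B=\RHom_{\tM}(\mathcal{T}',\mathcal{T})$. Using the coordinate-ring description of (1), I would compute $B$ directly and identify it with the braiding bimodule of \cite[\S 6]{Webmerged}, checking it has the same diagrammatic generators and relations; the braid relations among successive wall-crossings are then inherited from \cite{Webmerged}. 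Finally, for the Schober I assemble the assignment (chamber $\mapsto D^b(\Rring\mmod)$, wall-crossing $\mapsto$ braiding functor) over the hyperplane arrangement of flavor walls, equip each wall with nearby- and vanishing-cycle functors built from the KLRW algebras attached to the degenerate data that survives on that wall, and check the perverse-schober axioms — the required compatibilities among vanishing cycles being, once more, the known relations among the braiding functors.

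\emph{Main obstacle.} The heart of the argument is step (2)(b), the on-the-nose identification $\End(\mathcal{T})\cong\Rring$, and it rests entirely on (1). Establishing that the homogeneous coordinate ring of a BFN resolution has a clean presentation by twisted cylindrical KLR diagrams — with exactly the weighted KLR relations and nothing more — is the delicate step: the faithfulness half cannot be decided by a character or dimension count, so it needs an explicit spanning-set-plus-pairing argument, parallel to but more intricate than the one for $\Coulomb$ itself because the extra grading must be tracked throughout. Secondary difficulties — pinning down the ``sufficiently large $p$'' bound (inherited from the existence of the quantization and from cohomology-vanishing on $\tM$) and verifying the perversity conditions for the Schober — should be routine given standard inputs once (1) and (2) are in place.
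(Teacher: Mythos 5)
Your plan has the right shape and correctly identifies the logical dependencies, but the paper takes a substantially more economical route for parts (1) and (3) by leveraging the quantization machinery already built in Part I, and your proposed mechanism for the key faithfulness step in (1) is not the one available. For (1), the paper does not recompute $\bigoplus_n H^0(\tM,\cL^{\otimes n})$ from scratch: it identifies the twisted diagrams with the twisting bimodules ${}_{\phi+n\nu}\scrT_{\phi}$ via Proposition~\ref{prop:KLRW-bimodule} (an extension of the equivalence $\scrB_{\hat\beta}\simeq$ qcKLRW of Proposition~\ref{prop:KLR-B}), specializes $h=0$, and then observes that $e(a){}_{\phi+n\nu}\scrT_{\phi}e(a)\cong H^{BM}_*({}_{\eta_a}\EuScript{X}^{(n\nu)}_{\eta_a})$, which \emph{is} the degree-$n$ piece of BFN's projective coordinate ring by the very definition of $\tM^{\mathbf{\inter}}$ in \cite{BFNline}. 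Faithfulness is inherited, once and for all, from the faithful polynomial representation of $\scrB^+$ (Theorem~\ref{I-thm:BFN-pres}, Corollary~\ref{cor:faithful-action}, Lemma~\ref{lem:unrolled-B}); the ``spanning set plus non-degenerate pairing'' mechanism you describe is not what the paper uses and is not obviously available here — the extended BFN category is not Frobenius in any evident sense — so you should replace it by the polynomial-representation argument. For (2) the two approaches largely converge: the tilting generator is $\cQ_{\mathbf b}$, a sum of sheaves $\cQ_{\boldsymbol\alpha}$ obtained by localizing the graded modules $\bigoplus_k e(a)\,{}_{\phi_{1/p}+k\nu}\mathsf{T}_{\phi_{1/p}}\otimes \Rring e(\boldsymbol\alpha)$, and the identification $\End(\mathcal{T})\cong\Rring$ comes from the characteristic-$p$ equivalence of Theorem~\ref{thm:pStein-2} (derived from Theorem~\ref{I-thm:pStein-equiv}), not from a direct coordinate-ring matching. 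For (3), the paper again sidesteps the general-flop/derived-equivalence literature: the wall-crossing bimodule is literally ${}_{\phi+\nu}\scrT_{\phi}$ by Proposition~\ref{prop:KLRW-bimodule}, and the Schober structure follows by Theorem~\ref{I-ithm:Schobers}; your alternative of computing $\RHom_{\tM}(\mathcal{T}',\mathcal{T})$ directly should eventually give the same answer but requires more work to identify with the braiding bimodules. In short: your architecture is sound, but the paper's key efficiency is to route everything through the twisting bimodules in $\scrB^+$, which makes (1) nearly definitional and renders your ``main obstacle'' much smaller than you anticipate.
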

As with Theorem \ref{ith:NCCR}, this is only currently proven in characteristic 0, though we believe it holds in all characteristics.  

Finally, we turn to applying these algebras in topology for the construction of homological knot invariants.  Recent work \cite{aganagicKnotCategorification2020} of Aganagi\'c defines such a knot invariant based on the coherent sheaves on $\tM$ (which she denotes $\mathcal{X}$; see \cite[\S 3.1.1]{aganagicKnotCategorification2020}) in the case where $\Gamma$ is of ADE type.  In this case, our quiver gauge theory must correspond to a tensor product of minuscule representations in order to possess a BFN resolution.  Aganagi\'c's construction depends on an action of affine tangles on the category $D^b(\Coh \tM)$.  As described, this action arises naturally from a family of central charge functions on the Grothendieck group of this category: the action of affine braids is given by wall-crossing functors associated to certain singular loci for this function, and the action of cups and caps from a filtration that arises on the category as we approach certain walls.  We can capture this precisely in a {\bf real variation of stability} in the sense of Anno, Bezrukavnikov, and Mirkovi\'c \cite{annoStabilityConditions2015}.  

By Theorem \ref{thm:D-equivalence}, we can translate Aganagi\'c's action into complexes of bimodules over a cylindrical KLRW algebra.  In particular, we give a combinatorial description of the central charge function, which can be thought of as ``integration over a noncommutative resolution.''  The affine braid group action is covered in Theorem \ref{thm:D-equivalence}, and we add cup and cap functors that extend this to  
an annular tangle action on the categories $D^b(\Coh \tM)$ for different theories.  This gives us an annular knot invariant defined using only KLRW algebras, which agrees with that of Aganagi\'c, since at each point, our construction matches the description in \cite{aganagicKnotCategorification2020} under the equivalences of Theorem \ref{thm:D-equivalence}. In particular, there is no need for the asterisk in \cite[Th. 5*]{aganagicKnotCategorification2020}. 

More precisely, let $\Gamma$ be of ADE type, and let $\Rring^{\Bj}$ be the corresponding cylindrical KLR algebra, summed over all possible numbers of black strands. Consider 
 an oriented affine ribbon tangle $T$ labeled with minuscule fundamental representations.  We can read off the labels at the bottom of this tangle (taking the dual representation if the strand is oriented downward) to get a sequence $\Bj$ of minuscule fundamental representations, and similarly read off $\Bj'$ from the top.  
\begin{itheorem} We have an induced functor $\Phi(T)\colon D^b( {\Rring^{\Bj}}\fdmod)\to D^b(\Rring^{\Bj'}\fdmod)$ which is compatible with composition of tangles up to isomorphism of functors.  This defines a link invariant that agrees with those of \cite[Th. 5*]{aganagicKnotCategorification2020} and \cite[\S 8]{Webmerged}, and in type A with Khovanov-Rozansky homology.
\end{itheorem}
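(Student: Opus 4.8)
The plan is to build $\Phi$ on the generators of the oriented affine ribbon tangle category, verify the defining relations, and then match the resulting invariant against each of the three known constructions, using Theorem \ref{thm:D-equivalence} as the bridge throughout.

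First I would fix a presentation of oriented affine ribbon tangles labeled by minuscule fundamentals: the generators are the crossings $\sigma_i^{\pm1}$ of adjacent strands, the cups $\cup_i$ and caps $\cap_i$ joining adjacent strands with dual labels, and the generator $\rho$ carrying a strand once around the hole of the annulus. To $\sigma_i^{\pm1}$ and $\rho$ I assign tensoring with the braiding bimodules of Theorem \ref{thm:D-equivalence}(3), which are already known there to be invertible and to realize the affine braid group action on $\bigoplus_{\Bj} D^b(\Rring^{\Bj}\fdmod)$. To $\cup_i$ I assign tensoring with the $(\Rring^{\Bj''},\Rring^{\Bj})$-bimodule built from ``U-turn'' diagrams --- two new black strands of the prescribed labels inserted in a cap shape, with the standard weighting and adjacency data --- and to $\cap_i$ the reflected bimodule, which is biadjoint to it up to an overall grading shift. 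These are the cylindrical counterparts of the cup and cap bimodules of \cite[\S 8]{Webmerged}.

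Next I would check that $\Phi$ factors through the tangle relations. The affine braid relations (which subsume Reidemeister II and III) are exactly Theorem \ref{thm:D-equivalence}(3). What remains are the zig-zag (snake) identities for $(\cup_i,\cap_i)$, the pitchfork relations for sliding a strand past a cup or a cap, the ribbon (framing) relation --- which for ribbon tangles comes out as multiplication by an invertible grading shift rather than as strict Reidemeister I --- and the compatibility of $\rho$ with $\cup_i$ and $\cap_i$. Each of these is \emph{local}: it is an identity of bimodules supported in a bounded window of the cylinder, hence equivalent to an identity over an ordinary finite-type KLRW algebra, and those finite-type identities are precisely the ones established in \cite[\S 8]{Webmerged}. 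As a cross-check, transporting a relation along Theorem \ref{thm:D-equivalence} turns it into an identity of Fourier--Mukai kernels on $\tM$ that holds because the supporting correspondences are related by an isotopy, which is what the real variation of stability of \cite{anno2015stability} supplies. Granting the relations, composing elementary tangles gives $\Phi(T_1\circ T_2)\cong\Phi(T_1)\circ\Phi(T_2)$, so $\Phi$ descends to the tangle category, and its value on a tangle without endpoints is the asserted link invariant, a complex of graded vector spaces.

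Finally, for the comparisons, Theorem \ref{thm:D-equivalence} provides a tilting equivalence relating $\Rring^{\Bj}\fdmod$ to coherent sheaves on $\tM$, under which the crossing bimodules correspond to Aganagi\'c's wall-crossing functors and --- this is the crux --- the cup/cap bimodules should correspond to the functors she extracts from the stability filtration as the central charge degenerates along a wall; granting this, $\Phi(T)$ agrees with her functor on generators, hence for every $T$, which in particular removes the asterisk in \cite[Th.~5*]{aganagic2020knot}. Agreement with \cite[\S 8]{Webmerged} follows because, for data coming from a tensor product of minuscule representations of a type ADE Lie algebra, $\Rring^{\Bj}$ is identified up to Morita equivalence with the merged KLRW algebra of that paper compatibly with the crossing and cup/cap bimodules; and agreement with Khovanov--Rozansky homology in type $A$ then follows from the corresponding theorem of \cite{Webmerged}. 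I expect the main obstacle to be exactly the crux above: showing that the algebraically defined cup and cap bimodules really do compute Aganagi\'c's filtration functors. Concretely, this requires evaluating the ``integration over a non-commutative resolution'' central charge near its singular locus in purely combinatorial terms and matching its associated graded with the image of the cup/cap bimodule; once this local dictionary between the stability filtration and the KLRW cup/cap diagrams is pinned down, the remaining work is bookkeeping with local relations on top of the already-proven affine braid part of Theorem \ref{thm:D-equivalence}.
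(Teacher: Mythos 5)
Your proposal matches the paper's approach quite closely. The paper defines $\Phi$ on elementary (sliced) annular tangle diagrams exactly as you do — crossings and rotations go to the wall-crossing bimodules, cups and caps go to the cylindricalizations of the corresponding planar bimodules of \cite[\S 7.3]{Webmerged} — and then verifies well-definedness by checking the ribbon Turaev moves of Ohtsuki's formalism (ribbon Reidemeister moves, the $S$-move, the pitchfork move, commutation of distant tangles), all of which reduce to the already-proven planar identities of \cite[Th.\ 8.6]{Webmerged} via the exact cylindricalization functor of Lemma \ref{lem:affinize-commute}; the comparison with $\Phi_{\mathsf L}$ and Khovanov--Rozansky then proceeds through the observation that, since $\Rring^{\emptyset}_{\mathbf 0}\cong\C$, the value on a link presented inside a $3$-ball is the cylindricalization of the planar invariant slice by slice, which is just the planar invariant.

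The one place where you add something beyond the paper's stated proof is exactly the step you flag as the crux. The paper's formal proofs (of Theorem \ref{thm:tangle-action} and Theorem \ref{th:knot-invariant}) establish well-definedness and the comparison with \cite[\S 8]{Webmerged} and Khovanov--Rozansky; the agreement with \cite[Th.\ 5*]{aganagic2020knot} is argued more informally in the introduction, resting on Theorem \ref{thm:D-equivalence} together with the claim that the combinatorial central charge of \eqref{eq:central-charge} and the real variation of stability of Theorem \ref{thm:real-variation} reproduce Aganagi\'c's central charge $\mathcal{Z}^0$ and her wall filtration (Lemma \ref{lem:RT-charge} and Remark \ref{rem:Z-match} supply the quantitative link). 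Your instinct that pinning down the cup/cap side of this dictionary is the real work is therefore well placed; the paper develops the necessary RVS machinery but leaves the final match of cup/cap functors with Aganagi\'c's degeneration functors at the level of a claim rather than a displayed proof.
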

While we can use the abstract machinery of categorified skew Howe duality to show that this invariant is equivalent to many knot invariants which have appeared in the literature, our construction is closest in spirit to that of Cautis and Kamnitzer \cite{CKII}, since in type A, the resolved Coulomb branch appearing in our construction is an open subset of their convolution variety in the affine Grassmannian by \cite[Th. 3.10]{BFNplus}.  We have not carefully verified this, but it seems virtually certain that our coherent sheaves defining functors for tangles match those of \cite{CKII}.

It's worth noting that this defines an invariant of annular links (we think of a usual link as an annular link by embedding $B^3$ into the annulus times an interval).  We expect that in type A, this agrees with the annular Khovanov-Rozansky homology of Queffelec and Rose \cite{QRannular};  we lay out some preliminary steps to check this fact, but verifying it carefully is beyond the scope of this paper.

\excise{
Aside from making explicit a construction that otherwise requires some
rather challenging techniques, it's generally believed that the
algebras $\End(\mathcal{T})$ are Koszul; we hope that this can be
proved geometrically, much as similar results have been proven for
usual KLRW algebras.  Other possible applications include
studying Bezrukavnikov and Okounkov's conjecture relating
wall-crossing functors to quantum cohomology and generalizing Anno and
Nandakumar's work on affine tangles in 2-block Springer fibers to more
general actions of webs.  }
 \subsection*{Acknowledgements}

Many thanks to Mina Aganagi\'c, Roman Bezrukavnikov, Alexander Braverman, Kevin Costello, Tudor
Dimofte, Michael Finkelberg, Justin Hilburn, Joel Kamnitzer, Gus Lonergan, Ivan Losev,
Alex Weekes and Philsang Yoo for useful discussions on
these topics and Youssef Mousaaid and Alistair Savage for not
complaining about the theft of their diagrams.

\section{Cylindrical KLRW algebras}
\label{sec:cylindr-klrw-algebr}

\subsection{The definition of cylindrical KLRW algebras}
\notation{$\No$}{$\No=N_{GL(V)}(G)$}
As in the introduction, let $\quiver$ be a quiver, and  $\Bv,\Bw\colon {\vertex}\to
\Z_{\geq 0}$ dimension vectors.  First, we note some basic facts about quiver gauge theories.  In this case, the group $\No=N_{GL(V)}(G)$ is generated by $\gaugeG$, the product
$GL(\C^{ {w_i}})$ acting by precomposition on $\C^{w_i}$, and by
$GL(\C^{\chi_{i,j}})$ where $\chi_{i,j}$ is the number of edges
$i\to j$, acting by taking linear combinations of the maps along these
edges, i.e. via the isomorphism
\[\bigoplus_{i\to j}\Hom(\C^{ {v_i}},\C^{ {v_j}})\cong \bigoplus_{(i,j)\in {\vertex}\times \vertex}
  \Hom(\C^{ {v_i}},\C^{ {v_j}})\otimes
  \C^{\chi_{i,j}}.\]

Thus, for any unitary element $g\in \No$, we can find a conjugate (i.e. change bases in each of the spaces $\C^{v_i},\C^{w_i},\C^{\chi_{i,j}}$) which is a product of diagonal matrices.  If we expand to considering the coset
$g\gaugeG$, some element of this coset will be conjugate to a product of diagonal matrices in $GL(\C^{ {w_i}})$ and $GL(\C^{\chi_{i,j}})$ to an element which acts diagonally in the usual bases on $\mathbb{C}^{w_i}$ and $\C^{\chi_{i,j}}$.  Thus, we can find classes $\beta_e\in \R/\Z$ for $e\in \edge$ and $ \beta_{i,k}\in \R/\Z$ for $i\in \vertex,
k=1,\dots, w_i$ such that $g$ acts on the
map attached to the edge $e$ by the scalar
$\exp(2\pi i \beta_e)$ and on $\C^{w_j}$ with eigenvalues $\exp(2\pi i
\beta_{j,k})$.  

\notation{\ensuremath{\Gamma^{\Bw}}}{The Crawley-Boevey quiver of the dimension vector $\Bw$.}
This is slightly cleaner if we use the ``Crawley-Boevey trick'' of adding a new vertex
$\infty$ and $w_i$ new edges from $i$ to $\infty$ to make a larger quiver $\Gamma^{\Bw}$.  We extend $ \Bv $ to this new vertex by setting $ v_\infty = 1 $.  Then, we can think of an element
of $V$ as a representation of $\Gamma^{\Bw}$, using $ \Hom(\C^{v_i},
\C^{w_i}) = \Hom(\C^{v_i}, \C^{v_\infty})^{\oplus w_i} $, so 
$ N = \bigoplus_{e \in \edge(\Gamma^\Bw)} \Hom(\C^{v_{t(e)}},\C^{v_{h(e)}}) $.  We can extend $\beta$ to the set of edges of $\Gamma^{\Bw}$ by making $\beta_{i,k}$ for $k=1,\dots, w_i$ its values on the $w_i$ new edges $i\to\infty$.

\subsubsection{Cyclic flavored sequences} 
\label{sec:cyl-flav-seq}

A frequent theme in this paper will be generalizations of notions from earlier papers on Coulomb branches and KLR algebras, with the real line $\R$ replaced by the circle $\R/\Z$.  

Recall that a {\bf cyclic order} on a set $Y$ is a ternary relation $C\subset Y^3$, which is cyclic, asymmetric, transitive and total.  That is, $C$ is closed under cyclic permutations, and for any $a\in Y$, the order $<_a$ on $Y$ defined by $b<c$ if $(a,b,c)\in C$ or if $b=a$ and $c\neq a$ is a total order.   Note that any cyclic order on a finite set $Y$ is induced by a map of $Y$ to $\R/\Z$, where the latter is given a cyclic order where   $(\bar{a},\bar{b},\bar{c})\in C$ if and only if these classes have representatives such that $a<b<c<a+1$.

\notation{\ensuremath{\Ab}}{The abelian group from which longitudes are taken.}
\notation{$\sim$}{An equivalence relations on $\Ab$, whose equivalence classes are cyclically ordered.}
Consider a triple of:
\begin{enumerate}
    \item an abelian group $(\Ab,+)$,
    \item  an equivalence relation $\sim$ on the set $\Ab$ compatible with addition with a fixed element: $a\sim b$ if and only if $a+x\sim b+x$, and 
    \item a total cyclic order on each equivalence class compatible with addition with a fixed element:  for all $x\in \Ab$, we have that $(a,b,c) $ is cyclically ordered if and only if $(a+x,b+x,c+x)$ is.
\end{enumerate}  We encode these cyclic orders as a subset $C_{\Ab}\subset \Ab^3$ which satisfies invariance under the usual action of $\Ab$ on $\Ab^3$.  
 
The circle $\Ab=\R/\Z$ is an example of such a group with $\sim$ equal to the full equivalence relation.    
This restricts to the same structure on $\mathbb{F}_p$, induced by the group homomorphism $a\mapsto a/p\colon \mathbb{F}_p\to \R/\Z$.  

 Another example illustrating why we need this equivalence relation is $\Ab=\K$, an arbitrary field of characteristic $p$.  We endow this with the equivalence relation that $a\sim b$ if $a-b\in \Fp$ (or equivalently, $a^p-a=b^p-b$), and the cyclic order on each of these equivalence classes induced by the action of $\Fp$.   Perhaps a stranger example that we can consider is $\Ab=\R$ or $\Z$, with $C_{\Ab}$ the cyclic closure of the usual total order $<$, that is, $(a,b,c)\in C _{\Ab} $ if $a<b<c$ or $b<c<a$ or $c<a<b$.  
 \begin{remark}
If we worked strictly analogously to \cite{kamnitzerLieAlgebra2024},  we would also consider a separate set $\longi$ that carries an $\Ab$-action, equipped with its own cyclic order $C_{\longi}$ and equivalence relation $\sim$ compatible with those on $\Ab$.  That is, if $a,b,c\in \Ab, x,y,z\in \longi$, then
\begin{enumerate}
    \item $a\sim b, x\sim y$ implies that $a+x\sim b+y$.
    \item $(a,b,c)\in C_{\Ab} $ implies $(a+x,b+x,c+x)\in C_{\longi}$.
    \item $(x,y,z)\in C_{\longi}$ implies $(a+x,a+y,a+z)\in C_{\longi}$.  
\end{enumerate}
In this paper, we'll only consider the case $\Ab=\longi$, but in future papers it will be useful to consider where these sets are not equal.  
 \end{remark}

\begin{definition}
	An {\bf $\Ab$-flavoring} is a choice of $\beta_e\in \Ab$ for each edge $e\in \edge(\Gamma^{\Bw})$. 
\end{definition}  

\notation{$\beta=( {\beta_e}, {\beta_{i,k}})$}{The flavor in $\Ab$ defining the relationship between the longitudes of corporeal, ghostly and red elements of a flavored sequence.}
Fix an ordered $n$-tuple $\Bi\in  \vertex^n$.
Consider the sets $\Sc=[1,n]$  and 
\begin{equation}
   \Sg=\big\{(k,e) \in \Sc\times \edge\mid   i_k=h(e)\big\} \qquad \Sr= \big\{(\star,e) \mid e \in \edge(\Gamma^\Bw), \infty=h(e) \big\}.\label{eq:Gplus}	
\end{equation}
\notation{$\Sc$,$\Sg$,$\Sr$}{The sets indexing the corporeal, ghostly and red elements of the a flavored sequence.}
As in \cite{kamnitzerLieAlgebra2024}, we refer to the element of these sets as corporeal, ghostly and red, respectively, and let $\Sall$ denote their disjoint union.  Similarly, we let pairs like $\Sgr$ denote the union of the two sets. 

\begin{definition}
	A {\bf cyclic flavored sequence} is a triple $(\Bi,
	\Ba,C)$:
	\begin{enumerate}
		\item An ordered $n$-tuple $\Bi\in  \vertex^n$.
		\item An ordered $n$-tuple $\Ba=(a_1,\dots,a_n)\in \Ab^n$.  We call these the {\bf longitudes} of the elements of $\Sc$.
		\item A (total) cyclic order $C$ on the set $\Sall$.
	\end{enumerate}
	We also endow the elements of $(k,e)\in \Sgr$ with longitudes $a_k+\beta_e$ and the elements of $\Sr$ with $t(e)=i$ with the longitudes $\beta_{i,k}$.  We require the following properties to be satisfied:
\renewcommand{\theenumi}{\roman{enumi}}
\begin{enumerate}
  \item The longitude function $\mathfrak{a}\colon \Sall\to \Ab$ is weakly monotone: if $(x,y,z)\notin C $ with $x, y, z$ distinct, then $(\mathfrak{a}(x),\mathfrak{a}(y),\mathfrak{a}(z))\notin C_{\Ab}$. 
\item If $g\in \Sgr, m\in \Sc, x\in \Sall$ and $\mathfrak{a}(g)=\mathfrak{a}(m)\neq \mathfrak{a}(x)$, then $(x,m,g)\notin C $.  
    \end{enumerate} 
\end{definition}

We visualize a cyclic flavored sequence by choosing a monotone map $\Sall\to \R/\Z$ to the circle $\R/\Z$, and labeling the corresponding points of the circle with the longitudes.  With this perspective, the condition (i) is that as we read the longitudes around the circle in a given equivalence class, they are in cyclic order, but with repeats possible; (ii) is the statement that whenever corporeal strands and ghostly/red strands have the same longitude, we break the tie in their order by pushing the corporeal strand in the positive direction.  
\begin{example}\label{ex:flavored-cyclic}
	If our graph $\Gamma$ is given by $i\overset{e}\leftarrow j \overset{f}\leftarrow k$, and we fix $\Bw=(1,0,1)$, then the Crawley-Boevey quiver is 
	$$
	\begin{tikzcd}
		i \arrow[swap]{rd}{r} & j \arrow[swap]{l}{e} & k\arrow{ld}{s}  \arrow[swap]{l}{f} \\
		& \infty &
	\end{tikzcd}
	$$ 
Consider the dimension vector $\Bv=(1,1,1),$ and fix $\Bi=(k,j,i)$.  In this case, we have  
	\[\Sc=\{1,2,3\}\quad \Sg=\{(2,f),(3,e)\}\qquad \Sr=\{(\star, r),(\star,s)\} \] 
	 If we consider $\Ab=\mathbb{F}_{19}$ and choose the flavoring $\beta_e=10,\beta_f=14, \beta_{i,1}=2,\beta_{k,1}=12$, then the longitudes of all strands will be fixed by the choice of longitudes on $\Sc$.  One possible function $\mathfrak{a}$ is defined by:
	\[ \{4,7,16\} \quad \{18=4+14,7=16+10\}\quad \{2,12\}\]
	
	The only possible cyclic order compatible with this longitude function is the cyclicalization of the total order:
	\[ (\star, r) < 1 <(3,e)< 2 <(\star,s)< 3< (2,f) \]
	Note here that $((3,e), 2,x)\in C$ for any other $x$ by the ``tie-break rule'' since these have equal longitudes.  
\end{example}

\excise{For example, if $\Gamma$ is the Kronecker quiver $$
	\begin{tikzcd}
		0  \arrow[bend right = 30,swap]{r}{f} & 1 \arrow[bend right = 30,swap]{l}{e}
	\end{tikzcd}
	$$
	and $n=3$, then we must have a sequence $(i_1,i_2,i_3)$, for example    $\Bi=(0,1,0)$.  }

If $\Ab=\R/\Z$ then a particularly natural way to do this is to map each element of $\Sall$ to its longitude.  This map is only monotone if all longitudes are distinct, but this can be fixed by a small deformation, mapping $x$ to $\mathfrak{a}(x)+\epsilon_x$ for some small $\epsilon_x\in \R$. 

\begin{definition} 
	We say that two cyclic flavored sequences $(\Bi,\Ba,C)$ and $(\Bi',\Ba',C')$ are {\bf equivalent} if there is a bijection $\sigma\colon \Sc\to \Sc$ such that for all $r\in \Sc$ we have that:
    \begin{enumerate}
    \item $i'_{\sigma(r)}=i_r$
    \item  for every $(x,e)\in \Sgr$ such that $i_r=t(e)$, we
    have that
    $r<(x,e)$ if and only if $\sigma(r)<'(\sigma(x),e)$ (by
    convention $\sigma(\star)=\star$).  
    \end{enumerate}
\end{definition}

\notation{$\mathbb{L}$}{The set $\prod_{i\in \vertex}\Ab^{v_i}$, regarded as the set of possible longitudes on corporeal strands in a flavored sequence.}
Let $\mathbb{L}=\prod_{i\in \vertex}\Ab^{v_i}$; given a point $(\alpha_{i,k})$ in this space, there is a unique flavored sequence up to equivalence where the coordinates in $\Ab^{v_i}$ are the longitudes on corporeal strands with label $i$.  
\begin{definition}\label{def:prefered}
	Given $(\alpha_{i,k})\in \mathbb{L}$, the associated preferred cyclic flavored sequence is defined by:
	\begin{enumerate}
	\item Fix a total order on the vertices $\vertex$, and a choice of total order $<_{\Ab}$ on $\Ab$ whose cyclicalization $\tilde{C}_{\Ab}$ is a refinement of  $C_{\Ab}$.   
	\item The values $\alpha_{i,k}$ are the longitudes on the corporeal strands of label $i$.  We construct the vector $\Ba$ by ordering these values in increasing order with respect to $<_{\Ab}$; this fixes $\Bi$ up to reordering of strands with the same $a_*$.  We order these groups so that if $a_{j}=a_{k}$, then $i_j\leq i_k$ in the order on vertices.
	\item Order the set $\{x\in \Sall \mid \mathfrak{a}(x)=a\}$ so that the elements of $\Sr$ are lowest, then the elements of $\Sg$ and then the elements of $\Sc$, with strands in these groups ordered by the order on $\vertex$ applied to $t(e)$ for $(r,e)\in \Sgr$, and by $i_x$ when $x\in \Sc$.  More formally, we say that  $x<y$ whenever one of the conditions below holds
\[ (x,y)\in \Sr\times \Sg\cup \Sr\times \Sc\cup \Sg\times \Sc\]
\[ (x,y)\in \Sc\times \Sc\text{ and }i_x<i_y\]\[(x=(r,e),y=(r',e'))\in \Sr\times \Sr\cup\Sg\times \Sg  
\text{ and }t(e)<t(e').\]
\item 	We now endow  $\Sall$ with the cyclic order that first considers the longitude $\mathfrak{a}(x)$ and then within strands of fixed longitude, uses the order above.  That is $C$ is the saturation under cyclic permutations of the $(x,y,z)$ such that either 
	\begin{enumerate} \item $(\mathfrak{a}(x),\mathfrak{a}(y),\mathfrak{a}(z))\in \tilde{C}_{\Ab}$ or 
	\item $\mathfrak{a}(x)=\mathfrak{a}(y)\neq \mathfrak{a}(z)$ and  $x<y$ in the order above or
	\item $\mathfrak{a}(x)=\mathfrak{a}(y)= \mathfrak{a}(z)$ and  $x<y<z$ in the order above.
	\end{enumerate}
\end{enumerate}
This associates a preferred cyclic flavored sequence to each element of $\mathbb{L}$.  If we write $(\Bi,\Ba)$ without reference to a choice of $C$, then we implicitly mean that we use this preferred cyclic order.
\end{definition}

\begin{example}
	For example, if we return to the setting of Example \ref{ex:flavored-cyclic}, then when we chose $\{4,7,16\}$ to be the longitudes of the corporeal strands, all longitudes were distinct and no tie-breaks were necessary.  However, if two corporeal strands have the same longitude, for example, the strands with labels $i,j,k$ have longitudes $7,4,7$ respectively, then either $\Bi=(j,k,i)$ or $(j,i,k)$ would give a flavored sequence, so we must choose one of them, depending on the order of $j$ and $k$ in our order on $\vertex$.  Similarly, if these longitudes are $16,11,7$, then the two elements of $\Sg$ will both have longitude 2, and either ordering of them is compatible with the flavored sequence conditions. 
\end{example}

The space $\mathbb{L}$ is then naturally divided into equivalence classes of flavored sequences; if $\Ab$ has finitely many equivalence classes under $\sim$, there will only be finitely many equivalence classes of flavored sequences.
In the case of $\Ab=\R/\Z$, we can think of $\mathbb{L}$ as a compact manifold diffeomorphic to $(\R/\Z)^n$; we'll endow this space with the usual
measure with volume 1 on $(\R/\Z)^n$ (that is, its Haar measure as a
Lie group).  In this case, the equivalence classes given by the chambers for the subtorus arrangement bounded by the subtori where $\alpha_{j,m}-\alpha_{i,k}=\beta_e$ when $e\colon i\to j$ and those with $\alpha_{i,m}=\beta_{i,k}$; compare with \cite[Prop. 2.12]{WebwKLR} for the linear case. 
  \begin{example}
 	If we have $\Gamma=1\to 2$ and $v_1=v_2=w_1=w_2$ and we choose $\beta_e=1/4$, $\beta_{1,1}=1/2,$ $\beta_{2,1}=1/3$, then the space $\mathbb{L}=\R^2/\Z^2$, and the division into chambers looks like:
 	\[\tikz[very thick,scale=4]{\draw[fringe] (0,0)--(0,1) --(1,1) -- (1,0)--cycle;\draw[red](.5,0)--(.5,1); \draw[red](0,.33) --(1,.33); \draw (0,.25) --(.75,1); \draw (.75,0) --(1,.25); }\]
 	with the $x$-coordinate giving the longitude of the point with label $1$ and the $y$-coordinate giving the longitude of the point with label $2$.
 \end{example}
 If we deform $\beta_e$ and $\beta_{i,k}$, usually the set of these chambers will not change, but it will when we hit a point where there is a redundancy between these equations.  These redundancies correspond to paths in $\Gamma$ as an unoriented graph.  Such a path is given by a list of edges $e_1,\dots, e_n$ together with signs $\varepsilon_i\in \{\pm 1\}$ that indicate whether we transverse $e_i$ in the positive or negative direction.  

\begin{lemma}\label{lem:circuits}
If the set of possible equivalence classes of cyclic flavored sequences is not locally constant near $(\beta_e,\beta_{i,k})$ then for some $i,j\in \vertex$ and some $k\in [1,w_i],\ell\in [1,w_j]$, we have an unoriented path \[i=i_0\overset{e_1}\longrightarrow i_1\overset{e_2}\longrightarrow i_2\overset{e_3}\longrightarrow\cdots \overset{e_n}\longrightarrow i_n=j\] such that in $\R/\Z$:
\begin{equation}\label{eq:circuits}
    \beta_{j,\ell}-\beta_{i,k}+\sum_{p=1}^n\varepsilon_i\beta_{e_i} =0
\end{equation}
\end{lemma}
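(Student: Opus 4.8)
The plan is to recognize the division of $\mathbb{L}$ into equivalence classes of flavored sequences as the chamber decomposition of a subtorus arrangement whose directions are the signed incidence vectors of a graph built from the Crawley-Boevey quiver $\Gamma^{\Bw}$, and then to appeal to the standard principle that such a chamber decomposition can change only where the underlying matroid of the arrangement degenerates, i.e.\ where a minimal linearly dependent family of the subtori becomes concurrent. First I would recall, from the discussion after Definition~\ref{def:prefered}, that for $\Ab=\R/\Z$ two points of $\mathbb{L}$ represent the same equivalence class precisely when they lie in one connected component of the complement of the arrangement $\mathcal{A}(\beta)$ of subtori $\{\alpha_{j,m}-\alpha_{i,k}=\beta_e\}$ (one for each edge $e\colon i\to j$ of $\Gamma^{\Bw}$ and each $k\in[1,v_i]$, $m\in[1,v_j]$), in which — since $v_\infty=1$ — the framing edges $i\to\infty$ contribute exactly the ``red'' subtori $\{\alpha_{i,k}=\beta_{i,k'}\}$. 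It is worth spelling out why these are all the walls: an equivalence class only remembers, for each corporeal strand and each ghostly or red strand of compatible type, their relative cyclic position, and such a relation can flip only when a corporeal longitude crosses a ghostly or red longitude, i.e.\ on one of the hypersurfaces above.

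Next I would set up the family. Each subtorus of $\mathcal{A}(\beta)$ is a translate of a fixed primitive codimension-one subtorus, its direction a vector $\ell_W\in\Z^n$ ($n=\sum_{i\in\vertex}v_i$) independent of $\beta$, its offset an affine-linear function $c_W(\beta)\in\R/\Z$. Passing to the $\Z^n$-periodic hyperplane arrangement on $\R^n$ with hyperplanes $\{\ell_W(\alpha)=c_W(\beta)+t:t\in\Z\}$, whose $\Z^n$-orbits of chambers are the chambers of $\mathcal{A}(\beta)$, one checks that the combinatorial type — in particular the set of chambers — is locally constant in $\beta$ away from the locally finite union of the discriminantal subtori $D_S=\{\beta:\sum_{W\in S}\lambda_W c_W(\beta)=0\text{ in }\R/\Z\}$, as $S$ runs over those circuits of the linear matroid of the $\ell_W$ for which the relation $\sum_{W\in S}\lambda_W c_W$ (with $(\lambda_W)$ the essentially unique integral relation supported on $S$) is not identically zero; this is exactly the locus where such a circuit of subtori becomes concurrent, and it subsumes the special case of two parallel subtori colliding (a circuit of size two). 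Hence if the equivalence-class decomposition is not locally constant at $\beta$, then $\beta\in D_S$ for some such circuit $S$.

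It then remains to read a path off $S$. Indexing the coordinates of $\R^n$ by pairs $(i,k)$ with $i\in\vertex(\Gamma^{\Bw})$, $k\in[1,v_i]$, the direction of $\{\alpha_{j,m}-\alpha_{i,k}=\beta_e\}$ is $\pm(\mathbf{e}_{(i,k)}-\mathbf{e}_{(j,m)})$, the signed incidence vector of the edge $\{(i,k),(j,m)\}$ of the graph $G$ on vertex set $\{(i,k)\}$ with one edge — labelled by $e$, carrying the constant $\beta_e$ — for each such $(e,k,m)$. Thus the matroid of the $\ell_W$ is the graphic matroid of $G$ (deleting the $(\infty,1)$-coordinate, i.e.\ grounding that single vertex, does not alter it), its circuits are the cycles of $G$, and each $\lambda_W=\pm1$. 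Given such a cycle $S$ with resonance relation $\sum_{W\in S}\varepsilon_W\beta_{e(W)}=0$, $\varepsilon_W\in\{\pm1\}$, project $(i,k)\mapsto i$, $W\mapsto e(W)$ to obtain a closed walk in $\Gamma^{\Bw}$. If it avoids $\infty$ it is a closed walk in $\Gamma$, and — since the resonance relation is non-trivial — it must traverse some edge a net nonzero number of times, forcing $\Gamma$ to contain a cycle; when $\Gamma$ is a forest (the case of interest) this does not occur, and I would dispatch loops in $\Gamma$, and honest cycles of $\Gamma$ (cut open at a framed node, using that for cyclic quivers the standard realization on $\R/\Z$ makes $\sum_{e\in C}\beta_e$ vanish), in a brief remark. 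If instead the walk passes through $\infty$, it passes through it exactly once, hence runs along exactly two framing edges, say the $k$-th at $i$ and the $\ell$-th at $j$ (so $k\in[1,w_i]$, $\ell\in[1,w_j]$), its remaining edges $e_1,\dots,e_n$ lying in $\Gamma$ and forming an unoriented path $i=i_0\to\cdots\to i_n=j$; orienting the cycle to enter $\infty$ along the framing edge over $i$ and leave along the one over $j$ turns the resonance relation into exactly $\beta_{j,\ell}-\beta_{i,k}+\sum_{p=1}^n\varepsilon_p\beta_{e_p}=0$, which is \eqref{eq:circuits}.

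The step I expect to be the real work is the second: making precise that the chamber decomposition of a family of subtorus arrangements with fixed directions is constant away from the discriminantal subtori. The clean route is through the intersection poset of the periodic cover together with Zaslavsky's region count, verifying that crossing $D_S$ is exactly what introduces a new flat; one must also take care that ``not locally constant at $\beta$'' genuinely forces $\beta$ onto some $D_S$ with \emph{non-trivial} relation, rather than merely onto the trivially-satisfied ones. The other point needing attention is the bookkeeping in the last step confirming that cycles of $G$ not meeting $\infty$ never cut out a genuine wall in $\beta$-space — which is exactly where the hypothesis on $\Gamma$ (a tree, or a cyclic quiver with its standard flavoring) enters.
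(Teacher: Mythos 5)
The paper never supplies a formal proof of this lemma: the statement sits between two paragraphs of informal discussion, and the remark immediately following it (``\ldots we can use the edges corresponding to $(i,k)$ and $(j,\ell)$ to close this path up in the Crawley-Boevey graph\ldots'') only describes the intended mechanism without establishing it. Your proposal therefore fills a genuine gap, and the scheme it follows --- lift the subtorus arrangement on $\mathbb{L}$ to the $\Z^{\sum v_i}$-periodic hyperplane arrangement on the universal cover, note that the chamber decomposition is locally constant in $\beta$ away from discriminantal loci indexed by circuits of the matroid of directions, recognize that matroid as the graphic matroid of a graph $G$ on vertex set $\{(i,k)\}$ projecting to the Crawley-Boevey quiver $\Gamma^{\Bw}$, and read the lemma's path off a circuit through $\infty$ --- is precisely the one the surrounding text gestures at. The observation that a circuit of $G$ avoiding $\infty$ projects to a null-homologous closed walk in $\Gamma$, hence carries a trivial offset relation whenever $\Gamma$ is a forest, is exactly the reason the lemma's conclusion only needs circuits passing through $\infty$, and you identify it correctly; the verification that deleting the $(\infty,1)$-coordinate does not change the matroid of incidence vectors is also sound.

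The one place that deserves a flag is the one you yourself flag but do not finish. When $\Gamma$ has cycles --- which the paper explicitly allows, the Jordan quiver being the working example --- circuits of $G$ projecting to non-null-homologous closed walks in $\Gamma$ avoiding $\infty$ do produce genuine discriminantal loci (for the Jordan quiver these are exactly $k\vartheta\in\Z$). The lemma's conclusion absorbs such a wall only under the degenerate reading with $i=j$, $k=\ell$, so that $\beta_{j,\ell}-\beta_{i,k}=0$ and the ``path'' is really a closed walk, and that reading requires $w_i\geq 1$ for some $i$ on the cycle. Your attempt to ``dispatch'' this case by appealing to ``the standard realization on $\R/\Z$ making $\sum_{e\in C}\beta_e$ vanish'' does not work: the lemma quantifies over arbitrary $\beta$, not a chosen normalization, and the wall $k\vartheta\in\Z$ is a non-empty locus you must account for, not argue away. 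So for general $\Gamma$ there is a residual gap; but it is a looseness in the lemma's phrasing as much as in your argument, and for trees (the Dynkin case that the rest of Section 3 actually relies on) your proof closes completely.
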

Note that we can use the edges corresponding to $(i,k)$ and $(j,\ell)$ to close this path up in the Crawley-Boevey graph, and if we think of the weighting $\beta_*$ as a $\R/\Z$-valued 1-cochain, the LHS of \eqref{eq:circuits} is the integral of this 1-cochain over the path thought of as a 1-cycle.  

In the example above, this local constancy will fail if the three lines all go through a common point, which would mean that we have $\beta_{1,1}-\beta_{2,1}+\beta_e=0$.  This corresponds to the unique circuit in the Crawley-Boevey graph, which is a cycle on the 3 nodes $\infty,1,2$.

\subsubsection{Cylindrical KLRW diagrams} 

Throughout the rest of this paper, a {\bf strand} will mean a curve in $ \R/\Z\times [0,1]$ (or in some contexts in later sections, in $\R\times [0,1]$)  of the form $\{(\bar{\pi}(t),t)\mid t\in [0,1]\}$ for some path $\bar{\pi}\colon [0,1]\to \R/\Z$.  
  
  \begin{definition}
The {\bf number of signed intersections} of this curve with a second curve defined by $\bar{\sigma}\colon [0,1]\to \R/\Z$ is the number of solutions to $\Delta(t)=\bar{\pi}(t)	-\bar{\sigma}(t)=0$ where this quantity is increasing minus the number where it is decreasing (not counting points where the strands are tangent but never cross); put differently, if we choose a continuous lift $\tilde{\Delta}$ of $\Delta$ to $\R$, then this number is $\lfloor \tilde{\Delta}(1)\rfloor-\lfloor\tilde{\Delta}(0)\rfloor$.  
  \end{definition} 
 Note that this number is antisymmetric: It switches sign if we swap the roles of $\bar{\pi}$ and $\bar{\sigma}$. The most important special case is that $\bar{\sigma}(t)=\bar{\pi}-nt+c$ for $c\in \R/\Z$ and $n\in \Z$, then this number of intersections is $n$.  In particular, if $n=0$, the difference between these strands is constant and there are no intersection points.  
  
\begin{definition}\label{def:cylindrical-diagram}
 A {\bf cylindrical flavored KLRW} diagram is a collection of finitely many strands in $\R/\Z\times [0,1]$.   The strands are divided into three sets: 
 \begin{enumerate}
     \item The {\bf corporeal} (which are drawn as solid black lines).
     \item The {\bf red} (which are drawn as solid red lines).
  \item The {\bf ghostly} (which are drawn as dashed black lines). 
 \end{enumerate}
  These must satisfy the usual genericity property of avoiding tangencies and triple points between any set of strands, as well as strands that meet at $y=0$ or $y=1$.  
  
  Note that the order in which strands meet the circles $y=0$ and and $y=1$ induce two distinct cyclic orders on the set of strands, which we'll call the bottom and top orders.
  
    In addition, a cylindrical KLRW diagram carries the data of cyclic flavored sequences $(\Bi,\Ba,C)$ and $(\Bi',\Ba',C')$ corresponding to the lines $y=0$ and $y=1$, together with bijections of the sets $\Sall$ and $\Sall'$ to the set of strands, matching the cyclic order $C$ to the bottom order and $C'$ to the top order.  We require that:
       \begin{enumerate}
    \item These bijections are compatible with the division of $\Sall,\Sall'$ into $\Sc,\Sg,\Sr$.  In particular, the bijection $\Sc  \to \Sc'$ induces a permutation $\sigma\in \Sigma_n$ of corporeal strand which respects labels: $i_{m}=i'_{\sigma(m)}$.
    \item The bijection $\Sg\to \Sg'$ between ghostly elements in the flavored sequence is given by $(k,e)\mapsto (\sigma(k),e)$, that is, it is induced by the bijection on corporeals.
    \item Furthermore, if $(k,e)\in \Sg$, then the corresponding ghostly strand and the corporeal strand for $k$ have $0$ signed intersections.  Note that we will have this behavior if we draw our diagrams with a fixed difference in $\R/\Z$ between the corporeal strand and its ghost.   
   \item The longitudes satisfy $a_{m}\sim a'_{\sigma(m)}$, that is, the difference between the longitudes at the top and bottom are in the same equivalence class; if this holds for corporeal strands, then it automatically follows for ghostly/red strands.
  \end{enumerate} 
  We can put dots on corporeal strands at any point that avoids a crossing and the lines $y=0$ and $y=1$.   We identify any diagrams that differ by isotopies preserving these genericity conditions.  
    \end{definition}

In the interest of brevity, we will typically leave ``flavored KLRW'' out and call these just ''cylindrical diagrams'' or just ``diagrams.'' Let us now describe our conventions for drawing cylindrical diagrams.  
We'll draw these on the page in the rectangle $[0,1]\times
[0,1]$ with seams on the left and right side of the diagram where we
should glue to obtain the cylindrical diagram.  Strands corresponding to corporeals are drawn as solid black lines,  strands corresponding to ghostly elements are drawn as dashed black lines, and strands corresponding to red elements as solid red lines.  At the top and bottom of each diagram, we include two rows of information:  
\begin{enumerate}
	\item  In the first row, we write the corresponding vertex $i_k$ for $k\in \Sc$, the edge $e$ for $(k,e)\in \Sg$ and the tail vertex $t(e)$ for $(\star,e)\in \Sr$.  
	\item In the second row we write the longitude.  
\end{enumerate}
\begin{example}
If our graph $\Gamma$ is given by $i\overset{e}\leftarrow j \overset{f}\leftarrow k$, and we take $\Ab =\mathbb{F}_{19}$ with $\beta_e=10,\beta_f=14, \beta_{i,1}=2,\beta_{k,1}=12$, then an example of such a diagram is given by \begin{equation*} 
       \tikz[very thick,xscale=4,yscale=1.25]{
          \draw[fringe] (-1,-1)-- (-1,1);
          \draw[fringe] (1,1)-- (1,-1);
          \draw[wei] (-.8,-1)--node[below, at start ]{$\stck{i}{2}$} (-.8,1);
          \draw[wei] (.4 ,-1)--node[below, at start ]{$\stck{k}{12}$} (.4,1);
\draw[dashed](-.9,1) to[out=-90,in=30] node[above, at start]{$\stck{\vphantom{j}e}{1}$} (-1,.65);
\draw (-1,.2) to[out=30,in=-90] node[above, at end]{$\stck{i}{10}$} (.1,1);
           \draw[dashed] (-.4 ,-1) to[out=90,in=-150] node[below, at start ]{$\stck{\vphantom{j}e}{7}$}(1,.65);
           \draw (.6 ,-1) to[out=90,in=-150] node[below, at start ]{$
           \stck{i}{16}$}(1,.2);
           \draw (-1,-.2) to[out=-30,in=90]node[below, at end ]{$\stck{k}{4}$} (-.6,-1);
           \draw (-.2 ,1) to[out=-90,in=150] node[pos=.4,circle,fill=black,inner sep=2pt]{} node[above, at start ]{$\stck{k}{8}$}(1,-.2);
           \draw (-.2,-1) to[out=90,in=-90] node[below, at start ]{$\stck{j}{7}$} node[above, at end]{$\stck{j}{5}$} (-.5,1);
           \draw[dashed]  (.8,-1) to[out=90,in=-90] node[below, at start ]{$\stck{f}{18}$} node[above, at end]{$\stck{f}{16}$}   (.5,1);
           }
        \end{equation*}
        On the other hand, the diagram below is not allowed, since the corporeal strand with label $i$ and its ghost with label $e$ don't have 0 signed intersections.  \begin{equation*} 
       \tikz[very thick,xscale=4,yscale=1.25]{
          \draw[fringe] (-1,-1)-- (-1,1);
          \draw[fringe] (1,1)-- (1,-1);
          \draw[wei] (-.8,-1)--node[below, at start ]{$\stck{i}{2}$} (-.8,1);
          \draw[wei] (.4 ,-1)--node[below, at start ]{$\stck{k}{12}$} (.4,1);
\draw (-1,.2) to[out=30,in=-90] node[above, at end]{$\stck{i}{10}$} (.1,1);
           \draw[dashed] (-.4 ,-1) to[out=90,in=-90] node[below, at start ]{$\stck{\vphantom{j}e}{6}$} node[above, at end]{$\stck{\vphantom{j}e}{1}$} (-.9,1);
           \draw (.6 ,-1) to[out=90,in=-150] node[below, at start ]{$
           \stck{i}{15}$}(1,.2);
           \draw (-1,-.2) to[out=-30,in=90]node[below, at end ]{$\stck{k}{4}$} (-.6,-1);
           \draw (-.2 ,1) to[out=-90,in=150] node[pos=.4,circle,fill=black,inner sep=2pt]{} node[above, at start ]{$\stck{k}{8}$}(1,-.2);
           \draw (-.2,-1) to[out=90,in=-90] node[below, at start ]{$\stck{j}{7}$} node[above, at end]{$\stck{j}{5}$} (-.5,1);
           \draw[dashed]  (.8,-1) to[out=90,in=-90] node[below, at start ]{$\stck{f}{18}$} node[above, at end]{$\stck{f}{16}$}   (.5,1);
           }
        \end{equation*}
\end{example}

For a given choice of $(\Bi,\Ba)$ or $(\alpha_{i,k})\in \mathbb{L}$, as discussed in Definition \ref{def:prefered}, we have a corresponding cyclic flavored sequence. 
\begin{definition}\label{def:i-a}
We let $e(\Bi,\Ba)$ or $e(\boldsymbol{\alpha})$ denote the KLR diagram where each strand is straight vertical (i.e. $\bar{\pi}$ is constant) with this corresponding flavored sequence at both top and bottom.  
\end{definition}
\notation{$e(\Bi,\Ba)$, $e(\boldsymbol{\alpha})$}{The idempotent in $\Rring$ defined by the labels $\Bi$ on corporeal strands and longitudes $\Ba$.}

\subsubsection{Relations}

\notation{$\K$}{A commutative ring which we use as the base ring for $\Rring$, and for the variety $\fM_{\K}$.}
\notation{\ensuremath{\Rring}}{The cylindrical flavored KLRW algebra.}
\begin{definition}\label{def:cKLRW}  The {\bf cylindrical flavored KLRW algebra} $ {\Rring}$ attached to the data $\Gamma,\Bv,\Bw, \beta_*$  is the quotient of the formal span over a commutative ring $\K$ of cylindrical KLRW  diagrams for these data by the local relations below:
  \newseq\begin{equation*}\subeqn\label{c-first-QH}
    \begin{tikzpicture}[scale=.8,baseline]
      \draw[very thick](-4,0) +(-1,-1) -- +(1,1) node[below,at start]
      {$\stck{i}{a}$}; \draw[very thick](-4,0) +(1,-1) -- +(-1,1) node[below,at
      start] {$\stck{j}{b}$}; \fill (-4.5,.5) circle (3pt);
      \node at (-2.25,0){=}; \draw[very thick](-.5,0) +(-1,-1) -- +(1,1)
      node[below,at start] {$\stck{i}{a}$}; \draw[very thick](-.5,0) +(1,-1) --
      +(-1,1) node[below,at start] {$\stck{j}{b}$}; \fill (0,-.5) circle (3pt);
    \end{tikzpicture}
    \qquad \begin{tikzpicture}[scale=.8,baseline]
      \draw[very thick](-4,0) +(-1,-1) -- +(1,1) node[below,at start]
      {$\stck{i}{a}$}; \draw[very thick](-4,0) +(1,-1) -- +(-1,1) node[below,at
      start] {$\stck{j}{b}$}; \fill (-3.5,.5) circle (3pt);
      \node at (-2.25,0){=}; \draw[very thick](-.5,0) +(-1,-1) -- +(1,1)
      node[below,at start] {$\stck{i}{a}$}; \draw[very thick](-.5,0) +(1,-1) --
      +(-1,1) node[below,at start] {$\stck{j}{b}$}; \fill (-1,-.5) circle (3pt);  \node at (3.75,.5){$i\neq j$ or};
      \node at (3.754,-.5){$a\not\sim b$ };
    \end{tikzpicture}
  \end{equation*}
  \begin{equation*}\subeqn\label{c-third-QH}
    \begin{tikzpicture}[scale=.8,baseline]
      \draw[very thick,dashed](-4,0) +(-1,-1) -- +(1,1) node[below,at start]
      {$\stck{\vphantom{j}e}{c}$}; \draw[very thick](-4,0) +(1,-1) -- +(-1,1) node[below,at
      start] {$\stck{i}{a}$}; \fill (-4.5,.5) circle (3pt);
      \node at (-2,0){=}; \draw[very thick,dashed](0,0) +(-1,-1) -- +(1,1)
      node[below,at start] {$\stck{\vphantom{j}e}{c}$}; \draw[very thick](0,0) +(1,-1) --
      +(-1,1) node[below,at start] {$\stck{i}{a}$}; \fill (.5,-.5) circle (3pt);
    \end{tikzpicture}\qquad \qquad
    \begin{tikzpicture}[scale=.8,baseline]
      \draw[very thick](-4,0) +(-1,-1) -- +(1,1) node[below,at start]
      {$\stck{i}{a}$}; \draw[very thick,dashed](-4,0) +(1,-1) -- +(-1,1) node[below,at
      start] {$\stck{\vphantom{j}e}{c}$}; \fill (-3.5,.5) circle (3pt);
      \node at (-2,0){=}; \draw[very thick](0,0) +(-1,-1) -- +(1,1)
      node[below,at start] {$\stck{i}{a}$}; \draw[very thick,dashed](0,0) +(1,-1) --
      +(-1,1) node[below,at start] {$\stck{\vphantom{j}e}{c}$}; \fill (-.5,-.5) circle (3pt);
    \end{tikzpicture}
  \end{equation*}
  \begin{equation*}\subeqn\label{c-psi2}
    \begin{tikzpicture}[very thick,scale=.8,baseline]
      \draw (-2.8,0) +(0,-1) .. controls (-1.2,0) ..  +(0,1)
      node[below,at start]{$\stck{i}{a}$}; \draw (-1.2,0) +(0,-1) .. controls
      (-2.8,0) ..  +(0,1) node[below,at start]{$\stck{i}{a}$}; \node at (-.5,0)
      {=}; \node at (0.4,0) {$0$};
    \end{tikzpicture}\qquad     \begin{tikzpicture}[scale=.8,baseline,very
      thick]
      \draw[very thick] (-2.8,0) +(0,-1) .. controls (-1.2,0) ..  +(0,1)
      node[below,at start]{$\stck{i}{a}$}; \draw (-1.2,0) +(0,-1) .. controls
      (-2.8,0) ..  +(0,1) node[below,at start]{$\stck{j}{b}$}; 
     \node at (-.5,0){$=$};  \draw[very thick](1.5,0) +(-1,-1) -- +(-1,1)
      node[below,at start]
      {$\stck{i}{a}$}; \draw[very thick](1.5,0) +(0,-1) --
      +(0,1) node[below,at start] {$\stck{j}{b}$};   \node at (4,.5){$i\neq j$ or};
      \node at (4,-.5){$a\not\sim b$ };
    \end{tikzpicture}
      \end{equation*} \begin{equation*}\subeqn\label{c-nilHecke-1}
      \begin{tikzpicture}[scale=.8,baseline]
      \draw[very thick](-4,0) +(-1,-1) -- +(1,1) node[below,at
      start] {$\stck{i}{a}$}; \draw[very thick](-4,0) +(1,-1) -- +(-1,1) node[below,at start]
      {$\stck{i}{b}$}; \fill (-4.5,-.5) circle (3pt);
      \node at (-2.25,0){$-$}; \draw[very thick](-.5,0) +(-1,-1) -- +(1,1)
      node[below,at
      start] {$\stck{i}{a}$}; \draw[very thick](-.5,0) +(1,-1) --
      +(-1,1) node[below,at start]
      {$\stck{i}{b}$}; \fill (0,.5) circle (3pt);
      \node at (1.25,0){$=$};
    \end{tikzpicture}
    \begin{tikzpicture}[scale=.75,baseline]\draw[very thick](-4,0) +(-1,-1) -- +(1,1) node[below,at start]
      {$\stck{i}{a}$}; \draw[very thick](-4,0) +(1,-1) -- +(-1,1) node[below,at
      start] {$\stck{i}{b}$}; \fill (-4.5,.5) circle (3pt);
      \node at (-2.25,0){$-$}; \draw[very thick](-.5,0) +(-1,-1) -- +(1,1)
      node[below,at start]
      {$\stck{i}{a}$}; \draw[very thick](-.5,0) +(1,-1) --
      +(-1,1) node[below,at start] {$\stck{i}{b}$}; \fill (0,-.5) circle (3pt);
     \node at (1.25,0){$=$};  \draw[very thick](3,0) +(-1,-1) -- +(-1,1)
      node[below,at start]
      {$\stck{i}{a}$}; \draw[very thick](3,0) +(0,-1) --
      +(0,1) node[below,at start] {$\stck{i}{b}$};  
      \node at (4.5,0){$a\sim b$ };
    \end{tikzpicture}
  \end{equation*}
\begin{equation*}\subeqn\label{w-cost-1}
  \begin{tikzpicture}[very thick,baseline,scale=.9]
    \draw (-2.8,0)  +(0,-1) .. controls (-1.2,0) ..  +(0,1) node[below,at start]{$\stck{i}{b}$};
       \draw[wei] (-2,0)  +(0,-1)--node[below,at start]{$\stck{i}{a}$}  +(0,1);
  \end{tikzpicture}
= 
  \begin{tikzpicture}[very thick,baseline,scale=.9]
 \draw[wei] (2.3,0)  +(0,-1) -- node[below,at start]{$\stck{i}{b}$} +(0,1);
       \draw (1.5,0)  +(0,-1) -- +(0,1) node[below,at start]{$\stck{i}{a}$};
       \fill (1.5,0) circle (3pt);      \node at (3.5,0){$a\sim b$ };
\end{tikzpicture}\qquad \begin{tikzpicture}[very thick,baseline,scale=.9]
    \draw (-2.8,0)  +(0,-1) .. controls (-1.2,0) ..  +(0,1) node[below,at start]{$\stck{j}{b}$};
       \draw[wei] (-2,0)  +(0,-1)--node[below,at start]{$\stck{i}{a}$}  +(0,1);
  \end{tikzpicture}
= 
  \begin{tikzpicture}[very thick,baseline,scale=.9]
 \draw[wei] (2.3,0)  +(0,-1) -- node[below,at start]{$\stck{i}{a}$} +(0,1);
       \draw (1.5,0)  +(0,-1) -- +(0,1) node[below,at start]{$\stck{j}{b}$};   \node at (3.5,.5){$i\neq j$ or};
      \node at (3.5,-.5){$a\not\sim b$ };
\end{tikzpicture}
\end{equation*}\begin{equation*}
    \subeqn\label{w-cost-2}
  \begin{tikzpicture}[very thick,baseline,scale=.9]
          \draw[wei] (-2,0)  +(0,-1)-- node[below,at start]{$\stck{i}{a}$} +(0,1);
  \draw (-1.2,0)  +(0,-1) .. controls (-2.8,0) ..  +(0,1)
  node[below,at start]{$\stck{i}{b}$};  
  \end{tikzpicture}
=
  \begin{tikzpicture}[very thick,baseline,scale=.9]
    \draw (2.5,0)  +(0,-1) -- +(0,1) node[below,at start]{$\stck{i}{a}$};
       \draw[wei] (1.7,0)  +(0,-1) -- node[below,at start]{$\stck{i}{b}$} +(0,1) ;
       \fill (2.5,0) circle (3pt); \node at (4,0){$a\sim b$ };
 \end{tikzpicture}\qquad  \begin{tikzpicture}[very thick,baseline,scale=.9]
          \draw[wei] (-2,0)  +(0,-1)-- node[below,at start]{$\stck{i}{a}$} +(0,1);
  \draw (-1.2,0)  +(0,-1) .. controls (-2.8,0) ..  +(0,1)
  node[below,at start]{$\stck{j}{b}$};\end{tikzpicture}
=
  \begin{tikzpicture}[very thick,baseline,scale=.9]
    \draw (2.5,0)  +(0,-1) -- +(0,1) node[below,at start]{$\stck{j}{b}$};
       \draw[wei] (1.7,0)  +(0,-1) -- node[below,at start]{$\stck{i}{a}$} +(0,1) ;   \node at (3.5,.5){$i\neq j$ or};
      \node at (3.5,-.5){$a\not\sim b$ };

       \end{tikzpicture}
     \end{equation*}
     Given an edge $e\colon j\to i$, we'll use the convention $a'=a-\beta_e, b'=b-\beta_e,c'=c-\beta_e$, and we have that:
   \begin{equation*}\subeqn\label{w-black-bigon1}
      \begin{tikzpicture}[very thick,scale=.65,baseline]
      \draw(-2.8,0) +(0,-1) .. controls (-1.2,0) ..  +(0,1)
      node[below,at start]{$\stck{i}{a'}$}; 
\end{tikzpicture}\quad   \begin{tikzpicture}[very thick,scale=.65,baseline]
      \draw[dashed] (-2.8,0) +(0,-1) .. controls (-1.2,0) ..  +(0,1)
      node[below,at start]{$\stck{\vphantom{j}e}{a}$}; \draw (-1.2,0) +(0,-1) .. controls
(-2.8,0) ..  +(0,1) node[below,at start]{$\stck{k}{b}$};
\end{tikzpicture}
=
\begin{cases}
  \begin{tikzpicture}[very thick,scale=.65,baseline]
      \draw(-2.8,0) +(0,-1) -- +(0,1)
      node[below,at start]{$\stck{i}{a'}$}; 
      \draw[dashed] (-.8,0) +(0,-1)-- +(0,1)
      node[below,at start]{$\stck{\vphantom{j}e}{a}$}; \draw (.2,0) +(0,-1) --+(0,1) node[below,at start]{$\stck{k}{b}$};
    \end{tikzpicture} & j\neq k\text{ or }a\not\sim b\\
\begin{tikzpicture}[very thick,scale=.65,baseline]
      \draw(-2.8,0) +(0,-1) -- +(0,1)
      node[below,at start]{$\stck{i}{a'}$}; 
      \draw[dashed] (-.8,0) +(0,-1)-- +(0,1)
      node[below,at start]{$\stck{\vphantom{j}e}{a}$}; \draw (.2,0) +(0,-1) --node[midway,fill=black, inner sep=2pt, circle]{}+(0,1) node[below,at start]{$\stck{k}{b}$};
\end{tikzpicture}-\begin{tikzpicture}[very thick,scale=.65,baseline]
      \draw(-2.8,0) +(0,-1) -- node[midway,fill=black, inner sep=2pt, circle]{} +(0,1)
      node[below,at start]{$\stck{i}{a'}$}; 
      \draw[dashed] (-.8,0) +(0,-1)-- +(0,1)
      node[below,at start]{$\stck{\vphantom{j}e}{a}$}; \draw (.2,0) +(0,-1) --+(0,1) node[below,at start]{$\stck{k}{b}$};
\end{tikzpicture}    & j=k\text{ and }a\sim b
\end{cases}
\end{equation*}
   \begin{equation*}\subeqn\label{w-black-bigon2}
      \begin{tikzpicture}[very thick,scale=.65,baseline]
      \draw (-1.2,0) +(0,-1) .. controls
(-2.8,0) ..  +(0,1) node[below,at start]{$\stck{i}{a'}$};
\end{tikzpicture}\quad   \begin{tikzpicture}[very thick,scale=.65,baseline]
      \draw[dashed] (-1.2,0) +(0,-1) .. controls
(-2.8,0) ..  +(0,1) 
      node[below,at start]{$\stck{\vphantom{j}e}{a}$}; \draw (-2.8,0) +(0,-1) .. controls (-1.2,0) ..  +(0,1) node[below,at start]{$\stck{k}{b}$};
\end{tikzpicture}
=
\begin{cases}
  \begin{tikzpicture}[very thick,scale=.65,baseline,xscale=.9]
    \draw(-1.8,0) +(0,-1) -- +(0,1) node[below,at
    start]{$\stck{i}{a'}$}; \draw[dashed](.2,0) +(0,-1)-- +(0,1) node[below,at
    start]{$\stck{\vphantom{j}e}{a}$}; \draw (-.8,0)+(0,-1) -- +(0,1) node[below,at start]{$\stck{k}{b}$};
  \end{tikzpicture}& j\neq k\text{ or }a\not\sim b\\
  \begin{tikzpicture}[very thick,scale=.65,baseline,xscale=.9]
    \draw(-1.8,0) +(0,-1) -- +(0,1) node[below,at
    start]{$\stck{i}{a'}$}; \draw[dashed](.2,0) +(0,-1)-- +(0,1) node[below,at
    start]{$\stck{\vphantom{j}e}{a}$}; \draw (-.8,0)+(0,-1) --node[midway,fill=black, inner
    sep=2pt, circle]{}+(0,1) node[below,at start]{$\stck{k}{b}$};
  \end{tikzpicture}-\begin{tikzpicture}[very
    thick,scale=.65,baseline,xscale=.9] \draw(-1.8,0) +(0,-1) --
    node[midway,fill=black, inner sep=2pt, circle]{} +(0,1)
    node[below,at
    start]{$\stck{i}{a'}$}; \draw[dashed] (.2,0) +(0,-1)-- +(0,1) node[below,at
    start]{$\stck{\vphantom{j}e}{a}$}; \draw (-.8,0) +(0,-1) --+(0,1) node[below,at
    start]{$\stck{k}{b}$};
  \end{tikzpicture}& j=k\text{ and }a\sim b\\
\end{cases}
\end{equation*}
In the next three relations, we assume that $ a\sim b\sim c$:  
\begin{equation*}\subeqn
    \begin{tikzpicture}[very thick,baseline]\label{red-triple-correction}
      \draw (-3,0)  +(1,-1) -- +(-1,1) node[at start,below]{$\stck{i}{c}$};
      \draw (-3,0) +(-1,-1) -- +(1,1)node [at start,below]{$\stck{i}{a}$};
      \draw[wei] (-3,0)  +(0,-1) .. controls (-4,0) .. node[below, at start]{$\stck{i}{b}$}  +(0,1);
      \node at (-1,0) {=};
      \draw (1,0)  +(1,-1) -- +(-1,1) node[at start,below]{$\stck{i}{c}$};
      \draw (1,0) +(-1,-1) -- +(1,1) node [at start,below]{$\stck{i}{a}$};
      \draw[wei] (1,0) +(0,-1) .. controls (2,0) ..  node[below, at start]{$\stck{i}{b}$} +(0,1);   
\node at (2.6,0) {$+ $};
      \draw (4.5,0)  +(1,-1) -- +(1,1) node[at start,below]{$\stck{i}{c}$};
      \draw (4.5,0) +(-1,-1) -- +(-1,1) node [at start,below]{$\stck{i}{a}$};
      \draw[wei] (4.5,0) +(0,-1) -- node[below, at start]{$\stck{i}{b}$} +(0,1);
 \end{tikzpicture} 
  \end{equation*}
\begin{equation*}\subeqn\label{w-triple-point}
    \begin{tikzpicture}[very thick,xscale=1.7,baseline]
      \draw[dashed] (-2.5,0) +(.35,-1) -- +(-.35,1) node[below,at start]{$\stck{\vphantom{j}e}{c}$};
 \draw[dashed]      (-2.5,0) +(-.35,-1) -- +(.35,1) node[below,at start]{$\stck{\vphantom{j}e}{a}$}; 
    \draw (-1.5,0) +(.35,-1) -- +(-.35,1) node[below,at start]{$\stck{i}{c'}$}; \draw
      (-1.5,0) +(-.35,-1) -- +(.35,1) node[below,at start]{$\stck{i}{a'}$}; 
 \draw (-2.5,0) +(0,-1) .. controls (-3,0) ..  +(0,1) node[below,at
      start]{$\stck{j}{b}$};\node at (-.75,0) {=};  \draw[dashed] (0,0) +(.35,-1) -- +(-.35,1) node[below,at start]{$\stck{\vphantom{j}e}{c}$}; ;
 \draw[dashed]      (0,0) +(-.35,-1) -- +(.35,1) node[below,at start]{$\stck{\vphantom{j}e}{a}$}; 
    \draw (1,0) +(.35,-1) -- +(-.35,1) node[below,at start]{$\stck{i}{c'}$}; \draw
      (1,0) +(-.35,-1) -- +(.35,1) node[below,at start]{$\stck{i}{a'}$}; 
 \draw (0,0) +(0,-1) .. controls (.5,0) ..  +(0,1) node[below,at
      start]{$\stck{j}{b}$};
\node at (2,0) {$+$};
     \draw (4,0)
      +(.35,-1) -- +(.35,1) node[below,at start]{$\stck{i}{c'}$}; \draw (4,0)
      +(-.35,-1) -- +(-.35,1) node[below,at start]{$\stck{i}{a'}$}; 
 \draw[dashed] (3,0)
      +(.35,-1) -- +(.35,1) node[below,at start]{$\stck{\vphantom{j}e}{c}$}; \draw[dashed] (3,0)
      +(-.35,-1) -- +(-.35,1) node[below,at start]{$\stck{\vphantom{j}e}{a}$}; 
\draw (3,0)
      +(0,-1) -- +(0,1) node[below,at start]{$\stck{j}{b}$};
\end{tikzpicture}
  \end{equation*}
\begin{equation*}\subeqn\label{w-triple-point2}
    \begin{tikzpicture}[very thick,xscale=1.6,yscale=.8,baseline]
\draw[dashed] (-2.5,0) +(0,-1) .. controls (-3,0) ..  +(0,1) node
[below, at start]{$\stck{\vphantom{j}e}{b}$};  
  \draw (-2.5,0) +(.35,-1) -- +(-.35,1) node[below,at start]{$\stck{j}{c}$}; \draw
      (-2.5,0) +(-.35,-1) -- +(.35,1) node[below,at start]{$\stck{j}{a}$}; 
 \draw (-1.5,0) +(0,-1) .. controls (-2,0) ..  +(0,1) node[below,at
      start]{$\stck{i}{b'}$};\node at (-.75,0) {=};  
    \draw (0,0) +(.35,-1) -- +(-.35,1) node[below,at start]{$\stck{j}{c}$}; \draw
      (0,0) +(-.35,-1) -- +(.35,1) node[below,at start]{$\stck{j}{a}$}; 
 \draw[dashed] (0,0) +(0,-1) .. controls (.5,0) ..  +(0,1) node[below,at start]{$\stck{\vphantom{j}e}{b}$};
 \draw (1.5,0) +(0,-1) .. controls (2,0) ..  +(0,1) node[below,at
      start]{$\stck{i}{b'}$};
\node at (2.25,0)
      {$-$};   
     \draw (3,0)
      +(.35,-1) -- +(.35,1) node[below,at start]{$\stck{j}{c}$}; \draw (3,0)
      +(-.35,-1) -- +(-.35,1) node[below,at start]{$\stck{j}{a}$}; 
\draw[dashed] (3,0)
      +(0,-1) -- +(0,1) node[below,at start]{$\stck{\vphantom{j}e}{b}$};\draw (4.5,0)
      +(0,-1) -- +(0,1) node[below,at start]{$\stck{i}{b'}$};
\end{tikzpicture}.
  \end{equation*}
For all other triple points, we set the two sides of the isotopy
through it equal.
\end{definition}

\begin{remark}
Note that in many earlier works, such as \cite{Webmerged, WebRou}, we
had an additional non-local relation setting a diagram to 0 if it had
a black strand at far left of the diagram.  We just wish to clarify that we are not imposing that relation here.
\end{remark}

\subsubsection{Grading}

As with usual (planar) KLRW algebras, this algebra is graded by a notion of degree of KLRW diagrams:
\begin{definition}
	The degree of a diagram is a sum of local contributions from dots and crossings given by:
	\begin{equation}
   \deg    \begin{tikzpicture}[scale=.6,baseline]
      \draw[very thick](-4,0) +(-1,-1) -- +(1,1) node[below,at start]
      {$\stck{i}{a}$}; \draw[very thick](-4,0) +(1,-1) -- +(-1,1) node[below,at
      start] {$\stck{j}{b}$}; 
    \end{tikzpicture}
  =
  \begin{cases}
    -2 & i=j \text{ and }a\sim b \\
   0 & i \neq j \text{ or }a\not \sim b\\
  \end{cases} \qquad 
 \deg  \begin{tikzpicture}[scale=.6,baseline]
      \draw[very thick](-4,0) +(0,-1) -- +(0,1) node[below,at start]
      {$\stck{i}{a}$};  \fill (-4,0) circle (3pt);
     \end{tikzpicture} =2 
	\end{equation}
	\begin{equation}
		\deg    \begin{tikzpicture}[scale=.6,baseline]
      \draw[very thick](-4,0) +(-1,-1) -- +(1,1) node[below,at start]
      {$\stck{i}{a}$}; \draw[very thick, dashed ](-4,0) +(1,-1) -- +(-1,1) node[below,at
      start] {$\stck{\vphantom{j}e}{b}$}; 
    \end{tikzpicture}=  \begin{cases}
    1 & i=t(e) \text{ and }a\sim b \\
   0 & i \neq t(e) \text{ or }a\not \sim b\\
  \end{cases}	\end{equation}
	\begin{equation}	\deg    \begin{tikzpicture}[scale=.6,baseline]
      \draw[very thick](-4,0) +(-1,-1) -- +(1,1) node[below,at start]
      {$\stck{i}{a}$}; \draw[wei ](-4,0) +(1,-1) -- +(-1,1) node[below,at
      start] {$\stck{\vphantom{i}j}{b}$}; 
    \end{tikzpicture}=\begin{cases}
    1 & i=j \text{ and }a\sim b \\
   0 & i \neq j \text{ or }a\not \sim b\\
  \end{cases}
	\end{equation}
\end{definition} 
This induces a grading on $\Rring$, 
since the relations above are homogeneous with respect to the grading.

\subsubsection{Comparison with Coulomb branches}

Given $\boldsymbol{\alpha}\in \mathbb{L}$, we have an associated idempotent $e(\boldsymbol{\alpha})$.  
By construction, the strands with label $i$ and longitude $a$ are all consecutive to each other in this idempotent.   Let
$\mu_{i,a}$ be the number of strands with label $i$ and longitude $a$.
 Acting with crossings and dots on the strands with label $i$ and longitude $a$ gives a homomorphism
of 
the nilHecke algebra of rank $\mu_{i,a}$ to
$e(\boldsymbol{\alpha})\Rring e(\boldsymbol{\alpha})$. This nilHecke algebra contains
a primitive idempotent projecting to the invariants of the symmetric group $\Sigma_{\mu_{i,a}}$  in
the usual polynomial representation.
\begin{definition}\label{def:i-a2}
   We let $e'(\boldsymbol{\alpha})\in
 {\Rring}$ be the product of these nilHecke idempotents over all pairs of a 
vertex $i$ and value $a$.  
\end{definition}

Note that the idempotent is independent of the choice of order of
$\vertex$ up to equivalence because we can reorder strands with
different labels by (\ref{c-third-QH}).  

\begin{definition}\label{def:ea}
  Let $e(a)=e'((a,\dots, a))$ be the idempotent where we set all $\alpha_{i,k}$'s equal to a single
value $a\neq \beta_{i,j}$. In this
case, the ordering of $\Bi$ is irrelevant; when constructing the flavored sequence from Definition \ref{def:prefered} will put the indices in our fixed order, and as discussed above,
the order won't change the isomorphism type of the corresponding
idempotent. 
\end{definition}
\notation{$e(a)$}{The idempotent defined by setting all longitudes equal to $a$ and projecting by a primitive idempotent in the nilHecke algebra (Definition \ref{def:ea}).  }

One primary reason for our interest in this idempotent is the following
result:
\begin{theorem}\label{coulomb-idempotent}
  The algebra $A_0=e(a) \Rring e(a)$ is isomorphic to the (undeformed) Cou\-lomb branch algebra $\K[\Coulomb]$ of the quiver gauge theory associated to $\Gamma$ with the dimension vectors $\Bv,\Bw$.
\end{theorem}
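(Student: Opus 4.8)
The plan is to read the Coulomb branch algebra off the diagram calculus: recall the combinatorial presentation of $\K[\Coulomb]$ from \cite{WebSD} and Part I \cite{WebcohI}, specialize it to the quiver case so that it becomes a presentation of $A_0=e(a)\Rring e(a)$, and then promote the resulting surjection to an isomorphism by a rank count. Since the statement carries no hypothesis on $\K$ — unlike Theorems \ref{ith:NCCR} and \ref{thm:D-equivalence} — the whole argument must be algebraic: generators, relations, and an explicit spanning set, rather than the geometry of the affine Grassmannian.

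First I would recall that $\K[\Coulomb]$ is presented as in \cite[(2.5a--c)]{WebSD} by homotopy classes of decorated paths in $T_\R/\weylW$ modulo local relations. Specializing to $\gaugeG=\prod GL(\C^{v_i})$, the space $T_\R/\weylW$ is the configuration space of $\sum_i v_i$ points on $\R/\Z$ with $v_i$ of label $i$, so a decorated path becomes a cylindrical diagram whose corporeal strands begin and end at a single common longitude $a$ — that is, an element of $A_0$ — carrying dots that record the equivariant decoration, with ghost and red strands installed to record the matter $\matterV$ and the framing $\Bw$ exactly as in the Braverman--Finkelberg--Nakajima construction. After this translation the relations \cite[(2.5a--c)]{WebSD} become the relations \eqref{c-first-QH}--\eqref{w-triple-point2}, so one obtains a graded algebra homomorphism $\Phi\colon\K[\Coulomb]\to A_0$ sending each decorated path to the corresponding diagram; in particular the Cartan part $\K[\ft]^{\weylW}$ (the symmetric functions in the equivariant parameters of the factors $GL(\C^{v_i})$) goes to dotted vertical diagrams, pre- and post-composed with the nilHecke idempotents inside $e(a)$, and the class of a loop winding by a cocharacter $\lambda$ of $\gaugeG$ goes to the diagram in which the corporeal strands wind around the cylinder by $\lambda$, crossing every ghost and red strand they pass. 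Surjectivity of $\Phi$ is a straightening argument: in any diagram of $A_0$ one isotopes the corporeal strands so that all of their winding is pushed into a thin collar near $y=0$, below which every corporeal strand sits at longitude $a$; the collar is a product of monopole diagrams, and what remains is reduced to dotted diagrams by \eqref{c-first-QH}--\eqref{c-nilHecke-1}, so the whole diagram is a $\K$-combination of (monopole)$\,\times\,$(polynomial) terms.

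The step I expect to be the genuine obstacle is injectivity — equivalently, that the cylindrical KLRW relations impose nothing on $A_0$ beyond \cite[(2.5a--c)]{WebSD}. The straightening argument already produces a spanning set of $A_0$ indexed by pairs of a $\weylW$-orbit of cocharacters of $\gaugeG$ and an element of a fixed monomial basis of $\K[\ft]^{\weylW}$, which is exactly the index set of the standard basis of dressed monopole operators of $\K[\Coulomb]$; it then suffices to show $\Phi$ carries the spanning set to this basis degree by degree, which forces $\Phi$ to be a bijection of free $\K$-modules of the same graded rank. The delicate point inside this is that when two winding diagrams are stacked, the crossings they make with the ghost and red strands are resolved using the bigon and triple-point relations \eqref{w-black-bigon1}--\eqref{w-triple-point2}, and one must check that, after summing over the intermediate configurations, the resulting product of local sign and Euler-class corrections reassembles into precisely the product-of-Euler-classes factor governing the multiplication of monopole operators in the BFN construction for the representation $\matterV$; this is also where the flavoring $\beta$ has to be shown to disappear, since $a\neq\beta_{i,j}$ pins down the combinatorial type of every diagram in $A_0$. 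A cleaner alternative, worth carrying out in parallel, is to restrict the standard faithful ``polynomial'' representation of $\Rring$ to $e(a)$ and identify it with the localization/abelianization description of $\K[\Coulomb]$, so that $\Phi$ is sandwiched between faithful realizations of the two algebras and is therefore injective. The overall strategy parallels the non-cylindrical case of \cite{WebwKLR}; the new content is carrying the winding of strands around the cylinder through each of these steps.
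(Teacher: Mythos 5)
Your proposal is correct and is essentially the paper's argument, unwound. The paper's one-line proof of this theorem ("specialize Lemma \ref{lem:quantum-coulomb} at $h=0$") is packing together exactly the chain you describe: the presentation of the (extended) Coulomb branch by decorated paths from \cite{WebSD} is translated into unrolled and then cylindrical diagrams (Lemma \ref{lem:unrolled-B} and Proposition \ref{prop:KLR-B}), surjectivity comes from the fact that the generating morphisms of $\scrB^+$ are spanned by paths, injectivity comes from matching the straightened spanning set against the free-module basis of \cite[Cor. 3.13]{WebSD} — your "rank count" — which is in turn backstopped by the faithful polynomial representation (Theorem \ref{I-thm:BFN-pres}, Corollary \ref{cor:faithful-action}), your "cleaner alternative"; and the nilHecke idempotent $e(a)$ is precisely what cuts down from generic objects to the $G[[t]]$-fixed one. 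The only organizational difference is that the paper builds the $h$-generic quantum version first and then specializes, because the quantum statement (Lemma \ref{lem:quantum-coulomb}) is needed independently elsewhere; your version runs directly at $h=0$, which is fine for this theorem on its own.
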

This is proven on page \pageref{proof-coulomb-idempotent}.
Note that this theorem is independent of the choice of $a$ and of $\beta_*$; we obtain the same algebra $A_0$ here regardless of
these parameters.  This is a generalization of
\cite[Cor. 4.13]{weekesGeneratorsCoulomb2019}.   
More generally, for each $e(\Bi,\Ba)$, we have a corresponding vortex
line operator, that is, an object in the category $\scrB^+$ (Definition \ref{I-def:extended-BFN}), and
$\Rring$ is the sum of the morphism spaces between these
objects; see Physics Motivation \ref{I-physics:line}.

\subsection{Change of flavor}
\label{sec:change-flavor-1}

Now we consider the relationship between different choices of flavoring sets $\Ab$ and flavor $\beta$.  
As in \cite{kamnitzerLieAlgebra2024}, we consider two groups $\Ab,\Ab'$ satisfying the conditions of Section \ref{sec:cyl-flav-seq} as above,  and a choice of subset $\corre\subset \Ab \times \Ab'$, which we assume has the property that 
\begin{itemize}
    \item[$(\ddag)$] For any  two pairs $(a,b), (a',b')\in \corre $,  the statements $a\sim a'$ and $b\sim b'$ are both true or both false.  That is, $\corre $ induces a bijection between subsets of the equivalence classes in $\Ab,\Ab'.$  
\end{itemize}  
The simplest case is when $\Ab,\Ab'$ both have a single equivalence class, in which case $\corre =\Ab\times \Ab'$ is the only possible non-empty choice.   One case of special interest to us will be when $\Ab=\R/\Z$, and $\Ab'=\mathbb{F}_p$, with $\corre =\R/\Z\times \mathbb{F}_p$.  We further choose flavors 
\[\beta\colon \edge(\Gamma^{\Bw})\to \Ab\qquad \beta'\colon \edge(\Gamma^{\Bw})\to \Ab'\] such that $(\beta_e,\beta_e')\in \corre$ for all $e\in \edge(\Gamma^{\Bw})$.

We'll want to consider below the case where we have three sets $\Ab,\Ab',\Ab''$ and correspondences $\corre\subset \Ab\times \Ab'$ and $\corre'\subset \Ab' \times \Ab''$ that both satisfy condition $(\ddag)$ above.  Clearly: 
\begin{lemma}
In this case, the composition $\corre\circ \corre'$ also satisfies the condition $(\ddag)$.
\end{lemma}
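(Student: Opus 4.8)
The statement is essentially a transitivity assertion, so the plan is to unwind the definition of composition of correspondences and chain together the two instances of $(\ddag)$. First I would fix the convention: the composition $\corre\circ\corre'\subset \Ab\times\Ab''$ consists of those pairs $(a,c)$ for which there exists $b\in\Ab'$ with $(a,b)\in\corre$ and $(b,c)\in\corre'$. (If the paper's convention composes in the other order the argument is identical after relabeling.)

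Next I would take two arbitrary elements $(a_1,c_1),(a_2,c_2)\in\corre\circ\corre'$ and choose witnesses $b_1,b_2\in\Ab'$, so that $(a_1,b_1),(a_2,b_2)\in\corre$ and $(b_1,c_1),(b_2,c_2)\in\corre'$. Applying condition $(\ddag)$ to the pair of elements $(a_1,b_1),(a_2,b_2)$ of $\corre$ gives the equivalence ``$a_1\sim a_2$ iff $b_1\sim b_2$''; applying $(\ddag)$ to the pair $(b_1,c_1),(b_2,c_2)$ of $\corre'$ gives ``$b_1\sim b_2$ iff $c_1\sim c_2$''. Composing these two biconditionals yields ``$a_1\sim a_2$ iff $c_1\sim c_2$'', which is precisely condition $(\ddag)$ for $\corre\circ\corre'$. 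Since $(a_1,c_1),(a_2,c_2)$ were arbitrary, this completes the argument.

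The only point that requires any care is that the intermediate elements $b_1,b_2$ used to certify membership in the composition need not be unique; but since $(\ddag)$ for $\corre$ already forces $b_1\sim b_2$ to be determined by $a_1\sim a_2$ (and similarly $b_1\sim b_2$ is determined by $c_1\sim c_2$ via $\corre'$), the conclusion is independent of the choice of witnesses, so no ambiguity arises. There is no real obstacle here: the lemma is a formal consequence of $(\ddag)$ being an ``iff'' condition, and the proof is two lines once the definitions are in place. I would simply record it as such, perhaps noting explicitly that the same reasoning shows $(\ddag)$ is preserved under arbitrary finite compositions.
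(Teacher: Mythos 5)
Your proof is correct, and it is exactly the argument the paper is alluding to when it prefaces the lemma with ``Clearly'': unwind the definition of composition, chain the two biconditionals furnished by $(\ddag)$ for $\corre$ and $\corre'$ through the intermediate equivalence $b_1\sim b_2$, and conclude. Your remark that the choice of witnesses $b_1,b_2$ is immaterial is the right thing to flag, and the observation that $(\ddag)$ is stable under arbitrary finite compositions is a harmless and true strengthening.
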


We want to consider a bimodule consisting of KLRW diagrams with the top labeled by flavored sequences in $\Ab$ and the bottom labeled by flavored sequences in $\Ab'$.  We also want to modify condition (3) in the definition of KLRW diagrams.  Given $\mathbf{\inter}\colon \edge(\Gamma^{\Bw}) \to \Z$, we define a $\mathbf{\inter}$-twisted KLRW diagram to be one where the top is labeled by a flavored sequence from $(\Ab,\beta)$, the bottom by a flavored sequence from $(\Ab',\beta')$ and condition (3) is changed to:
\begin{itemize}
    \item [(3')] If $(k,e)\in \Sg$, then the corresponding ghostly strand and the corporeal strand for $k$ have $\inter_{e}$ signed intersections.  If $(\star,e)\in \Sr$, then the corresponding red strand and vertical line $x=0$ have $\inter_e$ signed intersections.  Note that we will have this behavior if we draw our diagrams with a difference in $\R/\Z$ between the corporeal strand and its ghost given by $\inter_{e}t+r$ for some $r\in \R/\Z$ and the put red strands at $x$-value $\inter_{i,k}t+r$.
\end{itemize}

Below, we show an example of a twisted cylindrical KLRW diagram, once in our usual convention where the cylinder is cut open, and the same
example in perspective on a cylinder:
\begin{equation*}
  \begin{tikzpicture}[scale=2]
    \draw[very thick] (-.58,-1)-- (-1.18,1);
    \draw[very thick] (1.40,-1)-- (0.82,1);
    \draw[very thick] (-.75,-1)-- (.58,1);
    \draw[very thick] (-1.08,-1)-- (-.48,1); 
    \draw[very thick] (.92,-1)-- (1.52,1);
    \draw[wei] (-.4,-1)--(1.6,1);
    \draw[wei] (-2.4,-1)--(-.4,1);
    \draw[very thick,dashed] (.53,-1)-- (-1.97,1);
    \draw[very thick,dashed] (2.53,-1)-- (.03,1);
    \draw[very thick,dashed] (.36,-1)-- (-.31,1);
    \draw[very thick,dashed] (.03,-1)-- (-1.47,1); 
    \draw[very thick,dashed] (2.03,-1)-- (.53,1);
    \fill[white](-1,-1) -- (-1,1)-- (-2.5,1)-- (-2.5,-1)--cycle;
    \fill[white](1,-1) -- (1,1)-- (2.54,1)-- (2.54,-1)--cycle;
    \draw[fringe] (-1,-1) -- (-1,1);
    \draw[fringe] (1,1) -- (1,-1);
  \end{tikzpicture}
  \begin{tikzpicture}[MyPersp,font=\large]
\def\h{1.5}

\fill[blue,fill opacity=.05]  
		 (1,0,{\h})--(1,0,0)
		\foreach \t in {0,-2,-4,...,-180}
			{--({cos(\t)},{sin(\t)},0)}
-- (-1,0,0)--(-1,0,{\h})
		\foreach \t in {180,178,...,0}
			{--({cos(\t)},{sin(\t)},{\h})}--cycle;
\draw[gray,   very thick] (1,0,0)
		\foreach \t in {0,2,4,...,180}
			{--({cos(\t)},{sin(\t)},0)};
\draw[dashed, very thick] ({cos(50)},{sin(50)},0) 
		\foreach \t in {0,2,...,45}
		{--({cos((3*\t+50))} ,{sin((3*\t+50))},{.01*\t})};
\draw[dashed, very thick] ({cos((3*101+50))} ,{sin((3*101+50))},{.01*101}) 
				\foreach \t in {101,103,...,150}
		{--({cos((3*\t+50))} ,{sin((3*\t+50))},{.01*\t})};
\draw[dashed, very thick] ({cos(140)},{sin(140)},0) 		\foreach \t in {0,2,...,16}
		{--({cos((1.8*\t+140))} ,{sin((1.8*\t+140))},{.01*\t})};
\draw[dashed, very thick] ({cos((1.8*124+140))} ,{sin((1.8*124+140))},{.01*124}) 
		\foreach \t in {124,126,...,150}
		{--({cos((1.8*\t+140))} ,{sin((1.8*\t+140))},{.01*\t})};
\draw[dashed, very thick] ({cos(80)},{sin(80)},0) 
		\foreach \t in {0,2,...,120}
		{--({cos((.8*\t+80))} ,{sin((.8*\t+80))},{.01*\t})};
\draw[ very thick] ({cos((-1.6*60-80))} ,{sin((-1.6*60-80))},{.01*60}) 
		\foreach \t in {60,62,...,150}
		{--({cos((-1.6*\t-80))}
                  ,{sin((-1.6*\t-80))},{.01*\t})};
\draw[ wei] ({cos((-2.4*24-130))} ,{sin((-2.4*24-130))},{.01*24}) 
		\foreach \t in {26,28,...,106}
		{--({cos((-2.4*\t-130))} ,{sin((-2.4*\t-130))},{.01*\t})};
\fill[blue!20!white,fill opacity=.5]
		 (1,0,{\h})--(1,0,0)
		\foreach \t in {0,-2,-4,...,-180}
			{--({cos(\t)},{sin(\t)},0)}
-- (-1,0,0)--(-1,0,{\h})
		\foreach \t in {-180,-178,...,0}
			{--({cos(\t)},{sin(\t)},{\h})}--cycle;
\draw[dashed, very thick] ({cos((3*45+50))} ,{sin((3*45+50))},{.01*45}) 
		\foreach \t in {45,47,...,101}
		{--({cos((3*\t+50))} ,{sin((3*\t+50))},{.01*\t})};
\draw[ very thick] ({cos(-110)},{sin(-110)},0) 
		\foreach \t in {0,2,...,150}
		{--({cos((.6*\t-110))} ,{sin((.6*\t-110))},{.01*\t})};
\draw[dashed, very thick] ({cos((1.8*16+140))} ,{sin((1.8*16+140))},{.01*16}) 
		\foreach \t in {16,18,...,124}
		{--({cos((1.8*\t+140))} ,{sin((1.8*\t+140))},{.01*\t})};
\draw[ very thick] ({cos(-20)},{sin(-20)},0) 
		\foreach \t in {0,2,...,150}
		{--({cos((-.6*\t-20))} ,{sin((-.6*\t-20))},{.01*\t})};
\draw[fringe] ({cos(-35)},{sin(-35)},0)--({cos(-35)},{sin(-35)},\h);
\draw[dashed, very thick] ({cos((.8*120+80))},{sin((.8*120+80))},{.01*120}) 
		\foreach \t in {120,122,...,150}
		{--({cos((.8*\t+80))} ,{sin((.8*\t+80))},{.01*\t})};
\draw[ very thick] ({cos(-80)},{sin(-80)},0) 
		\foreach \t in {0,2,...,60}
		{--({cos((-1.6*\t-80))}
                  ,{sin((-1.6*\t-80))},{.01*\t})};
\draw[ wei] ({cos((-130))} ,{sin((-130))},0) 
		\foreach \t in {2,4,...,24}
		{--({cos((-2.4*\t-130))}
                  ,{sin((-2.4*\t-130))},{.01*\t})};
\draw[ wei] ({cos((-2.4*104-130))} ,{sin((-2.4*104-130))},{.01*104}) 
		\foreach \t in {106,108,...,150}
		{--({cos((-2.4*\t-130))} ,{sin((-2.4*\t-130))},{.01*\t})};
\draw[gray] (1,0,0)--(1,0,{\h});
\draw[gray] (-1,0,0)--(-1,0,{\h});
\draw[gray, very thick] (1,0,0) 
		\foreach \t in {0,-2,-4,...,-180}
			{--({cos(\t)},{sin(\t)},0)};
\draw[gray, very thick] (1,0,\h) 
		\foreach \t in {2,4,...,360} 
			{--({cos(\t)},{sin(\t)},{\h})}--cycle;
                      \end{tikzpicture}
                      \qquad
                     \qquad
\end{equation*}

\begin{definition}
\label{def:bimodule}
    Let $\rif^{\mathbf{\inter}}(\corre)$ be the set of $\mathbf{\inter}$-twisted flavored KLRW diagrams equipped with a $\Ab$-flavored sequence at the top and a $\Ab'$-flavored sequence at the bottom, such that the labels $t$ at the top and $b$ at the bottom of each strand satisfy $(t,b)\in \corre$ modulo the flavored KLRW relations (\ref{c-first-QH}--\ref{w-triple-point2}) in the case where the labels $T$ at the top of the strands involved and labels $B$ at the bottom of all strands involved satisfy $T\times B\subset \corre $, and the isotopy relations otherwise.  
\end{definition}

\notation{$\rif^{\mathbf{\inter}}(\corre)$}{The natural bimodule between cylindrical flavored KLR algebras determined by a correspondence $\corre$, and a vector $\mathbf{\inter}$ which controls the number of times a ghost intersects with its corporeal (Definition \ref{def:bimodule}).}

  We can compose twisted KLRW diagrams with matching top and bottom as usual:
  
  \begin{lemma}\label{lem:bimod-mult}
  For functions $\mathbf{\inter},\mathbf{\inter}'$ and correspondences $\corre\subset \longi\times \longi'$ and $\corre'\subset \longi' \times \longi''$ composition of diagrams induces  a map $\rif^{\mathbf{\inter}}(\corre)\otimes \rif^{\mathbf{\inter}'}(\corre')\to \rif^{\mathbf{\inter}+\mathbf{\inter}'}(\corre\circ \corre')$, which is associative.
  \end{lemma}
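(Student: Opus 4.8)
The plan is to define the pairing on representatives by vertical stacking, verify that the result is again a suitably twisted diagram for the composed correspondence, check that stacking descends to the quotients defining the bimodules, and then observe that associativity is formal. Concretely, given a $\mathbf{\inter}$-twisted diagram $D$ representing a class in $\rif^{\mathbf{\inter}}(\corre)$ --- so its top is a $\longi$-flavored sequence and its bottom a $\longi'$-flavored sequence --- and a $\mathbf{\inter}'$-twisted diagram $D'$ representing a class in $\rif^{\mathbf{\inter}'}(\corre')$, I would stack $D$ above $D'$ exactly when the bottom flavored sequence of $D$ coincides with the top flavored sequence of $D'$ (and set the product to be $0$ otherwise), then perturb slightly to restore genericity at the new interior slice. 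The resulting class $D*D'$ does not depend on the perturbation, by isotopy invariance of the relations, and extending $\K$-bilinearly over the diagram spanning set gives the candidate map.

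Next I would check that $D*D'$ represents an element of $\rif^{\mathbf{\inter}+\mathbf{\inter}'}(\corre\circ\corre')$. At the glued slice the strands of $D$ match bijectively with those of $D'$, respecting the partition into corporeal, ghostly, and red, so each strand of $D*D'$ is a concatenation of one strand of $D$ with one strand of $D'$. Since the number of signed intersections of two strands is additive under concatenation at a common endpoint --- immediate from the formula $\lfloor\tilde\Delta(1)\rfloor-\lfloor\tilde\Delta(0)\rfloor$, which telescopes across the slice --- and is an isotopy invariant, in $D*D'$ each ghost meets its corporeal in $\inter_e+\inter'_e$ signed points and each red strand meets the line $x=0$ in $\inter_e+\inter'_e$ signed points; this is condition (3') for $\mathbf{\inter}+\mathbf{\inter}'$. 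For the labels: if a strand of $D*D'$ has top label $t$, slice label $m$, and bottom label $b$, then $(t,m)\in\corre$ and $(m,b)\in\corre'$ because $D$ and $D'$ are diagrams for $\corre$ and $\corre'$, so $(t,b)\in\corre\circ\corre'$; and the lemma just proved ensures $\corre\circ\corre'$ still satisfies $(\ddag)$, so the target bimodule is defined.

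The main point --- and the step I expect to cost the most care --- is that stacking descends to the quotients by the relations (\ref{c-first-QH}--\ref{w-triple-point2}). Because these relations are local and the two halves of $D*D'$ occupy disjoint horizontal bands, it suffices to show that whenever such a relation is imposed (not merely up to isotopy) in $\rif^{\mathbf{\inter}}(\corre)$ for a configuration inside $D$, the same relation is imposed in $\rif^{\mathbf{\inter}+\mathbf{\inter}'}(\corre\circ\corre')$ for the induced configuration in $D*D'$, and symmetrically for configurations inside $D'$. So suppose a relation inside $D$ involves strands whose top labels form $T\subset\longi$ and whose labels at the glued slice form $M\subset\longi'$; being imposed in $\rif^{\mathbf{\inter}}(\corre)$ means $T\times M\subset\corre$. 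In $D*D'$ these strands gain bottom labels forming a set $B\subset\longi''$, and for each involved strand $s$ the pair $(m_s,b_s)$ of its slice and bottom labels lies in $\corre'$ since $D'$ is a $\corre'$-diagram. Given $t\in T$ and $b\in B$, pick an involved strand $s$ with bottom label $b$; then $(t,m_s)\in\corre$ by $T\times M\subset\corre$ and $(m_s,b)\in\corre'$, hence $(t,b)\in\corre\circ\corre'$. Thus $T\times B\subset\corre\circ\corre'$ and the relation is imposed, as needed. The case of a relation inside $D'$, with slice labels $M'\subset\longi'$ and bottom labels $B\subset\longi''$ satisfying $M'\times B\subset\corre'$, runs identically, now using that $D$ is a $\corre$-diagram to get, for each involved strand, that the pair of its top and slice labels lies in $\corre$.

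Finally, associativity is formal: $(D*D')*D''$ and $D*(D'*D'')$ are the same triple stack up to isotopy, the twists add associatively, $(\corre\circ\corre')\circ\corre''=\corre\circ(\corre'\circ\corre'')$, and compatibility with relations follows by applying the argument of the previous paragraph twice. Apart from that argument --- keeping track of the ``imposed versus isotopy only'' dichotomy for the relations and showing it is stable under stacking --- the only slightly delicate points are that the genericity-restoring perturbation at the glued slice changes nothing at the level of classes and that the strand bijection across the slice is as claimed; both are routine once set up carefully.
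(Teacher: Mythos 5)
Your proof is correct and takes essentially the same approach as the paper: both rely on the locality of the relations and the observation that $T\times M\subset\corre$ together with the $\corre'$-constraint on each strand of the second factor gives $T\times B\subset\corre\circ\corre'$, so an imposed KLRW relation stays imposed. The paper additionally spells out the case $T\times M\not\subset\corre$ (showing $T\times B\not\subset\corre\circ\corre'$ via $(\ddag)$), but as you implicitly note, that case is automatic since isotopy relations are always imposed in the target.
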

  \begin{proof}
  First, note that the composition of diagrams sends a pair of diagrams in $\rif^{\mathbf{\inter}}(\corre)$ and $ \rif^{\mathbf{\inter}'}(\corre')$ to one in $\rif^{\mathbf{\inter}+\mathbf{\inter}'}(\corre\circ \corre')$.  To show that this map is well-defined on the vector spaces, we need to check that any relation in $\rif^{\mathbf{\inter}}(\corre)$ or $\rif^{\mathbf{\inter}'}(\corre')$ is sent to a relation in $\rif^{\mathbf{\inter}+\mathbf{\inter}'}(\corre\circ \corre')$ under product with an arbitrary diagram.       This follows from the locality of the isotopy and flavored KLRW relations.  If we consider a relation in $\rif^{\mathbf{\inter}}(\corre)$ where all strands involved satisfy $T\times B\subset \corre$, then the composition with a diagram of $\rif^{\mathbf{\inter}'}(\corre')$ will change the labels at the bottom of these strands to a different set $B'$, but each element of $B'$ is related to an element of $B$ by $\corre'$, so $T\times B'\subset \corre\circ \corre'$.  
  
  Now assume that $T\times B\not\subset \corre$. This implies that at least two of the labels in $T$ are not equivalent, so the same is true of the labels in $B$ and $B'$, so $T\times B'\not\subset \corre\circ \corre'$.  
  
  Thus, in either case, the same local relation holds in $\rif^{\mathbf{\inter}+\mathbf{\inter}'}(\corre\circ \corre')$.  The same argument works for relations in $\rif^{\mathbf{\inter}'}(\corre')$ so this map is well-defined.  
  \end{proof}
  
   If $\Ab=\Ab'$, $\corre$ is the equivalence relation $\sim$ and $\mathbf{\inter}=\mathbf{0}$, then $\rif^{\mathbf{\inter}}(\corre)=\Rring(\Ab)$.  Thus,   Lemma \ref{lem:bimod-mult} defines a $\Rring(\Ab)\operatorname{-}\Rring(\Ab')$-bimodule structure on $\rif^{\mathbf{\inter}}(\corre)$ for any correspondence $\corre\subset \Ab\times \Ab'$ as above.

\subsubsection{Relation to resolved Coulomb branches}  
For any fixed $\Ab=\Ab'$ and  $\mathbf{\inter}$, with $\corre$ the equivalence relation $\sim$,  we have multiplications
  \begin{equation}
\rif^{k\mathbf{\inter}}(\sim)\otimes
  \rif^{m\mathbf{\inter}}(\sim)\to
  \rif^{(k+m)\mathbf{\inter}}(\sim).\label{eq:k-m-mult}
\end{equation} We can combine these into a $\Z_{\geq 0}$-graded ring $\ProjC^{\mathbf{\inter}}=\bigoplus_{k\geq
    0}\rif^{k\mathbf{\inter}}(\sim)$.  
  \begin{theorem}\label{thm:partial-resolution}
    The ring $\mathbf{A}^{\mathbf{\inter}}=e(a) \ProjC^{\mathbf{\inter}} e(a)$ is commutative, and $\Proj(\mathbf{A}^{\mathbf{\inter}})$ is the partial resolution $ {\tilde{\fM}^{\mathbf{\inter}}}$ defined in \cite{BFNline}.
  \end{theorem}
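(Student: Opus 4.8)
The plan is to mirror the proof of Theorem~\ref{coulomb-idempotent}, upgrading it to keep track of the grading by $\mathbf{\inter}$-winding. First I would record that the degree-zero piece is $\mathbf{A}^{\mathbf{\inter}}_0 = e(a)\rif^{\mathbf{0}}(\sim)e(a) = e(a)\Rring e(a) = A_0 \cong \K[\Coulomb]$ by Theorem~\ref{coulomb-idempotent} (using that $\rif^{\mathbf{0}}(\sim)=\Rring$), so $\mathbf{A}^{\mathbf{\inter}}=e(a)\ProjC^{\mathbf{\inter}}e(a)$ is automatically a $\Z_{\geq 0}$-graded $\K[\Coulomb]$-algebra. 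The content of the theorem is therefore (i) commutativity of $\mathbf{A}^{\mathbf{\inter}}$ and (ii) the identification of this graded ring, with its multiplication \eqref{eq:k-m-mult}, with the graded ring whose $\Proj$ is declared to be $\tilde{\fM}^{\mathbf{\inter}}$ in \cite{BFNline}.

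For both points I would pass through the geometric side. Recall from \cite{WebcohI} that the idempotent truncations $e(\Bi,\Ba)\Rring e(\Bj,\Bb)$ compute morphism spaces between vortex line operators in the extended BFN category $\scrB^+$, and that under this dictionary the $\mathbf{\inter}$-twist defining $\rif^{\mathbf{\inter}}(\sim)$ corresponds to intertwining with the monopole/flux operators appearing in the Braverman--Finkelberg--Nakajima construction of line defects; adding $k\mathbf{\inter}$ units of flux to the basic line operator cut out by $e(a)$ yields precisely the object whose sections assemble into $H^0(\tilde{\fM}^{\mathbf{\inter}},\cO(k))$. Concretely, I would generalize the computation behind Theorem~\ref{coulomb-idempotent}: the faithful polynomial representation of $\Rring$ extends to the bimodules $\rif^{k\mathbf{\inter}}(\sim)$, and applying $e(a)$ on both sides identifies $e(a)\rif^{k\mathbf{\inter}}(\sim)e(a)$ with the $k$-th graded piece of the BFN graded ring attached to $\mathbf{\inter}$ (an equivariant Borel--Moore homology group of a space of triples carrying the defect datum), with composition of twisted diagrams (Lemma~\ref{lem:bimod-mult}) going over to the convolution product.

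Granting this identification, commutativity of $\mathbf{A}^{\mathbf{\inter}}$ follows from commutativity of that graded ring on the geometric side --- it is a ring of sections of line bundles over a scheme --- exactly as in the undeformed case, and $\Proj$ of it is $\tilde{\fM}^{\mathbf{\inter}}$ by the very definition of the partial resolution in \cite{BFNline}. Finite generation of the graded ring, which is needed so that the $\Proj$ is well behaved and insensitive to the usual Veronese ambiguity, is also established there, so nothing more is required once (ii) is in hand.

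The main obstacle I expect is this second step: verifying that the diagrammatic multiplication $\rif^{k\mathbf{\inter}}(\sim)\otimes\rif^{m\mathbf{\inter}}(\sim)\to\rif^{(k+m)\mathbf{\inter}}(\sim)$ agrees with the BFN convolution on the nose after applying $e(a)$, including the bookkeeping that translates the winding number of a ghost strand around the cylinder into the monopole charge (equivalently, into the degree of the line-bundle twist). The comparison underlying Theorem~\ref{coulomb-idempotent} supplies this in degree zero; propagating it compatibly across all degrees, and pinning down that the resulting ample line bundle is the one used in \cite{BFNline}, is where the real work lies. The remaining ingredients --- the reduction to degree zero, finite generation, and the $\Proj$ formalism --- should be routine given the cited results.
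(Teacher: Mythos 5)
Your proposal takes essentially the same route as the paper: pass from the diagrammatic bimodules to the extended BFN category, identify each graded piece $e(a)\,\rif^{k\mathbf{\inter}}(\sim)\,e(a)$ with an equivariant Borel--Moore homology group of the BFN defect space, and recognize the resulting graded ring as the one whose $\Proj$ is declared to be $\tilde{\fM}^{\mathbf{\inter}}$ in \cite{BFNline}. The "real work" you rightly flag --- matching the diagrammatic multiplication on twisted bimodules with BFN convolution across all degrees, including translating ghost winding into flux/monopole degree --- is exactly what Proposition~\ref{prop:KLRW-bimodule} (specialized at $h=0$) supplies: it identifies $e(a)\rif^{k\mathbf{\inter}}(\sim)e(a)$ with the twisting bimodule ${}_{\phi+k\nu}\mathscr{T}_{\phi}(\eta_a,\eta_a)$, which by definition is $H^{BM}_*({}_{\eta_a}\EuScript{X}^{(k\nu)}_{\eta_a})$, and this space is precisely the $G[[t]]$-quotient of $\tilde{\mathcal{R}}^{(k\nu)}$ from \cite{BFNline}. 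So your strategy is correct; you just hadn't noticed that the key comparison you isolate is already a standing result in the paper rather than something to be established ab initio in this proof.
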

  This is a more precise statement of Theorem \ref{thm:D-equivalence}(1).
This is proven on page \pageref{proof-thm:partial-resolution}.
 \begin{definition}\label{def:BFN-resolution}
 	If for some $\mathbf{\inter}$, the space $\tilde{\fM}^{\mathbf{\inter}}$ is a resolution of singularities,  we call it a {\bf BFN resolution}. 
 \end{definition}  By \cite[Thm. 5]{weekesQuiverGauge2022}, a BFN resolution is necessarily symplectic.  By \cite[Prop. 19]{namikawaFlopsPoisson2008}, the space $\tilde{\fM}^{\mathbf{\inter}}$ is a symplectic resolution if and only if the twistor deformation for the corresponding ample line bundle is generically smooth (i.e. the Coulomb branch becomes smooth when we consider $\mathbf{\inter}$ as FI parameters).

  This allows us to state one of the main results of our paper.  Let $\Ab=\R/\Z$, and assume we have fixed a choice of flavors $\beta_e$, which are generic in the sense of avoiding the subtori defined by \eqref{eq:circuits}:
  \begin{theorem}\label{thm:NCSR}
    If the space $\tilde{\fM}^{\mathbf{\inter}}$ is a symplectic resolution, then the ring $ {\Rring_{\beta}}$ is a noncommutative symplectic resolution of singularities  and $\D^b(\Coh(\tilde{\fM}^{\mathbf{\inter}}))\cong D^b(\Rring_{\beta}\mmod)$.
  \end{theorem}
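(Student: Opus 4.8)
The plan is to deduce Theorem~\ref{thm:NCSR} from Theorem~\ref{thm:partial-resolution} together with the standard machinery of tilting generators on symplectic resolutions, exactly as outlined in the introduction (``these non-commutative resolutions arise as the endomorphisms of tilting generators''). The key observation is that Theorem~\ref{thm:partial-resolution} already identifies $\Proj(\mathbf{A}^{\mathbf{\inter}})$ with the BFN resolution $\tilde{\fM}^{\mathbf{\inter}}$, and more is true: the full graded ring $\ProjC^{\mathbf{\inter}}=\bigoplus_{k\geq 0}\rif^{k\mathbf{\inter}}(\sim)$ is a (noncommutative) homogeneous coordinate ring for a sheaf of algebras on $\tilde{\fM}^{\mathbf{\inter}}$, and $\Rring_\beta = \rif^{\mathbf{0}}(\sim)$ is its degree-zero part in a twisted sense. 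So the first step is to reinterpret $\ProjC^{\mathbf{\inter}}$ geometrically: produce a coherent sheaf $\mathcal{T}$ on $\tilde{\fM}^{\mathbf{\inter}}$ (the ``tilting generator'' of Theorem~\ref{thm:D-equivalence}(2)), defined as a sheafification of an explicit $\ProjC^{\mathbf{\inter}}$-module built from the diagrammatic idempotents, whose global sections over the relative Proj recover $\Rring_\beta$ as $\End(\mathcal{T})$. Concretely, for each flavored sequence $(\Bi,\Ba)$ one gets a graded module $\bigoplus_k e(\Bi,\Ba)\,\rif^{k\mathbf{\inter}}(\sim)\,e(a)$ over $\mathbf{A}^{\mathbf{\inter}}$, which sheafifies to a summand of $\mathcal{T}$; then $\End_{\tilde{\fM}^{\mathbf{\inter}}}(\mathcal{T}) = e(\Bi,\Ba)\,\ProjC^{\mathbf{\inter}}\,e(\Bi',\Ba')$-type morphism spaces assemble, up to a degree-zero truncation argument, to $\Rring_\beta$.

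The second step is to verify that $\mathcal{T}$ is genuinely a tilting generator on $\tilde{\fM}^{\mathbf{\inter}}$, i.e. that it is a vector bundle (or at least perfect) with $\Ext^{>0}(\mathcal{T},\mathcal{T})=0$ and that it generates $D^b(\Coh \tilde{\fM}^{\mathbf{\inter}})$. Here is where the characteristic hypothesis enters: the standard route is to exhibit $\tilde{\fM}^{\mathbf{\inter}}$, together with its tilting bundle, as coming from a reduction of a characteristic-$p$ construction (the quantization/derived-localization package of Bezrukavnikov--Kaledin and Kaledin, as invoked in the introduction via \cite{BKpos,KalDEQ,BezNon}), and to use that the Coulomb branch has symplectic singularities with a conical $\mathbb{C}^\times$-action so that a tilting generator obtained via Frobenius-constant quantization in large characteristic lifts to characteristic~$0$. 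The vanishing $\Ext^{>0}=0$ and generation then follow from the general theorem that a symplectic resolution with such a structure admits a tilting generator whose endomorphism ring has finite global dimension equal to $\dim \tilde{\fM}^{\mathbf{\inter}}$; one must match that abstractly-produced tilting generator with the diagrammatic $\mathcal{T}$ above, presumably by comparing both to the extended BFN category $\scrB^+$ of Definition~\ref{I-def:extended-BFN} (the vortex line operators), which is where the link between diagrams and geometry was forged in part~I.

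The third step is bookkeeping: once $\End(\mathcal{T})\cong \Rring_\beta$ with $\mathcal{T}$ a tilting generator, the derived equivalence $D^b(\Coh \tilde{\fM}^{\mathbf{\inter}})\cong D^b(\Rring_\beta\mmod)$ is the classical tilting theorem (functors $\RHom(\mathcal{T},-)$ and $-\Lotimes_{\Rring_\beta}\mathcal{T}$); and the NCSR property is immediate from the definition recalled in the introduction: taking $e=e(a)$ we have $A_0 = e\Rring_\beta e \cong \K[\Coulomb]$ by Theorem~\ref{coulomb-idempotent}, the module $e\Rring_\beta = \Gamma(\tilde{\fM}^{\mathbf{\inter}}, e(a)\mathcal{T})$ is a module of global sections of a vector bundle on a symplectic resolution hence maximal Cohen--Macaulay over $\K[\Coulomb]$ (this uses that $\tilde{\fM}^{\mathbf{\inter}}\to \Coulomb$ is a crepant resolution, so higher cohomology of the bundle vanishes and the pushforward is reflexive/CM by a Grauert--Riemenschneider-type argument), and $\operatorname{gldim}\Rring_\beta = \dim \Coulomb$ is exactly the finiteness statement coming with the tilting generator. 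Since $\Coulomb$ has symplectic singularities, this is by definition a non-commutative symplectic resolution.

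The main obstacle is Step~2, specifically matching the diagrammatically-defined $\mathcal{T}$ with a tilting generator known abstractly to have the right homological properties, and controlling the characteristic. One cannot simply assert $\Ext^{>0}(\mathcal{T},\mathcal{T})=0$ from the diagram algebra side; the honest argument has to pass through either (a) the characteristic-$p$ Bezrukavnikov--Kaledin localization theorem applied to the quantization of $\Coulomb$ — which forces the ``characteristic $0$ or sufficiently large'' hypothesis, since one needs $p\gg 0$ for the Frobenius-constant quantization and for the dimension/parity bounds controlling the exceptional collection — or (b) a direct cohomology-vanishing computation on $\tilde{\fM}^{\mathbf{\inter}}$ using its description as an iterated GIT quotient / affine-Grassmannian slice, which is more self-contained but technically heavier. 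Either way, one also needs to know that $\ProjC^{\mathbf{\inter}}$ is Noetherian and that $\Rring_\beta$ has finite rank as a module over its center-image $A_0$ (so that ``$A_0 = e\Rring_\beta e$'' genuinely exhibits the NCCR structure), which should follow from the earlier finiteness results on cylindrical KLRW algebras but needs to be stated; and one must handle the subtlety that $\Rring_\beta$ is a twisted degree-zero part rather than literally $\rif^{\mathbf{0}}(\sim)$ being visible inside $\ProjC^{\mathbf{\inter}}$, which is why the tilting bundle is built from the $\rif^{k\mathbf{\inter}}$-layers and not from $\mathbf{A}^{\mathbf{\inter}}$ alone.
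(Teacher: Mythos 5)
Your plan is essentially the paper's argument: construct the tilting bundle from the diagrammatically graded modules $\bigoplus_k e(\boldsymbol{\alpha})\rif^{k\mathbf{\inter}}(\sim)e(a)$ on $\Proj(\mathbf{A}^{\mathbf{\inter}})\cong\tilde{\fM}^{\mathbf{\inter}}$ (this is precisely the sheaf $\cQ_{\mathbf{b}}$ of Theorem~\ref{th:Q-equiv-2}), establish the tilting and $\End$-computation via the characteristic-$p$ Bezrukavnikov--Kaledin package from part I (Theorem~\ref{I-th:Q-equiv}, Corollary~\ref{I-cor:A-nccr}) with the comparison to the extended BFN category carrying the diagram-to-geometry dictionary, and then deduce the NCSR property and derived equivalence from the general theory of tilting generators together with $e(a)\Rring_\beta e(a)\cong\K[\Coulomb]$. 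You correctly identify the role of the characteristic hypothesis, and your caveats about Noetherianity and the twisted degree-zero structure are the right things to watch but are already supplied by the finiteness results for cylindrical KLRW algebras established earlier in the paper.
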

  This follows from Theorem \ref{th:Q-equiv-2}. One point we should emphasize here is that $\beta_e$ is in no way related to $\mathbf{\inter}$.  We prove this result by showing that $\tilde{\fM}^{\mathbf{\inter}}$ possesses a tilting generator with endomorphisms given by $\Rring_{\beta}$ for any generic choice of $\beta$, which proves the desired properties.

\subsubsection{Wall crossing functors}
\label{sec:wall-cross}

\notation{$\hat\beta=( {\hat\beta_e}, {\hat\beta_{i,k}})$}{A point in $\R^{\edge}\times \prod_{i\in \vertex}{\R^{w_i}}$, which is a real lift of the choice of flavor $\beta$ in $\R/\Z$.}
Let $V=\R^{\edge}\times \prod_{i\in \vertex}{\R^{w_i}}$.  
Given a point $\hat\beta=( {\hat\beta_e}, {\hat\beta_{i,k}})$ in this space, we can interpret it as the lift to $\R$ of a flavor in the group $\Ab=\R/\Z$.
For this choice of flavor, we have a corresponding algebra $ {\Rring_{\beta}}$, and for any pair $\hat\beta,\hat\beta'$, we have a bimodule $ {\rif_{\hat\beta',\hat\beta}}=\rif^{\mathbf{\inter}}(\corre)$, where $\corre\subset \R/\Z\times \R/\Z$ is the full correspondence, and ${\inter_e}=\lfloor \hat\beta_e'\rfloor-\lfloor \hat\beta_e\rfloor$; since $\mathbf{\inter}$ is determined by $\hat\beta$ and $\hat\beta'$ it can be safely left out from the notation.
This last statistic might look strange, but it reflects the number of crossings we will find if the distance between corporeal and ghost strands continuously varies from $\hat\beta$ to $\hat\beta'$, for example, if it is given at $y=t$ by 
$\bbeta=(1-t)\hat\beta+t\hat\beta'$.  
\notation{$\rif_{\hat\beta',\hat\beta}$}{A bimodule over the algebras $\Rring_{\beta}$ and $\Rring_{\beta'}$, with the number of intersections of each ghost with the corresponding strand determined by the choice of lifts $\hat\beta',\hat\beta$.}

Another point where the reader should be cautious: while the algebras $\Rring_{\beta}$ and $\Rring_{\beta'}$ only depend on the image of these points modulo $\Z$, the bimodule depends on the difference $\hat\beta-\hat\beta'$, since $d_e$ depends on this difference.

We can define an equivalence relation on $V$ by $\hat\beta\sim\hat\beta'$ if $\rif_{\hat\beta',\hat\beta}$ and $\rif_{\hat\beta,\hat\beta'}$ induce Morita equivalences.  Considering the bimodules $\rif_{\hat\beta',\hat\beta}$ and $\rif_{\hat\beta,\hat\beta'}$ as a Morita context (or ``pre-equivalence datum'' in the terminology of \cite{BassK}), by \cite[II.3.4]{BassK}, this defines a Morita equivalence if and only  the multiplication maps 
\[\rif_{\hat\beta',\hat\beta}\otimes \rif_{\hat\beta,\hat\beta'}\to \Rring_{\hat\beta}\qquad \rif_{\hat\beta,\hat\beta'}\otimes \rif_{\hat\beta',\hat\beta}\to \Rring_{\hat\beta'}\]
are surjective.

\begin{proposition}\label{prop:alcove-equivalence}
This equivalence relation is refined by the alcoves of the hyperplane arrangement defined by the equations 
  \begin{equation}\label{eq:circuit-hyperplane}
       {\hat\beta_{j,\ell}}-\hat\beta_{i,k}+\sum_{p=1}^n \varepsilon_i {\hat\beta_{e_i}}=m
  \end{equation} for each unoriented path \[i=i_0\overset{e_1}\longrightarrow i_1\overset{e_2}\longrightarrow i_2\overset{e_3}\longrightarrow\cdots \overset{e_n}\longrightarrow i_n=j\] in $\Gamma$, all $k\in [1,w_i],\ell\in [1,w_j]$ and all integers $m$; as before, $\varepsilon\in \{\pm 1\}$ keeps track of whether the unoriented path matches or reverses the orientation of the quiver on the edge $e_i$.

If we restrict to choices of flavor where $\hat\beta_e=0$ for all $e\in \edge$, then the third term on the LHS is identically 0, and so we have the hyperplane arrangement   \begin{equation}\label{eq:circuit-hyperplane01}
       {\hat\beta_{j,\ell}}-\hat\beta_{i,k}=m.
  \end{equation}
 \end{proposition}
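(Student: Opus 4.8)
The plan is to prove the main assertion — that $\hat\beta\sim\hat\beta'$ whenever the two points lie in one open alcove of \eqref{eq:circuit-hyperplane} — and then deduce the statement for the slice $\{\hat\beta_e=0\}$ formally. By the Morita-context criterion recalled just above the proposition, $\hat\beta\sim\hat\beta'$ is equivalent to surjectivity of the two multiplication maps $\rif_{\hat\beta',\hat\beta}\otimes\rif_{\hat\beta,\hat\beta'}\to\Rring_{\hat\beta}$ and $\rif_{\hat\beta,\hat\beta'}\otimes\rif_{\hat\beta',\hat\beta}\to\Rring_{\hat\beta'}$; since each is a map of bimodules into the target ring, its image is a two-sided ideal, so it is enough to show that every idempotent $e(\Bi,\Ba)$ lies in the image, as these sum to the unit.

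The key observation is that the segment $\bbeta=(1-t)\hat\beta+t\hat\beta'$ stays inside the convex alcove, so by Lemma \ref{lem:circuits} — whose circuit subtori are exactly the images in $(\R/\Z)^{\edge(\Gamma^\Bw)}$ of the hyperplanes \eqref{eq:circuit-hyperplane} — the set of equivalence classes of cyclic flavored sequences, and the cyclic order each induces on the corporeal strands (which is $\beta$-independent in any case), is constant along the segment. Hence for every class $[\Bi,\Ba]$ at $\hat\beta$ there is a matching class $[\Bi,\Ba']$ at $\hat\beta'$ with the same tuple $\Bi$, differing only in the positions of the ghost and red strands, and I would take $D_{[\Bi,\Ba]}\in\rif_{\hat\beta',\hat\beta}$ to be the interpolation diagram in which every strand follows $\bbeta$ as $t$ runs from $0$ to $1$. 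Its corporeal permutation is the identity, it has no corporeal--corporeal crossings, and each ghost meets its own corporeal exactly $\inter_e=\lfloor\hat\beta'_e\rfloor-\lfloor\hat\beta_e\rfloor$ times, as condition (3') demands; let $D'_{[\Bi,\Ba']}\in\rif_{\hat\beta,\hat\beta'}$ be the reverse interpolation, and set $D=\sum D_{[\Bi,\Ba]}$, $D'=\sum D'_{[\Bi,\Ba']}$.

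The heart of the argument is the computation of the product $D'D$, which lies in $\Rring_{\hat\beta}$. When $D'_{[\Bi,\Ba']}$ is stacked on $D_{[\Bi,\Ba]}$, every ghost and red strand returns to its starting position, so the only features present are bigons formed by a ghost or red strand with the corporeals (and with the seam $x=0$) that it crosses and then re-crosses. Each such bigon is removed using the isotopy relations together with (\ref{w-cost-1}--\ref{w-black-bigon2}), sliding ghosts past crossings and the seam via \eqref{red-triple-correction}, \eqref{w-triple-point}, \eqref{w-triple-point2}; each removal strictly decreases the number of crossings, at the possible cost of creating dots. Thus $D'_{[\Bi,\Ba']}D_{[\Bi,\Ba]}=e(\Bi,\Ba)$ modulo diagrams with strictly fewer crossings, and a triangularity argument with respect to the crossing filtration — invoking the basis theorem for $\Rring$ and its natural bimodules from Part I, just as in the linear case of \cite{WebwKLR, KWWYred} — upgrades this to surjectivity (indeed bijectivity) of the first multiplication map. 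The identical computation with $\hat\beta$ and $\hat\beta'$ exchanged handles the second, so $\hat\beta\sim\hat\beta'$; transitivity of $\sim$ (composites of Morita equivalences, identified via Lemma \ref{lem:bimod-mult}) is then not even needed, though it follows.

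On the slice $\{\hat\beta_e=0:e\in\edge\}$ the term $\sum_p\varepsilon_i\hat\beta_{e_i}$ of \eqref{eq:circuit-hyperplane} vanishes identically, so every hyperplane of that family restricts to some $\hat\beta_{j,\ell}-\hat\beta_{i,k}=m$, and conversely each such equation (for vertices joined by a path in $\Gamma$, and every integer $m$) is realized by restricting \eqref{eq:circuit-hyperplane} for any such path; as a chamber of a restricted arrangement always lies inside a single chamber of the ambient one, the main case gives at once that \eqref{eq:circuit-hyperplane01} refines $\sim$ on the slice. I expect the main obstacle to be the crossing-induction in the third paragraph: the alcove hypothesis is precisely what makes it go through — via Lemma \ref{lem:circuits} it guarantees that no genuine reordering of a flavored sequence occurs as $\beta$ moves from $\hat\beta$ to $\hat\beta'$, so that $D'D$ is literally ``the trivial braid, unwound'' — but one must carefully account for the dotted correction terms produced by (\ref{w-black-bigon1}--\ref{w-black-bigon2}), namely the cases where a ghost passes a corporeal of its tail label, and for the nilHecke idempotents implicit in $e(\Bi,\Ba)$ (Definition \ref{def:i-a2}), checking that both are absorbed by the filtration exactly as in the known linear picture.
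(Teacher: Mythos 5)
Your first two paragraphs match the paper's proof exactly: the Morita-context criterion from \cite[II.3.4]{BassK}, Lemma \ref{lem:circuits} to pin down where the set of equivalence classes jumps, and interpolation diagrams for each idempotent. The gap is in the third paragraph, and it is a real one.

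You fix the corporeal strands and let the ghost and red strands trace out $\bbeta_e(t)$, $\hat{\bbeta}_{i,k}(t)$. In general this does not produce a diagram that joins $e(\Bi,\Ba)$ to an \emph{equivalent} idempotent at $\hat\beta'$: as the walls of the chamber decomposition of $\mathbb{L}$ move with $\bbeta$, the fixed point $\boldsymbol\alpha$ can cross a wall, so the top of your diagram lands on a different equivalence class than the bottom. Worse, when $\inter_e\neq 0$ a ghost for an edge $e$ must cross corporeal strands of label $t(e)$ as it winds, so $D'_{[\Bi,\Ba']}D_{[\Bi,\Ba]}$ genuinely contains bigons of the sort covered by \eqref{w-black-bigon1}--\eqref{w-black-bigon2}. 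But over $\Ab=\R/\Z$ the relation $\sim$ is the full relation, so the ``$j=k$, $a\sim b$'' branch always fires; the bigon resolves to a \emph{difference of two dotted diagrams}, with coefficient $0$ on the crossing-free idempotent. Your claim ``$D'D = e(\Bi,\Ba)$ modulo diagrams with strictly fewer crossings'' is therefore false in precisely the case you flagged as the obstacle, and the triangularity argument has nothing to hang on.

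The paper avoids all of this. It uses Lemma \ref{lem:circuits} not merely to match up equivalence classes at the two endpoints, but to move the corporeal strands \emph{along with} the deforming chamber: since no circuit equation \eqref{eq:circuits} is ever satisfied, the cyclic flavored sequence stays inside a fixed (deforming) chamber for all $t$, so the resulting interpolation diagram has no crossings between a ghost for $e$ and any corporeal of label $t(e)$, and no corporeal--red or inter-chamber crossings at all. (The unavoidable $\inter_e$ crossings are between a ghost and its \emph{own} corporeal, whose label is $h(e)\neq t(e)$; those bigons fall into the ``$j\neq k$'' branch of \eqref{w-black-bigon1}--\eqref{w-black-bigon2} and resolve cleanly.) With that choice, $D'D$ is literally $e(\Bi,\Ba)$ and $DD'$ is literally $e(\Bi,\Ba')$, so both multiplication maps hit every idempotent and the Morita equivalence is immediate, no filtration argument needed. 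Your final paragraph, deducing \eqref{eq:circuit-hyperplane01} from \eqref{eq:circuit-hyperplane} on the slice $\{\hat\beta_e=0\}$, is fine.
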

\begin{proof}
If there is a path from $\hat\beta$ to $\hat\beta'$ that never crosses one of these hyperplanes, then Lemma \ref{lem:circuits} shows that the set of equivalence classes of loadings never changes, and so we can draw diagrams in $\rif_{\hat\beta',\hat\beta}$ and $\rif_{\hat\beta,\hat\beta'}$ with no strands crossing which join any loading in $\Rring_{\beta}$ to the same loading in $\Rring_{\beta'}$.  Multiplying these in either order gives the desired idempotent, and shows the Morita equivalence.
\end{proof}
If the bimodules $ {\rif_{\hat\beta',\hat\beta}}$ and $\rif_{\hat\beta,\hat\beta'}$ do not induce Morita equivalences, then the situation is more complicated.  
\begin{conjecture}\label{conj-DE}
For any generic $\hat\beta,\hat\beta'$, the bimodule $\rif_{\hat\beta',\hat\beta}\Lotimes -$ induces an equivalence of derived
categories for any coefficient ring $\K$.
\end{conjecture}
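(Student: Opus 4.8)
The plan is to reduce Conjecture~\ref{conj-DE} to a single wall-crossing and then prove that the one-wall bimodule is invertible in the derived category by a local computation. First I would join $\hat\beta$ to $\hat\beta'$ by a generic straight-line path in $V$ and subdivide it at its intersections with the hyperplanes \eqref{eq:circuit-hyperplane}; by Lemma~\ref{lem:bimod-mult}, $\rif_{\hat\beta',\hat\beta}$ is the composition of the bimodules attached to the successive segments, so it suffices to treat each segment. On a segment contained in a single alcove of that arrangement, Proposition~\ref{prop:alcove-equivalence} already gives a Morita equivalence; any crossings of the integer hyperplanes $\hat\beta_e\in\Z$ in the interior of such a segment correspond to ``full rotation'' bimodules, which one checks directly to be invertible. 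Thus we reduce to the case where $\hat\beta,\hat\beta'$ are generic, lie in adjacent alcoves, and are both contained in a small ball about a generic point $\hat\beta_0$ of a single hyperplane of \eqref{eq:circuit-hyperplane}. For this reduction to be valid at the derived level, the composition map of Lemma~\ref{lem:bimod-mult} must be an \emph{honest} isomorphism of bimodules along a monotone path, with no higher $\Tor$ intervening; I would deduce this from a basis/straightening theorem for twisted cylindrical KLRW diagrams that extends the basis theorem for $\Rring$, the point being that concatenating twisted diagrams whose winding numbers telescope produces precisely the twisted diagrams of the composite, with no creation or cancellation of crossings.

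Second, I would use the same straightening theorem to show that each $\rif^{\mathbf{\inter}}(\corre)$ is free, with an explicit diagrammatic basis, as a left module over $\Rring_{\hat\beta'}$ and as a right module over $\Rring_{\hat\beta}$: any twisted diagram equals a distinguished ``winding permutation'' diagram times an element of the target algebra, uniquely modulo the local relations. In particular $\rif_{\hat\beta',\hat\beta}\Lotimes-$ preserves boundedness and finite generation, and the derived composites are computed by the naive tensor product.

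Third, and this is the heart of the matter, I would show that the two bimodules of a single wall-crossing are mutually inverse, i.e.\ that the multiplication maps of Lemma~\ref{lem:bimod-mult} induce isomorphisms $\rif_{\hat\beta,\hat\beta'}\otimes_{\Rring_{\hat\beta'}}\rif_{\hat\beta',\hat\beta}\cong\Rring_{\hat\beta}$ and $\rif_{\hat\beta',\hat\beta}\otimes_{\Rring_{\hat\beta}}\rif_{\hat\beta,\hat\beta'}\cong\Rring_{\hat\beta'}$. This is the expected picture from the affine braid group action of Theorem~\ref{thm:D-equivalence}(3): crossing a wall and crossing back is the identity, not a reflection. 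Near the generic wall point $\hat\beta_0$, only the strands participating in the circuit of Lemma~\ref{lem:circuits} that defines the hyperplane change their crossing pattern, so the computation localizes to a model of bounded rank governed by the bigon relations \eqref{w-black-bigon1} and \eqref{w-black-bigon2} and the triple-point relations \eqref{w-triple-point} and \eqref{w-triple-point2}; I would identify the resulting bimodule with (a twist of) the braiding bimodule of \cite[\S 6]{Webmerged} and transport the invertibility statement proved there, or, when a direct match is unavailable, verify invertibility after a convenient specialization using flatness of the family $\{\rif_{\hat\beta',\hat\beta}\}$ over $V$. If the two bimodules turn out not to be literally mutual inverses, the fallback is to show directly that $\rif_{\hat\beta',\hat\beta}\Lotimes-$ sends a set of compact generators (the projectives $\Rring_{\hat\beta}e(\Bi,\Ba)$, or the exceptional collection of a real variation of stability in the sense of \cite{anno2015stability}) to another such, which already forces it to be an equivalence.

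The main obstacle, and the reason this is stated only as a conjecture, is twofold. The cylindrical (affine) geometry makes the straightening theorem for twisted diagrams considerably subtler than in the linear KLRW case of \cite{WebwKLR,Webmerged}, because strands may wind around the cylinder arbitrarily often, so the ``monotone composition is an isomorphism'' statement has to be proved with real care about winding numbers. More seriously, when the relevant circuit in the Crawley-Boevey quiver $\Gamma^{\Bw}$ is a genuine cycle --- the affine-type walls, which have no analogue in finite type --- the local model for the wall-crossing is no longer a finite-dimensional spherical twist but an $\widehat{\mathfrak{sl}}$-type braiding, and proving it invertible over an \emph{arbitrary} coefficient ring $\K$ seems to require an integral categorical (affine) $\mathfrak{sl}_2$-action with sufficient integrability, in place of the characteristic-zero shortcuts --- semisimplicity of the decategorification, or the comparison with $\D^b(\Coh(\tilde{\fM}^{\mathbf{\inter}}))$ furnished by Theorem~\ref{thm:NCSR} when a symplectic resolution exists --- which settle the special cases already known.
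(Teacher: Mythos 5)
The statement you attempted is labeled a conjecture in the paper precisely because the paper does not prove it; the paper's actual contribution here is Proposition \ref{prop:DE}, which treats only characteristic-zero coefficients under the hypothesis that a BFN resolution exists, and does so by a wholly different route than you propose: it passes to $p$th-root parameters over $\mathbb{F}_p$ for large $p$, invokes derived localization and the characteristic-$p$ wall-crossing equivalences for the quantized Coulomb branch (Theorem \ref{I-thm:asymptotic-derived}, Lemma \ref{I-lem:localize-twist}, Proposition \ref{prop:KLRW-bimodule}), and then deduces the $\Q$ case by a universal-coefficient argument on the cone of the unit map $\Rring_\beta \to \RHom_{\Rring_{\beta'}}(\rif_{\hat\beta',\hat\beta},\rif_{\hat\beta',\hat\beta})$. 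So your diagrammatic sketch is not a reconstruction of any existing proof, and it would prove strictly more if it worked; unfortunately, its central step fails.

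You aim to show that the multiplication maps $\rif_{\hat\beta,\hat\beta'}\otimes_{\Rring_{\hat\beta'}}\rif_{\hat\beta',\hat\beta}\to\Rring_{\hat\beta}$ are isomorphisms, i.e.\ that a single wall-crossing is a \emph{Morita} equivalence. But the entire real-variation-of-stability structure of Section \ref{sec:real-variation} rests on this failing: Proposition \ref{prop:alcove-equivalence} only establishes Morita equivalence within an alcove, by producing crossing-free composite diagrams that hit every idempotent; across a wall the composites necessarily acquire bigons between a corporeal strand and its ghost or a red strand, and after applying the bigon relations \eqref{w-black-bigon1}--\eqref{w-black-bigon2} the multiplication map is no longer surjective. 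Consistent with this, Corollary \ref{cor:wall-crush} and Theorem \ref{thm:real-variation} describe wall-crossing as a \emph{perverse} equivalence with a nontrivial filtration indexed by the codimensions $d(\Bi,\Ba)$; that filtration would be trivial, and the central charge would never vanish on a wall, if wall-crossing were abelian. The intuition that ``crossing and crossing back is the identity'' applies to the composite derived functor, not to the bimodule product: the derived inverse of $\rif_{\hat\beta',\hat\beta}\Lotimes-$ is a right adjoint computed by $\RHom$, not naive tensor with $\rif_{\hat\beta,\hat\beta'}$. Your fallback --- sending compact generators to compact generators --- is also insufficient on its own, since one must additionally show full faithfulness on that generating set, and that is exactly the hard content. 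What does stand in your sketch is the subdivision/composition reduction via Lemma \ref{lem:bimod-mult}, the plausibility of a twisted-diagram basis theorem in the spirit of Lemma \ref{lem:cyl-basis}, and your correct diagnosis that some characteristic-$p$ or integral categorical input seems required --- which is precisely the ingredient the paper deploys in the cases it can settle.
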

Establishing this in full generality requires more algebraic machinery than we want to develop at this moment, but we can use already known results to show it holds in the most important cases for us:
\begin{proposition}\label{prop:DE} For each $\Gamma,\Bv,\Bw$, 
Conjecture \ref{conj-DE} holds for $\K=\Q$ (or more generally any
characteristic 0 field) if the space
$\tilde{\fM}^{\mathbf{\inter}}$ is a symplectic resolution for
generic $\mathbf{\inter}$.  
\end{proposition}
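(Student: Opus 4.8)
The plan is to deduce this directly from the geometry of Theorem~\ref{thm:NCSR} (established via Theorem~\ref{th:Q-equiv-2}), by identifying the combinatorial bimodule $\rif_{\hat\beta',\hat\beta}$ with an $\RHom$-complex between tilting generators on the resolution.

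Fix $\mathbf{\inter}$ so that $\tilde{\fM}^{\mathbf{\inter}}$ is a symplectic resolution, and write $\tM$ for it. First I would record that for every generic lift $\hat\beta$, Theorem~\ref{th:Q-equiv-2} furnishes a tilting generator $\mathcal{T}_{\hat\beta}$ on $\tM$ with $\End(\mathcal{T}_{\hat\beta})\cong\Rring_{\beta}$, hence an equivalence $\Psi_{\hat\beta}\colon D^b(\Coh\,\tM)\xrightarrow{\ \sim\ }D^b(\Rring_{\beta}\mmod)$, $\mathcal{F}\mapsto\RHom_{\tM}(\mathcal{T}_{\hat\beta},\mathcal{F})$, with quasi-inverse $M\mapsto\mathcal{T}_{\hat\beta}\Lotimes_{\Rring_{\beta}}M$. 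For two generic lifts $\hat\beta,\hat\beta'$ the composite $\Psi_{\hat\beta}\circ\Psi_{\hat\beta'}^{-1}$ is then an equivalence $D^b(\Rring_{\beta'}\mmod)\to D^b(\Rring_{\beta}\mmod)$; since $\mathcal{T}_{\hat\beta}$ is a vector bundle and hence a perfect complex, it is computed by $-\,\Lotimes_{\Rring_{\beta'}}-$ with the complex of $(\Rring_{\beta},\Rring_{\beta'})$-bimodules $\mathbb{B}_{\hat\beta',\hat\beta}:=\RHom_{\tM}(\mathcal{T}_{\hat\beta},\mathcal{T}_{\hat\beta'})$.

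The substantive step is to identify $\rif_{\hat\beta',\hat\beta}$ with $\mathbb{B}_{\hat\beta',\hat\beta}$ in the derived category of bimodules. I would use the description of the homogeneous coordinate ring $\ProjC^{\mathbf{\inter}}$ of $\tM$ as twisted cylindrical diagrams (Theorem~\ref{thm:partial-resolution}) together with the presentation of $\mathcal{T}_{\hat\beta}$ as an explicit $\ProjC^{\mathbf{\inter}}$-module from the proof of Theorem~\ref{thm:D-equivalence}(2): under the diagram-to-sheaf dictionary set up there, a $\mathbf{\inter}$-twisted cylindrical diagram with bottom sequence adapted to $\hat\beta'$ and top sequence adapted to $\hat\beta$ is precisely a morphism between the corresponding shifted summands of $\mathcal{T}_{\hat\beta'}$ and $\mathcal{T}_{\hat\beta}$, with composition of diagrams matching composition of morphisms. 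This gives a bimodule map $\rif_{\hat\beta',\hat\beta}\to H^0(\mathbb{B}_{\hat\beta',\hat\beta})$; to see it is an isomorphism and that $\mathbb{B}_{\hat\beta',\hat\beta}$ is concentrated in degree $0$, I would rerun the spanning-set/basis arguments --- and the $\Ext$-vanishing over the affinization $\fM$ --- that already prove $\End(\mathcal{T}_{\hat\beta})\cong\Rring_{\beta}$, now for the pair $(\mathcal{T}_{\hat\beta},\mathcal{T}_{\hat\beta'})$ in place of a single bundle. Once $\rif_{\hat\beta',\hat\beta}\cong\mathbb{B}_{\hat\beta',\hat\beta}$, tensoring with it \emph{is} the composite equivalence $\Psi_{\hat\beta}\circ\Psi_{\hat\beta'}^{-1}$, giving Conjecture~\ref{conj-DE} over any characteristic-$0$ field, since Theorem~\ref{th:Q-equiv-2} is available there.

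The hard part will be exactly this bimodule identification: Theorem~\ref{thm:D-equivalence} packages the $\Hom$-level statement $\End(\mathcal{T}_{\hat\beta})\cong\Rring_{\beta}$ and the wall-crossing bimodules of part~(3), but matching the twist vector $\mathbf{\inter}$ on diagrams with the precise line-bundle shift relating $\mathcal{T}_{\hat\beta}$ to $\mathcal{T}_{\hat\beta'}$, and ruling out higher $\Ext$'s (equivalently, checking that $\rif_{\hat\beta',\hat\beta}$ is Cohen--Macaulay of the expected depth, so that no derived correction intervenes) takes work. An alternative, more combinatorial route would be to first reduce via Proposition~\ref{prop:alcove-equivalence} to the case where $\hat\beta,\hat\beta'$ lie in alcoves separated by a single hyperplane --- every generic pair is joined by a path meeting the arrangement one wall at a time, and the bimodules multiply by Lemma~\ref{lem:bimod-mult} --- and then analyze a single wall-crossing, where $\mathbf{\inter}$ is supported on one circuit; but establishing that the one-wall bimodule is a derived equivalence again reduces to the same comparison with $\Coh\,\tM$, so it does not really circumvent the main difficulty.
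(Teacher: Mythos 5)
Your proposal takes a genuinely different route from the paper, and it has a gap that I don't think can be repaired along the lines you sketch.

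The paper's proof runs entirely through characteristic $p$. It approximates $\beta,\beta'$ by $\mathbb{F}_p$-valued parameters for large $p$ so that derived localization holds, applies Lemma \ref{I-lem:localize-twist} to conclude that the quantum twisting bimodule ${}_{\phi+\nu}\scrT_{\phi}$ induces a derived equivalence, identifies that bimodule with $\rif_{\hat\beta',\hat\beta}$ via Proposition \ref{prop:KLRW-bimodule}, and then transfers the conclusion to $\Q$ by taking the cone of the natural map $\Rring_{\beta}\to\RHom_{\Rring_{\beta'}}(\rif_{\hat\beta',\hat\beta},\rif_{\hat\beta',\hat\beta})$ over $\Z$ and using the universal coefficient theorem. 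Crucially, at no point does it compare $\rif_{\hat\beta',\hat\beta}$ with an $\RHom$-complex between two \emph{different} tilting bundles on $\tM$.

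Your plan hinges on identifying $\rif_{\hat\beta',\hat\beta}$ with $\RHom_{\tM}(\mathcal{T}_{\hat\beta},\mathcal{T}_{\hat\beta'})$ as an object of the derived category of bimodules. That requires two things you don't supply: (i) a definition of $\mathcal{T}_{\hat\beta}$ that actually sees the real lift $\hat\beta$ rather than just $\beta\bmod\Z$ --- the bundle $\cQ_{\mathbf b}$ of Theorem \ref{th:Q-equiv-2} depends only on $\mathbf b$, so to encode the winding $\mathbf{\inter}$ you would have to tensor by a line bundle $\mathcal{O}(\nu)$ depending on the lift --- and, more seriously, (ii) the vanishing $\Ext^{>0}_{\tM}(\mathcal{T}_{\hat\beta},\mathcal{T}_{\hat\beta'})=0$. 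Point (ii) fails already in the simplest example: on $T^*\mathbb{P}^1$ with $\mathcal{T}=\mathcal{O}\oplus\mathcal{O}(-1)$, twisting by $\mathcal{O}(n)$ makes $\mathcal{T}^\vee\otimes\mathcal{T}'$ a sum of $\mathcal{O}(n-1),\mathcal{O}(n)^{\oplus2},\mathcal{O}(n+1)$, and for $n\le-1$ at least one of these has nonzero $H^1$ on $T^*\mathbb{P}^1$. So $\RHom_{\tM}(\mathcal{T}_{\hat\beta},\mathcal{T}_{\hat\beta'})$ is not concentrated in degree zero while $\rif_{\hat\beta',\hat\beta}$ manifestly is, and the two cannot coincide. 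The Cohen--Macaulay heuristic you mention cannot save this: the higher Ext's are genuinely present, not an artifact of a bad resolution.

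Without the identification, the fact that $\RHom(\mathcal{T}_{\hat\beta},\mathcal{T}_{\hat\beta'})\Lotimes-$ is an equivalence (automatic, being a composite of two tilting equivalences) says nothing about $\rif_{\hat\beta',\hat\beta}\Lotimes-$, so Conjecture \ref{conj-DE} is not established. This is exactly what the characteristic-$p$ route buys: it matches $\rif_{\hat\beta',\hat\beta}$ with a representation-theoretic translation bimodule that is concentrated in degree zero \emph{by construction}, and whose derived invertibility is a theorem (derived localization) rather than a sheaf-cohomology vanishing one would have to check on $\tM$. The integral cone argument then imports that statement to $\Q$ with no further geometry. Your alternative ``one wall at a time'' reduction is, as you note, no help, since the single-wall comparison runs into the same Ext-vanishing problem.
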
 This is proven on page \pageref{proof-prop:DE}.

  \subsubsection{Real variation of stability conditions}
  \label{sec:real-variation}
  
  Recall the definition of a real variation of stability conditions:
let $\mathcal{D}$ be a finite type triangulated category and $V$ a real vector space.
Fix a finite set $\hat{\Sigma}\subset V^*$ of linear functionals on $V$.  By changing sign if necessary, we may assume that there's an open subset $V^+$ where all these functionals are positive.  Consider the ``unrolled'' hyperplane arrangement  $\Sigma$ given by 
\[H_{k,\xi}=\{v\in V \mid \xi(v)=k \}\qquad \xi\in \hat{\Sigma}, k\in \Z.\]
Let $V^0=V\setminus \cup H_{k,\xi}$ denote the complement of the union of these hyperplanes. The hyperplane $H_{k,\xi}$ is naturally cooriented: its complement is naturally divided into positive and negative half-spaces
\[(V\setminus H_{k,\xi})^+=\{v\in V \mid \xi(v)>k \}\qquad (V\setminus H_{k,\xi})^-=\{v\in V \mid \xi(v)<k \}.\]
By an {\bf alcove}, we mean a connected component of $V^0$ and we let $\Alc$ denote the set of alcoves.
For two alcoves
 $A$, $A'\in \Alc$ sharing a codimension one face which is contained
in a hyperplane $H\in \Sigma$, we will say that $A'$ is {\bf above}
$A$ and $A$ is {\bf below} $A'$ if $A'\in (V\setminus H)^+$.

\begin{definition}\label{def1}
A {\bf real variation of stability conditions} on $\mathcal{D}$ parameterized by $V^0$ and directed to $V^+$
is the data $(Z,\tau)$, where $Z$ (the central charge) is a polynomial map $Z:V\to (K^0(\mathcal{D})\otimes \R)^*$, and $\tau$ is a map from $\Alc$ to the set of bounded
$t$-structures on $\mathcal{D}$, subject to the following conditions.

\begin{enumerate}
\item
If $M$ is a nonzero object in the heart of $\tau(A)$, $A\in \Alc$, then
$\< Z(x), [M]\> >0$ for $x\in A$.\label{RV1}

\item
 Suppose $A$, $A'\in \Alc$ share a codimension one face $H$ and $A'$ is above $A$.
Let $\mathcal{C}$ be the heart of $\tau(A)$; for $n\in {\mathbb{N}}$ let $\mathcal{C}_n\subset \mathcal{C}$
 be the full
subcategory in $\mathcal{C}$ given by: $M\in \mathcal{C}_n$  if the polynomial function on $V$,
$x\mapsto \< Z(x), [M] \>$ has zero of order at least $n$ on $H$.
One can check that $\mathcal{C}_n$ is a Serre subcategory in $\mathcal{C}$, thus $\mathcal{D}_n=\{
C\in \mathcal{C}\ |\ H^i_{\tau(A)}(C)\in \mathcal{C}_n\}$ is a thick subcategory in $\mathcal{D}$.
We require that

\begin{enumerate}
\item The $t$-structure $\tau(A')$ is compatible with the filtration by thick
subcategories $\mathcal{D}_n$.\label{RV2a}

\item The functor of shift by $n$ sends the $t$-structure on $gr_n(\mathcal{D})=\mathcal{D}_n/\mathcal{D}_{n+1}$ induced
by $\tau(A)$ to that induced by $\tau(A')$. In other words, $$gr_n(\mathcal{C}')=gr_n(\mathcal{C})[n]$$ where $\mathcal{C}'$ is the heart of $\tau(A')$, $gr_n=\mathcal{C}'_n/\mathcal{C}'_{n+1}$, $\mathcal{C}'_n=\mathcal{C}'\cap \mathcal{C}_n$. \label{RV2b}

\end{enumerate}
\end{enumerate}

\end{definition}
Assuming that $D^b(\mathcal{C})\cong \mathcal{D}\cong D^b(\mathcal{C}')$, condition (2) can be rephrased as saying that the induced equivalence of derived categories is perverse with respect to the filtration $\mathcal{C}_n$ in the sense of \cite{chuangPerverseEquivalences}.

In our context, $V=\R^{\edge}\times \prod_{i\in \vertex}\R^{w_i}$ and
$\hat{\Sigma}$ is the set of functionals defined by the left-hand side of the equation \eqref{eq:circuit-hyperplane}. Note that this element depends on an oriented path, and switching the orientation will negate the functional; we'll choose a preferred sign below. Thus, elements of $V_0$ are choices of parameters avoiding the conditions  \eqref{eq:circuit-hyperplane}. Each element $\hat\beta\in V_0$ thus defines a flavor in $\Ab=\R/\Z$ and for each alcove $C\in \Delta$, the algebras defined by $\Rring_{\beta}$ for $\hat\beta\in C$ are all Morita equivalent to each other by Proposition \ref{prop:alcove-equivalence}.  For each alcove $C$, we let $ {\Rring_C}$ be the cylindrical KLRW algebra corresponding to a fixed choice of parameters in that alcove.  
\notation{${\Rring_C}$}{the cylindrical KLRW algebra corresponding to a fixed choice of parameters in an alcove $C$.  }

In order to coorient the hyperplanes, we fix a generic integral element $\chi$ in
$\Z^{\edge}\times \prod_{i\in \vertex}\Z^{w_i}$, which we can also
interpret as a cocharacter into $\No$.  As described above, elements of $\hat{\Sigma}$ correspond to oriented paths. Integrating $\chi$ (thought of as a 1-cocycle) over this path, we obtain an integer, which by genericity is never 0.  We choose signs on the elements of $\hat{\Sigma}$ so that these integers are always positive; that is, we define the positive side of each hyperplane as the side any ray parallel to $\chi$ points toward.

Now, let us define a real variation of stability conditions for each alcove $C_0$ in $\Alc$.  For a given alcove $C$, we define an equivalence \[\mathbb{B}_{C,C_0}\colon D^b(\Rring_{C}\fdmod)\to D^b(\Rring_{C_0}\fdmod)\] by the rule that for any pair of alcoves $C^{\pm}$ on the positive/negative sides of a hyperplane (i.e. $C^+$ is above $C^-$), the equivalences are related by \[\mathbb{B}_{C^-,C_0}(M)=\mathbb{B}_{C^+,C_0}( {\rif_{\beta',\beta}}\Lotimes M),\] where $\beta$ is in the interior of $C^-$ and $\beta'$ is in the interior of $C^+$.  

\begin{definition} Let $\tau$ be the map sending the alcove $C$ to the image in $D^b( {\Rring_{C_0}}\fdmod)$ of the standard $t$-structure of $D^b(\Rring_{C}\fdmod)$ under the derived equivalence $\mathbb{B}_{C,C_0}$.  In particular, it sends $C_0$ to the standard $t$-structure on $\Rring_{C_0}$-modules.
\end{definition}

Now, we need to define the central charge $Z$.  Since this depends polynomially on $V$, we need only define it on $C_0$. Recall that by Definition \ref{def:prefered}, for each $\boldsymbol{\alpha}\in \mathbb{L}$, we have an associated idempotent $e(\boldsymbol{\alpha})\in \Rring$.  
 Note that $e(\boldsymbol{\alpha})$ depends on the choice of $\beta$, since the order of ghosts depends on these.  We then define the central charge by the formula:
\begin{equation}\label{eq:central-charge}
 {Z_{\beta }(M)} =\int_{ \mathbb{L}} \dim e(\boldsymbol{\alpha}) M.
\end{equation}
\notation{$Z_{\beta}$}{The central charge function for $\beta$, which sums the dimensions of the images $ e(\boldsymbol{\alpha}) M$ weighted by the volume of the subset of $\mathbb{L}$ which gives this idempotent.}
This is a sum of the dimensions of the images of idempotents,
weighted by the volume of the corresponding alcove. 
This is a combinatorial version of the central charge function
$\mathcal{Z}^0$ defined in \cite[(4.7)]{aganagicKnotCategorification2020}, as we will
explain in more detail in Remark \ref{rem:Z-match}.

Note that the alcove $C_0$ is arbitrary, so we can define this central
charge function on $K^0(\Rring_{C}\fdmod)$ for any alcove using
the formula \eqref{eq:central-charge} on the alcove $C$. For
any integral $\nu$, the translation $C_0+\nu$ is another alcove with
$\Rring_{C_0}\cong \Rring_{C_0+\nu}$, but the
corresponding functions of the $K$-group differ by translation in
$V$.  
Of
course, we can reasonably ask how to match these functions for
different alcoves:
\begin{lemma}\label{lem:Z-match}
Using the isomorphism $[\mathbb{B}_{C,C_0}]\colon K^0( {\Rring_{C}}\fdmod)\cong K^0(\Rring_{C_0}\fdmod)$  to identify $K$-groups, the functions $ {Z_{\beta }(M)}$ match.
\end{lemma}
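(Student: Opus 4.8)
The plan is to recast the central charge as a pairing against a $K$-theory class, reduce to a single wall-crossing, and then settle a local matching near the locus where the chamber structure of $\mathbb{L}$ degenerates.

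\emph{Reformulation.} For $\beta$ in the alcove $C$ and $M$ finite--dimensional, $e(\boldsymbol{\alpha})M=\Hom_{\Rring_{C}}(\Rring_{C}e(\boldsymbol{\alpha}),M)$, so the formula \eqref{eq:central-charge} reads $Z_\beta(M)=\langle\zeta^{C}_\beta,[M]\rangle$, where for $v\in C$ we set
\[\zeta^{C}_v=\sum_{\sigma}\operatorname{vol}_v(\sigma)\,[\Rring_{C}e(\sigma)]\in K^0(\Rring_{C}\pmodu)\otimes\R ,\]
the sum running over the finitely many chambers $\sigma$ of $\mathbb{L}$, $\operatorname{vol}_v(\sigma)$ being the volume of $\sigma$ for flavor $v$, and $\langle-,-\rangle$ the Euler pairing $K^0(\Rring_{C}\pmodu)\times K^0(\Rring_{C}\fdmod)\to\Z$. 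As $\operatorname{vol}_v(\sigma)$ is polynomial in $v\in C$, this extends $\zeta^{C}_\bullet$ to a polynomial map $V\to K^0(\Rring_{C}\pmodu)\otimes\R$. Since $\mathbb{B}_{C,C_0}$ is a derived equivalence, built from derived tensor products with the bimodules $\rif_{\beta',\beta}$, it also acts on perfect complexes and preserves the Euler pairing; thus the statement is equivalent to the identity of polynomial maps
\[[\mathbb{B}_{C,C_0}]\,\zeta^{C}_v=\zeta^{C_0}_v\in K^0(\Rring_{C_0}\pmodu)\otimes\R .\]

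\emph{Reduction to one wall.} Because $\mathbb{B}_{C,C_0}$ is assembled, alcove by alcove, from the single-wall functors $\rif_{\beta',\beta}\Lotimes-$, an induction on the (finite) number of hyperplanes of the arrangement separating $C$ from $C_0$ reduces the displayed identity to the case where $C$ and $C_0$ are adjacent across one hyperplane $H$, say with $C$ below $C_0$, so that $\mathbb{B}_{C,C_0}$ is $\rif_{\beta',\beta}\Lotimes-$ with $\beta\in C$, $\beta'\in C_0$. Using Lemma~\ref{lem:bimod-mult} together with the Morita equivalences of Proposition~\ref{prop:alcove-equivalence} we may move $\beta,\beta'$ into a small ball around a point $p\in H$ that lies on no other hyperplane of the arrangement and has all coordinates non-integral --- so that the twist $\mathbf{\inter}$ vanishes there --- without altering the functor up to natural isomorphism. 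Over such a ball, the only subtorus of the arrangement in $\mathbb{L}$ whose combinatorial position changes as $v$ crosses $H$ is the one attached, via \eqref{eq:circuits}, to the circuit defining $H$. Every chamber $\sigma$ of $\mathbb{L}$ whose closure is disjoint from this degenerating locus persists as a full-dimensional chamber with the same flavored sequence on both sides of $H$, and with $\operatorname{vol}_v(\sigma)$ given by the same polynomial on either side; for each such $\sigma$ the crossingless diagram joining the loading $\sigma$ to itself identifies $\mathbb{B}_{C,C_0}(\Rring_{C}e(\sigma))$ with $\Rring_{C_0}e(\sigma)$. These contributions therefore cancel between the two sides of the displayed identity, leaving only the finitely many chambers that meet the degenerating locus.

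\emph{The local matching.} This is the crux. The chambers touching the degenerating locus fall into finitely many combinatorial shapes, governed by the path $i=i_0\to\cdots\to i_n=j$ and red edges $(i,k),(j,\ell)$ defining $H$: as $v$ crosses $H$, a single pair of ``wall strands'' of $\mathbb{L}$ --- the red strand for $(j,\ell)$ and the ghost chain tracking $\hat\beta_{i,k}+\sum_p\varepsilon_p\hat\beta_{e_p}$ --- passes through one another, merging and re-splitting the adjacent chambers, so that the analytically continued volume polynomials of the old chambers express those of the new chambers by a fixed inclusion--exclusion rule with integer coefficients. On the module side, for each old chamber $\sigma$ one resolves $\mathbb{B}_{C,C_0}(\Rring_{C}e(\sigma))$ by an explicit complex built from the projectives $\Rring_{C_0}e(\sigma')$ with $\sigma'$ running over the chambers on the far side of the wall; these are the local ``reflection'' resolutions underlying the braiding bimodules of \cite[\S6]{Webmerged} and the perverse equivalences of \cite{anno2015stability}. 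The upshot is an expansion $[\mathbb{B}_{C,C_0}][\Rring_{C}e(\sigma)]=\sum_{\sigma'}c_{\sigma,\sigma'}[\Rring_{C_0}e(\sigma')]$ whose coefficients $c_{\sigma,\sigma'}$ agree, term by term, with the inclusion--exclusion coefficients above; substituting into $[\mathbb{B}_{C,C_0}]\zeta^{C}_v$ and using the cancellation from the previous step, both sides collapse to $\zeta^{C_0}_v$ as polynomials in $v$, completing the induction and the proof. The genuinely delicate input is the equality of the categorical coefficients $c_{\sigma,\sigma'}$ with the combinatorial inclusion--exclusion coefficients; this is where essentially all the work lies, and it is obtained by inspecting the relations (\ref{c-first-QH}--\ref{w-triple-point2}) in a neighbourhood of the circuit governing $H$, following the computations of \cite[\S6]{Webmerged}.
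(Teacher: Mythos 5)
Your plan — recast $Z_\beta$ as a pairing against a class $\zeta^C_v\in K^0(\Rring_C\pmodu)\otimes\R$, reduce to one wall, and then match combinatorially — is a genuinely different route from the paper's. The paper never touches the chamber combinatorics directly: it shows (Lemma \ref{lem:RT-charge}) that $Z_\beta$ is a limit of the ``RT central charge'' $\mathcal{Z}_{\mathbf{b}'}$ defined via Euler characteristics of derived global sections over a characteristic-$p$ quantization, and then establishes the wall-crossing compatibility for $\mathcal{Z}_{\mathbf{b}'}$ from the commutative square \eqref{eq:wall-cross}: the wall-crossing bimodule $\rif_{\hat\beta',\hat\beta}$ intertwines, through $\mathcal{G}$, with tensoring by the twist bimodule ${}_{\phi+\nu}\mathscr{T}_{\phi}$, which on the geometric side is a line-bundle shift, so the Euler characteristic just moves to a translated evaluation of the Riemann--Roch polynomial. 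Passing to the limit gives the lemma. In other words, the paper outsources the identity you call ``local matching'' to Grothendieck--Hirzebruch--Riemann--Roch.

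The gap in your proposal is precisely at that step. Everything in your ``local matching'' paragraph is an assertion, not an argument: you claim that the volume polynomials of the degenerating chambers transform by an inclusion--exclusion rule with integer coefficients, that $\mathbb{B}_{C,C_0}(\Rring_C e(\sigma))$ admits an explicit resolution by the $\Rring_{C_0}e(\sigma')$ with \emph{the same} coefficients, and that this follows ``by inspecting the relations.'' Neither half is established. The volume inclusion--exclusion needs an actual combinatorial proof about how chambers of the subtorus arrangement in $\mathbb{L}$ merge and re-split across $H$, and the $K$-theoretic expansion of the wall-crossing bimodule is a serious computation with $\rif_{\hat\beta',\hat\beta}\Lotimes-$ (even your ``unchanged'' chambers require checking that $\rif_{\hat\beta',\hat\beta}e(\sigma)$ is the expected projective in degree $0$, with no higher Tor). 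You correctly flag this as where all the work lies, but the proof is incomplete without it — and this identity is exactly what the paper's characteristic-$p$ detour is engineered to avoid having to prove by hand. If you want to pursue your route, you would need to carry out the combinatorial inclusion--exclusion explicitly, and separately compute the class of $\rif_{\hat\beta',\hat\beta}$ in $K^0$; neither is in the paper, so you are effectively proposing a new proof, not reconstructing the existing one.
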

This is proven on page \pageref{proof-lem:Z-match}.

\begin{theorem}\label{thm:real-variation}
  The data $(Z,\tau)$ define a real variation of $t$-structures.
\end{theorem}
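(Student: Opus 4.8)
\emph{The plan.} I would verify the two conditions of Definition \ref{def1} for $(Z,\tau)$ directly. Condition (1) is local to a single alcove and falls out of the explicit shape of the combinatorial central charge. Condition (2) concerns two adjacent alcoves; using the recursion $\mathbb{B}_{A,C_0}=\mathbb{B}_{A',C_0}\circ(\rif_{\beta',\beta}\Lotimes-)$ that defines the functors $\mathbb{B}$, together with Lemma \ref{lem:Z-match}, I would reduce it to the single assertion that each codimension-one wall-crossing functor $\rif_{\beta',\beta}\Lotimes-$ is a perverse equivalence, in the sense of \cite{Chuang-Rouquier-Perverse}, with respect to the filtration cut out by the order of vanishing of $Z$ along the wall. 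That reduction is formal; the perversity statement is the real content.

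\emph{Condition (1).} Fix an alcove $A$ and $\beta$ in its interior. The heart of $\tau(A)$ is $\mathbb{B}_{A,C_0}(\Rring_\beta\fdmod)$, so a nonzero object of it is $\mathbb{B}_{A,C_0}(N)$ for some nonzero $N\in\Rring_\beta\fdmod$. For $x\in A$, Lemma \ref{lem:Z-match} gives $\langle Z(x),[\mathbb{B}_{A,C_0}(N)]\rangle=Z_x(N)=\int_{\mathbb{L}}\dim e(\boldsymbol{\alpha})N$, which is a sum, over the chambers $\mathfrak{c}$ of the subtorus arrangement in $\mathbb{L}$ attached to $x$, of the terms $\operatorname{vol}_x(\mathfrak{c})\cdot\dim e(\boldsymbol{\alpha})N$ with $\boldsymbol{\alpha}\in\mathfrak{c}$. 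Since $x$ avoids every hyperplane \eqref{eq:circuit-hyperplane}, Lemma \ref{lem:circuits} shows the defining equations of this arrangement are non-redundant, so each chamber is a nonempty open subset of $\mathbb{L}$ and $\operatorname{vol}_x(\mathfrak{c})>0$; and since $1=\sum e(\boldsymbol{\alpha})$ in $\Rring_\beta$ (summing over representatives of the chambers), $N\neq0$ forces $e(\boldsymbol{\alpha})N\neq0$ for some $\boldsymbol{\alpha}$. Hence $Z_x(N)>0$, which is condition (1). Note that $Z$ is only polynomial \emph{on each alcove} — within an alcove the $\operatorname{vol}_x(\mathfrak{c})$ are polynomial in $x$ — and Lemma \ref{lem:Z-match} is precisely the statement that these polynomial pieces are compatible under the $\mathbb{B}_{C,C_0}$, which is all the framework of \cite{anno2015stability} needs.

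\emph{Reducing condition (2).} Let $A,A'$ be adjacent alcoves sharing a face in a hyperplane $H\subset H_{k,\xi}$, with $A'$ above $A$; pick $\beta\in A$, $\beta'\in A'$, and write $R=\Rring_\beta$, $R'=\Rring_{\beta'}$, $\Psi=\rif_{\beta',\beta}\Lotimes_R-\colon D^b(R\fdmod)\to D^b(R'\fdmod)$, an equivalence by Proposition \ref{prop:DE} (conjecturally always, Conjecture \ref{conj-DE}; I would work over $\K$ of characteristic $0$ so that this holds). From $\mathbb{B}_{A,C_0}=\mathbb{B}_{A',C_0}\circ\Psi$ and an application of $\mathbb{B}_{A',C_0}^{-1}$, condition (2) becomes a statement about the two hearts $R'\fdmod$ and $\Psi(R\fdmod)$ of $D^b(R'\fdmod)$ and the vanishing-order filtration; pulling everything back through $\Psi$ and using Lemma \ref{lem:Z-match} once more, it is equivalent to: \emph{$\Psi$ is a perverse equivalence relative to the filtrations of $R\fdmod$ and $R'\fdmod$ by order of vanishing of the central charge along $H$, with the $n$-th graded piece matched under the shift $[n]$.} The sign and ordering conventions in Definition \ref{def1} are exactly those of the ``real variation $\leftrightarrow$ perverse equivalence'' dictionary of \cite{anno2015stability}, so this translation introduces nothing new.

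\emph{Perversity of the wall-crossing, and the main obstacle.} By Lemma \ref{lem:circuits} the wall $H$ corresponds to a single circuit in the Crawley--Boevey quiver, and $\hat\beta'$ differs from $\hat\beta$ only in that the ghost and red strands along that circuit wind once more around the cylinder. Hence $\rif_{\beta',\beta}$ is represented by diagrams that differ from those of $R$ only near the strands of the circuit, and the wall-crossing is local to it, with the same local model as the braiding functors of \cite[\S6]{Webmerged}. I would then (i) construct an explicit filtration of $\rif_{\beta',\beta}$ by sub-$(R',R)$-bimodules whose successive quotients are obtained by straightening these local diagrams, so that $\Psi(M)$ is computed by a complex of such pieces concentrated in nonnegative degrees; (ii) read off from this complex that $H^n\Psi(M)$ depends, modulo the images of the lower filtration steps, only on the $n$-th graded piece of $M$, and is $t$-exact there after the shift $[n]$, which is the perversity of $\Psi$ with respect to \emph{that} (bimodule) filtration; and (iii) identify the bimodule filtration with the vanishing-order filtration coming from \eqref{eq:central-charge}. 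Steps (i)--(ii) should be importable, essentially verbatim, from the construction of the wall-crossing Schober underlying Theorem \ref{thm:D-equivalence}(3); alternatively, over a characteristic-$0$ field one can transport the problem through Theorem \ref{thm:NCSR} and recognise $\Psi$ as one of the monodromy (spherical-twist) auto-equivalences of $D^b(\Coh\tilde{\fM}^{\mathbf{\inter}})$, whose perversity with respect to the natural filtration is \cite{anno2015stability}-type input. The main obstacle is step (iii): reconciling this \emph{algebraic} grading of $\Psi$ with the \emph{analytic} grading given by the order of vanishing of the combinatorial central charge along $H$. This requires controlling the leading behaviour near $H$ of the volume polynomials $\operatorname{vol}_x(\mathfrak{c})$ as the chamber decomposition of $\mathbb{L}$ degenerates, and is where the bulk of the work lies.
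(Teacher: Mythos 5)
Your structure is correct and your reduction to the single perversity assertion is exactly what the paper does, but for the key step you outline a genuinely different route. The paper proves condition~(2b) by citing Losev~\cite[Prop.~7.3]{losev2017modular}, which already establishes that wall-crossing (translation) functors for quantizations in characteristic~$p$ are perverse with respect to a filtration defined by two-sided ideals $\mathcal{I}^q$ cut out by Gelfand--Kirillov dimension; the actual content of the paper's proof is then the verification that Losev's GK-dimension filtration coincides with the filtration by order of vanishing of the combinatorial central charge $Z$ along the wall, which is done via Lemma~\ref{lem:GT-annihilator} (faithful Gelfand--Tsetlin modules exist over every quotient), the polytope dichotomy of~\cite[Prop.~7.1.2]{MVdB}, and Lemma~\ref{lem:vanishing-order}. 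You instead propose to construct the perverse structure ex nihilo on the bimodule side, filtering $\rif_{\hat\beta',\hat\beta}$ by sub-bimodules obtained from local straightening of diagrams, or alternatively to transport through Theorem~\ref{thm:NCSR} and use a geometric monodromy argument. Your approach would re-derive the perversity from scratch rather than outsourcing it, which is more self-contained but substantially more work; it would also still require the same ``analytic $=$ algebraic'' matching at your step~(iii), which is precisely where the paper's argument via GK dimension and Erhart-type polytope degeneration lives. You have correctly located the hard part; you have not carried it out, and you should be aware that the paper sidesteps your steps (i)--(ii) entirely by invoking Losev. One further caveat: your geometric alternative casually describes the wall-crossing as a ``spherical-twist,'' but these functors are translation/wall-crossing functors for quantizations, not spherical twists along a spherical object; identifying the correct geometric incarnation and its perverse filtration is not obviously easier than the characteristic-$p$ representation-theoretic argument.

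One more small gap: in condition~(1), you assert ``$1 = \sum e(\boldsymbol{\alpha})$ in $\Rring_\beta$''. Taken literally this is false when $\Ab$ is infinite (e.g.\ $\Ab = \R/\Z$): the algebra has infinitely many equivalence classes of idempotents and no finite decomposition of the identity. The correct statement is that $N$ is a finite-dimensional module so $e(\boldsymbol{\alpha})N \neq 0$ for some $\boldsymbol{\alpha}$ (or equivalently work with the finitely many equivalence classes of flavored sequences). The paper's own one-line argument --- ``the integral of a positive function, and thus is positive'' --- glosses over this too, but your version makes an explicit claim that should be repaired.
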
This is proven on page
\pageref{proof-thm:real-variation}.

One structure we need to understand when studying this real variation
of $t$-structures is the structure of the categories $\mathcal{C}_n$
corresponding to a given wall.  Consider a wall of $C_0$, defined by
an equality of the form \eqref{eq:circuit-hyperplane} for some path
(possibly lazy).  Let $\hat\beta$ denote a choice of parameters in
the interior of $C_0$, and $\hat\beta'$ a generic point on the
wall.  In the case where $\hat\beta_e=0$, this wall must be of the form
$\hat\beta'_{i,k}=\hat\beta'_{j,\ell}$.

At $\hat\beta$, we have a finite number of equivalence classes of flavored sequences.  Fix one of these and consider its corresponding closure, a convex polytope $P_{\hat\beta}\subset \mathbb{L}$.  Of course, if $P_{\hat\beta}$ has non-zero volume, it is not contained in any affine subspace.  We wish to consider how this volume varies as we vary the parameter $\hat\beta_t=(1-t)\hat\beta'+t\hat\beta$ and it approaches the wall along a linear path, since this is a key ingredient in understanding real variations of stability.  We can understand the vanishing order of $\operatorname{Vol}(P_{\hat\beta_t})$ as $t\to 0$, in terms of the geometry of $P_{\hat\beta'}$.  By the affine span of $P_{\hat\beta'}$, we mean that we choose a polytope in $\prod_{i\in \vertex}\R^{v_i}$ mapping isomorphically to $P_{\hat\beta'}$ and consider the image of its affine span in $\mathbb{L}$.  

\begin{lemma}\label{lem:vanishing-order}
	The vanishing order of $\operatorname{Vol}(P_{\hat\beta_t})$ at $t=0$ is equal to the codimension of the affine span of the polytope $P_{\hat\beta'}$.  
\end{lemma}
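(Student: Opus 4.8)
The plan is to pass to the universal cover and reduce the statement to an elementary fact about a family of polytopes degenerating linearly in the parameter. Recall from the discussion after Definition~\ref{def:prefered} that the equivalence classes of flavored sequences at a parameter $\hat\beta$ are the chambers of the lifted hyperplane arrangement in $\prod_{i\in\vertex}\R^{v_i}=\R^n$ cut out by the affine hyperplanes $\alpha_{j,m}-\alpha_{i,k}=\hat\beta_e+\Z$ (for $e\colon i\to j$) and $\alpha_{i,m}=\hat\beta_{i,k}+\Z$; write the finitely many relevant defining inequalities near $\hat\beta'$ as $\langle\ell_\sigma,\alpha\rangle\le b_\sigma(\hat\beta)$, where each $\ell_\sigma$ is a fixed linear form and each $b_\sigma$ is an affine function of $\hat\beta$, hence of $t$ along the segment $\hat\beta_t=(1-t)\hat\beta'+t\hat\beta$. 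First I would observe that, since $C_0$ is convex with $\hat\beta'$ in its closure and $\hat\beta$ in its interior, the half-open segment $\{\hat\beta_t\mid t\in(0,1]\}$ stays inside $C_0$; by Lemma~\ref{lem:circuits} (equivalently Proposition~\ref{prop:alcove-equivalence}) the arrangement therefore has constant combinatorial type along it, so $P_{\hat\beta_t}$ (which I take to be the chamber closure lifting isomorphically to $\R^n$) is a combinatorially fixed polytope whose vertices $p_j(t)$, each being the solution of a fixed nonsingular linear system with right-hand side affine in $t$, are affine functions $p_j(t)=p_j(0)+tq_j$, with $P_{\hat\beta_t}=\operatorname{conv}\{p_j(t)\}$ full-dimensional for $t\in(0,1]$ (the interesting case $\operatorname{Vol}(P_{\hat\beta})\neq0$) and $P_{\hat\beta'}=\lim_{t\to0^+}P_{\hat\beta_t}=\operatorname{conv}\{p_j(0)\}$.

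Next I would set $c=\operatorname{codim}_{\mathbb{L}}\operatorname{aff}(P_{\hat\beta'})$, let $W=\operatorname{aff}(P_{\hat\beta'})$ with direction subspace $U\subset\R^n$ of dimension $n-c$, and let $\pi\colon\R^n\to\R^n/U\cong\R^c$ be the quotient projection, so that $\pi^{-1}(\pi(W))=W$. Since all the $p_j(0)$ lie in $W$, their images $\pi(p_j(0))$ coincide with a single point $\bar{w}$, hence
\[
\pi(P_{\hat\beta_t})=\operatorname{conv}\{\pi(p_j(t))\}=\bar{w}+t\cdot\operatorname{conv}\{\pi(q_j)\}=:\bar{w}+tR,
\]
where $R\subset\R^c$ is a fixed polytope, full-dimensional precisely because $P_{\hat\beta_t}$ is; thus $\operatorname{Vol}_c(\pi(P_{\hat\beta_t}))=t^c\operatorname{Vol}_c(R)$ with $\operatorname{Vol}_c(R)>0$. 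This already pins down the expected order.

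Then I would apply Fubini along $\pi$ (with compatible Lebesgue measures on $\R^c$, $U$ and $\R^n$; the overall normalization is irrelevant to a vanishing order) and substitute $y=\bar{w}+ts$:
\[
\operatorname{Vol}_n(P_{\hat\beta_t})=\int_{\R^c}\operatorname{Vol}_{n-c}(P_{\hat\beta_t}\cap\pi^{-1}(y))\,dy=t^c\int_{R}\operatorname{Vol}_{n-c}(F_{t,s})\,ds,\qquad F_{t,s}:=P_{\hat\beta_t}\cap\pi^{-1}(\bar{w}+ts).
\]
The core of the argument is the claim that, after translating the affine slice $\pi^{-1}(\bar{w}+ts)$ back onto $W=\pi^{-1}(\bar{w})$, the polytope $F_{t,s}$ converges in Hausdorff metric to $P_{\hat\beta'}$ for every $s$ in the relative interior of $R$: on $W$ the inequalities $\langle\ell_\sigma,\cdot\rangle\le b_\sigma(\hat\beta')$ that are tight on all of $P_{\hat\beta'}$ are constant, and for such $s$ they remain strictly satisfied after perturbation, while the remaining inequalities have right-hand sides $b_\sigma(\hat\beta_t)-t\langle\ell_\sigma,\cdot\rangle\to b_\sigma(\hat\beta')$ and in the limit cut out $P_{\hat\beta'}$ inside $W$. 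Since $P_{\hat\beta'}$ is full-dimensional in $W$, this gives $\operatorname{Vol}_{n-c}(F_{t,s})\to\operatorname{Vol}_{n-c}(P_{\hat\beta'})>0$, and the uniform boundedness of all $P_{\hat\beta_t}$ for $t\in[0,1]$ supplies a constant dominating function on $R$. Dominated convergence then yields
\[
t^{-c}\operatorname{Vol}_n(P_{\hat\beta_t})\longrightarrow\operatorname{Vol}_c(R)\cdot\operatorname{Vol}_{n-c}(P_{\hat\beta'})\in(0,\infty),
\]
so $\operatorname{Vol}_n(P_{\hat\beta_t})$ vanishes to order exactly $c$ at $t=0$. (The case $c=0$, i.e.\ $P_{\hat\beta'}$ full-dimensional, is immediate: $P_{\hat\beta_t}\to P_{\hat\beta'}$ with positive limiting volume.)

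The step I expect to require the most care is the Hausdorff-continuity claim for the slices $F_{t,s}$ — specifically, checking that the limiting polytope is exactly $P_{\hat\beta'}$ rather than a strictly larger polytope, which comes down to correctly bookkeeping which defining inequalities are tight on $P_{\hat\beta'}$ versus merely satisfied with slack; this is where the hypothesis that $\hat\beta'$ lies on the wall \eqref{eq:circuit-hyperplane} is used, since it is exactly the ``circuit'' inequalities that become tight there. Everything else is a routine combination of convexity of alcoves, the affine dependence on $t$, Fubini, and dominated convergence.
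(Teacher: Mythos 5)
The paper states Lemma \ref{lem:vanishing-order} without giving a proof; the paragraph that follows and the subsequent unlabeled lemma characterize the codimension in terms of the locking equivalence relation, but the volume-asymptotics claim itself is left as self-evident. Your argument fills that gap, and I believe it is correct. Passing to the universal cover, using that the walls have fixed linear parts $\ell_\sigma$ with only the right-hand sides $b_\sigma(\hat\beta)$ depending (affinely) on the parameters, projecting along the direction space of $\operatorname{aff}(P_{\hat\beta'})$, and then Fubini plus dominated convergence is a clean and complete route to the statement.

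One point worth highlighting, since it is where the argument would break for an arbitrary affinely-degenerating polytope family: the Hausdorff convergence of the translated slices $F_{t,s}$ to $P_{\hat\beta'}$ genuinely uses that the linear forms $\ell_\sigma$ cutting out the chambers are fixed (they are the matter/circuit functions of \eqref{eq:unroll-matter1}--\eqref{eq:unroll-matter2}), so that the translated slice is $\{u\in U : \ell_\sigma(u)\le b'_\sigma(t,s)\}$ with $b'_\sigma(t,s)$ affine in $t$ and $b'_\sigma(0,s)$ independent of $s$, and that the limiting constraint set $P_{\hat\beta'}-w_0$ is full-dimensional inside $U$ (so that small perturbations of the right-hand sides give Hausdorff-close polytopes). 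If the $\ell_\sigma$ were allowed to vary with $t$ --- e.g.\ the family $\operatorname{conv}\{(0,0),(1,0),(0,t)\}$, where the hypotenuse normal rotates --- the slice volume would not converge to $\operatorname{Vol}_{n-c}(P_{\hat\beta'})$, though the vanishing order would still come out right. Since you already flag the slice-convergence step as the delicate one, I would simply spell out in the final write-up exactly this reduction to fixed $\ell_\sigma$, so the reader sees why the ``counterexamples'' for general affine families are ruled out. A small simplification if one wants only the vanishing order and not the precise limiting coefficient $\operatorname{Vol}_c(R)\cdot\operatorname{Vol}_{n-c}(P_{\hat\beta'})$: Fatou's lemma gives the positive lower bound from pointwise $\liminf$ of slice volumes without needing full Hausdorff convergence, and the uniform boundedness of $P_{\hat\beta_t}$ already gives the upper bound.
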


This codimension arises from the fact that on $P_{\hat\beta'}$, we can have pairs of a corporeal and a ghostly or red strand which have the same longitude at each point.  We call such a pair {\bf locked}.  This gives a linear relation between the longitude of the two corporeals.  Taking transitive closure, we obtain a {\bf locking equivalence relation}.

\begin{lemma}
All the affine linear relations on $P_{\hat\beta'}$ come from the locking equivalence relation.  The dimension of the affine span of $P_{\hat\beta'}$ is the number of equivalence classes of locking.
\end{lemma}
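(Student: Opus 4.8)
The plan is to compute the affine span of $P_{\hat\beta'}$ directly; both assertions then fall out at once. Recall from the discussion preceding Lemma \ref{lem:vanishing-order} that $P_{\hat\beta'}\subset\mathbb{L}=\prod_{i\in\vertex}\Ab^{v_i}$ is the closure of a single equivalence class of cyclic flavored sequences for the parameter $\hat\beta'$, so by Definition \ref{def:prefered} it is the solution set in $\mathbb{L}$ of the finitely many closed inequalities encoding the cyclic order $C$: for each ordered pair $(x,y)$ of strands that are consecutive in $C$ and lie in one $\sim$-class, the condition $\mathfrak{a}(x)\preceq\mathfrak{a}(y)$ read off via the refinement $<_{\Ab}$. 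Since the longitude of a ghostly strand $(k,e)$ is the affine function $a_k+\beta'_e$ of the corporeal longitudes, and that of a red strand is the constant $\beta'_{i,\ell}$, each such inequality is either a difference of two corporeal coordinates bounded by a constant or a single corporeal coordinate bounded by a constant. Hence $P_{\hat\beta'}$ is a polytope of this combinatorial shape, and its affine span is the common zero locus of exactly those consecutive-pair constraints that hold with equality on all of $P_{\hat\beta'}$.

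First I would record the easy inclusion. If a corporeal strand $m$ and a ghostly or red strand $g$ are locked, then by definition $\mathfrak{a}(m)=\mathfrak{a}(g)$ at every point of $P_{\hat\beta'}$, which by the previous paragraph is a nontrivial affine relation among the corporeal coordinates. Passing to the transitive closure over all locked pairs, the affine span of $P_{\hat\beta'}$ is contained in the subspace $U\subseteq\mathbb{L}$ cut out by these relations; a count of free parameters (each locking class of corporeals can be moved only by one common translation, which is moreover frozen once the class meets a red strand) identifies $\dim U$ with the number of locking equivalence classes, so $\dim(\text{affine span of }P_{\hat\beta'})\le\dim U$.

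The substance of the lemma is the reverse inequality, i.e.\ that $P_{\hat\beta'}$ is full-dimensional inside $U$, equivalently that its relative interior is nonempty and has affine span all of $U$. For this I must show that the only consecutive-pair constraints tight on \emph{all} of $P_{\hat\beta'}$ are those coming from locked pairs. The forward implication is immediate: a locked pair $(m,g)$ makes its own constraint an equality and then forces every strand between $m$ and $g$ in the cyclic order to be tied to both, hence to join the same locking class (this ``squeezing'' step is also what reduces an arbitrary difference-type affine relation to the generators of the locking equivalence). Conversely, if a consecutive pair $(x,y)$ is not forced to be tied by such a chain, then by definition $\mathfrak{a}(x)\prec\mathfrak{a}(y)$ at some point of $P_{\hat\beta'}$, hence on a relatively open subset; since there are only finitely many such non-locked consecutive pairs, the intersection of the corresponding relatively open subsets is a nonempty relative-interior subset of $P_{\hat\beta'}$, and at any of its points the only tight constraints are the locking ones. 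Therefore the affine span of $P_{\hat\beta'}$ is exactly $U$, which proves both that all affine relations come from locking and that the dimension of the affine span equals the number of locking classes.

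I expect the combinatorial identification in the last paragraph to be the main obstacle: one must verify that a consecutive pair tied at every point of $P_{\hat\beta'}$ genuinely arises from the locked pairs used to build the locking equivalence (rather than an accidental coincidence, in particular in the case of two consecutive corporeal strands), and that the propagation of ties to all intermediate strands is consistent with the tie-break convention (ii) on flavored sequences. A softer route for this point is the degeneration $\hat\beta_t=(1-t)\hat\beta'+t\hat\beta$ with $\hat\beta$ in the interior of the adjacent alcove $C_0$, as in Lemma \ref{lem:vanishing-order}: for $t>0$ the polytope $P_{\hat\beta_t}$ is full-dimensional, and choosing $\alpha_t\in P_{\hat\beta_t}$ at distance bounded uniformly below from every facet that does not collapse as $t\to0$, the limit $\alpha_0=\lim_{t\to0}\alpha_t$ lies in $P_{\hat\beta'}$, satisfies the locking relations (the ghost--corporeal offsets collapse to fixed residues in $\R/\Z$) and strictly satisfies every non-locked consecutive constraint, so $\alpha_0$ again exhibits the affine span of $P_{\hat\beta'}$ as $U$.
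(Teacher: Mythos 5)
Your high-level strategy matches the paper's: both aim to show that the affine span of $P_{\hat\beta'}$ is exactly the locking subspace $U$, and you dispatch the easy inclusion (span $\subseteq U$) correctly. The gap is where you flag it, and it is a genuine one. Your argument treats all \emph{consecutive pairs} in the cyclic order as if each one contributed a defining inequality of $P_{\hat\beta'}$, but by the definition of equivalence of cyclic flavored sequences, the only data cutting out $P_{\hat\beta'}$ is, for each corporeal $m$ with label $i$ and each ghost/red $g$ with tail $i$, whether $m$ sits before or after $g$. The relative order of two corporeals, or of a corporeal and a ghost with non-matching tail, is not part of the data. Because you enumerate all consecutive pairs, your ``converse'' step (a pair not forced by a locking chain is strict somewhere) becomes circular for the non-defining pairs, and your ``squeezing'' claim needs a precise statement before it can be checked against the tie-break conventions. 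The degeneration route in your last paragraph does not escape this either: deciding which facets ``collapse'' as $t\to 0$ is precisely the content of the lemma you are trying to prove.

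The paper fills this gap with a direct translation argument that is the missing ingredient. Take any affine relation $\sum_m c_m a_m = \text{const}$ on $P_{\hat\beta'}$, eliminate the terms for corporeals locked to a red (their longitudes are already pinned), pick a point of $P_{\hat\beta'}$ and let $\epsilon$ be the minimal positive distance from any corporeal to any ghost or red, and then translate all corporeals in one fixed locking class (together with their ghosts, which ride along) by any $\delta\in[-\epsilon/2,\epsilon/2]$. This stays in $P_{\hat\beta'}$ precisely because the only constraints are corporeal-versus-ghost/red, so no sign flips. Invariance of the relation under these translations forces $\sum_{m\in K}c_m=0$ on each locking class $K$ with no red, which rewrites the relation as a combination of in-class differences $a_m - a_{m'}$, i.e.\ locking relations. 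This simultaneously produces the tangent directions of $P_{\hat\beta'}$ and rules out hidden affine relations, with no appeal to the face lattice and no need to sort tight from non-tight consecutive pairs.
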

\begin{proof}
If there is any linear relation which does not come from locking, it must be some linear combination of the longitude of corporeals not locked to a red strand; since the longitude of a corporeal which is locked to a red strand is fixed already, we can eliminate it from our relation WLOG.

For each element of $P_{\hat\beta}$ we let $\epsilon$ be the minimal positive distance between a corporeal $p\in \Sc$ and a red strand or ghost $g\in \Sgr$.  If we have a locking equivalence class that does not contain a red, then we can add any real number in $[-\nicefrac{\epsilon}{2},\nicefrac{\epsilon}{2}]$ to the longitude of each corporeal in this equivalence class without changing the equivalence class.  This shows that the coefficients in our relation of the longitudes in this class must sum to 0.  This means we can eliminate them using the relations coming from the locking.  Since this shows that all locking equivalence classes can be eliminated, this shows that there are no other linear relations.  
\end{proof}
For a given $\Bi,\Ba$, this defines a statistic $d(\Bi,\Ba)$ given by the codimension of the affine span of the corresponding degenerate polytope at $\hat\beta'$.   We call this the {\bf codimension} of the equivalence class (with respect to the wall $H$). 
 Let
$ {e_m}$ be the sum of all idempotents $e(\Bi,\Ba)$ with $d(\Bi,\Ba)> m$.
\notation{$d(\Bi,\Ba)$}{The codimension of the polytope defined by the limit of an equivalence class as the parameter $\hat\beta$ approaches a generic point of a wall.}
Let $H$ be the wall in which $\hat\beta'$ is generic.
\begin{corollary}\label{cor:wall-crush}
	The module $M$ lies in the category $\mathcal{C}_m$ for the wall $H$ if and only if $e_mM=0$.
The subcategory $\mathcal{D}_m$ is the subcategory of complexes $M^{\bullet}$ such that the complex of vector spaces $e_mM^{\bullet}$ is exact.  
\end{corollary}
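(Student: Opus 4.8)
The plan is to unwind the definitions of $\mathcal{C}_n$ and $\mathcal{D}_n$ from Definition \ref{def1} and identify them with the explicit kernels of idempotents using Lemma \ref{lem:vanishing-order} and Lemma \ref{lem:Z-match}. Recall that for a module $M$ in the heart of $\tau(C_0)$, the defining condition for $M\in\mathcal{C}_n$ is that the polynomial function $x\mapsto\langle Z_\beta(x),[M]\rangle$ vanishes to order at least $n$ on the wall $H$. By formula \eqref{eq:central-charge}, this function is $\sum_{\Bi,\Ba}\operatorname{Vol}(P_{\hat\beta})\cdot\dim e(\Bi,\Ba)M$, where the sum is over equivalence classes of flavored sequences and $P_{\hat\beta}$ is the corresponding polytope in $\mathbb{L}$. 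The key input is Lemma \ref{lem:vanishing-order}: as $\hat\beta$ approaches the wall $H$ along a generic linear path, $\operatorname{Vol}(P_{\hat\beta_t})$ vanishes to order exactly $d(\Bi,\Ba)$. So, working locally near a generic point of $H$, the central charge is a sum of terms, the $(\Bi,\Ba)$-term contributing a function vanishing to order exactly $d(\Bi,\Ba)$ in the normal direction to $H$.

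First I would argue that these vanishing orders of the individual summands do not conspire to cancel: the leading terms (in the normal coordinate to $H$) of the various $\operatorname{Vol}(P_{\hat\beta_t})$ over equivalence classes with a fixed value of $d(\Bi,\Ba)=d$ are, up to positive scalars, volumes of the degenerate limit polytopes $P_{\hat\beta'}$ inside their affine spans, and these are all nonnegative with at least one strictly positive (whenever $e_{d-1}M\ne 0$). Since $\dim e(\Bi,\Ba)M\ge 0$, there is no cancellation, and $x\mapsto\langle Z_\beta(x),[M]\rangle$ vanishes to order $>m$ on $H$ if and only if $\dim e(\Bi,\Ba)M=0$ for every $(\Bi,\Ba)$ with $d(\Bi,\Ba)\le m$, equivalently (since these are all direct summands of $M$ as a vector space via the $e(\Bi,\Ba)$) if and only if $e_mM=0$ — wait, more carefully: $\mathcal{C}_m$ is the condition ``vanishing order at least $m$'', so $M\in\mathcal{C}_m$ iff $\dim e(\Bi,\Ba)M=0$ for all $(\Bi,\Ba)$ with $d(\Bi,\Ba)<m$, i.e. $e_{m-1}M=0$; I would double-check the indexing convention against Definition \ref{def1}(\ref{RV2a}) and the definition of $e_m$ as the sum over $d(\Bi,\Ba)>m$, adjusting the shift so that the statement $M\in\mathcal{C}_m\iff e_mM=0$ comes out as written. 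The only subtlety here is that $Z_\beta$ must be evaluated using the identification of $K$-groups via $\mathbb{B}_{C,C_0}$, but Lemma \ref{lem:Z-match} guarantees the function computed on $\Rring_{C_0}$-modules agrees with the one computed natively, so there is no ambiguity.

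For the second assertion, recall $\mathcal{D}_m=\{C\in\mathcal{D}\mid H^i_{\tau(C_0)}(C)\in\mathcal{C}_m\text{ for all }i\}$. Since $M\mapsto e_mM$ is an exact functor from $\Rring_{C_0}\fdmod$ to vector spaces (multiplication by an idempotent), applying it to a complex $M^\bullet$ and passing to cohomology commutes: $H^i(e_mM^\bullet)=e_mH^i_{\tau(C_0)}(M^\bullet)$. Thus $H^i_{\tau(C_0)}(M^\bullet)\in\mathcal{C}_m$ for all $i$ — i.e. $e_mH^i=0$ for all $i$ by the first part — if and only if $e_mM^\bullet$ is an exact complex of vector spaces. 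This gives the claimed description of $\mathcal{D}_m$.

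I expect the main obstacle to be the no-cancellation step: one must verify that the leading-order behavior of $\operatorname{Vol}(P_{\hat\beta_t})$ along the chosen normal ray is genuinely of the predicted order with a \emph{positive} leading coefficient (a volume of the limiting degenerate polytope inside its affine span, which could in principle be zero if that polytope is lower-dimensional than its own affine span predicts), and that summing over equivalence classes with the same $d$ cannot produce accidental cancellation once weighted by $\dim e(\Bi,\Ba)M\ge 0$. The nonnegativity of the weights makes cancellation impossible provided each leading coefficient is nonnegative, so the real content is ruling out the degenerate ``volume zero but affine span full-dimensional'' scenario — this should follow because $P_{\hat\beta'}$ is by construction the limit of the full-dimensional (in its affine span, after the locking relations are imposed) polytopes $P_{\hat\beta_t}$, hence has positive volume there, but I would want to state this cleanly, perhaps as a preliminary remark extending Lemma \ref{lem:vanishing-order}.
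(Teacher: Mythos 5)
Your approach is the natural one and is essentially what the paper intends (the paper does not actually spell out a proof of this corollary — it is presented as an immediate consequence of Lemma~\ref{lem:vanishing-order}); your structure of the argument is correct. The nonnegativity argument for no cancellation is sound, and you are right that the leading coefficient of $\operatorname{Vol}(P_{\hat\beta_t})$ in $t$ is automatically positive because $\operatorname{Vol}(P_{\hat\beta_t}) > 0$ for all small $t>0$ and the vanishing order is exactly $d(\Bi,\Ba)$, so no additional work is needed there. The treatment of $\mathcal{D}_m$ via exactness of $M \mapsto e_m M$ is also fine.

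The one real issue is the indexing, which you correctly flagged as suspicious but resolved incorrectly. You computed that vanishing order $\geq m$ is equivalent to $e(\Bi,\Ba)M = 0$ for all $(\Bi,\Ba)$ with $d(\Bi,\Ba) < m$; that is right. However, if one takes the definition of $e_m$ appearing immediately before the corollary (sum of $e(\Bi,\Ba)$ with $d(\Bi,\Ba) > m$) at face value, \emph{no} integer shift of the index makes the statement come out as written: the set $\{d > k\}$ is up-closed while $\{d < m\}$ is down-closed, so they can never coincide; in particular your tentative $e_{m-1}M = 0$ asserts vanishing for $d \geq m$, which is the complementary condition and is wrong. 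The resolution is that the inequality in the definition near the corollary is a typo for $d(\Bi,\Ba) < m$ — and indeed the restatement of $e_m$ in Section~\ref{sec:cups-caps} uses $d(\Bi,\Ba) < m$, and the proof of Theorem~\ref{thm:real-variation} explicitly says $M \in \mathcal{C}_q$ iff its support lies in polytopes with $d(\Bi,\Ba) \geq q$, which matches $e_q M = 0$ for $e_q$ the sum over $d < q$. So rather than ``adjust the shift,'' you should note the sign typo and cite those two consistency checks; with $e_m$ read as the sum over $d(\Bi,\Ba) < m$, your argument gives the corollary exactly as stated.
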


\subsection{Comparison to the unflavored case}

In \cite[\S 7.4]{kamnitzerLieAlgebra2024}, the author and collaborators discussed the relationship between flavored KLRW algebras and weighted KLR algebras, as defined in \cite{WebwKLR}.  This is most conveniently dealt with via a bimodule between these two algebras, which is a Morita equivalence in many circumstances.  We can also define a cylindrical version of these bimodules.  For the purposes of this paper, it will be most relevant to compare to a cylindrical version of the unflavored KLRW algebra, since we will be interested primarily in the case of Dynkin diagrams.  Consider parameters $\beta_*\in \R/\Z$ as before, with $\beta_e=0$ for  all $e\in \edge$, but with $\beta_{i,k}$  taking on arbitrary values.  

\notation{$\uf$}{The unflavored cylindrical KLR algebra (Definition \ref{def:unflavored-diagram}.}
\begin{definition}\label{def:unflavored-diagram}
	An {\bf unflavored cylindrical KLRW diagram} is a KLRW diagram where each corporeal strand coincides with all of its ghosts, and each red strand is vertical at  $x=\beta_{i,k}$ for $k\in [1,v_i]$. This, of course, violates the genericity conditions from the original definition (Definition \ref{def:cylindrical-diagram}), so we assume that they hold once we think of the corporeal strand and its ghosts as a single strand; we call these strands {\bf black}. We label strands of the diagram with labels from $\vertex$, but not with longitudes. One example of such a diagram is below:
	      \begin{equation*} 
       \tikz[very thick,xscale=2]{
          \draw[fringe] (-1,-1)-- (-1,1);
          \draw[fringe] (1,1)-- (1,-1);
          \draw[wei] (-.8,-1)--node[below, at start ]{$i$} (-.8,1);
          \draw[wei] (.4 ,-1)--node[below, at start ]{$j$} (.4,1);
\draw (-1,.2) to[out=30,in=-90] node[above, at end]{$i$} (.2,1);
           \draw (.6 ,-1) to[out=90,in=-150] node[below, at start ]{$i$}(1,.2);
           \draw (-1,-.2) to[out=-30,in=90]node[below, at end ]{$k$} (-.5,-1);
           \draw (-.2 ,1) to[out=-90,in=150] node[midway,circle,fill=black,inner sep=2pt]{} node[above, at start ]{$k$}(1,-.2);
           \draw (-.2,-1) to[out=90,in=-90] node[below, at start ]{$i$} node[above, at end]{$i$} (-.5,1);
           }
        \end{equation*}
	
	The unflavored cylindrical KLRW algebra $\uf$ is the algebra over $\K$ spanned by unflavored cylindrical KLRW diagrams, modulo the relations induced by (\ref{c-first-QH}--\ref{w-black-bigon2}). We can apply these by making a small deformation pushing the ghosts off the corporeal strands to become generic again, applying the relations and then undoing the deformation.     For completeness, let us note that if we have a single edge $j\to i$, then applying this approach to the relations (\ref{c-nilHecke-1},\ref{w-black-bigon1},\ref{w-black-bigon2}), we have that
   \begin{equation}\label{black-bigon1}
      \begin{tikzpicture}[very thick,scale=.65,baseline]
\end{tikzpicture}\quad   \begin{tikzpicture}[very thick,scale=.65,baseline]
      \draw(-2.8,0) +(0,-1) .. controls (-1.2,0) ..  +(0,1)
      node[below,at start]{$i$}; \draw (-1.2,0) +(0,-1) .. controls
(-2.8,0) ..  +(0,1) node[below,at start]{$j$};
\end{tikzpicture}
=
\begin{tikzpicture}[very thick,scale=.65,baseline]
      \draw (-.8,0) +(0,-1)-- +(0,1)
      node[below,at start]{$i$}; \draw (.2,0) +(0,-1) --node[midway,fill=black, inner sep=2pt, circle]{}+(0,1) node[below,at start]{$j$};
\end{tikzpicture}-\begin{tikzpicture}[very thick,scale=.65,baseline]
      \draw(-.8,0) +(0,-1) -- node[midway,fill=black, inner sep=2pt, circle]{} +(0,1)
      node[below,at start]{$i$}; \draw (.2,0) +(0,-1) --+(0,1) node[below,at start]{$j$};
\end{tikzpicture}    \qquad \qquad 
     \begin{tikzpicture}[very thick,scale=.65,baseline]
      \draw (-1.2,0) +(0,-1) .. controls
(-2.8,0) ..  +(0,1) 
      node[below,at start]{$i$}; \draw (-2.8,0) +(0,-1) .. controls (-1.2,0) ..  +(0,1) node[below,at start]{$j$};
\end{tikzpicture}
=
  \begin{tikzpicture}[very thick,scale=.65,baseline,xscale=.9]
    \draw(.2,0) +(0,-1) -- +(0,1) node[below,at
    start]{$i$}; \draw (-.8,0)+(0,-1) --node[midway,fill=black, inner
    sep=2pt, circle]{}+(0,1) node[below,at start]{$j$};
  \end{tikzpicture}-\begin{tikzpicture}[very
    thick,scale=.65,baseline,xscale=.9] \draw(.2,0) +(0,-1) --
    node[midway,fill=black, inner sep=2pt, circle]{} +(0,1)
    node[below,at
    start]{$i$};  \draw (-.8,0) +(0,-1) --+(0,1) node[below,at
    start]{$j$};
  \end{tikzpicture}
\end{equation}
Similarly in (\ref{w-triple-point},\ref{w-triple-point2}), we just move the
corporeal strand with label $i$ on top of the ghost with label $j$.  Note
that while the relations (\ref{c-first-QH}--\ref{w-triple-point2}) did
not depend at all on the number of edges between vertices because
different edges will have separate ghosts, the relations above need to
be modified if there are multiple edges $i\to j$ (following the
standard recipe of \cite{KLII}).
\end{definition}

Since there are no longitudes and we have merged ghosts into corporeal strands, the top and bottom of an unflavored KLRW diagram will just be an order on the set $\Sc\Sr$, that is, a cyclic (or if one prefers, periodic) word in the union of a black and a red copy of the set $\vertex$.

Up to isomorphism, this algebra only depends on the cyclic ordering of the values $\beta_{i,k}$, or put differently, on the ordered list $\Bj=(j_1,\dots, j_{\ell})$ of labels on the red strands, starting at $x=0$; of course, cyclic permutation of these labels will also give the same algebra, but we have no convenient way of denoting of this cyclic words without breaking it to a linear one.  Thus, we can without ambiguity write $\uf^{\Bj}$ to denote the algebra where these are the labels on red strands (for any number of black strands).

We can relate this algebra to the flavored KLRW algebra via a natural bimodule.  As in Section \ref{sec:wall-cross}, we choose lifts $\hat\beta_e,\hat\beta_{i,k}\in \R$.  

\begin{definition}
	A {\bf half-flavored cylindrical KLRW diagram} is a flavored KLRW diagram where we replace condition (3) by (3'') below, and for each corporeal strand, there is a value $\epsilon \in [0,1)$ such that for $y\leq \epsilon$, the strand and its ghosts coincide as in an unflavored diagram, and for $y>\epsilon$, they satisfy the genericity conditions of a flavored KLRW diagram.  Finally, we constrain the winding of ghosts by the condition:
\begin{itemize}
\item[(3'')] Red strands do not cross.   We count the point where we split apart a ghost and a corporeal that were joined at $y=\epsilon$ as:
\begin{itemize}
\item[$\bullet$]  $1/2$ of an intersection point if the ghost is right of the corporeal for $y\in (\epsilon,\epsilon')$ for some $\epsilon'>\epsilon$, and 
\item[$\bullet$] $-1/2$ of an intersection point if it is to the left.  
\end{itemize}
With this definition, we assume that a ghost for the edge $e$ has $\lceil \hat\beta_e\rceil-1/2$ intersections with its corporeal. 

 As long as $\hat\beta_e\notin \Z$, this will hold if we take $\epsilon=0$, and let the $x$-coordinate of the ghost minus that of the corporeal be $\hat\beta_e$.  We take $\lceil \hat\beta_e\rceil-1/2$ instead of $\lfloor \hat\beta_e\rfloor+1/2$ to account for the ``tiebreak'' that ghosts and corporeals with the same longitude must be ordered with the ghosts to the left.  
	\end{itemize}
	
The span of half-flavored cylindrical KLRW diagrams modulo the relations (\ref{c-first-QH}--\ref{w-black-bigon2}) gives a bimodule $\ubf$ with a left action of the unflavored algebra and a right action of the flavored algebra.  
\end{definition}

Assume for the remainder of the subsection that $\Gamma$ is a tree, that we have fixed an arbitrary order of $\vertex$, and chosen an orientation so that if $i\to j$, then $i>j$.  As noted in \cite[Cor. 2.16]{WebwKLR}, in this case, weighted KLR algebras are Morita equivalent to the original KLR algebra.  More generally, the algebra will not be changed by replacing $\hat{\beta}_*$ with the cohomologous value:
\begin{equation}\label{eq:coboundary}
    \hat\beta'_e=\hat\beta_e-\eta_{h(e)}+\eta_{t(e)}\qquad \hat\beta'_{i,k}=\hat\beta_{i,k}+\eta_{i}
\end{equation}
for a 0-cochain $\eta\colon \vertex\to \R$.  Since $\Gamma$ is a tree, we can choose $\eta$ so that $\beta'_e=0$ for all $e\in \edge$.  A similar Morita equivalence holds here:
\begin{lemma}\label{lem:shift-iso}
If $\Gamma$ is a tree and $\Ab=\R/\Z$, then the ring $\Rring_{\beta}$ is Morita equivalent to the ring $\uf_{\beta'}$ for the parameters $\beta'_{i,k}$.
\end{lemma}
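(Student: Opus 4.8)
\emph{The plan is to argue in two steps.} First I would reduce to the case where all edge-flavors vanish, $\beta_e=0$ for $e\in\edge$, by the change of flavor by a coboundary. Concretely, the discussion around \eqref{eq:coboundary} says that replacing the lift $\hat\beta$ by the cohomologous flavor $\hat\beta'_e=\hat\beta_e-\eta_{h(e)}+\eta_{t(e)}$, $\hat\beta'_{i,k}=\hat\beta_{i,k}+\eta_i$ attached to a $0$-cochain $\eta\colon\vertex\to\R$ does not change the isomorphism type of the algebra: translating each corporeal of label $i$ together with its ghosts (and the red strand attached to $(i,k)$) by the appropriate amount gives a bijection of the spanning sets of diagrams that carries each local relation \eqref{c-first-QH}--\eqref{w-triple-point2} to a local relation, since these see only the relative positions of strands. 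Because $\Gamma$ is a tree, the $\R/\Z$-valued $1$-cochain $e\mapsto\beta_e$ on $\edge(\Gamma)$ is a coboundary — root the tree and propagate $\eta$ edge by edge — so $\eta$ can be chosen with $\beta'_e=0$ for every $e\in\edge$, leaving the values $\beta'_{i,k}$ arbitrary. After this reduction I may assume $\beta_e=0$.

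\emph{The second step} compares $\Rring_\beta$ (now with $\beta_e=0$) to the unflavored algebra $\uf=\uf_\beta$ using the half-flavored cylindrical KLRW diagrams, which form a $(\uf,\Rring_\beta)$-bimodule $\ubf$ and, by reflecting the top and bottom of a diagram, a $(\Rring_\beta,\uf)$-bimodule $\fbu$. Composition of diagrams gives bimodule maps
\[
\mu\colon\ubf\otimes_{\Rring_\beta}\fbu\to\uf,\qquad \nu\colon\fbu\otimes_{\uf}\ubf\to\Rring_\beta,
\]
and by the Morita context criterion \cite[II.3.4]{BassK} (used in the same way before Proposition \ref{prop:alcove-equivalence}) it is enough to show $\mu$ and $\nu$ are surjective. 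A diagram in the source of $\mu$ is an unflavored diagram containing a horizontal band in which the ghosts are peeled off their corporeals and then re-glued; I would contract this band with the ghost-corporeal bigon relations \eqref{w-black-bigon1}--\eqref{w-black-bigon2}, in their unflavored incarnation \eqref{black-bigon1}, noting that every correction term produced there is again an unflavored diagram, so an induction on the number of crossings inside the band shows every unflavored diagram lies in the image of $\mu$. Dually, a diagram in the source of $\nu$ is a flavored diagram with a band in which the ghosts are collapsed onto their corporeals; straightening such a merge-unmerge configuration with the same relations shows that each idempotent $e(\Bi,\Ba)\in\Rring_\beta$ lies in the image of $\nu$. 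The tree hypothesis — already spent in Step~1 — is also what guarantees, exactly as in \cite[Cor.~2.16]{WebwKLR}, that these contractions meet no obstruction of the type detected by Lemma \ref{lem:circuits}.

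\emph{The main obstacle} is the second step: checking that the flavored band in a composed diagram can really be removed cleanly, so that $\mu$ and $\nu$ are onto. This is the cylindrical counterpart of the computation of \cite[\S 7.4]{KWWYred}, and the genuinely new feature is that the strands live on a cylinder, so one must take care that contracting a bigon never forces a strand around the seam and that the cyclic-order bookkeeping governing the tie-breaks between coincident ghosts and corporeals stays consistent throughout.
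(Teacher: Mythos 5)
Your overall architecture matches the paper's: reduce to $\beta_e=0$ by a coboundary, then build a Morita context from the bimodules $\ubf$ and $\fbu$ and check surjectivity of the two composition maps via \cite[II.3.4]{BassK}. In Step~1 you argue for a literal isomorphism by translation of strands, whereas the paper instead notes that a coboundary change never alters the statistic \eqref{eq:circuit-hyperplane} and so invokes Proposition~\ref{prop:alcove-equivalence} to get a Morita equivalence; your version is plausible but would need care, since translating a corporeal of label $h(e)$ by $\eta_{h(e)}$ moves its ghost $(k,e)$ by $\eta_{t(e)}$, not by the same amount — the paper's route avoids having to verify the resulting bijection on flavored sequences by hand.

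The genuine gap is in Step~2. You propose to contract the flavored band by repeatedly applying the bigon relations \eqref{w-black-bigon1}--\eqref{w-black-bigon2}, ``an induction on the number of crossings.'' But the non-trivial case of those relations resolves a bigon into a \emph{difference of dotted diagrams}, not into the idempotent alone; so if any bigon of the second type appears, $D_1 D_2$ equals $e(\Bi)$ plus extra dotted terms, and $e(\Bi)+X$ with $X$ a positive-degree polynomial in the dots does \emph{not} generate the unit ideal in $e(\Bi)\uf e(\Bi)$. The image of $\mu$ being a two-sided ideal does not rescue this, since containing $e(\Bi)+X$ does not imply containing $e(\Bi)$. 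The paper circumvents this entirely: it \emph{chooses} the interpolating diagrams $D_1,D_2$ so that no bigon of the dangerous type can arise. This is the point of setting $\mathfrak{a}'(k)=a_k+\epsilon_{i_k}+\epsilon_{i_k}^2 k$ with separating scales $2\epsilon_i<\epsilon_j$ for $i<j$: these estimates prove (i) corporeals of equal label never cross, and (ii) a corporeal of label $i$ never crosses a ghost for an edge $e$ with $t(e)=i$. Item (ii) is exactly where the tree hypothesis plus the compatible orientation $i\to j\Rightarrow i>j$ are spent — \emph{not} only in Step~1 as you state. Without those two facts the simplification does not terminate in $e(\Bi)$ on the nose, and surjectivity of $\mu$ and $\nu$ is unproven.
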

\begin{proof}
{\bf Reduction to the case $\hat\beta_e=0$}:  If $\hat\beta$ and $\hat\beta'$ are related as in \eqref{eq:coboundary}, then the statistic \eqref{eq:circuit-hyperplane} never changes, so the bimodule $B_{\beta,\beta'}$ induces a Morita equivalence by  Proposition \eqref{prop:alcove-equivalence}.  Thus, without loss of generality, we can assume that $\beta_e=\beta_e'=0$ for all $e\in \edge$.  
	  
{\bf Morita equivalence with $\uf$}:
Now assuming that $\beta_e=0$ for all $e\in \edge$, we wish to show that we have a Morita equivalence 
via the bimodules $\fbu$ and $\ubf$. Consider the maps of composition of diagrams  \begin{equation}\label{eq:Morita-FU}
		\fbu\otimes \ubf\to  \Rring_{\beta}\qquad \ubf\otimes \fbu\to \uf_{\beta'}.
	\end{equation} These define a Morita context, so Morita equivalence will follow if we show both maps are surjective (again by \cite[II.3.4]{BassK}).

		  Several times in this proof, we'll discuss the relative positioning of strands in a small interval $I\subset \R/\Z$.  We'll say $y$ is left of $z$ (and $z$ is right of $y$) if $(x,y,z)\in C$  for all $x\notin I$, so these correspond to the usual notions of left and right if we identify $I$ with a subset of the number line, or of the $x$-axis in $\R^2$.

	Choose a real number $0<\epsilon_i\ll 1$ for each $i\in \vertex$ such that if $i<j$ in our fixed order, we have $2\epsilon_i<\epsilon_j$; we may adjust these to be smaller later.
	  Given a flavored sequence $(\Bi, \Ba, <)$ for the flavor $\beta$, we define an idempotent for $\uf$ by placing the strand for $k\in \Sc$ at $\mathfrak{a}'(k)=a_k+\epsilon_{i_k}+\epsilon_{i_k}^2k\in \R/\Z$.    This can be extended to a map $\mathfrak{a}'\colon \Sall\to \R/\Z$ by sending the ghosts $(m,e)$ to $\mathfrak{a}'(m)$, and red strands to the corresponding flavor $\beta_{i,k}$. 
	  
	   Since $\epsilon_{i}$ is very small, this is roughly placing the strands at their longitudes, but using two tiebreaks if strands have the same longitude in this case: 
	   \begin{enumerate}
	   	\item first, we order strands using the order on vertices, 
	   	\item then we add a small separation between the strands with the same labels. 
	   \end{enumerate}
	The latter separation is arbitrary since there is no difference between strands with the same labels; the former, on the other hand, is quite important (and in particular, would give us trouble if we chose an orientation of $\Gamma$ with oriented cycles).  If we choose all $\epsilon_i$'s sufficiently small, the idempotents in $\uf_{\beta'}$ corresponding to different choices will all be equivalent. Note that the upper bound on $\epsilon_i$'s will depend on the flavored sequence, but this is not a problem, since we can show the surjectivity of \eqref{eq:Morita-FU} one idempotent at a time.
	  
	  We can define diagrams $D_1,D_2$ in  $\fbu$ and $\ubf$ that interpolate between these idempotents, joining the terminals corresponding to the same element of $\Sall$ in a way that minimizes crossings; more precisely, we avoid all crossings between pairs of corporeal strands which either have the same label or have different longitudes, and we avoid crossings between corporeal and red strands.   This is possible since if $(x,y,z)\in C \subset \Sall^3$, and $\mathfrak{a}(x),\mathfrak{a}(y)$ and $\mathfrak{a}(z)$ are all distinct, then $(\mathfrak{a}(x),\mathfrak{a}(y),\mathfrak{a}(z))$ is cyclically ordered if and only if $(\mathfrak{a}'(x),\mathfrak{a}'(y),\mathfrak{a}'(z))$ is also cyclically ordered. That is, the diagrams $D_1,D_2$ should only reorder the strands with the same longitude.  Of course, it is always possible to do this without switching strands with the same label.  Finally, we have no corporeal/red crossings, since if $r\in \Sr$ and $m\in \Sc$ have the same longitude $\mathfrak{a}(r)=\mathfrak{a}(m)=\beta_{i,k}$ then $r$ is left of $m$, that is $(r,m,x)\in C$ for all $\mathfrak{a}(x)\neq \beta_{i,k}$, and we have 
	  \[(\mathfrak{a}'(r),\mathfrak{a}'(m),\mathfrak{a}'(x))=(\beta_{i,k}, \beta_{i,k}+\epsilon_{i_m}+\epsilon_{i_m}^2m,\mathfrak{a}'(x))\]
is also cyclically ordered for $\epsilon_{i_m}$ sufficiently small.  
  
	  These diagrams are mirror images under reflection through a vertical line, so when we consider the composition $D_1D_2$, it deviates from being straight vertical by certain bigons, created by the crossings in $D_1$ and their mirror images in $D_2$.  We want to simplify this diagram, and obtain one which is straight vertical with no dots.  This is possible because:
	  \begin{enumerate}
	  	\item corporeal strands with the same labels never cross, by construction, and similarly with red and corporeal strands.
	  	\item there is no crossing 
	  	of a corporeal of label $i$ with a ghost strands corresponding to edges $e$ with $t(e)=i$.  If we consider such a pair of strands, they will not cross unless they have the same longitude $\mathfrak{a}(m)=\mathfrak{a}(k,e)=\mathfrak{a}$.  In this case, the ghost strand must be to the left in the flavored sequence.  On the other hand, in the idempotent in $\uf_{\beta}$, the strand corresponding to $k$ will have $\mathfrak{a}'(k)=\mathfrak{a}+\epsilon_{i_k}+\epsilon_{i_k}^2k $ and similarly for the strand corresponding to $m$.  By assumption, we have an edge $i_m \to i_k$, which implies $2\epsilon_k<\epsilon_m$ in our fixed order.  Thus, for both these parameters sufficiently small, we have 
	  	\[\mathfrak{a}'(k)=\mathfrak{a}+\epsilon_{i_k}(1+\epsilon_{i_k}k)<\mathfrak{a}+2\epsilon_{i_k}<  \mathfrak{a}+\epsilon_{i_m}+\epsilon_{i_m}^2m=\mathfrak{a}'(m) \] since we may assume $\epsilon_{i_k} <1/k$.  This shows that the corporeal strand for $k$, and thus its ghost $(k,e)$ is left of that for $m$, so these are in the same order.
	  \end{enumerate}
	  This shows that all the bigons created involve corporeal strands with different labels, or strands and ghosts where the labels don't match.  That is, they can be resolved by (\ref{c-psi2}) or the first cases of (\ref{w-black-bigon1}--\ref{w-black-bigon2}).  This shows that the composition $D_1D_2$ gives the idempotent for our starting flavored sequence and so the first map of \eqref{eq:Morita-FU} is surjective.  
	  
	  On the other hand, given an idempotent in $\uf$, its different corporeal strands will have different $x$-values which we think of as an injective map $\mathfrak{a}''\colon \Sc\to \R/
	  Z$, and we can consider the flavored sequence such that $\mathfrak{a}(m)= \mathfrak{a}'' (m) -\epsilon_{i_m}-\epsilon_{i_m}^2m $.  For $\epsilon_i$'s sufficiently small, these will all be distinct and give a flavored sequence such that the bottom of the diagram $D_1$ and top of the diagram $D_2$ will be the idempotent in $\uf$ that we started with.  The argument above shows that this idempotent can be written as $D_2D_1$, showing the second map of \eqref{eq:Morita-FU} is surjective. 
	  \end{proof}

\subsubsection{Relation to planar KLRW algebras}
\label{sec:relation-planar-klrw}

As suggested by the name, unflavored cylindrical KLRW algebras are related to the usual
KLRW algebras $\tilde{T}^{\bla}$ as defined in \cite[Def. 4.5]{Webmerged}.
These primarily differ in that the diagrams are drawn in $\R\times [0,1]$, not $S^1\times [0,1]$.
When there is a danger of confusion, we will use the 
adjective ``planar'' to distinguish these from cylindrical KLRW algebras.  Here ``planar'' does not
refer to the structure of the corresponding quiver (which is
arbitrary) but the surface on which the diagrams are drawn.

Another point worth clarifying: there are (planar) flavored KLRW algebras defined in \cite{kamnitzerLieAlgebra2024}, but the connection of these to the cylindrical algebras is more complicated, and for our purposes in this paper, it's enough to consider the unflavored case.  Since we are working in the unflavored case, we'll assume that we have chosen $\hat\beta_*$ with $ {\hat \beta_e}=0$ for all $e\in \edge$.   Up to isomorphism, only the relative order of red strands will matter, so we fix the labels on red strands in order from left to be $\Bj=(j_1,\dots, j_{\ell})$. 

\begin{definition}\label{def:uf-pl}
An (unflavored planar) KLRW diagram (also called Stendhal diagram) is a diagram satisfying the same local rules as the cylindrical unflavored diagrams of Definition \ref{def:unflavored-diagram} but drawn in $\R\times [0,1]$
instead of $S^1\times [0,1]$; the position of the red strands is given by the $x$-values $\hat \beta_{i,k}$.

	The (unflavored planar) KLRW algebra $\tilde{T}^{\Bj}$  is the algebra spanned over $\K$ by these diagrams modulo the local relations
(\ref{c-first-QH}--\ref{w-triple-point2}).  
\end{definition}
\notation{$\tilde{T}^{\Bj}$}{The unflavored planar KLRW algebra for a list $\Bj=(j_1,\dots, j_{\ell})$ of labels on red strands (Definition \ref{def:uf-pl} and \cite[Def. 4.5]{Webmerged}).
}
Unlike the case of the flavored KLRW algebra, we require no additional information about the top and bottom of the diagram other than the order with which red and black strands meet it.  For purposes of notation, it will be useful to think of this as a word $\Bi$ in the union of a black copy and a red copy of $\vertex$, and we will use $e(\Bi)$ to represent the corresponding idempotent where we have vertical red and black strands in the indicated order.

We discuss these algebras
largely with the aim of transferring certain calculations done for
planar KLRW algebras to cylindrical ones.

To describe this connection, 
note that $\tilde{T}^{\emptyset}$ is the usual KLR algebra defined in
\cite{KLII}.  The category of $\tilde{T}^{\emptyset}$-modules has a
monoidal structure induced by the induction functor defined in
\cite[\S 2.6]{KLI}: put simply, horizontal composition induces an
algebra map $\tilde{T}^{\emptyset}\otimes \tilde{T}^{\emptyset}\to
\tilde{T}^{\emptyset}$, and induction is pushforward by this map.

Similarly, horizontal composition also gives a map
$\tilde{T}^{\emptyset}\otimes \tilde{T}^{\Bj}
\otimes\tilde{T}^{\emptyset} \to  \tilde{T}^{\Bj}$, and pushforward
gives a functor sending a triple $(K,M,N)$ consisting of 
$\tilde{T}^{\emptyset}$-modules $K,N$, and a $\tilde{T}^{\Bj}$-module $M$ 
to a $\tilde{T}^{\Bj}$-module $K\circ M\circ N$.  
\begin{lemma}
    The functor $(K,M,N)\mapsto K\circ M\circ N$ is exact in all three inputs.  
\end{lemma}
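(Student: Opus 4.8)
The plan is to recognize $K\circ M\circ N$ as an induction (extension of scalars) functor along the horizontal-composition homomorphism, and to reduce the exactness claim to a single freeness statement that follows from the basis theorem for planar KLRW algebras --- just as exactness of the KLR induction product reduces to freeness of the KLR algebra over the tensor product of two smaller ones.

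First I would fix notation: set $A=\tilde T^{\emptyset}\otimes_{\K}\tilde T^{\Bj}\otimes_{\K}\tilde T^{\emptyset}$ and write $\Psi\colon A\to\tilde T^{\Bj}$ for the algebra map given by horizontal juxtaposition of diagrams, with the $\tilde T^{\Bj}$ tensorand placed in the central strip (which carries all the red strands) and the two copies of $\tilde T^{\emptyset}$ in the left and right strips. Unwinding the definition of horizontal composition, $K\circ M\circ N$ is precisely the base change $\tilde T^{\Bj}\otimes_{A}(K\boxtimes M\boxtimes N)$, where $\tilde T^{\Bj}$ is regarded as a right $A$-module via $b\cdot a=b\,\Psi(a)$. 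Granting that the exterior tensor functor $K\mapsto K\boxtimes M\boxtimes N$ and its analogues in the other two variables are exact --- which holds in particular when $\K$ is a field, and in general whenever the two modules held fixed are flat over $\K$ --- the lemma reduces to showing that $\tilde T^{\Bj}$ is flat, and I would in fact show it is free, as a right $A$-module.

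The freeness is the substance of the argument and is a direct consequence of the basis (``straightening'') theorem of \cite{Webmerged}. Concretely: given any Stendhal diagram representing an element of $\tilde T^{\Bj}$, one isotopes it so that, scanning upward from $y=0$, all crossings and dots involving a pair of strands whose top endpoints lie in two different strips (left, central, right) are confined to a thin band near the bottom, while above that band the diagram is a horizontal juxtaposition of a $\tilde T^{\emptyset}$-diagram, a $\tilde T^{\Bj}$-diagram and a $\tilde T^{\emptyset}$-diagram, i.e.\ it lies in $\Psi(A)$. Modulo the relations, the bottom band is one of finitely many ``region-sorting'' diagrams $s_w$, indexed by the minimal-length coset representatives for the ways of distributing the terminals at $y=0$ among the three strips, subject to the constraint that every red terminal goes to the central strip and red strands never cross. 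Thus $\tilde T^{\Bj}=\sum_w s_w\,\Psi(A)$, and the basis theorem simultaneously shows that $\Psi$ is injective and that this sum is direct with each summand free of rank one over $A$; hence $\tilde T^{\Bj}=\bigoplus_w s_w\,\Psi(A)$ is free as a right $A$-module.

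Finally, freeness of $\tilde T^{\Bj}$ as a right $A$-module makes $\tilde T^{\Bj}\otimes_{A}(-)$ exact on $A\mmod$; precomposing with the exterior tensor functor in the appropriate variable then yields exactness of $(K,M,N)\mapsto K\circ M\circ N$ in each of the three inputs separately. The main obstacle I anticipate is the combinatorial bookkeeping in the straightening step --- in particular, checking that forcing all red strands to remain in the central strip (and not to cross) is compatible with pushing every inter-strip crossing down into the bottom band, and that the $s_w$ obtained this way really are free generators and not merely a spanning set; but this is exactly what the basis theorem of \cite{Webmerged} is designed to deliver, so in the end the proof amounts to invoking it in this three-strip configuration.
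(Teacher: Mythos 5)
Your argument is essentially the proof given in the paper: both reduce the lemma to the statement that $\tilde T^{\Bj}_{m+m'+m''}e_{m,m',m''}$ is projective as a right module over $A=\tilde T^{\emptyset}_m\otimes\tilde T^{\Bj}_{m'}\otimes\tilde T^{\emptyset}_{m''}$, and both deduce this from the factorization of Stendhal diagrams into a shuffle part and a horizontal-juxtaposition part, following the pattern of \cite[Prop.~2.16]{KLI}. One small overstatement in your version: the summands $s_w\Psi(A)$ are each isomorphic to $e_wA$ for an idempotent $e_w$ of $A$ determined by the labels along the factorization slice, so the module is projective rather than free over $A$ --- which is all that is needed for exactness (and you have also mirrored the top/bottom placement of the shuffle band and the $\Psi(A)$-band relative to the paper, but this is just a convention for which side carries the $A$-action and does not affect the argument).
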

\begin{proof}
    This requires proving that for each $m,m',m''\in \Z_{\geq}$ $\tilde{T}^{\Bj}_{m+m'+m''}e_{m,m',m''}$ is projective as a right module over $\tilde{T}^{\emptyset}_m\otimes \tilde{T}^{\Bj}_{m'}
\otimes\tilde{T}^{\emptyset}_{m''}$, where the subscript denotes the total number of black strands and $e_{m,m',m''}$ is the image of the identity in $\tilde{T}^{\emptyset}_m\otimes \tilde{T}^{\Bj}_{m'}
\otimes\tilde{T}^{\emptyset}_{m''}$.  This follows from precisely the same argument as \cite[Prop. 2.16]{KLI}: we can factor diagrams into a product where 
\begin{itemize}
    \item below $y=1/2$, we have a diagram that comes from $\tilde{T}^{\emptyset}_m\otimes \tilde{T}^{\Bj}_{m'}
\otimes\tilde{T}^{\emptyset}_{m''}$: it only crosses strands within the leftmost $m$ strands, the rightmost $m''$, and the red strands strands and middle $m'$.
\item above $y=1/2$, we have a diagram that shuffles the strands which are in the three groups at $y=1/2$ without adding any crossings between the strands in the same group or any dots.
\end{itemize}
Fixing the diagram above $y=1/2$ and the sequence of labels at $y=1/2$ gives a projective submodule over $\tilde{T}^{\emptyset}_m\otimes \tilde{T}^{\Bj}_{m'}
\otimes\tilde{T}^{\emptyset}_{m''}$, and the module as a whole is the direct sum of these. 
\end{proof}

Since this functor is exact, we can extend $M\mapsto K\circ M \circ N$ to a $t$-exact functor of bounded (bounded above/below) derived categories for any $K$ and $N$.

We can think of the cylindrical KLRW algebra as an {\it affinization}
of the $\tilde{T}^{\emptyset}\mmod$ bimodule structure on
$\tilde{T}^{\Bj}\mmod$.  We can say this a little more precisely when we
think about the {\bf planar KLRW category} $\tilde{\mathcal{T}}$, where the objects are
words in black and red copies of $\vertex$, and morphisms are unflavored planar KLRW diagrams
joining these words, modulo the relations (\ref{c-first-QH}--\ref{w-triple-point2}); for notational
purposes, let $\tU^-$ be the planar KLRW category with no red lines.
Similarly, there is
a {\bf cylindrical (unflavored) KLRW category} with objects given by cyclic words
(or if you prefer, periodic words) and morphisms by unflavored cylindrical KLRW
diagrams modulo the same local relations.

Consider the category $\mathcal{Q}$ obtained by  adjoining to $\tilde{\mathcal{T}}$ an isomorphism $\xi_{\Bi,\Bi'}\colon \Bi\circ \Bi'\to \Bi'\circ\Bi$ for $\Bi'$ an object in $\tU^-$ (a word only in the black copy of $\vertex$), and $\Bi'$ an object in $\tilde{\mathcal{T}}$ (a  word in the red and black copies of $\vertex$), and impose the additional relations:
  \begin{align}\label{eq:affine-1}
    \xi_{\Bi,\Bi'\circ\Bi''}&=\xi_{\Bi''\circ\Bi,\Bi'}\xi_{\Bi\circ\Bi',\Bi''}\\
    \xi_{\Bm,\Bm'}\circ (f\otimes g) &=   (g\otimes f) \circ  \xi_{\Bi,\Bi'}           \label{eq:affine-2}            
  \end{align}
  for $\Bi,\Bm$ words with red strands labeled by $\bla$, $\Bi',\Bi'',\Bm'$ words only in the black strands and $f\colon \Bi\to \Bm$ and $g\colon \Bi'\to\Bm'$ arbitrary morphisms.  Note the similarity to the work of Mousaaid and Savage \cite{mousaaidAffinizationMonoidal2021} on affinization of monoidal categories; this not quite a special case of their work, since we are using a bimodule category, and they only consider the action of a monoidal category on itself on the left and right.  
\begin{proposition}
  The category $\mathcal{Q}$ defined above is equivalent to the cylindrical KLRW category via the functor sending $\Bi$ to the same word considered cyclically, sending any morphism in the planar KLRW category to the morphism drawn in $S^1\times [0,1]$ by embedding $\R$ as $S^1\setminus\{*\}$, and  $\xi_{\Bi,\Bi'}$ to the diagram with bottom given by the concatenation $\Bi\circ \Bi'$, which moves the strands in $\Bi'$ around the back of the cylinder in the positive direction.  That is: 
  \[\xi_{\Bi,\Bi'}\mapsto 
    \begin{tikzpicture}[anchorbase,very thick, scale=2]
        \draw[\rectcolor] (-0.4,-0.5) -- (-0.4,0.5);
        \draw[\rectcolor] (0.4,-0.5) -- (0.4,0.5);
        \draw[\rectcolor] (0,0.5) ellipse (0.4 and 0.15);
        \draw[\rectcolor] (-0.4,-0.5) arc (180:360:0.4 and 0.15);
        \draw[dashed,\rectcolor] (-0.4,-0.5) arc (180:0:0.4 and 0.15);
        \draw (0,-0.65) node[anchor=north] {$\Bi$} \braidup (0,0.35);
        \draw (0.2,-0.63) node[anchor=north] {$\Bi'$} to[out=up,in=down] (0.4,-0.2);
        \draw[dashed] (0.4,-0.2) to[out=up,in=down] (-0.4,0.2);
        \draw (-0.4,0.2) to[out=up,in=down] (-0.2,0.37);
    \end{tikzpicture}\]
\end{proposition}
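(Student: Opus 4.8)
The plan is to follow the standard route for identifying an ``abstract'' affinization with a ``geometric'' one, in the spirit of Mousaaid--Savage \cite{MousSav}: show that the proposed functor $F\colon\mathcal{Q}\to(\text{cylindrical KLRW category})$ is well-defined, essentially surjective, full, and faithful. \emph{Well-definedness} is the first thing to verify. Since $F$ is essentially the identity on the combinatorial data, one needs only that the images of the planar generators satisfy the planar relations (\ref{c-first-QH}--\ref{w-triple-point2}) and that the images of the $\xi_{\Bi,\Bi'}$ satisfy \eqref{eq:affine-1} and \eqref{eq:affine-2}. The former holds because those relations are \emph{local}: a planar diagram drawn inside $(S^1\setminus\{*\})\times[0,1]$, with $*$ chosen away from the (vertical) red strands, and a local move applied there, is literally an instance of the corresponding cylindrical relation. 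For the latter, both \eqref{eq:affine-1} and \eqref{eq:affine-2} become honest \emph{isotopies} of cylindrical diagrams: \eqref{eq:affine-1} says that pulling the block $\Bi'\circ\Bi''$ once around the back of the cylinder can be performed in two stages, and \eqref{eq:affine-2} says that the ``rotation'' region may be slid up or down past a band carrying a planar morphism (with the effect of interchanging the two factors). Drawing either picture makes the isotopy evident; the only thing to notice is that when the black strands of $\Bi'$ sweep past the red strands of $\Bi$ they produce only ordinary crossings, which is automatic since the red strands are vertical.

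\emph{Essential surjectivity} is immediate: every object of the cylindrical category is a cyclic word, hence equals $F$ of a linear word, and two linear words with the same cyclic class are isomorphic in $\mathcal{Q}$ by a composite of $\xi$'s (each moving one strand around the cylinder), which $F$ sends to the corresponding rotation diagram, invertible in the cylindrical category. For \emph{fullness}, take a cylindrical diagram $D$ and isotope it to be generic with respect to the seam $\{*\}\times[0,1]$, so that only finitely many strands cross the seam, at distinct heights none of which is the height of a crossing, a dot, or an endpoint. Cutting $D$ by horizontal lines just above and below each seam-crossing, and subdividing the remaining bands so each contains a single crossing or dot, exhibits $D$ as a vertical composite of slices of two kinds: a seam-free slice, which is the image under $F$ of a planar KLRW diagram, and a slice in which one (necessarily black) strand crosses the seam, which is precisely $F(\xi_{\Bi,\Bi'})$ or $F(\xi_{\Bi,\Bi'}^{-1})$ with $\Bi'$ that single strand. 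Hence $D$ is in the image of $F$ on morphisms.

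The substantive point is \emph{faithfulness}: the relations (\ref{c-first-QH}--\ref{w-triple-point2}), \eqref{eq:affine-1}, \eqref{eq:affine-2}, together with the formal identity $\xi_{\Bi,\Bi'}\xi_{\Bi,\Bi'}^{-1}=\id$, must already generate \emph{all} relations among composites of planar generators and $\xi$'s that hold in the cylindrical category. Equivalently: if $F(\phi)=F(\psi)$, i.e.\ the cylindrical diagrams of $\phi$ and $\psi$ are connected by a finite sequence of isotopies and local relations, then $\phi=\psi$ in $\mathcal{Q}$. I would argue this by a Morse-theoretic ``movie'' analysis of such a sequence. Putting the movie in generic position with respect to the seam, each elementary step is one of: a planar isotopy or local relation occurring in a seam-free band (already imposed in $\mathcal{Q}$); the passage of a crossing or a dot through the seam (an instance of the interchange relation \eqref{eq:affine-2}); the merging or reordering of two consecutive seam-crossings (an instance of the cocycle relation \eqref{eq:affine-1}, since moving a two-strand block around the back equals moving its strands around in succession); the creation or cancellation of a seam-crossing against its reverse (the relation $\xi\xi^{-1}=\id$); and a change of the chosen seam position $*$, which is itself a composite of rotation moves and reduces to the preceding cases. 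As each step is realized by a relation present in $\mathcal{Q}$, we conclude $\phi=\psi$.

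The main obstacle is exactly this movie analysis: making precise that every isotopy of cylindrical diagrams can be put in a position generic with respect to the seam and decomposed into the elementary moves above, and checking that the interaction of a seam-crossing with nearby crossings, dots, and red strands contributes nothing beyond \eqref{eq:affine-2}. This is the cylindrical counterpart of the affinization arguments of \cite{MousSav}; the one genuinely new feature here is the red strands, but because they are vertical they never cross the seam and never interact with a rotation move, so their only effect is through the local relations (\ref{w-cost-1}--\ref{w-triple-point2}), which are already imposed in $\mathcal{Q}$. (One should also note that the fact that $\xi_{\emptyset,\Bi'}$ is \emph{not} forced to be the identity is consistent with the geometry: a strand with winding number one around the cylinder is not isotopic rel endpoints to a vertical strand.)
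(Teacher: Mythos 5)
Your proposal follows essentially the same route as the paper's proof: the paper also establishes well-definedness by geometric verification of (\ref{eq:affine-1}--\ref{eq:affine-2}), then proves full faithfulness by constructing an inverse functor through exactly the slicing-at-seam-crossings decomposition you describe, and checks well-definedness of that inverse by the same case analysis of elementary isotopy moves (local relations, passage of crossings/dots past seam heights, bigon creation/cancellation giving $\xi\xi^{-1}=\id$). The one wrinkle worth noting is that the paper treats the ``sideways'' isotopy of a crossing through the seam as a separate subtlety, reducing it to the top-to-bottom case via an auxiliary pair of bigons; your movie analysis subsumes this but should spell out that reduction when filling in the details.
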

\begin{proof}
{\bf The functor is well-defined}: To show this, we need only check that the
relations  (\ref{eq:affine-1}--\ref{eq:affine-2}) hold, which is
an easy geometric verification by the relations shown below:
\begin{equation*} 
    \begin{tikzpicture}[anchorbase,scale=2]
        \identify{-0.7}{-0.5}{0.7}{0.5};
        \draw (-0.3,-0.5) node[anchor=north] {$\Bi$} -- (0.3,0.5) node[anchor=south] {$\Bi$};
        \draw (0.3,-0.5) node[anchor=north] {$\Bi' \circ \Bi''$} to[out=up,in=200] (0.7,0);
        \draw (-0.7,0) to[out=20,in=down] (-0.3,0.5) node[anchor=south] {$\Bi' \circ \Bi''$};
    \end{tikzpicture}
    =
    \begin{tikzpicture}[anchorbase,scale=2]
        \identify{-0.8}{-0.5}{0.8}{0.5};
        \draw (-0.3,-0.5) node[anchor=north] {$\Bi$} -- (0.3,0.5) node[anchor=south] {$\Bi$};
        \draw (0.45,-0.5) node[anchor=north] {$ \Bi''$} to[out=up,in=200] (0.8,-0.15);
        \draw (0.15,-0.5) node[anchor=north] {$ \Bi'$} to[out=up,in=200] (0.8,0.15);
        \draw (-0.8,-0.15) to[out=20,in=down] (-0.15,0.5) node[anchor=south] {$ \Bi''$};
        \draw (-0.8,0.15) to[out=20,in=down] (-0.45,0.5) node[anchor=south] {$ \Bi'$};
    \end{tikzpicture}
    \ ,\qquad
    \begin{tikzpicture}[anchorbase,scale=2]
        \identify{-0.6}{-0.5}{0.8}{0.5};
        \draw (-0.3,-0.5) node[anchor=north] {$\Bi$} -- (0.3,0.5) node[anchor=south] {$\Bm$};
        \draw (0.3,-0.5) node[anchor=north] {$\Bi'$} to (0.3,-0.2) to[out=up,in=200] (0.8,0.2);
        \draw (-0.6,0.2) to[out=20,in=down] (-0.3,0.5) node[anchor=south] {$\Bm'$};
        \coupon{-0.12,-0.2}{g};
        \coupon{0.3,-0.2}{f};
    \end{tikzpicture}
    =
    \begin{tikzpicture}[anchorbase,scale=2]
        \identify{-0.8}{-0.5}{0.6}{0.5};
        \draw (-0.3,-0.5) node[anchor=north] {$\Bi$} -- (0.3,0.5) node[anchor=south] {$\Bm$};
        \draw (0.3,-0.5) node[anchor=north] {$\Bi'$} to[out=up,in=200] (0.6,-0.2);
        \draw (-0.8,-0.2) to[out=20,in=down] (-0.3,0.2) -- (-0.3,0.5) node[anchor=south] {$\Bm'$};
        \filldraw[draw=black,fill=white] (-0.3,0.2) circle (0.15);
        \coupon{-0.3,0.2}{f};
        \coupon{0.12,0.2}{g};
    \end{tikzpicture}
\end{equation*}

{\bf The definition of the inverse}: To show full faithfulness, we define an inverse functor.  Given a cylindrical KLRW diagram that is generic with respect to the seam $x=0$ (it has no triple points or tangencies), we can define its inverse as follows:
\begin{enumerate}
	\item Let $h_1<\dots<h_p$ be the $y$-values where the black strands intersect the seam and let $k_1,\dots, k_p$ be the labels on the strands intersecting the seam at these points.  We further assume that no crossing or dot happens at the height $h_r$ (which will be the case generically).
	\item Let $\mathring{D}_r$ be the portion of the diagram with $y$ values in $(h_r+\epsilon,h_{r+1}-\epsilon)$, and let $D_r$ be the unrolled version of this diagram.  
	\item We use this to factor our diagram into diagrams that never cross the seam (which are the images of planar diagrams) and diagrams which are of the form $\xi_{*,k}^{\pm 1}$ for $*$ an arbitrary word, where the sign depends on whether the strand passes the seam in a positive direction or a negative one.  The image of this diagram under the inverse functor is the composition $D_p\xi_{*,k_p}^{\pm 1}D_{p-1}\cdots D_1\xi_{*,k_1} ^{\pm 1} D_0$.  
\end{enumerate}
{\bf The inverse is well-defined}: First, let us check that the resulting image is invariant under isotopy of diagrams.  Of course, any isotopy that keeps this diagram suitably generic will just isotope the diagrams $D_i$, leaving the structure of the inverse invariant.  The other situations we have to account for are:
\begin{enumerate}
	\item a crossing or dot isotopes from above one of the heights $h_r$ to below it: this follows from \eqref{eq:affine-1}.
	\item a strand isotopes through a tangency with the seam, creating or destroying a bigon: this is that the two legs of the bigon will be $\xi_{*,k}^{\pm 1}$ and thus cancel.
	\item a crossing isotopes through the seam: If the crossing is moving top to bottom, this is \eqref{eq:affine-1} and \eqref{eq:affine-2}.  The sideways isotopy can be shown to be equivalent to the top-to-bottom isotopy by making two bigons with the seam and using $\xi$ as shown below.
\[\tikz[very thick,baseline]{\draw (-1,-1) .. controls (1,0) .. (1,1);\draw (1,-1) .. controls (1,0) .. (-1,1); \draw[dashed](0,-1) --(0,1);}=\tikz[very thick,baseline]{\draw (-1,-1) -- (1,1);\draw (1,-1)  to[in=-90,out=135] (-.2,-.5) to[in=-90,out=90] (.4,0) to [in=-45,out=90] (-1,1); \draw[dashed](0,-1) --(0,1);}=\tikz[very thick,baseline]{\draw (-1,-1) -- (1,1);\draw (1,-1)  to[in=-90,out=135] (-.4,0) to[in=-90,out=90]  (.2,.5) to [in=-45,out=90] (-1,1); \draw[dashed](0,-1) --(0,1);}=\tikz[very thick,baseline]{\draw (-1,-1) .. controls (-1,0) .. (1,1);\draw (1,-1) .. controls (-1,0) .. (-1,1); \draw[dashed](0,-1) --(0,1);}\]
\end{enumerate}
Having shown invariance under isotopies, we now just have to check that relations are sent to relations, which is clear since all definitions are local.  This shows that we have a functor which is inverse up to isomorphism, and so the functor is an equivalence.  
\end{proof}
This equivalence has a manifestation on the level of modules: given a
$\tilde{T}^{\Bj}$-module $M$, we can consider the tensor product
$\Rring^{\Bj}\otimes_{\tilde{T}^{\bla}}-$.  This is the
pushforward by the inclusion of the planar KLRW category into the
cylindrical.
\begin{lemma}\label{lem:R-exact}
  The functor $ {\Rring^{\Bj}}\otimes_{\tilde{T}^{\bla}}-$ is
  exact.  
\end{lemma}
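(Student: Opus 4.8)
The plan is to reduce exactness to flatness: I would show that the $(\Rring^{\Bj},\tilde{T}^{\bla})$-bimodule underlying this pushforward is projective — in fact a (possibly infinite) direct sum of ``shuffle'' projective modules — as a right $\tilde{T}^{\bla}$-module, where $\tilde{T}^{\bla}$ acts by stacking a planar KLRW diagram (one drawn in $\R\times[0,1]$, hence not meeting the seam $x=0$) below a cylindrical diagram. Granting this, flatness is automatic, so $\Rring^{\Bj}\otimes_{\tilde{T}^{\bla}}-$ is exact. The whole argument is the cylindrical analogue of \cite[Prop.~2.16]{KLI}, run exactly as in the proof, given earlier in this section, that $(K,M,N)\mapsto K\circ M\circ N$ is exact on $\tilde{T}^{\bla}$-modules.

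The crux is a normal-form statement for cylindrical diagrams. Fix a generic height $y_0\in(0,1)$ (no dots, crossings or seam-crossings there). First I would check that, modulo the relations (\ref{c-first-QH}--\ref{w-black-bigon2}), every cylindrical KLRW diagram is a $\K$-linear combination of diagrams $D$ such that: below $y_0$, $D$ never meets the seam — so it is the image of a planar KLRW diagram — and it carries all of the dots and all crossings of equally-labelled corporeal strands; while above $y_0$, $D$ carries no dots and no crossing of two equally-labelled strands, and is otherwise a fixed ``combing'', namely a chosen representative, for each boundary configuration and each winding class, of the diagrams that realize the requisite winding around the cylinder with a minimal number of crossings. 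This reduction should use only that the defining relations are local and that the seam is not intrinsic to the algebra (only the cyclic order of the boundary points, recorded by $\Bj$, matters): one slides dots downward through crossings and past the seam using (\ref{c-first-QH}), (\ref{c-third-QH}) and (\ref{w-black-bigon1}--\ref{w-black-bigon2}); removes bigons between strands of distinct labels via (\ref{c-psi2}); and uses the nilHecke relation (\ref{c-nilHecke-1}) to push all crossings of equally-labelled strands below $y_0$, leaving only winding above it. The error terms all have strictly fewer crossings, so an induction on the crossing number (at fixed dot degree, working modulo terms with more dots) closes the reduction.

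Next I would read off the direct-sum decomposition. To a normal-form diagram $D$ attach its \emph{top datum}: the portion of $D$ above $y_0$ together with the planar word $\Bi_D$ it reads off at height $y_0$. For a fixed top datum $\theta$, stacking $\theta$ on top of an arbitrary planar diagram with top $\Bi_D$ is precisely right multiplication by $\tilde{T}^{\bla}$, so the span of the normal-form diagrams with datum $\theta$ is a right $\tilde{T}^{\bla}$-submodule, which I would identify with the right module $e(\Bi_D)\tilde{T}^{\bla}$ generated by an idempotent, hence projective. Since there may be infinitely many top data between two given boundary configurations — they differ by a Dehn twist of the cylinder — this is an infinite direct sum $\Rring^{\Bj}=\bigoplus_{\theta}e(\Bi_D)\tilde{T}^{\bla}$; but an arbitrary direct sum of projectives is flat, so the functor is exact.

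The hard part is the normal-form step, and above all promoting the spanning statement it yields into a genuine \emph{internal} direct sum in the last step — i.e. verifying that normal-form diagrams with distinct top data are linearly independent modulo the image of $\tilde{T}^{\bla}$, not merely that they span. For this I would compare graded dimensions of $\Rring^{\Bj}$ and of $\bigoplus_{\theta}e(\Bi_D)\tilde{T}^{\bla}$ using the standard bases of these diagram algebras (reduced expressions times monomials in dots) together with the usual straightening arguments, since the local moves that suffice for spanning do not by themselves preclude unexpected relations. Everything else is bookkeeping parallel to \cite[Prop.~2.16]{KLI} and the earlier exactness lemma of this section.
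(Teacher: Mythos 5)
Your proof is essentially the paper's proof in different clothing, but it sidesteps a resource the paper already has in hand. The paper's argument is: invoke Lemma~\ref{lem:cyl-basis}, which asserts (via the identification with the Coulomb branch and the basis of \cite[Cor.~3.13]{WebSD}) that $e(\Bi)\Rring^{\Bj}$ is free over the dot polynomials with a basis $\{D_w\}_{w\in\widehat W}$ of minimal-crossing diagrams; then observe that grouping these $D_w$ by the left coset $w\,\Sigma_{\Bv}$ of the finite Weyl group cuts the module into right $\tilde T^{\bla}$-submodules, each of which is a shuffle projective $e(\Bi')\tilde T^{\bla}$ generated by the minimal coset representative. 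Your ``top data differing by Dehn twists'' are precisely these cosets, your ``combing above $y_0$'' is the minimal-crossing representative, and your conclusion (an infinite direct sum of shuffle projectives, hence flat) is the same.

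The genuine weakness in your write-up is the step you yourself flag as hard: promoting the spanning statement to a direct sum. You propose to handle this by ``comparing graded dimensions using the standard bases of these diagram algebras together with the usual straightening arguments,'' but that is not an argument — it is a promissory note for a basis theorem for $\Rring^{\Bj}$, which is not one of the presentations-by-relations lemmas you can cite from the KLR literature and which needs its own proof (local moves give spanning, not independence). The paper already proved exactly this in Lemma~\ref{lem:cyl-basis}, using the faithful representation coming from the Coulomb branch; your proof should simply invoke it rather than gesture at rederiving it. With that substitution your argument closes and matches the paper's.
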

This is proven on page \pageref{proof-lem:R-exact}. This perspective is useful in that it shows that a
$\tilde{T}^{\Bj'}\operatorname{-}\tilde{T}^{\Bj}$-bimodule
$\mathfrak{B}$  compatible with the
bimodule structure on these categories induces a bimodule between
cylindrical KLRW categories.  For our purposes, it will be easier to
say this in terms of functors between derived categories.  We say that
the functor $\mathbb{B}=\mathfrak{B}\Lotimes-\colon D^b(\tilde T^{\Bj}\mmod)\to D^b(\tilde T^{\Bj'}\mmod)$ is {\bf strongly equivariant} if it
commutes with the action of $\tU^-$ on the left and the right, that is, the functors $ {\tU}^-\times D^b(\tilde T^{\Bj})\times \tU^-\to
D^b(\tilde T^{\Bj'})$ defined by
\[K\circ \mathbb{B} M \circ N \mapsfrom (K,M,N) \mapsto
  \mathbb{B}(K\circ M \circ N)\] are isomorphic.
  \notation{$\mathring{\mathbb{B}}$}{The functor between modules over cylindrical KLRW algebra induced by a strongly equivariant functor between planar KLRW algebras.}
\begin{lemma}\label{lem:affinize-commute}
  If $\mathbb{B}$ is strongly equivariant for the left and right actions of $\tU^-$, then there is an induced functor
  \[\mathring{\mathbb{B}}\colon D^b(\Rring^{\Bj}\mmod)\to
    D^b(\Rring^{\Bj'}\mmod)\]
  which is compatible with composition: If $\mathbb{B}=\mathbb{B}_1
  \mathbb{B}_2$, then $\mathring{\mathbb{B}}=\mathring{\mathbb{B}}_1
  \mathring{\mathbb{B}}_2$.
\end{lemma}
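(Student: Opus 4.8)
The plan is to realize $\mathring{\mathbb{B}}$ as $\mathring{\mathfrak{B}}\Lotimes_{\Rring^{\Bj}}-$ for an explicit complex of $(\Rring^{\Bj'},\Rring^{\Bj})$-bimodules $\mathring{\mathfrak{B}}$ manufactured from the complex $\mathfrak{B}$ representing $\mathbb{B}$. First I would set $\mathring{\mathfrak{B}}:=\Rring^{\Bj'}\otimes_{\tilde{T}^{\Bj'}}\mathfrak{B}$, where $\tilde{T}^{\Bj'}$ acts on $\Rring^{\Bj'}$ through the embedding of the planar KLRW category into the cylindrical one (the same functor whose pushforward is studied in Lemma \ref{lem:R-exact}). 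This is visibly a complex of $(\Rring^{\Bj'},\tilde{T}^{\Bj})$-bimodules, with the left $\Rring^{\Bj'}$-action just left multiplication on the first factor; and by Lemma \ref{lem:R-exact} the algebra $\Rring^{\Bj'}$ is flat as a right $\tilde{T}^{\Bj'}$-module, so this object already computes $\Rring^{\Bj'}\Lotimes_{\tilde{T}^{\Bj'}}\mathfrak{B}$ and no further resolution is needed on the left. The real content is to promote the right $\tilde{T}^{\Bj}$-action to a right $\Rring^{\Bj}$-action; via the presentation of the cylindrical KLRW category as the affinization $\mathcal{Q}$ of $\tilde{\mathcal{T}}$, this amounts to saying how the isomorphisms $\xi_{\Bi,\Bi'}$ with $\Bi'$ a black word act, i.e. how to wind a block of black strands once around the seam.

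The key step is that strong equivariance supplies exactly this winding. For a black word $\Bi'$, the effect of the right $\Bi'$-action passes through the $\tilde{T}^{\Bj}$-structure and so equals $\Rring^{\Bj'}\otimes_{\tilde{T}^{\Bj'}}(\mathfrak{B}\circ\Bi')$; the strong equivariance isomorphism $\mathfrak{B}\circ\Bi'\simeq \Bi'\circ\mathfrak{B}$ transports this to a left black action on $\mathfrak{B}$, after which the already-available action of $\xi$ on the $\Rring^{\Bj'}$ factor (an honest cylindrical algebra, which therefore knows how to wind a black strand around its own seam) produces the desired operator on $\mathring{\mathfrak{B}}$. I would then check that the operators so defined obey the affinization relations (\ref{eq:affine-1}) and (\ref{eq:affine-2}) as well as the (possibly seam-straddling) KLRW relations of $\Rring^{\Bj}$: the cocycle relation (\ref{eq:affine-1}) follows from the corresponding relation for $\Rring^{\Bj'}$ together with the coherence of the strong-equivariance isomorphisms; (\ref{eq:affine-2}) is their naturality; and the local KLRW relations hold because away from the seam they hold inside $\mathfrak{B}$, while across the seam they become conjugates of planar relations by the winding operators, by the equivalence $\mathcal{Q}\cong$ cylindrical KLRW category. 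This makes $\mathring{\mathfrak{B}}$ a genuine complex of $(\Rring^{\Bj'},\Rring^{\Bj})$-bimodules, and we put $\mathring{\mathbb{B}}:=\mathring{\mathfrak{B}}\Lotimes_{\Rring^{\Bj}}-$.

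For compatibility with composition, write $\mathbb{B}=\mathbb{B}_1\mathbb{B}_2$ (apply $\mathbb{B}_2$ first) with $\mathbb{B}_2$ represented by a $(\tilde{T}^{\Bj''},\tilde{T}^{\Bj})$-bimodule $\mathfrak{B}_2$ and $\mathbb{B}_1$ by a $(\tilde{T}^{\Bj'},\tilde{T}^{\Bj''})$-bimodule $\mathfrak{B}_1$, so $\mathfrak{B}\simeq \mathfrak{B}_1\Lotimes_{\tilde{T}^{\Bj''}}\mathfrak{B}_2$, and fix on $\mathbb{B}$ the strong-equivariance data that is the composite of those of $\mathbb{B}_1$ and $\mathbb{B}_2$. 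Using $N\otimes_{\Rring^{\Bj''}}(\Rring^{\Bj''}\otimes_{\tilde{T}^{\Bj''}}M)\cong N\otimes_{\tilde{T}^{\Bj''}}M$ one gets
\[
\mathring{\mathfrak{B}}_1\Lotimes_{\Rring^{\Bj''}}\mathring{\mathfrak{B}}_2
=(\Rring^{\Bj'}\otimes_{\tilde{T}^{\Bj'}}\mathfrak{B}_1)\Lotimes_{\Rring^{\Bj''}}(\Rring^{\Bj''}\otimes_{\tilde{T}^{\Bj''}}\mathfrak{B}_2)
\simeq \Rring^{\Bj'}\otimes_{\tilde{T}^{\Bj'}}(\mathfrak{B}_1\Lotimes_{\tilde{T}^{\Bj''}}\mathfrak{B}_2)
=\mathring{\mathfrak{B}},
\]
where the flatness needed to keep everything derived is again furnished by Lemma \ref{lem:R-exact}. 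One then checks this isomorphism intertwines the right $\Rring^{\Bj}$-actions: on both sides the winding of a black block is implemented by first applying the equivariance isomorphism of $\mathfrak{B}_2$ and then that of $\mathfrak{B}_1$, which is precisely the composite structure fixed on $\mathbb{B}$. This yields $\mathring{\mathbb{B}}\simeq\mathring{\mathbb{B}}_1\mathring{\mathbb{B}}_2$.

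The main obstacle I expect is bookkeeping the coherence of the strong-equivariance data at the chain level rather than just in the derived category: to build an honest bimodule $\mathring{\mathfrak{B}}$ (and not merely a family of operators defined up to homotopy) one wants the isomorphisms $\mathfrak{B}\circ\Bi'\simeq\Bi'\circ\mathfrak{B}$ realized by strict chain maps satisfying the cocycle identity on the nose, or else the whole argument must be phrased $\infty$-categorically. I would sidestep this either by taking $\mathfrak{B}$ to be a bounded complex of bimodules projective on each side (so the relevant mapping complexes are their own derived versions and the equivariance isomorphisms can be rigidified), or — more in the spirit of the equivalence $\mathcal{Q}\cong$ cylindrical KLRW category proved above — by running everything at the level of the planar and cylindrical KLRW categories themselves, taking "strongly equivariant functor" to mean one carrying the $\xi$-type structure strictly, and only afterwards passing to module categories.
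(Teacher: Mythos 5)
Your construction is, modulo presentation, the same as the paper's: the paper defines $\mathring{\mathbb{B}}$ on the projective generators $\Rring^{\Bj}e(\Bi)$ by sending them to $\Rring^{\Bj'}\otimes_{\tilde{T}^{\Bj'}}\mathfrak{B}e(\tilde{\Bi})$ for a chosen planar lift $\tilde{\Bi}$ of the cyclic word $\Bi$, invokes strong equivariance exactly to make this independent of the lift and functorial in the seam-crossing morphisms (which is your $\xi$-action step), and proves compatibility with composition via the base-change formula $\mathring{\mathbb{B}}(\Rring^{\Bj}\otimes_{\tilde{T}^{\Bj}}M)\cong \Rring^{\Bj'}\otimes_{\tilde{T}^{\Bj'}}\mathbb{B}(M)$, which is the same cancellation $N\otimes_{\Rring^{\Bj''}}(\Rring^{\Bj''}\otimes_{\tilde{T}^{\Bj''}}M)\cong N\otimes_{\tilde{T}^{\Bj''}}M$ you use. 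You simply package the data as the bimodule $\mathring{\mathfrak{B}}=\Rring^{\Bj'}\otimes_{\tilde{T}^{\Bj'}}\mathfrak{B}$ (which the paper also records as $\mathring{\mathfrak{B}}=\mathring{\mathbb{B}}(\Rring^{\Bj})$) rather than as a functor on generators. The one place you go beyond the paper is the closing concern about rigidifying the strong-equivariance data at the chain level so that $\mathring{\mathfrak{B}}$ is an honest complex of bimodules; this is a real subtlety, and the paper's ``well-defined and functorial by the strong equivariance'' is silent on it. In the applications in this paper the bimodules $\mathfrak{B}$ come with explicit diagrammatic equivariance maps that satisfy the cocycle and naturality identities strictly, so your first remedy (work with a model of $\mathfrak{B}$ for which the $\xi$-type structure is strict, e.g.\ one projective on each side) is the intended reading, but it is a reasonable thing to have flagged.
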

Of course, we have
\[\mathring{\mathbb{B}}(M)=\mathring{\mathfrak{B}}\otimes_{\Rring^{\Bj}}M\qquad \text{where}\qquad\mathring{\mathfrak{B}}=\mathring{\mathbb{B}}(\Rring^{\Bj}).\]
\begin{proof}
  It is enough to define the functor $\mathring{\mathbb{B}}$ on modules of the form $\Rring^{\Bj}e(\Bi)$ for $\Bi$ a cyclic word.  This is just
$\Rring^{\Bj'}\otimes_{\tilde{T}^{\Bj'}}\mathfrak{B}e(\tilde{\Bi})$
  for $\tilde{\Bi}$ any lift of the cyclic word to a usual planar word.  This is well-defined and functorial due to the strong equivariance of $  \mathbb{B}$.

  If $M$ is a $\tilde{T}^{\Bj}$-module, then
  \begin{equation}
  \mathring{\mathbb{B}}(\Rring^{\Bj}\otimes_{\tilde{T}^{\Bj}}M)\cong
    \Rring^{\Bj'}\otimes_{\tilde{T}^{\Bj'}}\mathbb{B}(M).\label{eq:affinize-commute}
  \end{equation}
  This is what we need to prove the composition, since we have a functorial isomorphism
  \[\mathring{\mathbb{B}}(\Rring^{\Bj}e(\Bi))\cong \Rring^{\Bj'}\otimes_{\tilde{T}^{\Bj'}}{\mathbb{B}} (\tilde{T}^{\Bj} e(\tilde{\Bi}))= \Rring^{\Bj'}\otimes_{\tilde{T}^{\Bj'}}\mathbb{B}_1
  \mathbb{B}_2 (\tilde{T}^{\Bj} e(\tilde{\Bi}))=\mathring{\mathbb{B}}_1
  \mathring{\mathbb{B}}_2(\Rring^{\Bj}e(\Bi))\] with the last
step applying \eqref{eq:affinize-commute} twice, to $\mathbb{B}_1$ first and then to $\mathbb{B}_2$.
\end{proof}

\section{The tangle action}
\label{sec:tangle}
\subsection{Affine braids}
Throughout this section, we assume that $\Gamma$ is an ADE quiver and that $ {\beta_e}=0$ for all $e\in \edge$; by Lemma \ref{lem:shift-iso}, since $\Gamma$ is a tree, the algebra $\Rring$ for any set of parameters is isomorphic to one of this form.  Recall that the operation of dualizing fundamental representations induces an involution of $\Gamma$, which coincides with the action of $-w_0$ on fundamental weights; this is the unique nontrivial diagram automorphism for $A_n, D_{2n+1}$ and $E_6$, and trivial for $D_{2n}$ and $E_7,E_8$.  We denote this involution by $j\mapsto j^*$.

We fix dimension vectors $\Bv,\Bw$ as usual, and let $\ell=\sum w_i$.  We let $ {\Rring^{\Bj}}$ be the cylindrical KLRW algebra where the labels on the red strands, reading from 0 to 1, are given by the $\ell$-tuple $\Bj=(j_1, \dots, j_\ell)$.

Let $\widehat{B}_{\ell}$ be the extended braid group of affine type A.  This is the group generated by elements $s_0,\dots, s_{\ell-1}, \sigma$ with relations:
\[ s_is_{i+1}s_i=s_{i+1}s_is_{i+1}\qquad s_i s_j=s_js_i \quad  |i-j|>1\]
\[s_{i+1}\sigma =\sigma s_i.\]
Let $\boldsymbol{\Sigma}_{\Bw}$ be the set of sequences in $\vertex$ where $i$
appears in $w_i$ many times.  The group $\widehat{B}_{\ell}$ acts on
$\boldsymbol{\Sigma}_{\Bw}$ with $s_{i}$ acting by transposition of the $i$ and
$i+1$st entries and $\sigma$ by the cyclic permutation
$\sigma\cdot (j_1,\dots, j_{\ell})=(j_{\ell}, j_1,\dots,
j_{\ell-1})$.

\begin{definition}
The {\bf affine braid groupoid} is the action groupoid for the action
of $\widehat{B}_{\ell}$ on $\boldsymbol{\Sigma}_{\Bw}$.
\end{definition}
The notation $ {\Rring^{\Bj}}$ is useful, since the algebra does
not depend up to isomorphism on the position of the red lines, just their cyclic order. It
will be useful for us to fix parameters $\beta_{i,k}$ so they are
evenly spaced around the circle. That is, we define $k_r$ inductively by \[k_r=\max\{ k_{s}\, |\, j_s=j_r \text{ and } s<r\}+1;\] more concretely, this means that
$j_m$ is the $k_m$th appearance of $j_m$ reading from the start of the
word $\Bj$. Let
$\beta_{j_m,k_m}=\frac{m}{\ell}-\frac{1}{2\ell}$.
For each $i$, we define a linear path for $t\in [0,1]$ that swaps the
$i$th and $i$st parameters, that is, 
\[\bbeta_{j_m,k_m}=\frac{m+\delta_{i,m}t-\delta_{i+1,m}t}{\ell}-\frac{1}{2\ell}.\]
Similarly, to $\sigma$ we associate the path rotating one ``click''
around the cylinder:
\[\bbeta_{j_m,k_m}=\frac{m+t}{\ell}-\frac{1}{2\ell}.\]
These are maybe easier to visualize in terms of the path traced by the
red strands:
\begin{equation*}
        \tikz[xscale=.9]{
      \node[label=below:{$ s_i$}] at (-4,0){ 
       \tikz[very thick,xscale=1]{
          \draw[fringe] (-1.7,-.5)-- (-1.7,.5);
          \draw[fringe] (1.7,.5)-- (1.7,-.5);
          \draw[wei] (.3,-.5)-- (-.3,.5);
          \draw[wei] (-1.3 ,-.5)-- (-1.3,.5);
          \draw[wei] (-.3,-.5)-- (.3,.5);
          \draw[wei] (1.3 ,-.5)-- (1.3,.5);
          \node at (.8,0) {$\cdots$};
          \node at (-.8,0) {$\cdots$};
        }
      };
      \node[label=below:{$ \sigma$}] at (4,0){ 
       \tikz[very thick,xscale=1, yscale=-1]{          
       \draw[fringe] (-1.7,.5)-- (-1.7,-.5);
          \draw[fringe] (1.7,-.5)-- (1.7,.5);
          \node at (0,0) {$\cdots$};
          \draw[wei] (1.3,-.5)-- (.5,.5);
          \draw[wei] (-1.3,.5)-- (-.5,-.5);
          \draw[wei] (1.3 ,.5)-- (1.7,0);
        \draw[wei] (-1.3 ,-.5)-- (-1.7,0);
        }
      };
      }
\end{equation*}
Consider the functors of tensor product with the bimodules
$B_{\bbeta}$ corresponding to these paths: \[\mathbb{B}_i\colon D^b(\Rring^{\Bj}\fdmod)\to D^b(\Rring^{s_i\Bj}\fdmod)\qquad \mathbb{B}_{\sigma}\colon D^b(\Rring^{\Bj}\fdmod)\to D^b(\Rring^{\sigma\Bj}\fdmod).\]
The elements of these bimodules are twisted KLRW diagrams where the red strands trace out the paths above, and the black strands behave normally and satisfy the usual relations.

\begin{theorem}\label{thm:braid-action}
  The functors $\mathbb{B}_i$ and $\mathbb{B}_{\sigma}$ define an affine braid groupoid action on the categories $D^b( {\Rring^{\Bj}}\fdmod)$. 
\end{theorem}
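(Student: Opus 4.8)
The plan is to show that the functors $\mathbb{B}_i$ and $\mathbb{B}_\sigma$ satisfy the defining relations of $\widehat B_\ell$, up to isomorphism of functors, by exhibiting explicit isomorphisms of the underlying bimodules (or at least of their derived tensor powers). The essential point is that each $\mathbb{B}_?$ is tensor product with the bimodule $\rif_{\hat\beta',\hat\beta}$ attached to a linear path of flavor parameters, and by Lemma \ref{lem:bimod-mult} composing two such functors gives the bimodule attached to the concatenated path, with $\mathbf{\inter}$ adding. So a braid relation $w = w'$ amounts to comparing the bimodules for two paths with the same endpoints; since $V = \R^{\edge}\times \prod \R^{w_i}$ is simply connected, any two such paths are homotopic, and one reduces to checking that $\rif_{\hat\beta',\hat\beta}$ depends only on the endpoints $\hat\beta,\hat\beta'$ up to canonical isomorphism — which is almost immediate since $\mathbf{\inter}_e = \lfloor\hat\beta_e'\rfloor - \lfloor\hat\beta_e\rfloor$ and the correspondence $\corre$ are determined by the endpoints alone, and the diagrams themselves only remember winding numbers, not the actual path.

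First I would set up the bookkeeping: for a word $w$ in $s_0,\dots,s_{\ell-1},\sigma$, spell out the composite path in $V$ obtained by concatenating the elementary paths $\bbeta$, and record the resulting total vector $\mathbf{\inter}(w)$ and the target sequence $w\cdot\Bj$. The extended affine braid relations to verify are: the type $A$ braid relation $\mathbb{B}_i\mathbb{B}_{i+1}\mathbb{B}_i \cong \mathbb{B}_{i+1}\mathbb{B}_i\mathbb{B}_{i+1}$ (on indices mod $\ell$, reading the subscripts as living on the affine diagram), the far-commutativity $\mathbb{B}_i\mathbb{B}_j\cong \mathbb{B}_j\mathbb{B}_i$ for $|i-j|>1$, and the rotation relation $\mathbb{B}_{i+1}\mathbb{B}_\sigma \cong \mathbb{B}_\sigma\mathbb{B}_i$. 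In each case both sides are tensor product with $\rif^{\mathbf{\inter}}(\corre)$ for the \emph{same} data $(\mathbf{\inter},\corre)$ and the same source/target idempotented algebras, because the two words act the same way on $\boldsymbol{\Sigma}_\Bw$ and the two concatenated paths, being homotopic rel endpoints in the vector space $V$, produce the same winding vector $\mathbf{\inter}$. The map identifying the two bimodules is "straighten the diagram": isotope a twisted KLRW diagram drawn along one path to one drawn along the homotopic path, which changes nothing since the diagrams are already taken up to isotopy preserving genericity (Definition \ref{def:cylindrical-diagram}). One must check this identification is a genuine bimodule map (it is, by locality of all the relations \eqref{c-first-QH}--\eqref{w-triple-point2}, exactly as in the proof of Lemma \ref{lem:bimod-mult}) and is compatible with the composition maps of Lemma \ref{lem:bimod-mult}, so that it upgrades to an isomorphism of functors on derived categories.

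The main obstacle is that the braid \emph{group} (not monoid) action requires the elementary functors $\mathbb{B}_i$ to be invertible: the path $\bbeta$ for $s_i$ and its reverse give bimodules $\rif_{\hat\beta',\hat\beta}$ and $\rif_{\hat\beta,\hat\beta'}$, and one needs these to be mutually inverse as derived Morita equivalences. When $s_i\Bj$ has its $i$ and $i{+}1$ entries distinct, the path $\bbeta$ crosses exactly the circuit hyperplanes \eqref{eq:circuit-hyperplane} associated to those two red strands, so the two bimodules are typically \emph{not} a Morita equivalence, and one must instead invoke Conjecture \ref{conj-DE} — available to us in the relevant cases by Proposition \ref{prop:DE} (for $\K$ of characteristic $0$ when $\tilde{\fM}^{\mathbf{\inter}}$ is a symplectic resolution), i.e.\ precisely the ADE minuscule setting of this section. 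So the cleanest route: (1) by Proposition \ref{prop:DE}, each $\mathbb{B}_i$ and $\mathbb{B}_\sigma$ is an equivalence; (2) the defining relations hold as isomorphisms of functors by the homotopy-of-paths argument above; (3) therefore the assignment $s_i\mapsto \mathbb{B}_i$, $\sigma\mapsto \mathbb{B}_\sigma$ extends to a groupoid homomorphism from $\widehat B_\ell\ltimes\boldsymbol{\Sigma}_\Bw$ to the groupoid of derived equivalences among the $D^b(\Rring^{\Bj}\fdmod)$. A secondary subtlety worth addressing explicitly is the index "wraparound": $s_0$ acts by swapping red strands across the seam $x=0$, and one should check the rotation relation $\mathbb{B}_{i+1}\mathbb{B}_\sigma\cong\mathbb{B}_\sigma\mathbb{B}_i$ in the boundary case so that the affine (not just finite) braid relations all follow from the same path-homotopy principle; this is again a routine isotopy of the red-strand trajectories pictured after the statement.
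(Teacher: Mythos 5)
Your plan hinges on a claim that is not correct and cannot hold in the generality you need: that a composite such as $\mathbb{B}_{i}\mathbb{B}_{i+1}\mathbb{B}_{i}$ is "tensor product with $\rif^{\mathbf{\inter}}(\corre)$ for the same data $(\mathbf{\inter},\corre)$" as $\mathbb{B}_{i+1}\mathbb{B}_{i}\mathbb{B}_{i+1}$, so that the two sides are identified by isotoping twisted diagrams. The composite functor is derived tensor with the derived triple tensor $\rif_{\hat\beta_3,\hat\beta_2}\Lotimes\rif_{\hat\beta_2,\hat\beta_1}\Lotimes\rif_{\hat\beta_1,\hat\beta_0}$, \emph{not} with the single bimodule $\rif_{\hat\beta_3,\hat\beta_0}$ attached to the endpoints. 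Lemma~\ref{lem:bimod-mult} only supplies a multiplication map from the (underived) tensor product into the endpoint bimodule; it is not asserted to be an isomorphism, and it cannot be one across a wall. Indeed, if $\rif_{\hat\beta_1,\hat\beta_0}\Lotimes\rif_{\hat\beta_0,\hat\beta_1}\cong\Rring_{\hat\beta_0}$ held whenever the endpoints agreed, every wall-crossing would be an exact Morita equivalence, and Conjecture~\ref{conj-DE}/Proposition~\ref{prop:DE} (which you yourself correctly invoke for invertibility) would be both unnecessary and false. The statement that the derived composite of wall-crossing bimodules depends only on the homotopy class of the path in parameter space is precisely the nontrivial content of the groupoid action; it is not a consequence of drawing a diagram along a deformed path.

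The paper's actual proof is very short because it outsources exactly this content to Proposition~\ref{I-prop:pi-action} from part I, which establishes an action of the fundamental groupoid of $\mathring{T}_{1,F}$ (the flavor torus minus the toric braid arrangement) on the derived categories by wall-crossing functors; the affine braid groupoid sits inside that fundamental groupoid, and Proposition~\ref{prop:KLRW-bimodule} identifies the abstract wall-crossing functors with your diagrammatically defined $\mathbb{B}_i,\mathbb{B}_\sigma$. If you want a direct argument in place of citing the part I proposition, you would need to actually prove the braid relation at the level of complexes of bimodules (e.g.\ by exhibiting explicit quasi-isomorphisms between the two triple derived tensor products, which is substantive), or reduce to a geometric statement about composing translation functors for quantizations as the part I proof does. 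As written, the homotopy-of-paths step is a gap, not a proof.
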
This is proven on page \pageref{proof-thm:braid-action}.

\subsection{Cups and caps}
\label{sec:cups-caps}
The functor $\mathbb{B}_i$ corresponds to a path through a wall where
the $i$-th and $(i+1)$-st red strands cross.   This is the only kind of wall we can have in the case where $\beta_e=0$ for all $e\in \edge$ (cf. \eqref{eq:circuit-hyperplane01}).    A black strand will be locked at this wall if and only if it is between the $i$-th and $(i+1)$-st red strands.  
 Thus, the statistic $d(\Bi,\Ba)$  is just the number of black strands between the red strands corresponding to $\beta'_{i,k}$ and $\beta'_{j,\ell}$ and
$ {e_m}$ is the sum of all idempotents $e(\Bi,\Ba)$ with $d(\Bi,\Ba)< m$ such strands.  
 
Associated to the
passage to this wall, we have a filtration of the derived category
$D^b(\Rring^{\Bj}\fdmod)$ by subcategories $\mathcal{D}_m$. By Corollary \ref{cor:wall-crush}, it follows immediately that:
\begin{lemma}
The category $\mathcal{C}_m$ is equivalent to the category of modules over $R^{(z)}:=\Rring^{\Bj}/\Rring^{\Bj}e_m\Rring^{\Bj}$.
\end{lemma}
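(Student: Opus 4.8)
The plan is to derive this immediately from Corollary~\ref{cor:wall-crush} together with the elementary description of modules over a quotient by an idempotent-generated ideal. By Corollary~\ref{cor:wall-crush}, $\mathcal{C}_m$ is exactly the full subcategory of the standard heart $\Rring^{\Bj}\fdmod$ consisting of those $M$ with $e_mM=0$, where $e_m$ is the sum of the idempotents $e(\Bi,\Ba)$ singled out just above the statement. So it suffices to identify this subcategory with $R^{(z)}\fdmod$.

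First I would record the general fact. For a unital ring $A$ and an idempotent $e\in A$, a left $A$-module $M$ is annihilated by the two-sided ideal $AeA$ if and only if $eM=0$: if $eM=0$ then $eAM=e(AM)=eM=0$ because $AM=M$, hence $AeAM=0$; conversely $eM\subseteq AeAM$. Therefore restriction of scalars along the surjection $A\twoheadrightarrow A/AeA$ is fully faithful with essential image precisely the full (Serre) subcategory $\{M\in A\mmod \mid eM=0\}$, and this identification manifestly preserves and reflects finite-dimensionality, so it restricts to an equivalence of abelian categories $(A/AeA)\fdmod\simeq\{M\in A\fdmod\mid eM=0\}$.

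Then I would apply this with $A=\Rring^{\Bj}$ — which, with $\Gamma,\Bv,\Bw,\beta_*$ fixed, is a finite-dimensional, hence unital, algebra, its unit being the sum of $e(\Bi,\Ba)$ over a set of representatives of the finitely many equivalence classes of cyclic flavored sequences, cf.\ the discussion after Definition~\ref{def:prefered} — and with $e=e_m$, which is an idempotent since it is a sum of pairwise orthogonal $e(\Bi,\Ba)$'s. By definition $A/AeA=R^{(z)}$, so combining the two previous paragraphs gives $\mathcal{C}_m=\{M\in\Rring^{\Bj}\fdmod\mid e_mM=0\}\simeq R^{(z)}\fdmod$, which is what is meant by "the category of modules over $R^{(z)}$."

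I do not anticipate a real obstacle: the entire content sits in Corollary~\ref{cor:wall-crush}, which is already established, and the remainder is the standard ring-theoretic bookkeeping above. The only point meriting a line of justification is that $\Rring^{\Bj}$ is genuinely unital, so that the identity $AM=M$ is available; this is where the finiteness of the set of equivalence classes of flavored sequences (for $\Ab=\R/\Z$ and fixed dimension vectors) enters.
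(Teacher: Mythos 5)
Your proof is correct and follows exactly the route the paper has in mind: the paper itself introduces the lemma with ``By Corollary~\ref{cor:wall-crush}, it follows immediately,'' so the intended argument is precisely the reduction to $\{M : e_m M = 0\}$ via that corollary, followed by the standard identification of this Serre subcategory with modules over $\Rring^{\Bj}/\Rring^{\Bj}e_m\Rring^{\Bj}$. One small inaccuracy worth fixing: $\Rring^{\Bj}$ is \emph{not} finite-dimensional (dots on strands give a free polynomial action, so each $\Hom$ space is infinite-dimensional); what you actually need and correctly identify is that it is \emph{unital} because, with $\Ab=\R/\Z$ having a single equivalence class and $\Bv,\Bw$ fixed, there are only finitely many equivalence classes of cyclic flavored sequences, so the finite sum of the corresponding pairwise-orthogonal idempotents $e(\Bi,\Ba)$ is the unit. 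Since your argument uses only unitality (to get $AM=M$), nothing else needs to change.
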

Note, the triangulated category $\mathcal{D}_m$ is not typically the derived category of $\mathcal{C}_m$, since the Ext-algebra of $R^{(z)}$ as an $\Rring^{\Bj}$-module is not concentrated in degree 0.  

We'll be particularly interested in the case where the consecutive strands have labels $j,j^*$.  Without loss of generality, we can assume these are the labels $j_{\ell-1},j_{\ell}$.  Let $\Bj'=(j_1,\dots, j_{\ell-2})$. In this case, the deepest level of the filtration is $\mathcal{C}_z$ where \[z=\rho^{\vee}(\varpi_j+\varpi_{j^*})=\rho^{\vee}(\varpi_j-w_0\varpi_j)=2\rho^{\vee}(\varpi_j).\]
Let $\varpi_j+\varpi_{j^*}=\sum_{i\in \vertex} z_i\alpha_i$.  Note that $z=\sum z_i$.
\begin{lemma}\label{lem:eat-cup}
The ring $R^{(z)}$ in this case is Morita equivalent to the algebra $\Rring^{\Bj'}$ with dimension vector $\Bv'=\Bv-\Bz$.  \end{lemma}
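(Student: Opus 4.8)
The plan is to cut $R^{(z)}$ down to an idempotent truncation which ``erases the cup.'' Recall that $\mathcal C_z$ is the deepest piece of the filtration at this wall, so a straight-strand idempotent $e(\Bi,\Ba)$ survives in $R^{(z)}=\Rring^{\Bj}/\Rring^{\Bj}e_z\Rring^{\Bj}$ exactly when precisely $z$ black strands lie between the red strands $j_{\ell-1}=j$ and $j_{\ell}=j^{*}$, in which case (as in the planar case) these ``trapped'' strands consist of $z_i$ strands of label $i$ for each $i$; moreover in $R^{(z)}$ no trapped strand can leave this gap and no other strand can enter it without passing through a configuration counted by $e_z=0$. For each loading $(\Bi',\Ba')$ of the reduced data $(\Gamma,\Bv',\Bw',\Bj')$ --- where $\Bv'=\Bv-\Bz$ and $\Bw'$ is $\Bw$ with the two framings carrying the red strands $j_{\ell-1},j_{\ell}$ removed --- insert the $\Bz$ trapped strands into the gap in a fixed standard order; this gives a loading of $(\Gamma,\Bv,\Bw,\Bj)$, and I let $f\in R^{(z)}$ be the image of the sum of the corresponding idempotents. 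I will show (i) $R^{(z)}=R^{(z)}fR^{(z)}$, so that $R^{(z)}$ is Morita equivalent to $fR^{(z)}f$, and (ii) $fR^{(z)}f\cong\Rring^{\Bj'}$.

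For (i), given any surviving $e(\Bi,\Ba)$, slide its trapped strands within the gap into the standard order and rearrange the remaining black strands exactly as in $\Rring^{\Bj'}$. Since inside $R^{(z)}$ a trapped strand never crosses $j_{\ell-1}$ or $j_{\ell}$, every isotopy and local relation used stays inside $R^{(z)}$, and composing the rearranging diagram with its mirror image recovers $e(\Bi,\Ba)$ modulo diagrams which at some height have fewer than $z$ strands in the gap, hence factor through $e_z$ and vanish. This is exactly the ``locked strands stay locked'' mechanism already used in the proof of Lemma \ref{lem:shift-iso}.

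The substance is (ii): one must show that, modulo the relations and $\Rring^{\Bj}e_z\Rring^{\Bj}$, the subdiagram formed by the $z$ trapped strands together with the red strands $j_{\ell-1},j_{\ell}$ is rigid --- it carries no dots and no crossings, admits no incoming strand, and is forced to be the trivial straight cup. Granting this, deleting the cup is a bijection between representative diagrams for $fR^{(z)}f$ and cylindrical KLRW diagrams for $(\Gamma,\Bv',\Bw',\Bj')$, and since all KLRW relations are local and never involve the deleted region it is an algebra isomorphism $fR^{(z)}f\cong\Rring^{\Bj'}$. For a trapped strand of label $j$ (resp.\ $j^{*}$) rigidity is immediate: a dot or self-crossing on it can be traded, using \eqref{w-cost-1}--\eqref{w-cost-2} (and \eqref{c-psi2}, \eqref{c-nilHecke-1}, \eqref{red-triple-correction} for the crossing), for a diagram in which the strand bulges past $j_{\ell-1}$ (resp.\ $j_{\ell}$), which then has a horizontal slice lying in $e_z$ and is therefore zero. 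The general case --- that the maximal black cabling trapped between a red strand of label $j$ and one of label $j^{*}$ has endomorphism algebra $\K$, i.e.\ that the ``cup'' $1$-morphism is rigid --- is the main obstacle; it is the cylindrical counterpart of the fact underlying the definition of cap functors for planar KLRW algebras, and I expect the cleanest route to be a reduction to that planar statement: through the affinization picture of \S\ref{sec:relation-planar-klrw} the pair $R^{(z)},\Rring^{\Bj'}$ together with the cup-erasing functor are the affinizations of $\tilde T^{\Bj}/(\text{the analogous ideal}),\tilde T^{\Bj'}$ and the planar cup-erasing Morita equivalence of \cite{Webmerged}, which is $\tU^{-}$-equivariant since the cup lies away from the strands on which $\tU^{-}$ acts, so Lemma \ref{lem:affinize-commute} carries it over to the cylindrical setting. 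A direct diagrammatic induction, peeling trapped strands off the red strands one label at a time, is the fallback; all remaining bookkeeping is routine.
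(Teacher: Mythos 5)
Your proposal takes a genuinely different route from the paper. You aim to realize the Morita equivalence by an idempotent truncation: exhibit a full idempotent $f$ in $R^{(z)}$ with $fR^{(z)}f\cong\Rring^{\Bj'}$. The paper instead constructs an explicit Morita context: it takes the planar cup bimodule $\mathfrak{k}^{\Bj'}_{\Bj}$ of \cite[\S 7.3]{Webmerged} (carrying an element of the simple module $L_0$ at the top of the cup), affinizes it via Lemma \ref{lem:affinize-commute} to get $\mathring{\mathfrak{k}}$, takes a reflected bimodule $\mathring{\mathfrak{k}}'$, and defines the pairings $\alpha,\omega$ using the non-degenerate form on $L_0$ and the identification $T^{(z)}\cong L_0\otimes_\K\dot L_0\cong \End_\K(L_0)$; it then verifies surjectivity of both multiplications. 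Both proofs funnel through the same hard fact about the ``cup region,'' but the paper makes it precise as ``$T^{(z)}$ is a matrix algebra with $L_0$ projective,'' which it cites from \cite{Webmerged}, while your version needs the stronger-sounding and slightly differently shaped claim that the standard-order idempotent is a rank-one idempotent in $T^{(z)}$, i.e.\ $e_{\mathrm{std}}T^{(z)}e_{\mathrm{std}}\cong\K$.

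There are two genuine gaps. First, your step (ii) as literally stated --- ``the subdiagram formed by the $z$ trapped strands together with the two reds is rigid... forced to be the trivial straight cup'' --- conflates two different things: $T^{(z)}\cong\K$ (false in general; $T^{(z)}$ is a matrix algebra of size $\dim L_0$, which exceeds $1$ already for $\mathfrak{sl}_3$) versus $e_{\mathrm{std}}T^{(z)}e_{\mathrm{std}}\cong\K$ (equivalently, $\dim e_{\mathrm{std}}L_0=1$). The latter is what you actually need, and it requires a proof; the paper avoids having to name a preferred idempotent precisely by using $L_0$ and its pairing. Moreover, even granting this, your ``no incoming strand'' claim needs care: the quotient $R^{(z)}$ only kills configurations with \emph{fewer} than $z$ strands in the gap, so strands are free to enter and leave; pushing them back out modulo relations is nontrivial bookkeeping (in the planar case it is \cite[Lemma 7.17]{Webmerged}). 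Second, your step (i) also has a hole: when you compose the rearranging diagram with its mirror image, the bigons involving a corporeal strand and a ghost of a same-label edge resolve via (\ref{w-black-bigon1})--(\ref{w-black-bigon2}) with dot correction terms, and those terms do not factor through $e_z$; you would need to argue something like invertibility of $1+(\text{dots})$ in $e(\Bi,\Ba)R^{(z)}e(\Bi,\Ba)$, which is not automatic since $R^{(z)}$ need not be finite-dimensional. Finally, the ``cleanest route'' you sketch --- affinizing a planar Morita equivalence via Lemma \ref{lem:affinize-commute} --- is in the right spirit, but that lemma is stated for derived functors; to get the abelian Morita equivalence you need to affinize the \emph{bimodules} rather than the equivalence of categories, and to check that $\Rring^\Bj/\Rring^\Bj e_z\Rring^\Bj$ is actually the affinization of $\tilde T^\Bj/\tilde T^\Bj e_z\tilde T^\Bj$. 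This is exactly what the paper does, and it is why the paper's proof is phrased in terms of bimodules rather than an idempotent.
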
This is proven on page \pageref{proof-lem:eat-cup}. Put more informally, this Morita equivalence simultaneously removes from the diagram the two red strands, and then $z_i$ black strands with label $i$.  We can picture the two red strands as colliding (we are approaching the wall where they do so), and pinching off the black strands as they meet to make the top of a red cup.  The elements of the Morita equivalence bimodule look like below:
 
\centerline{\includegraphics[width=.5\textwidth]{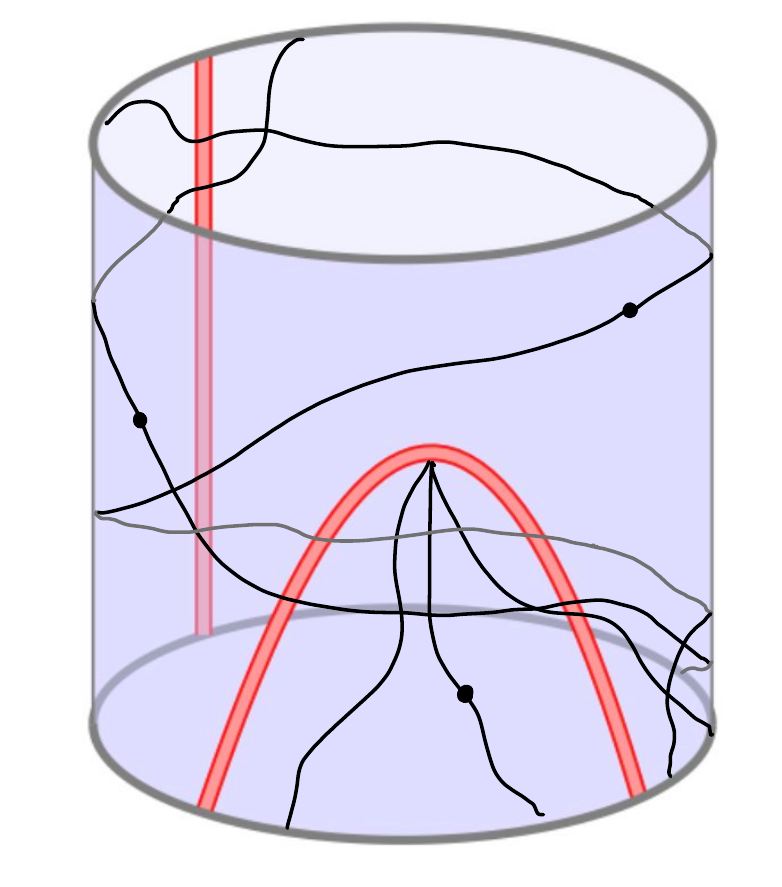}}

\begin{example}
	If $\Gamma$ is an $A_n$ quiver with nodes identified with $\{1,\dots, n\}$, then $j\mapsto j^*=n+1-j$ is the unique automorphism flipping the ends of the quiver.  The coefficients $z_i$ in this case are given by:
	\begin{equation*}
		z_i=\begin{cases}
		i & i\leq j\\
		j & j\leq i\leq j^*\\
		n+1-i & i\geq j^*
	\end{cases}
	\end{equation*}
	So, for $n=1$, which is the case $\fg=\mathfrak{sl}_2$, we have $1^*=1,z_1=1$, so in this case, a pair of colliding red strands will pinch off one black strand, coming from the fact that the unique positive root is twice the unique fundamental weight.  
\end{example}

This Morita equivalence defines a $R^{(z)}\operatorname{-}\Rring^{\Bj'}$-bimodule $B_\cup$ and a  $\Rring^{\Bj'}\operatorname{-}R^{(z)}$-bimodule $B_\cap$; of course, these are projective as left and right modules.  Inflating, we can consider the same abelian group as a $\Rring^{\Bj}\operatorname{-}\Rring^{\Bj'}$-bimodule $B_\cup$ and a  $\Rring^{\Bj'}\operatorname{-}\Rring^{\Bj}$-bimodule $B_\cap$, which are now projective only as right and left modules respectively.  

These are obtained from the quantum trace and coevaluation bimodules defined in \cite[\S 7.3]{Webmerged} by the cylindricalization operation $\mathfrak{K}\mapsto \mathring{\mathfrak{K}}$ defined in Lemma \ref{lem:affinize-commute}.

This defines an exact functor  $\cup_\ell=B_\cup\otimes_{\Rring^{\Bj'}}-\colon \Rring^{\Bj'}\mmod \to \Rring^{\Bj}\mmod.$  We can try to define a left adjoint $\cup_\ell^*\colon D^b(\Rring^{\Bj}\mmod)\to D^b(\Rring^{\Bj}\mmod)$ to this functor by derived tensor product with $B_{\cap}$. We say ``try'' above because there's no guarantee that this functor will preserve being a bounded complex.

Recall that we call a fundamental weight $\varpi_j$ {\bf minuscule} if
all of the non-zero weight spaces of its corresponding representation
are extremal; the number of such representations for a finite dimensional semi-simple Lie algebra is the determinant of the Cartan matrix minus 1 (there is one that transforms under each non-trivial character of the center of the simply-connected form of the Lie group).  This holds for all fundamental weights in type A, for
the vector and both spin representations in type D, two
representations for $E_6$ and one for $E_7$.  For simplicity, we call
$i\in \vertex$ minuscule if the corresponding fundamental
representation is minuscule, and let $\vertex_{\operatorname{min}}$ be
the subset of minuscule elements of $\vertex$.  
\begin{lemma}\label{lemma:cap-finite}
We have a well-defined functor $\cup_\ell^*\colon
D^b(\Rring^{\Bj}\fdmod)\to D^b(\Rring^{\Bj'}\fdmod)$ if
$j\in  \vertex_{\operatorname{min}}$.  
\end{lemma}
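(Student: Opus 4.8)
The functor $\cup_\ell^*$ is by definition the left adjoint of $\cup_\ell = B_\cup\otimes_{\Rring^{\Bj'}}-$, computed as the derived tensor product $B_\cap\Lotimes_{\Rring^{\Bj}}-$; since $\cup_\ell$ is exact (as $B_\cup$ is projective as a right $\Rring^{\Bj'}$-module), the only issue is whether $B_\cap\Lotimes_{\Rring^{\Bj}}M$ lies in $D^b(\Rring^{\Bj'}\fdmod)$. The plan is to transport this to the planar KLRW algebra $\tilde T^{\Bj}$. By construction $B_\cap$ is the cylindricalization $\mathring{\mathfrak{K}}$ of the planar quantum trace (evaluation) bimodule $\mathfrak{K}$ of \cite[\S 7.3]{Webmerged}, and the corresponding evaluation functor $\mathbb{B}=\mathfrak{K}\Lotimes_{\tilde T^{\Bj}}-$ is strongly equivariant for the left and right $\tU^-$-actions (this follows from the diagrammatic description of the quantum trace, which is manifestly compatible with adding black strands at the far left and right). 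Hence Lemma \ref{lem:affinize-commute} applies and identifies $\cup_\ell^*$ with the induced functor $\mathring{\mathbb{B}}$, so it suffices to establish (i) that $\mathbb{B}$ is already well defined on bounded complexes, i.e. that $\mathfrak{K}$ has finite flat dimension as a right $\tilde T^{\Bj}$-module, and (ii) that the cohomology of $\mathring{\mathbb{B}}(M)$ is finite-dimensional for $M\in D^b(\Rring^{\Bj}\fdmod)$.

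Part (ii) is routine. Since $\Rring^{\Bj}$ is Noetherian, a finite-dimensional module admits a resolution by finitely generated projectives $\Rring^{\Bj}e(\Bi)$; applying the cap bimodule turns these into finitely generated $R^{(z)}$-modules, which by the Morita equivalence of Lemma \ref{lem:eat-cup} correspond to finitely generated $\Rring^{\Bj'}$-modules. As only finitely many idempotents $e(\boldsymbol{\alpha})$ act nontrivially on $M$ and the cup operation removes a fixed dimension vector $\Bz$ of black strands, the homology is supported on a finite set of cyclic configurations and is therefore finite-dimensional.

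Part (i) is the crux, and this is exactly where the minuscule hypothesis is used. For $j\in\vertex_{\operatorname{min}}$ the weight space picked out by the two colliding red strands labelled $j,j^*$ is the $0$-weight space of $V_{\varpi_j}\otimes V_{\varpi_{j^*}}$, which is one-dimensional; combinatorially this says the configuration of $z_i$ black strands of label $i$ that gets pinched off is rigid, so the two-sided ideal $\Rring^{\Bj}e_z\Rring^{\Bj}$ cutting out $R^{(z)}$ is homologically well behaved (heredity-ideal-like), forcing $\mathfrak{K}$ — equivalently $R^{(z)}$ as a right $\Rring^{\Bj}$-module — to have finite projective dimension. This is the content of \cite[\S 7.3]{Webmerged}, where cup and cap for a minuscule node are shown to form a biadjoint pair up to grading shift on the bounded derived categories of $\tilde T$-modules. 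I expect the main obstacle to be twofold: checking carefully that the quantum-trace bimodule meets the strong-equivariance hypothesis of Lemma \ref{lem:affinize-commute}, so that the cylindricalization step is legitimate, and extracting from \cite{Webmerged} the precise finite-homological-dimension statement rather than merely the biadjointness. For a non-minuscule node the $0$-weight space is higher-dimensional, the pinched-off configuration is no longer rigid, and the relevant $\Tor$-groups fail to vanish in high degree, so the hypothesis is genuinely needed.

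Finally, I would record as a consistency check that in the minuscule case $\cup_\ell$ and $\cup_\ell^*$ are biadjoint up to a grading shift, recovering the planar biadjointness of \cite[\S 7.3]{Webmerged} after applying the cylindricalization functor; this is what will ultimately make the cup and cap functors assemble into a well-behaved annular tangle action.
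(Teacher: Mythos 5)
Your overall strategy — reduce to the planar KLRW algebra, extract the minusculity hypothesis as a finite--homological--dimension statement from \cite{Webmerged}, and transport back via cylindricalization — is the same as the paper's. But you route through Lemma~\ref{lem:affinize-commute}, which buys you nothing here and costs you an unverified hypothesis. That lemma requires strong equivariance of the cap functor, a claim you explicitly flag as ``the main obstacle'' and never establish; and even granting it, the lemma only hands you an induced functor on module categories, so you still have to prove boundedness separately. The paper sidesteps all of this: one observes directly that $\cup_\ell^*$ preserves $D^b(\fdmod)$ if and only if $R^{(z)}=\Rring^{\Bj}/\Rring^{\Bj}e_z\Rring^{\Bj}$ has a finite projective resolution as a right $\Rring^{\Bj}$-module, and one produces that resolution one idempotent at a time. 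Concretely: for $\Bj=(j,j^*)$ the planar algebra $\tilde T^{(j,j^*)}$ has finite global dimension (this is \cite[Prop.~8.3]{Webmerged}, and \emph{this} is the precise place minusculity enters — not the one-dimensionality of the $0$-weight space per se), so $L_0$ has a finite projective resolution; for general $\Bj$ one obtains a finite resolution of $e(\tilde{\Bi})\tilde T^{\Bj}/\tilde T^{\Bj}e_z\tilde T^{\Bj}$ by adding the remaining red and black strands of $\tilde{\Bi}$ on the left and right (exactness of the three-variable induction functor); and the exactness of cylindricalization (Lemma~\ref{lem:R-exact}), not Lemma~\ref{lem:affinize-commute}, carries this to a finite projective resolution of $e(\Bi)\Rring^{\Bj}/\Rring^{\Bj}e_z\Rring^{\Bj}$. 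Your ``part (ii)'' then comes for free: each term of the resolution tensored against a finite-dimensional module is of the form $e(\cdot)M$ and hence finite-dimensional.

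The one substantive gap: your ``heredity-ideal-like'' intuition for why minusculity forces finite projective dimension is not the argument — and it is vague enough that it would not survive scrutiny (indeed, $\Rring^{\Bj}e_z\Rring^{\Bj}$ is not an idempotent heredity ideal in general). The correct input is the single, citable fact that planar KLRW algebras attached to minuscule labels have finite global dimension. I'd recommend stating that fact explicitly and deleting the invariant-theoretic gloss; then the proof collapses to the three-line argument above.
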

\begin{proof}
    This functor is well-defined if and only if
  $\Rring^{\Bj}/\Rring^{\Bj}e_z \Rring^{\Bj}$ has a
  finite resolution as a right module over $\Rring^{\Bj}$.  As noted in the proof of \cite[Prop. 8.3]{Webmerged}, if $\Bj=(j,j^*)$, then the ring $\tilde{T}^{\Bj}$ has finite global dimension, so the simple $L_0$ has a finite length projective resolution.  The module $e(\Bi)(\tilde{T}^{\Bj}/\tilde{T}^{\Bj}e_z\tilde{T}^{\Bj})$ thus has obtained a resolution obtained by just adding the red and black strands from $\Bi$ on the left and right of this resolution, as needed.  Applying cylindricalization sends this to a projective resolution of $e(\Bi)(\Rring^{\Bj}/\Rring^{\Bj}e_z \Rring^{\Bj})$ by the exactness of the cylindricalization functor.  
\end{proof}

\begin{remark}
This result holds in effectively no non-minuscule cases.  
  As in \cite{Webmerged}, we could consider a category of complexes which are bounded above or below with weaker finiteness properties and use non-minuscule weights; for simplicity, we don't work through the details of this.  
\end{remark}
\begin{definition}\label{def:cup-cap}
Let $\cap_\ell=\cup_\ell^*[-m](-m)$; we remind the reader that $[-m](-m)$ means that we shift degree by {\it increasing} the internal and homological degree of any element by $m$.  Note that this is the opposite of the shift in \cite[Def. 7.4]{Webmerged}, because we are defining our functor using a tensor product.  We also have $\cap_\ell=\cup_\ell^![m](m)$ where $\cup_\ell^!$ is the right adjoint to $\cup_\ell$ by \cite[Thm. 8.11]{Webmerged}.  

The cup and cap functors $\cup_k=\mathbb{B}_{\sigma}^{k-\ell} \cup_\ell\mathbb{B}_{\sigma}^{\ell-k}$ and $\cap_k=\mathbb{B}_{\sigma}^{k-\ell}\cap_\ell\mathbb{B}_{\sigma}^{\ell-k}$ are obtained by pre- and post-composing with a 
twist of the cylinder so that there are $k$ red strands left of the
two red strands that meet (i.e., they are the $(k+1)$st and $(k+2)$nd
when reading around the circle from $0$).  
\end{definition}
\notation{$\cup_k,\cap_k$}{The cup and cap functors from Definition \ref{def:cup-cap}.}

Just as the bimodules attached to basic braids can be visualized with KLRW-type diagrams, we can visualize the cup and cap functors in terms of bimodules composed of KLRW diagrams satisfying the usual rules at most points and we insert some special behavior at the top of a cap or a bottom of a cup. \excise{\begin{center}
  \begin{tikzpicture}[MyPersp,font=\large]
\def\h{1.5}\fill[blue,fill opacity=.05]  
		 (1,0,{\h})--(1,0,0)
		\foreach \t in {0,-2,-4,...,-180}
			{--({cos(\t)},{sin(\t)},0)}
-- (-1,0,0)--(-1,0,{\h})
		\foreach \t in {180,178,...,0}
			{--({cos(\t)},{sin(\t)},{\h})}--cycle;
\draw[gray,   very thick] (1,0,0)
		\foreach \t in {0,2,4,...,180}
			{--({cos(\t)},{sin(\t)},0)};
\draw[wei] ({cos(50)},{sin(50)},0) 
		\foreach \t in {0,2,...,150}
		{--({cos((50))} ,{sin((50))},{.01*\t})};
		\fill[blue!20!white,fill opacity=.5]
		 (1,0,{\h})--(1,0,0)
		\foreach \t in {0,-2,-4,...,-180}
			{--({cos(\t)},{sin(\t)},0)}
-- (-1,0,0)--(-1,0,{\h})
		\foreach \t in {-180,-178,...,0}
			{--({cos(\t)},{sin(\t)},{\h})}--cycle;
						\draw[wei] ({cos(220)},{sin(220)},0) 
		\foreach \t in {0,2,...,90}
		{--({cos((\t+220))} ,{sin((\t+220))},{sin(2*\t)})};	
\draw[gray] (1,0,0)--(1,0,{\h});
\draw[gray] (-1,0,0)--(-1,0,{\h});
\draw[gray, very thick] (1,0,0) 
		\foreach \t in {0,-2,-4,...,-180}
			{--({cos(\t)},{sin(\t)},0)};
\draw[gray, very thick] (1,0,\h) 
		\foreach \t in {2,4,...,360} 
			{--({cos(\t)},{sin(\t)},{\h})}--cycle;
                      \end{tikzpicture}
\end{center}}

\subsection{Annular tangles}
Let $\mathbb{A}$ be the annulus $\R^2\setminus\{(0,0)\}$.  Let
$\pi\colon \mathbb{A}\to S^1$ be the obvious projection along rays;
we'll use the same symbol to denote the induced map $\mathbb{A}\times
[0,1]\to S^1\times [0,1]$.
\begin{definition}
An {\bf oriented framed annular tangle} $T$ is a framed tangle in
$\mathbb{A}\times [0,1]$, that is, it is a 1-dimensional oriented submanifold
whose boundary lies in $S^1\times \{0,1\}$, together with a fixed 1-dimensional subbundle of its normal bundle.   We often visualize this by thickening the tangle to a ribbon by extending it a small distance along the subbundle.

As usual, we number the
boundary points of the tangle by their cyclic order around $S^1\cong
\R/\Z$, starting at $0$. We'll consider these up to isotopy in the space
$\mathbb{A}\times [0,1]$ that keep the points on $S^1$.  Note that this means that the cyclic order of points on $S^1$ is preserved.  A {\bf projection} of an annular tangle $T$ is the
image of $T$ under $\pi$ in $S^1\times [0,1]$ when it is isotoped so
that $\pi$ is an immersion on $T$, and any point in
$S^1\times [0,1]$ has at most two pre-images in $T$, whose images cross
transversely (i.e. we avoid triple points and tangencies).  We account
for the framing on tangles by only using projections with the ``blackboard framing'' i.e. where the derivative of the projection always induces an isomorphism between the tangent plane to the ribbon discussed above and the tangent space of $S^1\times [0,1]$.  
\end{definition}

As always when considering tangles in a thickened surface, we have that any two projections for isotopic ribbon tangles are related by a finite chain of isotopies and Reidemeister moves II and III, as well as canceling pairs of Reidemeister I moves which preserve the blackboard framing; a single Reidemeister I move will not preserve the framing.  

We consider labelings of the components of an oriented ribbon tangle with elements $j\in \vertex_{\operatorname{min}}$ (which we think of as the corresponding fundamental representation).  As usual, these induce a labeling of the boundary of $T$, where we use the same element of $\vertex$ if the orientation on $T$ matches the upward orientation of $[0,1]$ under projection, and the ``dual'' $j^*$ if the orientations are opposite.  

\begin{definition}
Let $\mathsf{Tang}$ be the category such that:
\begin{itemize}
\item objects are finite subsets
  of $S^1=\R/\Z$ labeled with elements of $\vertex_{\operatorname{min}}$,
  \item morphisms $S\to S'$ are annular ribbon tangles with boundary
    in $\mathbb{A}\times \{0\}$ given by $S$ and  in $\mathbb{A}\times
    \{1\}$ by $S'$.
    \item composition is just stacking of tangles (followed by appropriate isotopy).   
    \end{itemize}
    We let the list $(j_1,\dots, j_\ell)$ denote any fixed subset of $\R/\Z$ where this is the list of labels in cyclic order. 
\end{definition}

We have an obvious functor from the affine braid groupoid to
$\mathsf{Tang}$ giving the tangles with no minima or maxima.  In order
to generate all tangles, we need only add cup and cap functors joining
two adjacent points, which we also denote $\cup_k, \cap_k$ when the
cup and cap attach to the $(k+1)$st and $(k+2)$nd terminals when reading around the
circle from $0$; of course, this functor depends on the labels $j_{k+1},j_{k+2}\in \vertex_{\operatorname{min}}$, but we leave these implicit.  Consider
two lists $\Bj\in \vertex_{\operatorname{min}}^{s}$ and $\Bj'\in \vertex_{\operatorname{min}}^{s'}$;  these give corresponding dimension vectors $\Bw$
and $\Bw'$ where $w_i$ is the number of $k$ with $j_k=i$, and
similarly with $w_i'$.  Fix a vector $\Bv$, and let
$\Bv'=\Bv+C^{-1}(\Bw'-\Bw)$ where $C$ is the Cartan matrix of $\Gamma$; of course, this is not necessarily
integral, but it will be if there is a morphism $T\colon \Bj \to \Bj'$.
\begin{theorem}\label{thm:tangle-action}
  For each morphism $T\colon \Bj \to \Bj'$ in $\mathsf{Tang}$, there is an associated functor $\Phi(T)\colon D^b(\Rring^{\Bj}_{\Bv}\fdmod)\to D^b(\Rring^{\Bj'}_{\Bv'}\fdmod)$ satisfying $\Phi(T_1\circ T_2)\cong \Phi(T_1)\circ \Phi(T_2)$ such that $\Phi(\tau)$ for an affine braid $\tau$ is the wall-crossing functor $\mathbb{B}_{\tau}$, and for a cup or cap, we have $\Phi(\cup_k)=\cup_k$ and $\Phi(\cap_k)=\cap_k$.
\end{theorem}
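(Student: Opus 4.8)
The plan is to present $\mathsf{Tang}$ by generators and relations and then to check that the functors already attached to the generators respect every relation. The generating morphisms are the affine braids $\mathbb{B}_i,\mathbb{B}_\sigma$ together with the cups $\cup_k$ and caps $\cap_k$ of Definition~\ref{def:cup-cap}, and a complete list of relations consists of: (a) the affine braid groupoid relations (including $\mathbb{B}_\sigma$-conjugation of the crossing generators); (b) the relations $\mathbb{B}_\sigma\circ\cup_k\circ\mathbb{B}_\sigma^{-1}\cong\cup_{k+1}$ and $\mathbb{B}_\sigma\circ\cap_k\circ\mathbb{B}_\sigma^{-1}\cong\cap_{k+1}$, which encode the annular structure; and (c) the \emph{local} tangle relations --- the zig-zag (adjunction) identities relating $\cup_k$ and $\cap_k$, the pitchfork relations that slide a crossing past a cup or cap, Reidemeister~II and III inside a disk, and cancellation of a pair of curls of opposite framing (framed Reidemeister~I). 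Relations of type (a) hold by Theorem~\ref{thm:braid-action}; those of type (b) are immediate from Definition~\ref{def:cup-cap} once one observes that the braid bimodules of Theorem~\ref{thm:braid-action} are equivariant under rotation of the cylinder, which is visible from their description as twisted cylindrical KLRW diagrams. So the real content is relations (c).

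The point is that every relation in (c) takes place inside an embedded disk $D^2\times[0,1]\subset\mathbb{A}\times[0,1]$, and one may always isotope a given finite relation so that each diagram occurring in it avoids the seam $x=0$. After this isotopy the functors $\mathbb{B}_i$, $\cup_\ell$, $\cap_\ell$ become --- up to conjugation by a power of $\mathbb{B}_\sigma$, which is harmless since $\mathbb{B}_\sigma$ is an equivalence --- the images under the cylindricalization functor $\mathbb{B}\mapsto\mathring{\mathbb{B}}$ of Lemma~\ref{lem:affinize-commute} of the braiding bimodules of \cite[\S6]{Webmerged} and the quantum trace and coevaluation bimodules of \cite[\S7.3]{Webmerged}; this is exactly how they are built in Definition~\ref{def:cup-cap} and the discussion preceding it. In \cite[\S8]{Webmerged} precisely the relations (c) are established for these planar bimodules --- that is what makes the planar tangle invariant well defined --- and moreover each such relation is natural in the black strands placed to the left and right, so the corresponding planar functors are strongly equivariant in the sense of Section~\ref{sec:relation-planar-klrw}. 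Since $\mathbb{B}\mapsto\mathring{\mathbb{B}}$ is compatible with composition (Lemma~\ref{lem:affinize-commute}), each planar relation of \cite[\S8]{Webmerged} transfers verbatim to a relation among the cylindrical functors. One bookkeeping point must be respected throughout: $\cap_\ell=\cup_\ell^\ast[-m](-m)$ is defined via a potentially unbounded tensor product, and the adjunctions and finiteness needed for (c) to make sense in $D^b(\Rring^{\Bj}\fdmod)$ are supplied by Lemma~\ref{lemma:cap-finite} (where the hypothesis $j\in\vertex_{\operatorname{min}}$ enters) together with \cite[Thm.~8.11]{Webmerged}.

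Once all the defining relations are verified, the universal property of the presentation produces $\Phi$, with $\Phi(T_1\circ T_2)\cong\Phi(T_1)\circ\Phi(T_2)$ and the stated values on braids, cups and caps holding by construction; the dimension vectors transform correctly because each generator does ($\mathbb{B}_i,\mathbb{B}_\sigma$ preserve $\Bv$, while $\cup_\ell,\cap_\ell$ shift it). I expect the main obstacle to be the organisational part: first, pinning down a presentation of $\mathsf{Tang}$ in which every relation is an affine braid relation, a $\mathbb{B}_\sigma$-conjugation relation, or a relation localised in a disk; and second, checking that the planar relations of \cite[\S8]{Webmerged} are genuinely strongly equivariant, so that cylindricalization carries them across. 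The framed Reidemeister~I relation --- equivalently, keeping consistent track of the ribbon twist and the grading shifts $[-m](-m)$ in Definition~\ref{def:cup-cap} --- is the place where signs and shifts are easiest to get wrong, and I would check it first.
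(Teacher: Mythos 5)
Your argument is essentially the paper's: both reduce to a finite list of local moves (which the paper packages via the ``annular version'' of Ohtsuki's sliced-tangle formalism \cite[Th.~3.1]{Oht}, while you assemble the presentation by hand) and then transfer each move from the planar case \cite[Th.~8.6]{Webmerged} to the cylindrical one via the cylindricalization functor of Lemma~\ref{lem:affinize-commute}. The only cosmetic difference is that the paper cites Ohtsuki rather than enumerating the relations explicitly, and it does not spell out the $\mathbb{B}_\sigma$-conjugation relation since that is built into Definition~\ref{def:cup-cap}; your extra attention to boundedness via Lemma~\ref{lemma:cap-finite} is a reasonable addition but is implicit in the paper's appeal to \cite[Th.~8.6]{Webmerged}.
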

\notation{$\Phi$}{The annular tangle invariant defined by Theorem \ref{thm:tangle-action}.}
This is proven on page \pageref{proof-thm:tangle-action}.

You can think of this as defining a functor from $\mathsf{Tang}$ to
the category whose objects are triangulated categories, and whose
morphisms are exact functors up to isomorphism.  

The most important special case for us is a tangle with no boundary points, that is, an annular link. This construction assigns a finite dimensional bigraded vector space to
any annular knot or link with components labeled by minuscule
representations by considering this link as a tangle and applying it
to $\Rring^{\emptyset}_{\mathbf{0}}\cong \C$.  

\begin{theorem}\label{th:knot-invariant}
  If $K\subset B^3\subset \R^3$ is any link, and we embed $K$ as an annular link via any embedding $B^3\hookrightarrow\mathbb{A}\times [0,1]$ (all such embeddings are isotopic), the invariant $\Phi(K)$ is the same as the invariant $\Phi_{\mathsf{L}}(K)$ defined in \cite[\S 8.1]{Webmerged} for the same labeling.  In particular: \begin{enumerate}
      \item If $\Gamma$ is of type $A_1$, then $\Phi(K)$ coincides with Khovanov homology.
      \item If $\Gamma$ is of type $A_n$, then $\Phi(K)$ coincides with Khovanov-Rozansky $\mathfrak{sl}_{n+1}$-homology.  
  \end{enumerate}
\end{theorem}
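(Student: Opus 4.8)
The plan is to reduce Theorem \ref{th:knot-invariant} to a comparison of two functorial invariants of annular tangles, both valued in triangulated categories of modules over (cylindrical or planar) KLRW algebras, and then invoke the fact that both are generated by the same elementary pieces. Concretely, recall from \cite[\S 8.1]{Webmerged} that the invariant $\Phi_{\mathsf{L}}$ is assembled from braiding functors, cup and cap functors on \emph{planar} KLRW algebras $\tilde T^{\Bj}$. Our functor $\Phi$ of Theorem \ref{thm:tangle-action} is assembled from the wall-crossing functors $\mathbb{B}_i,\mathbb{B}_{\sigma}$ and the cup/cap functors $\cup_k,\cap_k$ on the \emph{cylindrical} algebras $\Rring^{\Bj}$. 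The key structural input is the cylindricalization operation $\mathfrak{K}\mapsto\mathring{\mathfrak{K}}$ of Lemma \ref{lem:affinize-commute}: a strongly equivariant bimodule-functor on planar KLRW categories induces one on the cylindrical side, compatibly with composition. So the first step is to observe that, by construction (Definition \ref{def:cup-cap} and the discussion preceding it), the cylindrical cup and cap functors $\cup_k,\cap_k$ are the cylindricalizations of the planar quantum trace and coevaluation bimodules of \cite[\S 7.3]{Webmerged}, and likewise each non-affine braiding $\mathbb{B}_i$ (the piece that does not wind around the seam) is the cylindricalization of the corresponding planar braiding functor of \cite[\S 6]{Webmerged}. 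The purely ``affine'' generator $\mathbb{B}_{\sigma}$ has no planar analogue, but when we close up a link living inside an embedded ball $B^3\hookrightarrow\mathbb{A}\times[0,1]$, the tangle can be isotoped so that it never crosses the seam, i.e.\ it lies in the image of a planar tangle; hence $\sigma$ never appears.

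Given this, the second step is: for a link $K$ presented (after isotopy into the ball, away from the seam) as the closure of a planar tangle diagram built from planar braidings, cups, and caps, the value $\Phi(K)$ is computed by cylindricalizing the corresponding composite of planar bimodules and then applying the result to $\Rring^{\emptyset}_{\mathbf 0}\cong\C$. By Lemma \ref{lem:affinize-commute} (in particular the compatibility with composition $\mathring{\mathbb{B}}=\mathring{\mathbb{B}}_1\mathring{\mathbb{B}}_2$) and equation \eqref{eq:affinize-commute}, cylindricalizing the composite equals composing the cylindricalizations, and applying to $\Rring^{\emptyset}$ followed by taking $e(\emptyset)$ of the output recovers exactly $\tilde T^{\emptyset}\otimes_{\tilde T^{\emptyset}}(\,\cdot\,)$, i.e.\ the planar composite evaluated at $\tilde T^{\emptyset}_{\mathbf 0}\cong\C$. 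That last vector space is by definition $\Phi_{\mathsf{L}}(K)$. So $\Phi(K)\cong\Phi_{\mathsf{L}}(K)$, functorially in $K$ and including the grading shifts, since the shifts in Definition \ref{def:cup-cap} were chosen precisely to match (up to the sign convention there, which is accounted for by the fact that we use a left tensor product rather than the convention of \cite[Def. 7.4]{Webmerged}).

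The third step is to check invariance of $\Phi(K)$ under the moves relating different annular diagrams of the same link: Reidemeister II, III and cancelling pairs of blackboard-framed Reidemeister I, together with the annular isotopies permuting which arcs pass ``behind'' the seam. Reidemeister II and III and framed RI follow from Theorem \ref{thm:tangle-action} itself (which already asserts $\Phi$ is a well-defined functor on $\mathsf{Tang}$, hence respects these moves). The moves that slide strands across the seam are exactly relations \eqref{eq:affine-1}--\eqref{eq:affine-2} defining the category $\mathcal{Q}$, so they hold by the equivalence of $\mathcal Q$ with the cylindrical KLRW category; this is where one uses that composition with $\mathbb{B}_{\sigma}^{\pm 1}$ can be absorbed. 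Having matched $\Phi$ with $\Phi_{\mathsf{L}}$, parts (1) and (2) are then immediate from the corresponding identifications in \cite[\S 8.1]{Webmerged}: in type $A_1$, $\Phi_{\mathsf L}$ is Khovanov homology, and in type $A_n$, $\Phi_{\mathsf L}$ is Khovanov--Rozansky $\mathfrak{sl}_{n+1}$ homology (which in turn rests on the categorified skew Howe duality cited in the introduction).

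The main obstacle I expect is the bookkeeping in step two: making precise that ``cylindricalize then evaluate at $\C$'' agrees with ``evaluate the planar composite at $\C$,'' including all homological and internal grading shifts and the duals $j\mapsto j^*$ attached to downward strands. One must be careful that the cylindricalization functor $\mathring{(\,\cdot\,)}$ commutes with the relevant adjunctions (so that the cylindrical $\cap_k=\cup_k^*[-m](-m)$ is genuinely the cylindricalization of the planar cap, not merely abstractly adjoint to the cylindrical cup), which uses Lemma \ref{lemma:cap-finite} to guarantee the adjoint stays within bounded derived categories in the minuscule case, and uses \cite[Thm. 8.11]{Webmerged} to match left and right adjoints up to the stated shift. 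A secondary subtlety is confirming that every annular link in an embedded ball really can be isotoped to avoid the seam while keeping the blackboard framing; this is a standard fact about tangles in $D^2\times[0,1]\subset\mathbb{A}\times[0,1]$, but it should be stated explicitly so that the $\mathbb{B}_{\sigma}$-free presentation used in step two is justified.
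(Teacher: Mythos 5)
Your proposal is correct and takes essentially the same approach as the paper: present the link in a ball as a seam-avoiding sliced planar tangle, observe that $\Rring^{\emptyset}_{\mathbf 0}\cong\C$ coincides with the planar boundary case, and use the compatibility of cylindricalization with composition from Lemma \ref{lem:affinize-commute} to identify $\Phi(K)$ with $\Phi_{\mathsf L}(K)$ slice by slice. One small correction: the identifications in parts (1) and (2) with Khovanov and Khovanov--Rozansky homology are invoked in the paper via \cite[Th. A]{Webweb}, not \cite[\S 8.1]{Webmerged} (which only defines $\Phi_{\mathsf L}$).
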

This is proven on page \pageref{proof-th:knot-invariant}.
A more comprehensive description of all the different manifestations of this invariant in type A is given in \cite[Th. A]{mackaayCategorifiedSkew2018}. 

However, constructing an invariant of knots in $\mathbb{R}^3$ is only using a small portion of the power of this construction: we obtain an invariant for each annular link, which will depend on how the link wraps around the origin. Of course, in type A, invariants of this type are well-known: annular Khovanov-Rozansky homology.  This is defined by Queffelec and Rose in \cite{QRannular}. 

It also seems natural to compare with the following construction: given an annular link $K$, we can cut along the plane over the positive $x$-axis in $\mathbb{A}$ to obtain a usual tangle $K'$ in $\mathbb{R}^2\times [0,1]$.  This has an associated complex of $T^{\bla}\operatorname{-}T^{\bla}$-bimodules $\Phi_{\mathsf{L}}(K')$ over the (planar) KLRW algebras defined by the construction of \cite[\S 8.1]{Webmerged}.  The Hochschild homology $H\!H_{T^{\bla}}(\Phi_{\mathsf{L}}(K'))$ is easily seen to be an annular link invariant: 
\begin{conjecture}\label{conj:annular-KR}
The following invariants of annular knots labeled with minuscule representations coincide:
\begin{enumerate}
    \item the invariant $\Phi(K)$ constructed above;
    \item the Hochschild homology $H\!H_{T^{\bla}}(\Phi_{\mathsf{L}}(K'))$;
    \item if $\Gamma=A_n$, the annular Khovanov-Rozansky homology of \cite{QRannular}. 
\end{enumerate}
\end{conjecture}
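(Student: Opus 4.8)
The plan is to prove the equivalence $(1)\Leftrightarrow(2)$ by reducing it to a statement about the cylindricalization functor of Lemma~\ref{lem:affinize-commute}, and then to handle $(2)\Leftrightarrow(3)$ separately in type $A$ via the identification of $\Phi_{\mathsf L}$ with Khovanov--Rozansky homology. First I would present the annular link $K$ as the annular closure of a planar $(\ell,\ell)$-tangle $K'$ obtained by cutting along the seam, so that the strands of $K'$ carry labels $\bla$ and $K$ is recovered by joining the top and bottom ends of $K'$ around the cylinder. By functoriality of $\Phi_{\mathsf L}$ on tangles, $\Phi_{\mathsf L}(K')$ is a complex $\mathfrak B^{\bullet}$ of $T^{\bla}$-$T^{\bla}$-bimodules; using the equivalence between the cylindrical KLRW category and the affinization $\mathcal Q$ (the proposition preceding Lemma~\ref{lem:R-exact}) together with Theorem~\ref{thm:tangle-action}, the closure operation in $\mathsf{Tang}$ factors as: cylindricalize $\mathfrak B^{\bullet}$ to the complex $\mathring{\mathfrak B}^{\bullet}$ of $\Rring^{\bla}$-$\Rring^{\bla}$-bimodules produced by $\mathring{\mathbb B}$, then join the two ends of $\Rring^{\bla}$ around the seam. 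Thus the content of the first half of the conjecture becomes the identity
\[
\Phi(K)\;\cong\; H\!H_{\Rring^{\bla}}\!\big(\mathring{\mathfrak B}^{\bullet}\big)\;\cong\; H\!H_{T^{\bla}}\!\big(\mathfrak B^{\bullet}\big),
\]
where the first isomorphism is the interpretation of the annular closure as a (derived) horizontal trace, and the second says that cylindricalization is transparent to Hochschild homology.

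To establish the second isomorphism I would build an explicit bar-type model. The affinized morphisms $\xi_{\Bi,\Bi'}$, which transport a block of black strands once around the seam, realize on bimodules exactly the cyclic identification defining Hochschild homology, so one should exhibit a quasi-isomorphism between the bar complex computing $H\!H_{\Rring^{\bla}}(\mathring{\mathfrak B}^{\bullet})$ and that computing $H\!H_{T^{\bla}}(\mathfrak B^{\bullet})$. The inputs are the flatness/exactness statements for the cylindricalization and inflation functors (Lemmas~\ref{lem:R-exact} and~\ref{lem:affinize-commute}) and the fact, recorded in \eqref{eq:affinize-commute}, that $\mathring{\mathfrak B}^{\bullet}$ is induced from $\mathfrak B^{\bullet}$. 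Equivalently, one checks that the winding number of black strands around the seam defines a $\Z$-grading on $\Rring^{\bla}$ (exhibiting it morally as a crossed product of $T^{\bla}$ with $\Z$), that this splits the Hochschild homology of $\mathring{\mathfrak B}^{\bullet}$ into winding sectors, and that for a genuine closed annular link only the zero-winding sector survives and equals $H\!H_{T^{\bla}}(\mathfrak B^{\bullet})$.

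I expect this last step -- matching the derived trace over the larger cylindrical algebra with Hochschild homology over the planar algebra -- to be the main obstacle, together with the verification that $H\!H_{T^{\bla}}(\Phi_{\mathsf L}(K'))$ is actually an annular link invariant (independent of the choice of seam and of the planar presentation of $K'$), which comes down to the cyclic symmetry and trace property of $H\!H$ and hence to the annular Markov moves.

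For $(2)\Leftrightarrow(3)$ in type $A_n$: by Theorem~\ref{th:knot-invariant} and the compatibility of $\Phi_{\mathsf L}$ with categorified skew Howe duality, the functor $\Phi_{\mathsf L}$ on planar tangles coincides with the Khovanov--Rozansky $\mathfrak{sl}_{n+1}$ tangle invariant; Queffelec--Rose's annular homology \cite{QRannular} is built from the affine $\mathfrak{sl}_n$ web/foam category. The remaining task is to identify "close up the planar KR tangle invariant by Hochschild homology'' with the Queffelec--Rose construction. I would do this by comparing the two closure constructions applied to the common tangle theory: both arise by a horizontal/annular closure of a foam-theoretic $2$-functor on $K'$, so it suffices to exhibit the Queffelec--Rose affine web category as the annularization of the planar web category and to invoke uniqueness of such annular extensions of a fixed tangle theory. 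Since all three invariants carry compatible annular (winding) gradings, it is enough to match them sector by sector, reducing the comparison to the combinatorics of the web calculus.
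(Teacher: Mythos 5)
This statement is a \emph{conjecture} in the paper, not a theorem: the paper offers no proof, and in the surrounding discussion the author explicitly states that the key step (matching (1) with (2), which they frame as a comparison of horizontal and vertical traces) is one where they ``have not made much progress on making this precise.'' Your proposal does not resolve this.

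Your strategy is broadly parallel to the paper's sketch --- you both want to express (1) and (2) as (derived) traces and then compare --- but your reduction replaces the hard step with an unproven assertion. Concretely, you write $\Phi(K)\cong HH_{\Rring^{\bla}}(\mathring{\mathfrak B}^{\bullet})$ and call this ``the interpretation of the annular closure as a (derived) horizontal trace.'' This identification is not a consequence of anything proved in the paper. $\Phi(K)$ is defined by slicing the annular tangle into cups, caps and braid crossings and composing the functors of Theorem~\ref{thm:tangle-action}; the cup and cap functors are built from the quotients $R^{(z)}$, the Morita bimodules of Lemma~\ref{lem:eat-cup}, and shifts $[\pm m](\pm m)$, not from any bar-complex or trace. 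There is no reason offered --- and it is not obvious --- that composing these cup/cap functors around the annulus computes the Hochschild homology of the cylindricalized bimodule complex, as opposed to some other endofunctor of $D^b(\K\text{-mod})$. This is precisely what the paper flags as the ``rather complex calculation'' for a single essential circle: you would have to compute $\Phi$ on the unknot wrapping the annulus and show it gives $\dim V$ in the appropriate graded sense, and neither your bar-complex model nor your winding-sector heuristic actually does this.

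The second claimed isomorphism, $HH_{\Rring^{\bla}}\cong HH_{T^{\bla}}$, has a similar issue. It is true that winding number gives $\Rring^{\bla}$ a $\Z$-grading with $T^{\bla}$ roughly sitting in degree zero, but $\Rring^{\bla}$ is not a crossed product of $T^{\bla}$ by $\Z$ in any literal sense (there is no automorphism being twisted by), and even if it were, the Hochschild homology of a crossed product is not the Hochschild homology of the base algebra --- it typically acquires an extra circle factor. Your claim that ``only the zero-winding sector survives'' is a substantive assertion that needs a proof, not a heuristic. The exactness of cylindricalization (Lemma~\ref{lem:R-exact}) gives flatness of $\Rring^{\bla}$ over $T^{\bla}$, which is useful, but does not by itself collapse the winding sectors in $HH$.

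For the $(2)\Leftrightarrow(3)$ step your outline is closer to what the paper suggests (foam evaluation / horizontal vs.\ vertical trace as in Queffelec--Rose), but ``uniqueness of annular extensions of a fixed tangle theory'' is not an available off-the-shelf result and would itself require proof. In short: your proposal correctly identifies the shape of the argument, but the two load-bearing identifications are asserted rather than established, and both of them are exactly where the paper says the difficulty lies.
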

We don't expect this conjecture to be exceptionally difficult; the
equivalence of (1-3) in type A should be approachable by rephrasing constructions (1) and (2) using actions of foams as in \cite{mackaayCategorifiedSkew2018}, and showing that (1) and (2) are both defined by annular evaluation of foams.  The main difficulty here is showing that a single essential circle on the annulus evaluates to the corresponding representation over the Lie algebra $\mathfrak{g}_{\Gamma}$.  In case (2), this is effectively just the observation that $T^{\bla}$ has finite global dimension, since this means that higher Hochschild homology of $T^{\bla}$ vanishes, and in degree 0 it matches the Grothendieck group.  In case (1), this is not obvious for $n>2$, and requires a rather complex calculation.  It seems promising to think of the equivalence of (1) and (2) as a generalization of Queffelec and Rose's comparison of the horizontal and vertical traces in \cite[\S 3]{QRannular}, but we have not made much progress on making this precise.

\section{Diagrams for quiver gauge theories}
\label{sec:diagrams}

\excise{
First, we note some basic facts about quiver gauge theories.  In this case, the group $\No=N_{GL(V)}(G)$ is generated by $\gaugeG$, the product
$GL(\C^{ {w_i}})$ acting by precomposition in the obvious way, and by
$GL(\C^{\chi_{i,j}})$ where $\chi_{i,j}$ is the number of edges
$i\to j$, acting by taking linear combinations of the maps along these
edges, that is, via the isomorphism
\[\bigoplus_{i\to j}\Hom(\C^{ {v_i}},\C^{ {v_j}})\cong \bigoplus_{(i,j)\in {\vertex}}
  \Hom(\C^{ {v_i}},\C^{ {v_j}})\otimes \C^{\chi_{i,j}}.\]}

In order to establish the results of Sections
\ref{sec:cylindr-klrw-algebr} and \ref{sec:tangle}, we need to
describe how the results of \cite{WebcohI,websterKoszulDuality2019} can be interpreted in
the quiver case.  In those papers, we explain how the quantum Coulomb
branch $\Asph$ can be written as an endomorphism ring in a larger
category $\scrB^+$, which we call the {\it extended BFN category } (Definition \ref{I-def:extended-BFN}).
This larger category is more easily presented and more amenable to
algebraic methods; this will allow us to make the connection between
Coulomb branches and cylindrical KLR algebras.  
\notation{$\scrB^+$}{The extended BFN category (Definition \ref{I-def:extended-BFN}).}

The definition of $\scrB^+$ depends on a parameter $\delta$.
For simplicity, we'll assume throughout this paper that $\delta=\frac{1}{2}$ (and so in
the ``$p$th root'' conventions of Definition \ref{I-def:pth-root}, we have $\delta=\frac{1}{2p}$).

\subsection{Unrolled diagrams}
\label{sec:diagr-descr}

In the case of a quiver gauge theory, the extended BFN category
$\scrB^+$ has
a more graphical description.

Recall that this is a category whose objects are elements of
$\ft_{1,
  \No,\R}$; this space can be identified with the (real) Lie algebra of the
maximal torus of the normalizer $\No=N^\circ_{GL(V)}(G)$, but the subscript $1$ indicates that we twist the action of these elements on $V((t))$ by the loop action with weight $1$ on $t$.  As
discussed earlier, the Lie algebra $\ft_{\No,\R}$ is generated by the diagonal matrices in $\mathfrak{gl}(\R^{v_i}),\mathfrak{gl}(\R^{w_i})$ and $\mathfrak{gl}(\R^{\epsilon_{i,j}})$;
thus, each object in this category can be represented by choosing
diagonal entries.  We let $\{z_{i,k}\}_{i\in \vertex, k=1,\dots, v_i}$
be the diagonal entries in $\mathfrak{gl}(v_i)$; for slightly
complicated reasons, we let $\{ {\hat\beta_{i,k}}\}_{i\in \vertex, k=1,\dots, w_i}$ and $ \{ {\hat\beta_e}\}_{e\colon i\to j}$ be the appropriate diagonal entries in $\mathfrak{gl}(w_i)$ and $\mathfrak{gl}(\epsilon_{i,j})$ plus $\frac{1}{2}$.  This is to cancel the shift in the definition of $\varphi^{\operatorname{mid}}_i$, given in \eqref{I-eq:varphi}; in particular, the element $\second$ which acts on $vt^a\in V((t))$ with weight $a$ corresponds to $z_{i,k}=0$ and $\hat\beta_{i,k}=\hat\beta_e=\frac{1}{2}$.  This is related to the convention in \cite[Convention, pg. 6]{BFN} that the loop $\C^*$-action on $V((t))$ ($N$ in the notation of \cite{BFN}) has weight $1/2$ on $V$.

In these terms, the unrolled arrangements defined in Section
\ref{I-sec:extended} are given by the unrolled root hyperplanes
$\{\alpha(\acham)=n\mid n\in\Z\}$ of the form:\newseq
\[\subeqn z_{i,k}-z_{i,m}=n\qquad \text{ for all }k\neq m\in [1, {v_i}], n\in \Z,\label{eq:unroll-root}\] and the unrolled
matter hyperplanes $\{\varphi_i^{\operatorname{mid}}(\acham)=n\mid n\in
\Z\}$  of the form for all $n\in \Z$:
\begin{align*}\subeqn\label{eq:unroll-matter1}
z_{i,k}-z_{j,m}+\hat\beta_e&=n\qquad \text{ for all edges } j\to i,
  \text{ for all } k\in [1, {v_i}], m\in [1, {v_j}]\\
\subeqn \hat\beta_{i,k}-z_{i,m}&=n\qquad \text{ for all } i\in {\vertex}, m\in [1, {v_i}], k\in [1, {w_i}] \label{eq:unroll-matter2}
\end{align*}
Note that unlike in Section \ref{sec:cylindr-klrw-algebr}, in this context, the left-hand sides of these equations are real-valued linear functions on $\ft_{{H}}$.  We will sometimes want to fix these and sometimes allow these to vary.  Note that if we change $\hat\beta_*$ by an integer amount, we will not change the set of hyperplanes (\ref{eq:unroll-matter1}--\ref{eq:unroll-matter2}).  

Fix a choice of $ {\hat\beta_*}$; let
$\mathscr{B}_{\hat\beta}$ be the subcategory $\scrB^+$ where we
only consider objects in the coset $\ft_{\hat\beta}$ of this choice under $\ft_{\R}$,
the Cartan of $\mathfrak{g}=\prod_i\mathfrak{gl}(\R^{v_i})$.  For simplicity, we assume that $\hat\beta_*$ are generic in the same sense as discussed in Section \ref{sec:cylindr-klrw-algebr}.  We can describe the objects in this category just using the values $z_{*,*}\in \R$.

The category $\mathscr{B}_{\hat\beta}$ also depends on a flavor
$\phi$, which we can take to be an element of the Lie algebra of
$\Aut_G(V)$, and thus is induced by a regular element of $\prod_i\mathfrak{gl}(\K^{ {w_i}})
\times\prod_{(i,j)\in {\vertex}^2} \mathfrak{gl} (\K^{\chi_{i,j}})$.  We can
assume that this cocharacter lands in the usual torus of diagonal matrices; let $ {b_{i,1}},\dots, b_{i, {w_i}}$ be the diagonal entries of its components into $\mathfrak{gl}  (\K^{ {w_i}})$ and $ {b_e}$ for each edge $e$ the diagonal entries in $\mathfrak{gl}  (\K^{\chi_{i,j}})$.  Let us just emphasize that while the choice of $b_*$ and  $\hat\beta_*$  seem like similar information, the latter parameters are always chosen over $\R$, whereas the former are defined over the base field (which we will often want to think of as $\C$ or $\mathbb{F}_p$) and are chosen independently from $\hat\beta_*$.   \notation{$b_e,b_{i,k}$}{The parameters of the category $\mathscr{B}$ from the flavor deformation, corresponding to the equivariant parameters of $\No/\gaugeG$.}

Consider a path $\pi\colon [0,1]\to \ft_{H}$.  We can understand this path by considering how the coordinates $z_{i,k}, \hat\beta_{i,m},\hat\beta_e$ vary as $t$ varies from $0$ to $1$.  We'll describe this as a
$ {v_i}$-tuple of paths $\pi_{i,k}$ for each $i\in {\vertex}$, a $w_i$-tuple $\hat{\bbeta}_{i,m}$ and a path $\hat{\bbeta}_e$ for each $e\in \edge$.  We can visualize
this by superimposing the graphs of the paths $\pi_{i,k}$ and $\hat{\bbeta}_{i,m}$ as corporeal and red strands on the plane $\R\times [0,1]$, and drawing ghost strands at $\pi_{h(e),k}+\hat{\bbeta}_e$ for all $e\in \edge, k\in [1,v_{h(e)}]$.
We will use
the opposite convention from calculus class, using the $y$-axis for
the independent variable and the $x$-axis for the dependent, so we consider the path $t\mapsto (\pi_{i,k}(t),t)$ landing in
$\R\times [0,1]$.

We cross root hyperplanes when two of these paths for the same element
of ${\vertex}$ are integer distance
from each other, and matter hyperplanes when we solve one of the equations
\begin{equation*}
z_{j,m}=z_{i,k}+\hat\beta_e-n\qquad \text{or}\qquad 
z_{i,m}=n-\hat\beta_{i,k}.
\end{equation*}
Following the convention of \cite[\S 3.1]{Webmerged}, we call a point {\bf generic} if it avoids all these hyperplanes, and {\bf unexceptional} if it avoids the matter hyperplanes (but potentially not the root hyperplanes. 
It is thus convenient to label the points $(\pi_{i,k}(t)+n,t)$ for
$n\in \Z$ with ``partner'' strands and $(\pi_{h(e),k}(t)+\hat{\bbeta}_e(t)+n,t)$ with ``ghost partner'' strands so that we can see when these
crossings occur.  We'll label the strand corresponding to $n$ with $i;n$.  We'll draw partners as solid lines,
and ghosts partners as dashed lines.  We will also draw the path $(\hat{\bbeta}_{i,m}(t)+n,t)$ as a dashed red line, labeled with $i;m$.

\notation{\ensuremath{\What}}{The extended affine Weyl group.  In the case where $W=S_n$, this is the affine permutation group $S_n\ltimes \Z^n$.}
There's an obvious action of the affine Weyl group $\What=\prod_{i\in \vertex}(\Sigma_{v_i}\ltimes \Z^{v_i})$ on the set of such
paths with the finite Weyl group $\prod_{i\in \vertex}\Sigma_{v_i}$ acting on paths by permutation of
the second indices in $\pi_{i,k}$, and the coweight lattice $\Z^{v_i}$ acting by
translations $\pi_{i,k}(t)\mapsto \pi_{i,k}(t)+n_{i,k}$ for integers
$n_{i,k}$.  This leaves the collections of the original corporeal, ghost, and red curves and
their partners unchanged, just changing the indices and which curves
are partners and which are originals. Thus, we can visualize the
action of an affine Weyl group element by changing the labels
accordingly at some fixed value of $y=a$.  The equations
(\ref{I-eq:conjugate2}) and (\ref{I-eq:psiconjugate}) ensure that the
result does not depend on the value of $a$.

\notation{$S_h$}{The polynomial ring $\Sym \ft^*[h]$, considered as endomorphisms of each object in $\mathscr{B}_{\hat\beta}$.}
Another set of morphisms of $\mathscr{B}_{\hat\beta}$ are given by a copy of the polynomial ring $ {S_h}=\Sym \ft^*[h]$; geometrically, this is the equivariant cohomology of a point with respect to $T\times \C^*$.  We can also visualize the action of this ring by identifying the weights
$\ep_{i,k}$ with a dot on the corresponding path, at the top for multiplying on the left and at the bottom for multiplication on the right.  It will be useful to adopt the convention that if we draw dots on the partner strand with label $i;n$, this corresponds to $\ep_{i,k}+nh$.  This ensures that inserting an element of the affine Weyl group above or
below a dot will give the same answer, by equation (\ref{I-eq:dot-commute}).

Thus, the morphisms in $\scrB_{\hat\beta}$ can all be expressed as one of these diagrams.  More precisely:
\begin{definition}
  An {\bf unrolled diagram} is the following data:
 \begin{enumerate}
     \item A finite list $w_1,\dots, w_r\in
  \widehat{W}$ of elements of the extended affine Weyl group, and heights $0<h_1<\dots< h_r<1$ where we will apply these.
  \item Smooth maps \[\varpi_{i,k}\colon [0,1]\to \R\text{ for }k\in [1,v_i]\]
  \[\hat{\bbeta}_{i,m}\colon [0,1]\to \R\text{ for }m\in [1,v_i]\]
  \[\hat{\bbeta}_e\colon [0,1]\to \R\text{ for }e\in \edge\]
  satisfying genericity conditions discussed below.
  \item A placement of any number (including zero) of dots on the strands $\{ (\varpi_{i,k}(t)+n,t) \mid t\in [0,1]\}$ for
  $n\in \Z$, also satisfying genericity conditions given below.  
 \end{enumerate}
 We can define a piecewise smooth path $\pi_{i,k}$ by letting
 \[(\pi_{i,1}(t),\dots \pi_{i,v_i}(t))=w_s\cdots w_1(\varpi_{i,1}(t),\dots \varpi_{i,v_i}(t))\qquad \text{for all }t\in [h_s,h_{s+1}). \]
We'll represent these by drawing diagrams with: 
\begin{enumerate}
    \item a horizontal green squiggly line at each $y=h_s$, labeled with $w_s$.
    \item corporeal strands on $\{ (\pi_{i,k}(t),t) \mid t\in [0,1]\}$ and their partner strands at $\{ (\pi_{i,k}(t)+n,t) \mid t\in [0,1]\}$ for $n\in \Z\setminus \{0\}$, which we label with $i;n$.
    \item 
  ghost  and ghost partner strands 
  at $\{ (\pi_{h(e),k}(t)+\hat{\bbeta}_e(t)+n,t) \mid t\in [0,1]\}$ for each edge
  $e\in \edge$ which we label with $e$ and $e;n$ for $n\neq 0$.
  \item red strands at $\{
  (\hat{\bbeta}_{i,m}(t)+n,t) \mid t\in [0,1]\}$ for all $n\in \Z$, which we draw as dashed and label with $i;n$.
  \item drawing dots at generic points on the corporeal and partner strands, but not the dashed ones.  
\end{enumerate}
For genericity, we require there are no tangencies, triple crossings, or dots on
crossings.  The curves (including ghosts) must
meet the lines at $y=0$ and $y=1$ at distinct points. We consider these
diagrams up to isotopy preserving the conditions above.
\end{definition}
We'll draw an example below; lacking infinitely wide
paper, we can only draw part of the diagram.  For example if $\Gamma=\tikz[very thick,baseline=-5pt]{\node (a) at (0,0){$i$}; \node (b) at (1,0){$j$}; \draw[->] (a) to[out=30,in=150] node[above,midway]{$f$} (b); \draw[->] (b) to[out=-150,in=-30] node[below,midway]{$e$}(a);}$, an unrolled diagram with $w_i=1,w_j=0,v_i=1,v_j=1$ could look like:
\begin{equation} \label{eq:unroll-example}
       \tikz[very thick,xscale=1.7,baseline]{
           \draw (.8 ,-1) to[out=70,in=-110] node[below,at start, scale=.8]{$i$}(-.3,1);
           \draw (-.8 ,-1) to node[above, at end, scale=.8]{$j;-1$} (.3,1);
               \draw (-1.2 ,-1) to[out=70,in=-110]  node[below,at start, scale=.8]{$i;-1$} (-2.3,1);
               \draw (-2.8 ,-1) to node[above, at end, scale=.8]{$j;-2$}  (-1.7,1);
               \draw (1.2 ,-1) to node[above, at end, scale=.8]{$j$}  (2.3,1);
        \draw (2.8 ,-1) to[out=70,in=-110]  node[below,at start, scale=.8]{$i;1$} (1.7,1);
        \draw[dashed] (-.2,-1) -- node[below,at start, scale=.8]{$e;-1$} (-1.3,1);
        \draw[dashed] (.2,-1) to[out=50,in=-110]  node[above, at end, scale=.8]{$f;-1$}  (1.3,1);
        \draw[dashed] (-1.8,-1) to[out=50,in=-110] node[above, at end, scale=.8]{$f;-2$}  (-.7,1);
        \draw[dashed] (.7,1) -- node[below,at end,
        scale=.8]{$e$} (1.8,-1);
        \draw[dashed] (-2.2,-1) -- node[below,at start, scale=.8]{$e;-2$} (-3.3,1);
        \draw[dashed] (2.2,-1) to[out=50,in=-110] node[above, at end, scale=.8]{$f$}  (3.3,1);
        \draw[dashed] (-3.8,-1) to[out=50,in=-110] node[above, at end, scale=.8]{$f;-3$}  (-2.7,1);
        \draw[dashed] (2.7,1) -- node[below,at end,
        scale=.8]{$e;1$} (3.8,-1);
        \node at (4,0){$\cdots$};
         \node at (-4,0){$\cdots$};
         \draw[red,dashed] (2,1) to[out=-80,in=80] node[below,at end,scale=.8]{$i;2$} (2.4,-1);
        \draw[red,dashed] (0,1) to[out=-80,in=80] node[below,at end,scale=.8]{$i;1$}(.4,-1);
        \draw[red,dashed] (-2,1) to[out=-80,in=80] node[below,at end]{$i$}(-1.6,-1);
        }
\end{equation}
Let the {\bf bottom} $D(0)$ of a diagram defined by the path $\pi$ be the
object in $\mathscr{B}_{\hat\beta}$ defined $z_{i,k}=\pi_{i,k}(0)$, and the {\bf
  top} $D(1)$ the object defined by $z_{i,k}=\pi_{i,k}(1)$.  The requirement that strands do not meet at the top and bottom of the diagram implies that these are generic in the sense of avoiding root and matter hyperplanes.

To an unrolled diagram $D$, we can associate a morphism $ {\mathbbm{r}_D}\colon D(0)\to D(1)$ 
from the bottom of $D$ to
its top as follows:
\begin{definition}\hfill\label{def:rD}
  \begin{enumerate}
  \item Given an unrolled diagram $D$ with no dots and no elements of $\widehat{W}$, let $\mathbbm{r}_D$ be the morphism $\mathbbm{r}_\pi$ from \cite[Def. 3.12]{websterKoszulDuality2019} for the corresponding path.
  \item
    Given an unrolled diagram with all $\pi_{i,k}$'s constant and one dot on the strand at $(\pi_{i,k}+n,t)$, we let $\mathbbm{r}_D$ be multiplication by $\ep_{i,k}+nh$.
\item   Given an unrolled diagram with no dots, all strands vertical, and a single relabeling by $w\in \widehat{W}$, we let $\mathbbm{r}_D=y_w$.
  \end{enumerate}
Any other diagram can be written as a composition of these, and we
define $ {\mathbbm{r}_D}$ to be the composition of the corresponding morphisms.
\end{definition}

This associates a morphism to a diagram, and since we hit all the
generators of the category $\mathscr{B}^+$, every morphism is a sum of
$\mathbbm{r}_D$'s; this is clear from \cite[Cor. 3.15]{websterKoszulDuality2019}.  However, this description is redundant, since
there are relations between these generators. 

\newseq
 All the relations of the category $\mathscr{B}^+$ can be described locally in terms of unrolled
 diagrams.      We can visualize Definition  \ref{def:rD}(2) as a convention
 of ``dot migration'' for interpreting dots
 on partner strands:
\begin{equation*}\label{eq:dot-migration}\subeqn
    \begin{tikzpicture}[very thick,baseline,scale=.9]       \draw (1,0)
      +(1,-1) -- +(1,1) node[below,at start,scale=.8]{$i;n$};  \fill (2,0) circle (3.5pt);\node at (.5,0){$\cdots$}; \draw (0,0)
      +(-1,-1) -- +(-1,1) node[below,at start]{$i$};  
    \end{tikzpicture}=\:\begin{tikzpicture}[very thick,baseline,scale=.9]       \draw (1,0)
      +(1,-1) -- +(1,1) node[below,at start,scale=.8]{$i;n$};  \fill (-1,0) circle (3.5pt);\node at (.5,0){$\cdots$}; \draw (0,0)
      +(-1,-1) -- +(-1,1) node[below,at start]{$i$};  
    \end{tikzpicture}+nh\:\begin{tikzpicture}[very thick,baseline,scale=.9]       \draw (1,0)
      +(1,-1) -- +(1,1) node[below,at start,scale=.8]{$i;n$}; \node at (.5,0){$\cdots$}; \draw (0,0)
      +(-1,-1) -- +(-1,1) node[below,at start]{$i$};  
    \end{tikzpicture}
  \end{equation*}

 The fact that isotopy leaves $\mathbbm{r}_D$ invariant is a
combination of relations (\ref{I-eq:dot-commute}) and the ``boring'' cases of (\ref{I-eq:wall-cross1},\ref{I-eq:psi},\ref{I-eq:psipoly},\ref{I-eq:triple}) where the portions of diagrams commuting past each other are distant in $\R$.  As discussed above, (\ref{I-eq:weyl1},\ref{I-eq:conjugate2},\ref{I-eq:weyl2},\ref{I-eq:psiconjugate})
imply that green lines can isotope past all crossings and dots and (\ref{I-eq:coweight2}) that green lines can merge by multiplying their
labels.  
\begin{equation*}\label{eq:antipodal-possible}\subeqn
    \begin{tikzpicture}[baseline,scale=.9]
        \draw[very thick] (-4,0) +(-1,-1) -- +(1,1);
\draw[very thick](-4,0) +(1,-1) -- +(-1,1);
\draw[weyl] (-4,.5) +(1.5,0) -- +(-1.5,0);
  \end{tikzpicture}=   \begin{tikzpicture}[baseline,scale=.9]
        \draw[very thick] (-4,0) +(-1,-1) -- +(1,1);
\draw[very thick](-4,0) +(1,-1) -- +(-1,1);
\draw[weyl] (-4,-.5) +(1.5,0) -- +(-1.5,0);
  \end{tikzpicture}\qquad \qquad 
   \begin{tikzpicture}[baseline,scale=.9]
  \draw[very thick](-1,0) +(0,-1) --  node
  [midway,circle,fill=black,inner sep=2pt]{}
  +(0,1);
  \draw[weyl] (-1,.5) +(0.5,0) -- +(-0.5,0);
\end{tikzpicture}=  \begin{tikzpicture}[baseline,scale=.9]
  \draw[very thick](-1,0) +(0,-1) --  node
  [midway,circle,fill=black,inner sep=2pt]{}
  +(0,1);
  \draw[weyl] (-1,-.5) +(0.5,0) -- +(-0.5,0);
\end{tikzpicture}
\end{equation*}
\begin{equation*}
      \begin{tikzpicture}[baseline,scale=.9]
      
  \draw[very thick] (-12,0) +(-1,-1) -- +(1,1);
 
  \draw[very thick, dashed](-12,0) +(1,-1) -- +(-1,1);
 
  \draw[weyl] (-12,.5) +(1.5,0) -- +(-1.5,0);
\end{tikzpicture}=      \begin{tikzpicture}[baseline,scale=.9]
      
  \draw[very thick] (-12,0) +(-1,-1) -- +(1,1);
 
  \draw[very thick, dashed](-12,0) +(1,-1) -- +(-1,1);
 
  \draw[weyl] (-12,-.5) +(1.5,0) -- +(-1.5,0);
\end{tikzpicture}\qquad 
\begin{tikzpicture}[baseline,scale=.9]

  \draw[very thick, dashed] (-8,0) +(-1,-1) -- +(1,1);
 
  \draw[very thick](-8,0) +(1,-1) -- +(-1,1);
  
  \draw[weyl] (-8,.5) +(1.5,0) -- +(-1.5,0);
\end{tikzpicture}=\begin{tikzpicture}[baseline,scale=.9]

  \draw[very thick, dashed] (-8,0) +(-1,-1) -- +(1,1);
 
  \draw[very thick](-8,0) +(1,-1) -- +(-1,1);
  
  \draw[weyl] (-8,-.5) +(1.5,0) -- +(-1.5,0);
\end{tikzpicture}
\end{equation*}

The relation (\ref{I-eq:dot-commute}) implies that dots can commute past strands with a
different label or their partners, or past all ghosts:
\begin{equation*}\subeqn\label{b-first-QH}
    \begin{tikzpicture}[scale=.9,baseline]
      \draw[very thick](-4,0) +(-1,-1) -- +(1,1) node[below,at start]
      {$i;n$}; \draw[very thick](-4,0) +(1,-1) -- +(-1,1) node[below,at
      start] {$j$}; \fill (-4.5,.5) circle (3pt);
\node at (-2,0){=}; \draw[very thick](0,0) +(-1,-1) -- +(1,1)
      node[below,at start] {$i;n$}; \draw[very thick](0,0) +(1,-1) --
      +(-1,1) node[below,at start] {$j$}; \fill (.5,-.5) circle (3pt);
      \node at (4,0){unless $i=j$};
    \end{tikzpicture}
  \end{equation*}
\begin{equation*}\subeqn\label{b-second-QH}
    \begin{tikzpicture}[scale=.9,baseline]
      \draw[very thick](-4,0) +(-1,-1) -- +(1,1) node[below,at start]
      {$i$}; \draw[very thick](-4,0) +(1,-1) -- +(-1,1) node[below,at
      start] {$j;n$}; \fill (-3.5,.5) circle (3pt);
\node at (-2,0){=}; \draw[very thick](0,0) +(-1,-1) -- +(1,1)
      node[below,at start] {$i$}; \draw[very thick](0,0) +(1,-1) --
      +(-1,1) node[below,at start] {$j;n$}; \fill (-.5,-.5) circle (3pt);
      \node at (4,0){unless $i=j$};
    \end{tikzpicture}
  \end{equation*}
  \begin{equation*}\subeqn\label{b-third-QH}
    \begin{tikzpicture}[scale=.9,baseline]
      \draw[very thick,dashed](-4,0) +(-1,-1) -- +(1,1) node[below,at start]
      {$e;n$}; \draw[very thick](-4,0) +(1,-1) -- +(-1,1) node[below,at
      start] {$i$}; \fill (-4.5,.5) circle (3pt);
\node at (-2,0){=}; \draw[very thick,dashed](0,0) +(-1,-1) -- +(1,1)
      node[below,at start] {$e;n$}; \draw[very thick](0,0) +(1,-1) --
      +(-1,1) node[below,at start] {$i$}; \fill (.5,-.5) circle (3pt);
    \end{tikzpicture}\qquad \qquad
    \begin{tikzpicture}[scale=.9,baseline]
      \draw[very thick](-4,0) +(-1,-1) -- +(1,1) node[below,at start]
      {$i$}; \draw[very thick,dashed](-4,0) +(1,-1) -- +(-1,1) node[below,at
      start] {$e;n$}; \fill (-3.5,.5) circle (3pt);
\node at (-2,0){=}; \draw[very thick](0,0) +(-1,-1) -- +(1,1)
      node[below,at start] {$i$}; \draw[very thick,dashed](0,0) +(1,-1) --
      +(-1,1) node[below,at start] {$e;n$}; \fill (-.5,-.5) circle (3pt);
    \end{tikzpicture}
  \end{equation*}
The previous relations have not depended on the weights
$ {b_{i,1}},\dots, b_{i, {w_i}}$ and $ {b_e}$ defined before.  
We can now use these to write out our version of the relation (\ref{I-eq:wall-cross1}):
      \begin{equation*}\subeqn\label{x-cost-1}
   \begin{tikzpicture}[very thick,baseline,scale=.9]
     \draw (-2.8,0)  +(0,-1) .. controls (-1.2,0) ..  +(0,1) node[below,at start]{$i;m$};
        \draw[dashed,red] (-2,0)  +(0,-1)--node[below,at start ]{$i$}  +(0,1);
   \end{tikzpicture}
 =
  \begin{tikzpicture}[very thick,baseline,scale=.9]
  \draw[dashed,red] (2.3,0)  +(0,-1) -- node[below,at start ]{$i;m$} +(0,1);
        \draw (1.5,0)  +(0,-1) -- +(0,1) node[below,at start]{$i$};
        \fill (1.5,0) circle (3pt);
 \end{tikzpicture} +h(b_{i,k}+m)  \begin{tikzpicture}[very thick,baseline,scale=.9]
  \draw[dashed,red] (2.3,0)  +(0,-1) -- node[below,at start ]{$i;m$} +(0,1);
        \draw (1.5,0)  +(0,-1) -- +(0,1) node[below,at start]{$i$};
 \end{tikzpicture}\qquad  \qquad   \begin{tikzpicture}[very thick,baseline,scale=.9]
     \draw (-2.8,0)  +(0,-1) .. controls (-1.2,0) ..  +(0,1) node[below,at start]{$i$};
        \draw[dashed,red] (-2,0)  +(0,-1)--node[below,at start ]{$j;m$}  +(0,1);
   \end{tikzpicture}=\begin{tikzpicture}[very thick,baseline,scale=.9]
     \draw (-2.8,0)  +(0,-1) --  +(0,1) node[below,at start]{$i$};
        \draw[dashed,red] (-2,0)  +(0,-1)--node[below,at start ]{$j;m$}  +(0,1);
   \end{tikzpicture}
 \end{equation*}
\begin{equation*}
    \subeqn\label{x-cost-2}
  \begin{tikzpicture}[very thick,baseline,scale=.9]
          \draw[dashed,red] (-2,0)  +(0,-1)-- node[below,at start ]{$i;m$} +(0,1);
  \draw (-1.2,0)  +(0,-1) .. controls (-2.8,0) ..  +(0,1) node[below,at start]{$i$};\end{tikzpicture}
           =
  \begin{tikzpicture}[very thick,baseline,scale=.9]
    \draw (2.5,0)  +(0,-1) -- +(0,1) node[below,at start]{$i$};
       \draw[dashed,red] (1.7,0)  +(0,-1) -- node[below,at start ]{$i;m$} +(0,1) ;
       \fill (2.5,0) circle (3pt);\end{tikzpicture} +h(b_{i,k}+m)  \begin{tikzpicture}[very thick,baseline,scale=.9]
    \draw (2.5,0)  +(0,-1) -- +(0,1) node[below,at start]{$i$};
       \draw[dashed,red] (1.7,0)  +(0,-1) -- node[below,at start]{$i;m$} +(0,1) ;
       \end{tikzpicture}\qquad  \qquad   \begin{tikzpicture}[very thick,baseline,scale=.9]
     \draw (-2,0)  +(0,-1) .. controls (-3.6,0) ..  +(0,1) node[below,at start]{$i$};
        \draw[dashed,red] (-2.8,0)  +(0,-1)--node[below,at start ]{$j;m$}  +(0,1);
   \end{tikzpicture}=\begin{tikzpicture}[very thick,baseline,scale=.9]
     \draw (-2,0)  +(0,-1) --  +(0,1) node[below,at start]{$i$};
        \draw[dashed,red] (-2.8,0)  +(0,-1)--node[below,at start ]{$j;m$}  +(0,1);
   \end{tikzpicture}
\end{equation*}
If $e\colon j\to i$, then we have:
   \begin{equation*}\subeqn\label{b-black-bigon1}
      \begin{tikzpicture}[very thick,scale=.65,baseline]
      \draw(-2.8,0) +(0,-1) .. controls (-1.2,0) ..  +(0,1)
      node[below,at start]{$i$}; 
\end{tikzpicture}\quad   \begin{tikzpicture}[very thick,scale=.65,baseline]
      \draw[dashed] (-2.8,0) +(0,-1) .. controls (-1.2,0) ..  +(0,1)
      node[below,at start,scale=.8]{$e;m$}; \draw (-1.2,0) +(0,-1) .. controls
(-2.8,0) ..  +(0,1) node[below,at start]{$j$};
\end{tikzpicture}
=    \begin{tikzpicture}[very thick,scale=.65,baseline]
      \draw(-2.8,0) +(0,-1) -- node[midway,fill=black, inner sep=2pt, circle]{} +(0,1)
      node[below,at start]{$i$};      \draw[dashed] (-.8,0) +(0,-1)-- +(0,1)
      node[below,at start,scale=.75]{$e;m$}; \draw (.2,0) +(0,-1) --+(0,1) node[below,at start]{$j$};
\end{tikzpicture}-\begin{tikzpicture}[very thick,scale=.65,baseline]
      \draw(-2.8,0) +(0,-1) -- +(0,1)
      node[below,at start]{$i$}; 
      \draw[dashed] (-.8,0) +(0,-1)-- +(0,1)
      node[below,at start,scale=.75]{$e;m$}; \draw (.2,0) +(0,-1) --node[midway,fill=black, inner sep=2pt, circle]{}+(0,1) node[below,at start]{$j$};
\end{tikzpicture}+h(b_e-m)\begin{tikzpicture}[very thick,scale=.65,baseline]
      \draw(-2.8,0) +(0,-1) -- +(0,1)
      node[below,at start]{$i$}; 
      \draw[dashed] (-.8,0) +(0,-1)-- +(0,1)
      node[below,at start,scale=.75]{$e;m$}; \draw (.2,0) +(0,-1) --+(0,1) node[below,at start]{$j$};
\end{tikzpicture}
\end{equation*}
   \begin{equation*}\subeqn\label{b-black-bigon2}
      \begin{tikzpicture}[very thick,scale=.65,baseline]
      \draw (-1.2,0) +(0,-1) .. controls
(-2.8,0) ..  +(0,1) node[below,at start]{$i$};
\end{tikzpicture}\quad   \begin{tikzpicture}[very thick,scale=.65,baseline]
      \draw[dashed] (-1.2,0) +(0,-1) .. controls (-2.8,0) ..  +(0,1)    node[below,at start,scale=.75]{$e;m$}; 
      \draw (-2.8,0) +(0,-1) .. controls (-1.2,0) ..  +(0,1) node[below,at start]{$j$};
\end{tikzpicture}
=   \begin{tikzpicture}[very thick,scale=.65,baseline,xscale=.9]
      \draw(-1.8,0) +(0,-1) -- node[midway,fill=black, inner sep=2pt, circle]{} +(0,1)
      node[below,at start]{$i$}; 
\draw[dashed] (.2,0) +(0,-1)-- +(0,1)
      node[below,at start,scale=.75]{$e;m$}; \draw (-.8,0) +(0,-1) --+(0,1) node[below,at start]{$j$};
\end{tikzpicture}-\begin{tikzpicture}[very thick,scale=.65,baseline,xscale=.9]
      \draw(-1.8,0) +(0,-1) -- +(0,1)
      node[below,at start]{$i$}; 
\draw[dashed](.2,0) +(0,-1)-- +(0,1)
      node[below,at start,scale=.75]{$e;m$}; \draw  (-.8,0)+(0,-1) --node[midway,fill=black, inner sep=2pt, circle]{}+(0,1) node[below,at start]{$j$};
\end{tikzpicture}+h(b_e-m)\begin{tikzpicture}[very thick,scale=.65,baseline,xscale=.9]
      \draw(-1.8,0) +(0,-1) -- +(0,1)
      node[below,at start]{$i$}; 
\draw[dashed](.2,0) +(0,-1)-- +(0,1)
      node[below,at start,scale=.75]{$e;m$}; \draw  (-.8,0)+(0,-1) --+(0,1) node[below,at start]{$j$};
\end{tikzpicture}
  \end{equation*}
  Otherwise, we have:
     \begin{equation*}\subeqn\label{b-black-bigon3}
      \begin{tikzpicture}[very thick,scale=.65,baseline]
      \draw(-2.8,0) +(0,-1) .. controls (-1.2,0) ..  +(0,1)
      node[below,at start]{$i$}; 
\end{tikzpicture}\quad   \begin{tikzpicture}[very thick,scale=.65,baseline]
      \draw[dashed] (-2.8,0) +(0,-1) .. controls (-1.2,0) ..  +(0,1)
      node[below,at start,scale=.8]{$e;m$}; \draw (-1.2,0) +(0,-1) .. controls
(-2.8,0) ..  +(0,1) node[below,at start]{$j$};
\end{tikzpicture}
=  \begin{tikzpicture}[very thick,scale=.65,baseline]
      \draw(-2.8,0) +(0,-1) -- +(0,1)
      node[below,at start]{$i$}; 
      \draw[dashed] (-.8,0) +(0,-1)-- +(0,1)
      node[below,at start,scale=.75]{$e;m$}; \draw (.2,0) +(0,-1) --+(0,1) node[below,at start]{$j$};
\end{tikzpicture}\qquad 
      \begin{tikzpicture}[very thick,scale=.65,baseline]
      \draw (-1.2,0) +(0,-1) .. controls
(-2.8,0) ..  +(0,1) node[below,at start]{$i$};
\end{tikzpicture}\quad  \quad  \begin{tikzpicture}[very thick,scale=.65,baseline]
      \draw[dashed] (-1.2,0) +(0,-1) .. controls (-2.8,0) ..  +(0,1)    node[below,at start,scale=.75]{$e;m$}; 
      \draw (-2.8,0) +(0,-1) .. controls (-1.2,0) ..  +(0,1) node[below,at start]{$j$};
\end{tikzpicture}
=  \begin{tikzpicture}[very thick,scale=.65,baseline,xscale=.9]
      \draw(-1.8,0) +(0,-1) -- +(0,1)
      node[below,at start]{$i$}; 
\draw[dashed](.2,0) +(0,-1)-- +(0,1)
      node[below,at start,scale=.75]{$e;m$}; \draw  (-.8,0)+(0,-1) --+(0,1) node[below,at start]{$j$};
\end{tikzpicture}
  \end{equation*}
Note that the equations above are written assuming that $m>0$, but they are equally valid if $m<0$,
with the requisite reordering of strands.
The relation 
(\ref{I-eq:psi2}) implies that:
  \begin{equation*}\subeqn\label{b-psi2}
    \begin{tikzpicture}[very thick,scale=.9,baseline]
      \draw (-2.8,0) +(0,-1) .. controls (-1.2,0) ..  +(0,1)
      node[below,at start]{$i;m$}; \draw (-1.2,0) +(0,-1) .. controls
      (-2.8,0) ..  +(0,1) node[below,at start]{$i$}; \node at (-.5,0)
      {=}; \node at (0.4,0) {$0$};
    \end{tikzpicture}\qquad \qquad    \begin{tikzpicture}[very thick,scale=.9,baseline]
      \draw (-2.8,0) +(0,-1) .. controls (-1.2,0) ..  +(0,1)
      node[below,at start]{$i$}; \draw(-1.2,0) +(0,-1) .. controls
      (-2.8,0) ..  +(0,1) node[below,at start]{$i;m$}; \node at (-.5,0)
      {=}; \node at (0.4,0) {$0$};
    \end{tikzpicture}
      \end{equation*}
The relation 
(\ref{I-eq:psipoly}) is equivalent to isotopy and  \begin{equation*}\subeqn\label{b-nilHecke-1}
    \begin{tikzpicture}[scale=.9,baseline]
      \draw[very thick](-4,0) +(-1,-1) -- +(1,1) node[below,at start,scale=.8]
      {$i;m$}; \draw[very thick](-4,0) +(1,-1) -- +(-1,1) node[below,at
      start] {$i$}; \fill (-4.5,.5) circle (3pt);
\node at (-2,0){$-$}; \draw[very thick](0,0) +(-1,-1) -- +(1,1)
      node[below,at start,scale=.8]
      {$i;m$}; \draw[very thick](0,0) +(1,-1) --
      +(-1,1) node[below,at start] {$i$}; \fill (.5,-.5) circle (3pt);
     \node at (2,0){$=$};  \draw[very thick](4,0) +(-1,-1) -- +(-1,1)
      node[below,at start,scale=.8]
      {$i;m$}; \draw[very thick](4,0) +(0,-1) --
      +(0,1) node[below,at start] {$i$};
      \draw[weyl] (4,0) +(.5,0) -- +(-1.5,0) node[at start, right,green!50!black]{$s$} ;
    \end{tikzpicture}
  \end{equation*}
 \begin{equation*}\subeqn\label{b-nilHecke-2}
    \begin{tikzpicture}[scale=.9,baseline]
      \draw[very thick](-4,0) +(-1,-1) -- +(1,1) node[below,at
      start] {$i$}; \draw[very thick](-4,0) +(1,-1) -- +(-1,1) node[below,at start,scale=.8]
      {$i;m$}; \fill (-4.5,-.5) circle (3pt);
      \node at (-2,0){$-$}; \draw[very thick](0,0) +(-1,-1) -- +(1,1)
      node[below,at
      start] {$i$}; \draw[very thick](0,0) +(1,-1) --
      +(-1,1) node[below,at start,scale=.8]
      {$i;m$}; \fill (.5,.5) circle (3pt);
      \node at (2,0){$=$};  \draw[very thick](4,0) +(-1,-1) -- +(-1,1)
      node[below,at
      start] {$i$}; \draw[very thick](4,0) +(0,-1) --
      +(0,1) node[below,at start,scale=.8]
      {$i;m$}; 
      \draw[weyl] (4,0) +(.5,0) -- +(-1.5,0) node[at start, right,green!50!black]{$s$} ;
    \end{tikzpicture}
  \end{equation*}
  with $s$ denoting the unique reflection in the affine Weyl group
  switching the top and bottom labels of the diagram.

  Finally, the codimension 2 relations show how to relate the two
resolutions of a triple point. The correct relation depends on the
number of strands with the same label going through the triple
point: if all three have the same label, then we use (\ref{I-eq:psi}),
if two have the same label, we use
(\ref{I-eq:triple}) and if there is no such pair, then
(\ref{I-eq:wall-cross1}).  \excise{ Any triple point involving only partners
  \begin{equation*}\subeqn\label{b-triple-coxeter}
      \begin{tikzpicture}[very thick,scale=.8,baseline]  \draw (1,0) +(1,-1) .. controls
      (2,0) .. +(-1,1)
      node[below,at start,scale=.8]{$k;m$}; \draw (1,0) +(-1,-1) .. controls
      (2,0) .. +(1,1)
      node[below,at start]{$i$}; \draw (1,0) +(0,-1) -- node[below, at start,scale=.8]{$e;m$}+(0,1); 
    \end{tikzpicture} =\begin{tikzpicture}[very thick,scale=.8,baseline]
      \draw (-3,0) +(1,-1) .. controls (-4,0) .. +(-1,1) node[below,at start,scale=.8]{$k;m$}; \draw
      (-3,0) +(-1,-1) .. controls (-4,0) .. +(1,1) node[below,at start]{$i$}; \draw[dashed]
      (-3,0) +(0,-1)--  node[below, at start,scale=.8]{$e;m$}+(0,1);
       \end{tikzpicture} 
  \end{equation*}
 
  Finally, the codimension 2 relation
 (\ref{eq:triple}) implies a number of different relations relating the
 two different ways of resolving a triple point
 for all 
 ghosts, we have the equation below and its reflection through a
 vertical line:
  \begin{equation*}\subeqn\label{b-triple-extra-dumb}
    \begin{tikzpicture}[very thick,scale=.9,baseline]
      \draw (-3,0) +(-1,-1) to[out=90,in=-150] node[below,at start]{$e;m$} +(1,1) ; \draw
      (-3,0) +(-2,-1) -- +(2,1) node[below,at start]{$i;m$}; \draw[dashed]
      (-3,0) +(0,-1)--  +(0,1); \node at (-1,0) {=}; \draw (1,0) +(-1,-1) to [out=30,in=-90] node[below,at start]{$e;m$} +(1,1)
      ; \draw
      (1,0) +(-2,-1) -- +(2,1) node[below,at start]{$i;m$}; \draw[dashed] (1,0) +(0,-1) -- +(0,1); 
    \end{tikzpicture}
  \end{equation*}}
These imply we can isotope through any triple point unless it involves
exactly 
\begin{enumerate}
	\item two partners with the label $i\in {\vertex}$ on the outside, and a red strand with label $i$:
 \begin{equation*}\subeqn\label{b-frame-triple-smart}
      \begin{tikzpicture}[very thick,scale=.8,baseline]  \draw (1,0) +(1,-1) .. controls
      (2,0) .. +(-1,1)
      node[below,at start,scale=.8]{$i;n$}; \draw (1,0) +(-1,-1) .. controls
      (2,0) .. +(1,1)
      node[below,at start]{$i$}; \draw[dashed,red] (1,0) +(0,-1) -- node[above, at end,scale=.8]{$ i;m$}+(0,1);  \draw[weyl] (1,.7) +(1.5,0) -- +(-1.5,0) node[at start, right]{$s$} ;
    \end{tikzpicture} - \begin{tikzpicture}[very thick,scale=.8,baseline]
      \draw (-3,0) +(1,-1) .. controls (-4,0) .. +(-1,1) node[below,at start,scale=.8]{$i;n$}; \draw
      (-3,0) +(-1,-1) .. controls (-4,0) .. +(1,1) node[below,at start]{$i$}; \draw[dashed,red]
      (-3,0) +(0,-1)--  node[above, at end,scale=.8]{$ i;m$}+(0,1);  \draw[weyl] (-3,.7) +(1.5,0) -- +(-1.5,0) node[at start, right]{$s$} ;
       \end{tikzpicture} 
  =\begin{tikzpicture}[very thick,scale=.8,baseline]       \draw (0,0)
      +(1,-1) -- +(1,1) node[below,at start,scale=.8]{$i;n$};  \draw (0,0)
      +(-1,-1) -- +(-1,1) node[below,at start]{$i$}; \draw[dashed,red] (0,0)
      +(0,-1) -- node[above, at end,scale=.8]{$ i;m$}+(0,1); 
    \end{tikzpicture}
  \end{equation*}
  \item two partners with the label $i\in {\vertex}$ on the outside, and a ghost for $e$ with $t(e)=i$ in the middle:
      \begin{equation*}
       \subeqn\label{b-triple-smart}
      \begin{tikzpicture}[very thick,scale=.9,baseline,xscale=.9]  \draw (1,0) +(1,-1) .. controls
      (2,0) .. +(-1,1)
      node[below,at start,scale=.8]{$i;n$}; \draw (1,0) +(-1,-1) .. controls
      (2,0) .. +(1,1)
      node[below,at start]{$i$}; \draw[dashed] (1,0) +(0,-1) -- node[above, at end,scale=.8]{$e;m$}+(0,1);    \draw[weyl] (1,.4) +(1.5,0) -- +(-1.5,0) node[at start, right]{$s$} ;
    \end{tikzpicture}   
    - \begin{tikzpicture}[very thick,scale=.9,baseline,xscale=.9]
      \draw (-3,0) +(1,-1) .. controls (-4,0) .. +(-1,1) node[below,at start,scale=.8]{$i;n$}; \draw
      (-3,0) +(-1,-1) .. controls (-4,0) .. +(1,1) node[below,at start]{$i$}; \draw[dashed]
      (-3,0) +(0,-1)--  node[above, at end,scale=.8]{$e;m$}+(0,1);   \draw[weyl] (-3,.4) +(1.5,0) -- +(-1.5,0) node[at start, right]{$s$} ;
       \end{tikzpicture} 
  =\begin{tikzpicture}[very thick,xscale=.9,baseline,xscale=.9]       \draw (0,0)
      +(1,-1) -- +(1,1) node[below,at start,scale=.8]{$i;n$}; \draw (0,0)
      +(-1,-1) -- +(-1,1) node[below,at start]{$i$}; \draw[dashed] (0,0)
      +(0,-1) -- node[above, at end,scale=.8]{$e;m$}+(0,1); 
    \end{tikzpicture} 
  \end{equation*}
\item two ghosts for $e$ with $t(e)=i$ on the outside, and a corporeal strand with label $i$ in the middle:
   \begin{equation*}
       \subeqn\label{b-triple-smart2}
      \begin{tikzpicture}[very thick,scale=.9,baseline,xscale=.9]  \draw[dashed] (1,0) +(1,-1) .. controls
      (2,0) .. +(-1,1)
      node[below,at start,scale=.8]{$e;n$}; \draw[dashed] (1,0) +(-1,-1) .. controls
      (2,0) .. +(1,1)
      node[below,at start,scale=.8]{$e;m$}; \draw (1,0) +(0,-1) -- node[above, at end]{$i$}+(0,1);    \draw[weyl] (1,.4) +(1.5,0) -- +(-1.5,0) node[at start, right]{$s$} ;
    \end{tikzpicture}   
    - \begin{tikzpicture}[very thick,scale=.9,baseline,xscale=.9]
      \draw[dashed] (-3,0) +(1,-1) .. controls (-4,0) .. +(-1,1) node[below,at start,scale=.8]{$e;n$}; \draw[dashed]
      (-3,0) +(-1,-1) .. controls (-4,0) .. +(1,1) node[below,at start,scale=.8]{$e;m$}; \draw
      (-3,0) +(0,-1)--  node[above, at end]{$i$}+(0,1);   \draw[weyl] (-3,.4) +(1.5,0) -- +(-1.5,0) node[at start, right]{$s$} ;
       \end{tikzpicture} 
  =\begin{tikzpicture}[very thick,xscale=.9,baseline,xscale=.9]       \draw[dashed]  (0,0)
      +(1,-1) -- +(1,1) node[below,at start,scale=.8]{$e;n$}; \draw[dashed]  (0,0)
      +(-1,-1) -- +(-1,1) node[below,at start,scale=.8]{$e;m$}; \draw(0,0)
      +(0,-1) -- node[above, at end]{$i$}+(0,1); 
    \end{tikzpicture} 
  \end{equation*}
\end{enumerate} 
  In each of the diagrams above, $s$ denotes the unique reflection in the affine Weyl group
  making the top and bottom match.  

\begin{lemma}\label{lem:unrolled-B}
    Given $\eta,\eta'\in \ft_{\hat\beta}$ generic, the Hom space
    $\Hom_{\scrB^+}(\eta,\eta')$ is spanned by the morphisms
    $ {\mathbbm{r}_D}$ for unrolled diagrams $D$ with top $\eta'$ and
    bottom $\eta$, and all relations between these morphisms are induced by the local relations (\ref{eq:dot-migration}--\ref{b-triple-smart2}).
  \end{lemma}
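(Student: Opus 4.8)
The plan is to reduce the statement to the presentation of $\scrB^+$ established in part I, via the dictionary between paths in $\ft_{\hat\beta}$ and unrolled diagrams. First I would observe that, by \cite[Cor. 3.13]{WebSD} (invoked just after Definition \ref{def:rD}), every morphism $\eta\to\eta'$ in $\scrB^+$ is a sum of morphisms $\mathbbm{r}_\pi$ for paths $\pi$ from $\eta$ to $\eta'$, and that a generic path $\pi$ in $\ft_{\hat\beta}$ is exactly the data recorded by an unrolled diagram once we decompose $\pi$ into a sequence of elementary moves (isotopies within a chamber, crossings of root or matter hyperplanes, dot insertions corresponding to multiplication by $S_h$, and relabelings by $\widehat W$). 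So the generation statement is immediate from the definitions and \cite[Cor. 3.13]{WebSD}; the content of the lemma is that the relations are all local, i.e. generated by (\ref{eq:dot-migration}--\ref{b-triple-smart2}).

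For the relations, I would proceed by a standard ``normal form / confluence'' argument transplanted from part I. Any two unrolled diagrams $D,D'$ with $\mathbbm{r}_D=\mathbbm{r}_{D'}$ represent paths $\pi,\pi'$ with the same fixed endpoints, hence differ by a homotopy of paths in $\ft_{\hat\beta}$ rel endpoints together with relabelings. Decompose such a homotopy into a finite sequence of elementary moves: (a) isotopies of the diagram not changing the combinatorial type — these leave $\mathbbm{r}_D$ invariant by relation (\ref{I-eq:dot-commute}) and the ``boring'' far-apart cases of (\ref{I-eq:wall-cross1}, \ref{I-eq:psi}, \ref{I-eq:psipoly}, \ref{I-eq:triple}), which is precisely the content of (\ref{eq:dot-migration}--\ref{b-triple-smart2}) in its ``distant strands commute'' incarnations; (b) passing a dot past a strand, a green line past a crossing or dot, or merging green lines — handled by (\ref{I-eq:dot-commute}), (\ref{I-eq:weyl1}, \ref{I-eq:conjugate2}, \ref{I-eq:weyl2}, \ref{I-eq:psiconjugate}), (\ref{I-eq:coweight2}), which give exactly (\ref{eq:antipodal-possible}) and (\ref{b-first-QH}--\ref{b-third-QH}); (c) a codimension-one degeneration, i.e. a bigon being created or destroyed at a root or matter hyperplane — handled by (\ref{I-eq:psi2}), (\ref{I-eq:psipoly}), and the wall-crossing relations, giving (\ref{b-psi2}), (\ref{b-nilHecke-1}--\ref{b-nilHecke-2}), (\ref{x-cost-1}--\ref{x-cost-2}), (\ref{b-black-bigon1}--\ref{b-black-bigon3}); and (d) a codimension-two degeneration, i.e. a triple point, where the two resolutions are related by (\ref{I-eq:psi}), (\ref{I-eq:triple}), or (\ref{I-eq:wall-cross1}) according to how many of the three strands share a label, giving the isotopy-through-triple-point relations and (\ref{b-frame-triple-smart}, \ref{b-triple-smart}, \ref{b-triple-smart2}). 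The key input making this list complete is that $\ft_{\hat\beta}$ with its root and matter hyperplane arrangement is a generic (simple, i.e. no four hyperplanes meeting in codimension three unnecessarily) arrangement under our genericity hypothesis on $\hat\beta_*$, so any homotopy rel endpoints can be subdivided into moves of types (a)--(d) only; this is where genericity of $\hat\beta_*$ is used.

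The step I expect to be the main obstacle is bookkeeping the precise translation of each relation of $\scrB^+$ from part I into its local unrolled-diagram form, especially the dot-migration convention of Definition \ref{def:rD}(2) and the shift by $\tfrac12$ built into $\hat\beta_{i,k},\hat\beta_e$: one must check that the factors $nh$, $h(b_{i,k}+m)$, and $h(b_e-m)$ appearing in (\ref{b-first-QH}--\ref{b-triple-smart2}) are exactly what the relations (\ref{I-eq:wall-cross1}), (\ref{I-eq:psipoly}), (\ref{I-eq:triple}) produce once one accounts for the partner-strand relabeling (a partner labeled $i;n$ carries $\ep_{i,k}+nh$) and for the convention $\varphi^{\mathrm{mid}}_i$ from (\ref{I-eq:varphi}). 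This is genuinely just a careful unwinding of definitions rather than a new idea, but it is the part most prone to sign and shift errors, so I would do it relation-by-relation and cross-check each against the corresponding ``boring case'' already known to hold by isotopy-invariance of $\mathbbm{r}_D$.
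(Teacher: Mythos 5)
Your argument for the spanning direction is correct and matches the paper's: generation follows from \cite[Cor. 3.13]{WebSD} because the generating morphisms of $\scrB^+$ are hit by the various $\mathbbm{r}_D$. The gap is in the relations direction, where you use a homotopy argument that does not work as stated. You write that two unrolled diagrams $D,D'$ with $\mathbbm{r}_D=\mathbbm{r}_{D'}$ ``differ by a homotopy of paths in $\ft_{\hat\beta}$ rel endpoints together with relabelings,'' and then decompose that homotopy into local moves. There are two problems. First, the relations of $\scrB^+$ are not equalities of pairs of diagrams but honest linear relations $\sum a_i \mathbbm{r}_{D_i}=0$: the bigon relations (\ref{b-black-bigon1}--\ref{b-black-bigon3}) and the nilHecke relations (\ref{b-nilHecke-1}--\ref{b-nilHecke-2}) express one diagram as a linear combination of others, not as equal to a single other diagram, so the ``two paths with the same endpoints are homotopic'' picture simply does not apply. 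Second, even restricted to relations of the form $\mathbbm{r}_D=\mathbbm{r}_{D'}$, the implication ``equal morphism implies homotopic paths'' is exactly the content you need to prove, not an input; in a presented category the algebra a priori has more identifications than the topology.

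The paper instead runs a counting argument that closes this gap cleanly, and it is the piece you are missing. Once the local relations are checked (so there is a well-defined surjection from the formal span of diagrams modulo local relations onto $\Hom_{\scrB^+}(\eta,\eta')$), one shows the local relations suffice to put any unrolled diagram into a normal form: a reduced word in $\What$ with all dots and green lines at the bottom. These normal-form diagrams span the quotient, and \cite[Cor. 3.13]{WebSD} says precisely that their images form a basis of $\Hom_{\scrB^+}(\eta,\eta')$ as a free $S_h$-module. A surjection from a spanning set onto a basis is forced to be injective, which finishes the proof. You invoke \cite[Cor. 3.13]{WebSD} only for surjectivity; the essential point is that it is also the input for injectivity, via this normal-form comparison. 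If you want to salvage the homotopy intuition, you would still have to prove that the normal forms are linearly independent in the quotient by local relations, and the only tool available for that is the basis theorem, at which point the homotopy scaffolding is redundant.
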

  \begin{proof}
    We have justified in each individual case why the relations
    (\ref{eq:dot-migration}--\ref{b-triple-smart2}) hold.  Thus, we have a
    map from the formal span of unrolled diagrams modulo these
    relations to $\Hom_{\scrB^+}(\eta,\eta')$.  This is surjective
    because the generating morphisms of the category $\scrB^+$ are
   spanned as a left or right module over the dots by paths of the form $\mathbbm{r}_D$ for different $D$.  On the other hand, the relations
    (\ref{eq:dot-migration}--\ref{b-triple-smart2}) suffice to write any
     $\mathbbm{r}_D$ as a sum of diagrams corresponding to a reduced word in
    $\What$ with all dots and green lines at the bottom, and to
    relate any two reduced words for $w\in \widehat{W}$ modulo the
    diagrams for shorter elements of $\widehat{W}$.  Thus, we find
    that the unrolled diagrams corresponding to the basis of
    \cite[Th. 3.13]{websterKoszulDuality2019} are a spanning set of this quotient.  This
    is only possible if the map is injective as well.
  \end{proof}
By Theorem \ref{I-thm:BFN-pres}, we have a faithful representation of the category $\scrB^+$ on a sum of polynomial rings $\C[z_{i,k}]_{k\in [1,v_i]}\cdot\epsilon_{\eta}$ 
given by the formulas (\ref{I-eq:ract}--\ref{I-eq:muact}).   For the ease of the reader, we translate these into the diagrammatic representation.  The result is exactly the representation of the weighted KLR algebra defined in \cite[Prop. 2.7]{WebwKLR}, extended to the affine case in the only way compatible with \eqref{eq:dot-migration}. It's most convenient to represent the polynomial ring corresponding to $\eta$ as dots on the corporeal strands, positioned according to $\eta$ (and in particular, with no crossings).  A diagram with no crossings acts by the usual multiplication of diagrams, followed by an isotopy to make the strands straight vertical.  For all other diagrams, we only need to specify the action on the dotless generator $\epsilon_{\eta}$ of the polynomial ring.  
\begin{itemize}
\item A crossing of a corporeal strand with a partner with the same label acts by zero on $\epsilon_{\eta}$.  Combining with the equations (\ref{b-nilHecke-1}--\ref{b-nilHecke-2}), we find that it acts by a shifted divided difference operator on a more general polynomial.  For example, if $a>b$, we have:\newseq 
 \begin{equation*}\subeqn\label{b-first-action}
    \begin{tikzpicture}[scale=.9,baseline]
      \draw[very thick](-4,0) +(-1,-1) -- +(1,1) node[below,at
      start] {$i$}; \draw[very thick](-4,0) +(1,-1) -- +(-1,1) node[below,at start,scale=.8]
      {$i;m$}; 
          \end{tikzpicture}\:\:\star\:\:
        \begin{tikzpicture}[scale=.9,baseline]
\draw[very thick](3.3,0) +(0,-1) -- +(0,1)
      node[below,at
      start] {$i$}; 
      \fill (3.3,-.5) circle (3pt);
      \node[xshift=-8pt,yshift=3pt,scale=.85] at (3.3,-.5) {$a$};
      \fill (4,-.5) circle (3pt);
      \node[xshift=8pt,yshift=3pt,scale=.85] at (4,-.5) {$b$};
      \draw[very thick](4,0) +(0,-1) --
      +(0,1) node[below,at start,scale=.8]
      {$i;m$}; 
    \end{tikzpicture}=        \begin{tikzpicture}[scale=.9,baseline]
\draw[very thick](3.3,0) +(0,-1) -- +(0,1)
      node[below,at start,scale=.8]
      {$i;m$}; 
      \fill (3.3,-.5) circle (3pt);
      \node[xshift=-16pt,yshift=3pt,scale=.85] at (3.3,-.5) {$a-1$};
      \fill (4,-.5) circle (3pt);
      \node[xshift=8pt,yshift=3pt,scale=.85] at (4,-.5) {$b$};
      \draw[very thick](4,0) +(0,-1) --
      +(0,1) node[below,at
      start] {$i$}; 
    \end{tikzpicture}+       \begin{tikzpicture}[scale=.9,baseline]
\draw[very thick](3.3,0) +(0,-1) -- +(0,1)
      node[below,at start,scale=.8]
      {$i;m$}; 
      \fill (3.3,-.5) circle (3pt);
      \node[xshift=-16pt,yshift=3pt,scale=.85] at (3.3,-.5) {$a-2$};
      \fill (4,-.5) circle (3pt);
      \node[xshift=16pt,yshift=3pt,scale=.85] at (4,-.5) {$b+1$};
      \draw[very thick](4,0) +(0,-1) --
      +(0,1) node[below,at
      start] {$i$}; 
    \end{tikzpicture}+\cdots +       \begin{tikzpicture}[scale=.9,baseline]
\draw[very thick](3.3,0) +(0,-1) -- +(0,1)
      node[below,at start,scale=.8]
      {$i;m$}; 
      \fill (3.3,-.5) circle (3pt);
      \node[xshift=-8pt,yshift=3pt,scale=.85] at (3.3,-.5) {$b$};
      \fill (4,-.5) circle (3pt);
      \node[xshift=16pt,yshift=3pt,scale=.85] at (4,-.5) {$a-1$};
      \draw[very thick](4,0) +(0,-1) --
      +(0,1) node[below,at
      start] {$i$}; 
    \end{tikzpicture}
  \end{equation*}
  Note that the presence of a partner strand here can also be written as a shift by (\ref{eq:dot-migration}), but not conveniently in a diagram as above.
\item A crossing of a corporeal strand with a partner with a different label sends the $\epsilon_{\eta}$ to $\epsilon_{\eta'}$, so we have:
 \begin{equation*}\subeqn\label{b-nilHecke-action}
    \begin{tikzpicture}[scale=.9,baseline]
      \draw[very thick](-4,0) +(-1,-1) -- +(1,1) node[below,at
      start] {$i$}; \draw[very thick](-4,0) +(1,-1) -- +(-1,1) node[below,at start,scale=.8]
      {$j;m$}; 
          \end{tikzpicture}\:\:\star\:\:
        \begin{tikzpicture}[scale=.9,baseline]
\draw[very thick](3.3,0) +(0,-1) -- +(0,1)
      node[below,at
      start] {$i$}; 
      \fill (3.3,-.5) circle (3pt);
      \node[xshift=-8pt,yshift=3pt,scale=.85] at (3.3,-.5) {$a$};
      \fill (4,-.5) circle (3pt);
      \node[xshift=8pt,yshift=3pt,scale=.85] at (4,-.5) {$b$};
      \draw[very thick](4,0) +(0,-1) --
      +(0,1) node[below,at start,scale=.8]
      {$j;m$}; 
    \end{tikzpicture}=        \begin{tikzpicture}[scale=.9,baseline]
\draw[very thick](3.3,0) +(0,-1) -- +(0,1)
      node[below,at start,scale=.8]
      {$j;m$}; 
      \fill (3.3,-.5) circle (3pt);
      \node[xshift=-8pt,yshift=3pt,scale=.85] at (3.3,-.5) {$b$};
      \fill (4,-.5) circle (3pt);
      \node[xshift=8pt,yshift=3pt,scale=.85] at (4,-.5) {$a$};
      \draw[very thick](4,0) +(0,-1) --
      +(0,1) node[below,at
      start] {$i$}; 
    \end{tikzpicture}
  \end{equation*}
\item A crossing of a corporeal moving NE over a ghost gives multiplication by the right-hand side of (\ref{x-cost-2}) or (\ref{b-black-bigon1}):
\begin{equation*}
    \subeqn
  \begin{tikzpicture}[very thick,baseline,scale=.9]
          \draw[dashed,red] (-2,0)  +(0,-1)-- node[below,at start]{$ i$} +(0,1);
  \draw (-2,0)  +(-1,-1) -- +(1,1) node[below,at start]{$i$};\end{tikzpicture}\:\:\star\:\:    \begin{tikzpicture}[very thick,baseline,scale=.9]
          \draw[dashed,red] (-2,0)  +(0,-1)-- node[below,at start]{$ i$} +(0,1);
  \draw (-2,0)  +(-1,-1) -- +(-1,1) node[below,at start]{$i$};\end{tikzpicture}
           =
  \begin{tikzpicture}[very thick,baseline,scale=.9]
    \draw (2.5,0)  +(0,-1) -- +(0,1) node[below,at start]{$i$};
       \draw[dashed,red] (1.7,0)  +(0,-1) -- node[below,at start]{$ i$} +(0,1) ;
       \fill (2.5,0) circle (3pt);\end{tikzpicture} +h(b_{i,k}+m)  \begin{tikzpicture}[very thick,baseline,scale=.9]
    \draw (2.5,0)  +(0,-1) -- +(0,1) node[below,at start]{$i$};
       \draw[dashed,red] (1.7,0)  +(0,-1) -- node[below,at start]{$ i$} +(0,1) ;
       \end{tikzpicture}
\end{equation*}
\begin{equation*}
    \subeqn
  \begin{tikzpicture}[very thick,baseline,scale=.9]
          \draw[dashed,red] (-2,0)  +(0,-1)-- node[below,at start]{$ j$} +(0,1);
  \draw (-2,0)  +(-1,-1) -- +(1,1) node[below,at start]{$i$};\end{tikzpicture}\:\:\star\:\:    \begin{tikzpicture}[very thick,baseline,scale=.9]
          \draw[dashed,red] (-2,0)  +(0,-1)-- node[below,at start]{$ j$} +(0,1);
  \draw (-2,0)  +(-1,-1) -- +(-1,1) node[below,at start]{$i$};\end{tikzpicture}
           =
  \begin{tikzpicture}[very thick,baseline,scale=.9]
    \draw (2.5,0)  +(0,-1) -- +(0,1) node[below,at start]{$i$};
       \draw[dashed,red] (1.7,0)  +(0,-1) -- node[below,at start]{$ j$} +(0,1) ;
      \end{tikzpicture} \end{equation*}
      If $e\colon j\to i$, then:
  \begin{multline*}\subeqn
      \begin{tikzpicture}[very thick,scale=.65,baseline]
      \draw(-2.8,0) +(1,-1) --  +(-1,1)
      node[below,at start]{$i$}; 
\end{tikzpicture}\quad   \begin{tikzpicture}[very thick,scale=.65,baseline]
      \draw[dashed] (-2.8,0) +(1,-1) --  +(-1,1)
      node[below,at start,scale=.8]{$e;m$}; \draw (-2.8,0) +(-1,-1) -- +(1,1) node[below,at start]{$j$};
\end{tikzpicture}\:\:\star\:\:       \begin{tikzpicture}[very thick,scale=.65,baseline]
      \draw(-2.8,0) +(1,-1) --  +(1,1)
      node[below,at start]{$i$}; 
\end{tikzpicture}\quad   \begin{tikzpicture}[very thick,scale=.65,baseline]
      \draw[dashed] (-2.8,0) +(1,-1) --  +(1,1)
      node[below,at start,scale=.8]{$e;m$}; \draw (-2.8,0) +(-1,-1) -- +(-1,1) node[below,at start]{$j$};
\end{tikzpicture}\\
=    \begin{tikzpicture}[very thick,scale=.65,baseline]
      \draw(-2.8,0) +(0,-1) -- node[midway,fill=black, inner sep=2pt, circle]{} +(0,1)
      node[below,at start]{$i$};      \draw[dashed] (-.8,0) +(0,-1)-- +(0,1)
      node[below,at start,scale=.75]{$e;m$}; \draw (.2,0) +(0,-1) --+(0,1) node[below,at start]{$j$};
\end{tikzpicture}-\begin{tikzpicture}[very thick,scale=.65,baseline]
      \draw(-2.8,0) +(0,-1) -- +(0,1)
      node[below,at start]{$i$}; 
      \draw[dashed] (-.8,0) +(0,-1)-- +(0,1)
      node[below,at start,scale=.75]{$e;m$}; \draw (.2,0) +(0,-1) --node[midway,fill=black, inner sep=2pt, circle]{}+(0,1) node[below,at start]{$j$};
\end{tikzpicture}+h(b_e-m)\begin{tikzpicture}[very thick,scale=.65,baseline]
      \draw(-2.8,0) +(0,-1) -- +(0,1)
      node[below,at start]{$i$}; 
      \draw[dashed] (-.8,0) +(0,-1)-- +(0,1)
      node[below,at start,scale=.75]{$e;m$}; \draw (.2,0) +(0,-1) --+(0,1) node[below,at start]{$j$};
\end{tikzpicture}
\end{multline*}
Otherwise, we have that:
  \begin{equation*}\subeqn
      \begin{tikzpicture}[very thick,scale=.65,baseline]
      \draw(-2.8,0) +(1,-1) --  +(-1,1)
      node[below,at start]{$i$}; 
\end{tikzpicture}\quad   \begin{tikzpicture}[very thick,scale=.65,baseline]
      \draw[dashed] (-2.8,0) +(1,-1) --  +(-1,1)
      node[below,at start,scale=.8]{$e;m$}; \draw (-2.8,0) +(-1,-1) -- +(1,1) node[below,at start]{$j$};
\end{tikzpicture}\:\:\star\:\:       \begin{tikzpicture}[very thick,scale=.65,baseline]
      \draw(-2.8,0) +(1,-1) --  +(1,1)
      node[below,at start]{$i$}; 
\end{tikzpicture}\quad   \begin{tikzpicture}[very thick,scale=.65,baseline]
      \draw[dashed] (-2.8,0) +(1,-1) --  +(1,1)
      node[below,at start,scale=.8]{$e;m$}; \draw (-2.8,0) +(-1,-1) -- +(-1,1) node[below,at start]{$j$};
\end{tikzpicture}
=  \begin{tikzpicture}[very thick,scale=.65,baseline]
      \draw(-2.8,0) +(0,-1) -- +(0,1)
      node[below,at start]{$i$}; 
      \draw[dashed] (-.8,0) +(0,-1)-- +(0,1)
      node[below,at start,scale=.75]{$e;m$}; \draw (.2,0) +(0,-1) --+(0,1) node[below,at start]{$j$};
\end{tikzpicture}
\end{equation*}
\item A crossing of a corporeal moving NW over a ghost sends the identity to the identity: 
\begin{equation*}
    \subeqn
  \begin{tikzpicture}[very thick,baseline,scale=.9]
          \draw[dashed,red] (-2,0)  +(0,-1)-- node[below,at start]{$ i$} +(0,1);
  \draw (-2,0)  +(1,-1) -- +(-1,1) node[below,at start]{$i$};\end{tikzpicture}\:\:\star\:\:    \begin{tikzpicture}[very thick,baseline,scale=.9]
          \draw[dashed,red] (-2,0)  +(-1,-1)-- node[below,at start]{$ i$} +(-1,1);
  \draw (-2,0)  +(0,-1) -- +(0,1) node[below,at start]{$i$};\end{tikzpicture}
           =\begin{tikzpicture}[very thick,baseline,scale=.9]
    \draw (1.7,0)  +(0,-1) -- +(0,1) node[below,at start]{$i$};
       \draw[dashed,red] (2.5,0)  +(0,-1) -- node[below,at start]{$ i$} +(0,1) ;
       \end{tikzpicture}
\end{equation*}
  \begin{equation*}\subeqn\label{b-last-action}
      \begin{tikzpicture}[very thick,scale=.65,baseline]
      \draw(-2.8,0) +(-1,-1) --  +(1,1)
      node[below,at start]{$i$}; 
\end{tikzpicture}\quad   \begin{tikzpicture}[very thick,scale=.65,baseline]
      \draw[dashed] (-2.8,0) +(-1,-1) --  +(1,1)
      node[below,at start,scale=.8]{$e;m$}; \draw (-2.8,0) +(1,-1) -- +(-1,1) node[below,at start]{$j$};
\end{tikzpicture}\:\:\star\:\:     \begin{tikzpicture}[very thick,scale=.65,baseline]
      \draw(-2.8,0) +(0,-1) -- +(0,1)
      node[below,at start]{$i$}; 
      \draw[dashed] (-.8,0) +(0,-1)-- +(0,1)
      node[below,at start,scale=.75]{$e;m$}; \draw (.2,0) +(0,-1) --+(0,1) node[below,at start]{$j$};
\end{tikzpicture}=  \begin{tikzpicture}[very thick,scale=.65,baseline]
      \draw(-2.8,0) +(1,-1) --  +(1,1)
      node[below,at start]{$i$}; 
\end{tikzpicture}\quad   \begin{tikzpicture}[very thick,scale=.65,baseline]
      \draw[dashed] (-2.8,0) +(1,-1) --  +(1,1)
      node[below,at start,scale=.8]{$e;m$}; \draw (-2.8,0) +(-1,-1) -- +(-1,1) node[below,at start]{$j$};
\end{tikzpicture}
\end{equation*}
\end{itemize}
As stated above, Theorem \ref{I-thm:BFN-pres} implies that:
\begin{corollary}\label{cor:faithful-action}
	The rules above define a faithful representation of the category $\scrB_{\hat\beta}$.
\end{corollary}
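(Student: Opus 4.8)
The plan is to observe that the rules displayed in (\ref{b-first-action}--\ref{b-last-action}) are merely a transcription of the faithful representation furnished by Theorem~\ref{I-thm:BFN-pres}, and then to combine this with the presentation of $\scrB_{\hat\beta}$ from Lemma~\ref{lem:unrolled-B}. First I would fix, for each generic $\eta\in\ft_{\hat\beta}$, the dictionary between the polynomial summand $\C[z_{i,k}]_{k\in[1,v_i]}\cdot\epsilon_{\eta}$ appearing in part~I and the diagrammatic picture: $\epsilon_{\eta}$ is the dotless straight-line diagram $e(\eta)$, the variable $z_{i,k}$ is a single dot on the $k$-th corporeal strand of label $i$, and dots on partner strands are read via the migration convention \eqref{eq:dot-migration}. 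With this, the polynomial action of part~I becomes an unambiguous action by diagrams on every object of $\scrB_{\hat\beta}$.

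Next I would verify, generator by generator of $\scrB^+$, that the value of $\mathbbm{r}_D$ obtained from Definition~\ref{def:rD} together with the formulas (\ref{I-eq:ract}--\ref{I-eq:muact}) of part~I coincides with the rule listed above. The relevant generators are: a crossing of a corporeal with a partner of the same label (which should act by the shifted divided-difference operator, as in \eqref{b-first-action}, the shift coming precisely from \eqref{eq:dot-migration}); a crossing of a corporeal with a partner of a different label (acting by the bare transposition of dots, as in \eqref{b-nilHecke-action}); a crossing of a corporeal with a ghost for an edge $e$, moving NE or NW, which should multiply by the image of $\varphi_i^{\operatorname{mid}}$, i.e.\ by the right-hand sides of \eqref{b-black-bigon1} and \eqref{x-cost-2}; a crossing with a red strand, likewise; the relabellings $y_w$ for $w\in\What$ (permuting and translating the $z_{i,k}$); and a dot. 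For the crossings not involving ghost or red strands this is just the affine nilHecke calculation. The one place that requires care is the ghost/red case: the flavor terms $h(b_e-m)$ and $h(b_{i,k}+m)$ are exactly where the base-field parameters $b_*$ enter, and one must check that the $\tfrac12$-shift absorbed into $\hat\beta_e$ and $\hat\beta_{i,k}$ (designed to cancel the shift in $\varphi_i^{\operatorname{mid}}$ from \eqref{I-eq:varphi}) is correctly accounted for, for both $m>0$ and $m<0$. The upshot of this step is that the diagrammatic rules define the \emph{same} representation as part~I, which is in turn the affine extension of the weighted KLR representation of \cite[Prop.~2.7]{WebwKLR}.

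Given this identification, the two conclusions are immediate. Well-definedness of the diagrammatic rules is forced by Lemma~\ref{lem:unrolled-B}: the Hom-spaces of $\scrB_{\hat\beta}$ are spanned by the $\mathbbm{r}_D$ modulo exactly the local relations (\ref{eq:dot-migration}--\ref{b-triple-smart2}), all of which have already been checked to hold in $\scrB^+$, so any diagram-indexed family of linear maps respecting those relations descends to a representation of $\scrB_{\hat\beta}$; the part~I representation respects them because it is a functor out of $\scrB^+$. Faithfulness is then inherited: after the dictionary above, our representation is literally the restriction of the faithful representation of Theorem~\ref{I-thm:BFN-pres} to the full subcategory $\scrB_{\hat\beta}\subset\scrB^+$, and the restriction of a faithful functor to a full subcategory is faithful.

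The main obstacle is the three-layered bookkeeping in the middle step: keeping straight the $\tfrac12$-shift built into $\hat\beta_e$ and $\hat\beta_{i,k}$, the independent base-field flavor parameters $b_e,b_{i,k}$, and the affine dot-migration rule, so that the signs and the integer offsets $m$ in (\ref{x-cost-1}--\ref{b-last-action}) come out right uniformly. This is entirely mechanical once one is systematic about which matter hyperplane is being crossed and in which direction, but it is the only part where an error is easy to make.
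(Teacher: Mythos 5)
Your argument is correct and matches the paper's (extremely terse) justification: the displayed rules are a direct transcription of the faithful representation from Theorem~\ref{I-thm:BFN-pres} applied to the full subcategory $\scrB_{\hat\beta}\subset\scrB^+$, so well-definedness and faithfulness are inherited. The paper itself simply cites Theorem~\ref{I-thm:BFN-pres}; your more explicit generator-by-generator check and invocation of Lemma~\ref{lem:unrolled-B} are fleshing out the same route rather than taking a different one.
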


\subsection{Quantum cylindrical KLRW algebras}
\label{sec:rolled-diagrams}

  The reader may have noticed that these diagrams are quite
  difficult to draw and interpret, but there is a symmetry that we
  have not exploited, the action of the extended affine Weyl group.
  The quotient of $\prod_i\R^{ {v_i}}$ by the extended Weyl
  group $\What$ is
  given by the space $\prod_i (\R/\Z)^{v_i}/\Sigma_{v_i}$, which we can
  interpret as the moduli space of multisubsets of the circle $\R/\Z$
  labeled with elements of ${\vertex}$, such that $v_i$ elements have label $i\in {\vertex}$.
  Thus the path $\pi_{*,*}\colon [0,1]\to \prod_i\R^{v_i}$ composed
  with the projection $\prod_i\R^{v_i}\to \prod_i
  (\R/\Z)^{v_i}/\Sigma_{v_i}$ can be thought of as a path in 
  this moduli space.

In terms of diagrams, this is the result of 
  considering the quotient of the plane $\R\times [0,1]$ by $\Z$ acting by addition to
  the $x$-coordinate.  Note that this sends all the partners to
  a single curve in  $\R/\Z\times [0,1]$, and all ghosts that differ by translation by $\Z$
  to a single curve.  The result is a cylindrical
  KLRW diagram with flavors in $\Ab=\R/\Z$ as defined in Definition \ref{def:cylindrical-diagram}, where we take the images of the paths $\pi_{i,k}$ to be corporeals $\Sc$, the images of $\hat{\bbeta}_{i,m}$ to be red strands $\Sr$ and the images of $\pi_{t(e),k}+\hat{\bbeta}_e$ to be ghosts $\Sg$ and the flavors to coincide with $x$-values at the top and bottom of the diagram, i.e. the values of the paths mod $\Z$ at $y=0$ and $y=1$.

Every cylindrical
    diagram has a unique lift to a path $[0,1]\to \ft_{\hat\beta}$ which starts in the fundamental region of $\widehat{W}$ where the coordinates $z_{i,k}$ satisfy
\begin{equation}
 -\frac{1}{2}< z_{i,1}<z_{i,2}<\cdots < z_{i, {v_i}}<\frac{1}{2}.  
\end{equation}
That is, by the path lifting property of the universal cover, each of the curves $\bar{\pi}\colon [0,1]\to \R/\Z$ has a unique lift $\pi$ with $-\frac{1}{2}<\pi(0)<\frac{1}{2}$, and we can number these so that \begin{equation}
    -\frac{1}{2}<\pi_{i,1}(0)<\cdots <\pi_{i,v_i}(0)<\frac{1}{2}.
\end{equation} Let $\tilde{D}$ be the unrolled diagram defined by the
paths $\pi_{i,k}$, followed by the unique element of $\What$
sending the top of this diagram back to the fundamental region.  
\begin{definition}\label{def:cylindrical
  KLRW-r}
 Given a cylindrical
   diagram $D$ with no dots, let $\mathbbm{r}_D$ denote the
  morphism $ {\mathbbm{r}_{\tilde{D}}}$ associated to the lifted unrolled diagram $\tilde{D}$. 
 
 If the diagram contains dots, then place these in the lifted diagram on the unique partner preimage which has $x$-value in $(-\frac{1}{2}, \frac{1}{2} )$.
\end{definition}
We have to be careful about lifting
   diagrams with dots,
because if we do so in the most naive way, the result will not be
compatible with composition, which the definition above is.  We could accomplish the same effect if instead of 
applying a Weyl group element at the end, we  applied one
immediately whenever we left the fundamental region to move back into
it.  

We can also interpret the relations
(\ref{eq:dot-migration}--\ref{b-triple-smart2}) as relations on
cylindrical KLRW
   diagrams, following the rule that $\sum a_iD_i=0$
if we have that $\sum a_i\mathbbm{r}_{D_i}=0$.  Some of these can
interpreted locally exactly as they appear above:
(\ref{b-first-QH}--\ref{b-second-QH}) and
(\ref{b-psi2}--\ref{b-nilHecke-2}) are of this type. On the other, if
a dot on a cylindrical diagram is slid over the half-integer ghost with
label $\infty$, then it goes between lifting to the ghost just right
of $x=-\frac{1}{2}$ to that just left of $x=\frac{1}{2}$.  Thus, if we
draw $x=\frac{1}{2}$ as a fringed grey line, the effect
of (\ref{eq:dot-migration})  is thus the following relation on cylindrical
   diagrams: 
    \begin{equation}\label{a-dot-slide}
    \begin{tikzpicture}[very thick,baseline,scale=.7]
  \draw(-3,0) +(-1,-1) -- +(1,1);
  \draw[fringe](-3,0) +(0,-1) --  +(0,1);
\fill (-3.5,-.5) circle (3pt); \end{tikzpicture}
=
 \begin{tikzpicture}[very thick,baseline,scale=.7] \draw(1,0) +(-1,-1) -- +(1,1);
  \draw[fringe](1,0) +(0,-1) --  +(0,1);
\fill (1.5,.5) circle (3pt);
    \end{tikzpicture} +h  \begin{tikzpicture}[very thick,baseline,scale=.7] \draw(1,0) +(-1,-1) -- +(1,1);
  \draw[fringe](1,0) +(0,-1) --  +(0,1);
    \end{tikzpicture}
\qquad \qquad     \begin{tikzpicture}[very thick,baseline,scale=.7]
  \draw(-3,0) +(1,-1) -- +(-1,1);
  \draw[fringe](-3,0) +(0,-1) --  +(0,1);
\fill (-2.5,-.5) circle (3pt); \end{tikzpicture}
=
 \begin{tikzpicture}[very thick,baseline,scale=.7] \draw(1,0) +(1,-1) -- +(-1,1);
  \draw[fringe](1,0) +(0,-1) --  +(0,1);
\fill (.5,.5) circle (3pt);
    \end{tikzpicture} -h  \begin{tikzpicture}[very thick,baseline,scale=.7] \draw(1,0) +(1,-1) -- +(-1,1);
  \draw[fringe](1,0) +(0,-1) --  +(0,1);
    \end{tikzpicture}
  \end{equation}
The other relations need to be interpreted carefully to be compatible
with lifting. For example, the relations
(\ref{x-cost-1}--\ref{x-cost-2}) and  the relations
(\ref{b-black-bigon1}--\ref{b-black-bigon1}) need to be applied 
in the version where all original strands have $x$-values in $(-\frac
12,\frac 12)$ (we can isotope to avoid any values in the coset
$\Z+\frac 12$).  For (\ref{x-cost-1}--\ref{x-cost-2}), this means that
$m\in (-\frac
12,\frac 12)$, and for 
(\ref{b-black-bigon1}--\ref{b-black-bigon1}) that $m\in (-1,1)$, with
the sign determined by how the strands are cyclically ordered compared
with $x=1/2$.

\begin{definition}\label{def:qcKLRW}  The {\bf quantum cylindrical KLRW (qcKLRW)
    algebra} $\hRring$ for the dimension vectors $\Bv,\Bw$ and parameters
  $ {\hat\beta_*},  {b_*}$ is the quotient of the formal span of cylindrical
  KLRW diagrams  with 
  \[\hat{\bbeta}_e(0)=\hat{\bbeta}_e(1)=\hat\beta_e\qquad\qquad  \hat{\bbeta}_{i,m}(0)=\hat{\bbeta}_{i,m}(1)=\hat\beta_{i,m} \]
  over $\K[b_*,h]$  modulo the relations induced by
  (\ref{eq:dot-migration}--\ref{b-triple-smart2}), in particular  by
  (\ref{a-dot-slide}), with the usual rule of
  multiplication by stacking.
  
  The {\bf quantum cylindrical KLRW
category} is the category whose objects are cylindrical flavored sequences, and 
morphisms are diagrams with fixed flavored sequences at top and bottom.
\end{definition}
\notation{$\hRring$}{The quantum cylindrical KLRW algebra (Definition \ref{def:qcKLRW}).}
Immediately from Lemma \ref{lem:unrolled-B} we have:
\begin{proposition}\label{prop:KLR-B}
The qcKLRW category is equivalent to the category $\scrB_{\hat\beta}$ via the functor sending a diagram $D$ to the morphism $\mathbbm{r}_D$.
\end{proposition}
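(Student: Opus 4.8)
\textbf{Plan of proof for Proposition \ref{prop:KLR-B}.}

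The strategy is to transport the equivalence of Lemma \ref{lem:unrolled-B} across the quotient by the extended affine Weyl group $\What$. By Lemma \ref{lem:unrolled-B}, the Hom spaces of $\scrB_{\hat\beta}$ between generic objects $\eta,\eta'\in\ft_{\hat\beta}$ are presented by unrolled diagrams modulo the local relations (\ref{eq:dot-migration}--\ref{b-triple-smart2}); and by Definition \ref{def:cylindrical KLRW-r} every cylindrical diagram $D$ has a canonical lift $\tilde D$ to an unrolled diagram (choose the lift of the bottom endpoints into the fundamental region $-\frac12<z_{i,1}<\cdots<z_{i,v_i}<\frac12$, then append the unique element of $\What$ returning the top to that region). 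The functor $\Phi$ we want sends a cyclic flavored sequence to the object of $\scrB_{\hat\beta}$ obtained by the same lift of its corporeal longitudes into the fundamental region, and sends $D$ to $\mathbbm{r}_D:=\mathbbm{r}_{\tilde D}$. So the content to verify is: (i) $\Phi$ is a well-defined functor (respects composition and relations), (ii) $\Phi$ is essentially surjective, (iii) $\Phi$ is full, and (iv) $\Phi$ is faithful.

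First I would check well-definedness. Essential surjectivity is immediate: any generic $\eta\in\ft_{\hat\beta}$ is $\What$-conjugate to a point of the fundamental region, and the latter is (up to equivalence of flavored sequences, as in Definition \ref{def:prefered}) the image of a cyclic flavored sequence. Compatibility with composition is built into Definition \ref{def:cylindrical KLRW-r}: the key point, already flagged in the text, is that the naive lift is \emph{not} compatible with composition but the ``apply $\What$ at the end'' convention is — equivalently, one applies a Weyl element each time the path exits and re-enters the fundamental region, and Definition \ref{def:rD}(3) together with relations (\ref{I-eq:weyl1},\ref{I-eq:conjugate2},\ref{I-eq:weyl2},\ref{I-eq:psiconjugate}) let green lines slide past crossings and dots so the placement is unambiguous. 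The subtle part of well-definedness is that the cylindrical relations are respected: relations (\ref{b-first-QH}--\ref{b-second-QH}) and (\ref{b-psi2}--\ref{b-nilHecke-2}) lift locally and verbatim; relations (\ref{x-cost-1}--\ref{x-cost-2}) and (\ref{b-black-bigon1}--\ref{b-black-bigon2}) must be applied in the normalized form where all strands involved have $x$-value in $(-\tfrac12,\tfrac12)$, so that the integer $m$ lies in $(-\tfrac12,\tfrac12)$ resp.\ $(-1,1)$, as the excerpt explains; and the dot-migration relation (\ref{eq:dot-migration}) becomes exactly the dot-slide relation (\ref{a-dot-slide}) across the seam $x=\tfrac12$, which by Definition \ref{def:cylindrical KLRW-r} is how dots are placed after lifting. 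Checking that each generator of the ideal of cylindrical relations maps to a consequence of (\ref{eq:dot-migration}--\ref{b-triple-smart2}) is the routine-but-careful bookkeeping step.

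For fullness and faithfulness I would argue as follows. Fullness: given a morphism in $\scrB_{\hat\beta}$ between objects in the image of $\Phi$, Lemma \ref{lem:unrolled-B} writes it as a combination of $\mathbbm{r}_{D}$ for unrolled diagrams $D$; projecting each such $D$ to $\R/\Z\times[0,1]$ (the quotient by $\Z$ acting on the $x$-coordinate) yields a cylindrical diagram whose canonical lift is $\What$-equivalent to $D$, and by relabeling via $\What$ one returns to $D$ itself — hence the morphism is $\Phi$ of a cylindrical morphism. Faithfulness is where the real work sits: one must show that if $\sum a_i\mathbbm{r}_{D_i}=0$ in $\scrB_{\hat\beta}$ then $\sum a_iD_i=0$ in $\hRring$. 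Here I would invoke the faithful polynomial representation: Corollary \ref{cor:faithful-action} gives a faithful action of $\scrB_{\hat\beta}$ on $\bigoplus_\eta \C[z_{i,k}]\cdot\epsilon_\eta$, and this action is $\What$-equivariant in the sense that the affine Weyl translations/permutations intertwine the summands $\epsilon_\eta$ and $\epsilon_{w\eta}$ compatibly with the relabeling used to form $\tilde D$. Restricting to the fundamental-region idempotents, one obtains a faithful action of $\hRring$ (the diagrammatic version of the weighted-KLR representation of \cite[Prop. 2.7]{WebwKLR}, extended to the affine case exactly as forced by (\ref{eq:dot-migration})); since a relation among the $D_i$ holds in $\hRring$ iff it holds in this faithful representation iff the corresponding relation among the $\mathbbm{r}_{\tilde D_i}$ holds in $\scrB_{\hat\beta}$, faithfulness of $\Phi$ follows.

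\textbf{Main obstacle.} The chief difficulty is not any single computation but the matching of \emph{combinatorial conventions} across the $\What$-quotient: showing that the lift of dots to the partner strand with $x\in(-\tfrac12,\tfrac12)$ (Definition \ref{def:cylindrical KLRW-r}), the normalization of $m$ in the bigon relations, and the placement of green lines all cohere so that the cylindrical relations induced from (\ref{eq:dot-migration}--\ref{b-triple-smart2}) are \emph{exactly} the ones defining $\hRring$ in Definition \ref{def:qcKLRW} — with relation (\ref{a-dot-slide}) being the one genuinely new cylindrical phenomenon. Once that dictionary is pinned down, the equivalence is a formal consequence of Lemma \ref{lem:unrolled-B} and Corollary \ref{cor:faithful-action}.
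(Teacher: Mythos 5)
Your approach is the same as the paper's: the proposition is derived from Lemma~\ref{lem:unrolled-B} by transporting across the $\What$-quotient using the lift/projection dictionary of Definition~\ref{def:cylindrical KLRW-r}, and the paper treats this as immediate. Your account of well-definedness, essential surjectivity and fullness is accurate, and the flagged convention-matching (normalization of $m$ in the bigon relations, placement of dots on the partner in $(-\tfrac12,\tfrac12)$, the seam relation (\ref{a-dot-slide})) is exactly where the bookkeeping lives.

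The one step I would push back on is faithfulness. You assert that ``restricting to the fundamental-region idempotents, one obtains a faithful action of $\hRring$'' and then deduce faithfulness of $\Phi$ from it. But faithfulness of that restricted polynomial action on $\hRring$ is not something Corollary~\ref{cor:faithful-action} gives you: the corollary is faithfulness for $\scrB_{\hat\beta}$, and the action on $\hRring$ factors as the composition $\hRring\xrightarrow{\Phi}\scrB_{\hat\beta}\to\End(\text{poly})$. Claiming the composite is faithful is logically equivalent to what you are trying to prove (plus the corollary), so as written the argument begs the question. One could independently prove faithfulness of the polynomial action on $\hRring$ by an argument in the spirit of \cite[Prop.~2.7]{WebwKLR}, but that is essentially the work of establishing the basis theorem (Lemma~\ref{lem:cyl-basis}), which the paper derives from the \emph{same} source as Lemma~\ref{lem:unrolled-B} --- so you gain nothing and add a detour.

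The cleaner route to faithfulness, and what I take ``Immediately from Lemma~\ref{lem:unrolled-B}'' to mean, is to stay entirely on the diagrammatic side: suppose $\sum a_i\mathbbm{r}_{D_i}=0$ in $\scrB_{\hat\beta}$ with all $D_i$ cylindrical diagrams sharing top and bottom. Their lifts $\tilde D_i$ from Definition~\ref{def:cylindrical KLRW-r} all share the same bottom \emph{and} top in the fundamental region (the terminal green line is what makes this true), so $\sum a_i\mathbbm{r}_{\tilde D_i}=0$ is a relation between morphisms with common source and target. Lemma~\ref{lem:unrolled-B} then says this relation is a consequence of the local unrolled relations (\ref{eq:dot-migration}--\ref{b-triple-smart2}). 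Projecting each local unrolled relation to $\R/\Z\times[0,1]$ yields a local cylindrical relation of $\hRring$, and since the projection map on diagrams is surjective and well-defined, the chain of local moves projects to a chain of moves in $\hRring$ witnessing $\sum a_iD_i=0$. No appeal to Corollary~\ref{cor:faithful-action} is required.
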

Since the relations of the nilHecke algebra are not deformed in the
qcKLRW algebra, we can define idempotents $e(a)$ exactly as in Definition \ref{def:ea}.  These correspond to
elements of $\ft_{\hat\beta}$ which are scalar matrices.  Thus the corresponding object $\eta_a$ lies on the root
hyperplanes is unexceptional, but not generic. 
\begin{lemma}\label{lem:quantum-coulomb} The algebra $e(a)\hRring e(a)$ is isomorphic to the 
  quantum Coulomb branch $\Asph$ for a choice of flavor depending
  on $a$.
\end{lemma}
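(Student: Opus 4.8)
The plan is to deduce this from Proposition~\ref{prop:KLR-B} together with the realization of $\Asph$ as a spherical corner of the extended BFN category $\scrB^+$ proved in \cite{WebcohI}. First, by Proposition~\ref{prop:KLR-B} the functor $D\mapsto\mathbbm{r}_D$ identifies $\hRring$ with the total morphism algebra $\bigoplus\Hom_{\scrB^+}(\eta,\eta')$ of the qcKLRW category; under this identification the idempotent $e(\Bi,\Ba)$ with all longitudes equal to $a$ corresponds to the scalar object $\eta_a\in\ft_{\hat\beta}$, so $e(\Bi,\Ba)\hRring e(\Bi,\Ba)\cong\End_{\scrB^+}(\eta_a)$. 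Because the nilHecke relations are undeformed in $\hRring$, the remaining factor of $e(a)=e'((a,\dots,a))$ (Definition~\ref{def:ea}) is precisely the product over $i\in\vertex$ of the symmetrizing primitive idempotents of the rank-$v_i$ nilHecke subalgebras of $\End_{\scrB^+}(\eta_a)$ generated by dots and crossings on the label-$i$ strands; writing $e_{\mathrm{sym}}$ for this idempotent, we obtain the formal identity
\[ e(a)\,\hRring\,e(a)\;\cong\;e_{\mathrm{sym}}\,\End_{\scrB^+}(\eta_a)\,e_{\mathrm{sym}}. \]

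Next I identify the right-hand side with the quantum Coulomb branch. The object $\eta_a$ is unexceptional: it lies on all the root hyperplanes $z_{i,k}=z_{i,m}$ but, since $a\neq\beta_{i,k}\bmod\Z$ and $\hat\beta_e\notin\Z$, avoids every matter hyperplane. Thus Theorem~\ref{I-thm:BFN-pres} and the faithful polynomial representation of Corollary~\ref{cor:faithful-action} apply, and the block $\K[z_{i,k}]\cdot\epsilon_{\eta_a}$ on which $\End_{\scrB^+}(\eta_a)$ acts carries exactly the Gelfand--Tsetlin-type representation of the quantum Coulomb branch of the quiver gauge theory. Applying $e_{\mathrm{sym}}$ restricts to $\Sigma_{v_i}$-symmetric polynomials in each block of indices, which is the spherical quotient; combined with the description of $\Asph$ as $\End_{\scrB^+}$ of the scalar object established in \cite{WebcohI}, this yields $e_{\mathrm{sym}}\End_{\scrB^+}(\eta_a)e_{\mathrm{sym}}\cong\Asph$. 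This argument is the quantum lift of Theorem~\ref{coulomb-idempotent}, which is recovered by specializing $h,b_*\to 0$.

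It remains to determine the flavor. Translating the origin $\eta_0$ to $\eta_a$ by the central element $a\cdot\mathrm{id}\in\ft$ intertwines $\End_{\scrB^+}(\eta_0)$ with $\End_{\scrB^+}(\eta_a)$, but it shifts the linear functionals $\varphi_i^{\mathrm{mid}}$ defining the matter hyperplanes, hence the scalars $h(b_{i,k}+m)$ and $h(b_e-m)$ occurring in relations~(\ref{x-cost-1}--\ref{b-black-bigon3}), by multiples of $ha$; the net effect on $\Asph$ is to replace its flavor parameters $b_*$ by $b_*$ plus a fixed affine-linear function of $a$. This is the ``choice of flavor depending on $a$'' of the statement, and it is compatible with the fact that the undeformed algebra $A_0$ of Theorem~\ref{coulomb-idempotent}, which carries no flavor, is independent of $a$.

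The main obstacle is the bookkeeping in the second step: matching the affine weighted-KLR formulas of Corollary~\ref{cor:faithful-action} with the standard BFN presentation of $\Asph$ in terms of its equivariant parameters, while keeping track of the built-in $\tfrac12$-shifts in the definitions of $\hat\beta_*$ and $b_*$ and of the dictionary between $b_*$ and the BFN flavor parameters. Since the relations (\ref{eq:dot-migration}--\ref{b-triple-smart2}) are local and were already verified against \cite{WebSD} in Lemma~\ref{lem:unrolled-B}, and the $h=0$ analogue is handled in the proof of Theorem~\ref{coulomb-idempotent}, the only genuinely new content is confirming that the $h$-deformed relations reproduce the known relations of $\Asph$ after symmetrization — a finite, if tedious, check.
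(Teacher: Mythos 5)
Your proposal has the right overall shape — translate $e(a)\hRring e(a)$ into the extended BFN category, identify the result with $\Asph$, and determine the flavor shift — but there are three genuine problems, and together they amount to missing the step the paper's proof actually supplies.

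First, the opening identification is backwards. You claim $e(\Bi,\Ba)\hRring e(\Bi,\Ba)\cong\End_{\scrB^+}(\eta_a)$ and then cut further by $e_{\mathrm{sym}}$. But $\eta_a$ is \emph{not} a generic object of $\scrB^+$: it lies on every root hyperplane $z_{i,k}=z_{i,m}$, so Lemma~\ref{lem:unrolled-B} / Proposition~\ref{prop:KLR-B} do not directly compute $\End_{\scrB^+}(\eta_a)$; they compute $\Hom$-spaces between \emph{generic} perturbations of $\eta_a$, and the idempotent $e(\Bi,\Ba)$ corresponds to such a perturbation, not to $\eta_a$ itself. The correct statement — cited in the paper from \cite[Rmk.~3.7]{WebSD} — is that $\End_{\scrB^+}(\eta_a)$ is the corner of the perturbed endomorphism algebra cut by the symmetrizing nilHecke idempotent, i.e.\ $\End_{\scrB^+}(\eta_a)=e(a)\hRring e(a)$ on the nose, with no further $e_{\mathrm{sym}}$-cutting to do. Your chain of isomorphisms therefore double-counts the spherical idempotent and misplaces where the nilHecke projection enters the argument.

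Second, the substance of the lemma is the isomorphism $\End_{\scrB^+}(\eta_a)\cong\Asph$, and your justification of it is circular. The reference you invoke identifies $\Asph$ with $\End_{\scrB^+}(\tau)$ for the specific scalar object $\tau$ (with $z_{i,k}=0$), not for a general $\eta_a$. The passage from $\tau$ to $\eta_a$ is exactly what must be proved. The paper does this geometrically: since $\eta_a$ is a scalar, the parahoric $\Iwahori_{\eta_a}$ equals $G[[t]]$, the associated lattice is $U_{\eta_a}=t^{-\mu}V[[t]]$ for the cocharacter $\mu$ of the framing torus with weight $\lfloor a+\hat\beta_{i,k}\rfloor$ on the $k$-th basis vector of $\C^{w_i}$, and multiplication by $t^\mu$ (a constant element commuting with $G[[t]]$) gives a $G[[t]]$-equivariant isomorphism ${}_{\tau}\EuScript{X}_{\tau}\cong{}_{\eta_a}\EuScript{X}_{\eta_a}$. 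Your ``translation by the central element $a\cdot\mathrm{id}$'' gestures at the same idea, but a translation by a non-integral $a$ is not a morphism in the category, and on its own it does not preserve the matter hyperplane arrangement — the role of the floor function in $\mu$ is precisely to repair this. Without the geometric model you have no way to make the intertwiner precise.

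Third, the flavor shift you assert is wrong in form. You say the parameters $b_*$ are replaced by $b_*$ plus a ``fixed affine-linear function of $a$.'' In fact, because $t^\mu$ fails to commute with the loop rotation, the flavor cocharacter $\varphi$ is intertwined with $\varphi\cdot\mu$, so the shift of flavor is by the integral cocharacter $\mu$ — and $\mu$ depends on $a$ via floor functions, i.e.\ it is \emph{piecewise constant}, locally constant in $a$ and jumping only as $a$ crosses a red line. This discrepancy is not cosmetic: an affine-linear shift would have a nonzero classical limit and contradict the $a$-independence you yourself note at the end, whereas the integral jump genuinely vanishes upon setting $h=0$.
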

\begin{proof}
 By \cite[Rmk. 3.8]{websterKoszulDuality2019}, the endomorphisms of   $\eta_a$ are gotten by taking endomorphisms of a nearby generic point and cutting with an idempotent, which exactly corresponds to the nilHecke idempotent in $e(a)$.  Thus we have that $\End_{\scrB^+}(\eta_a)=e(a)\hRring e(a).$

Now, let us calculate these endomorphisms. The corresponding parahoric $\Iwahori_{\eta_a}$ is exactly
$G[[t]]$.
  Consider the cocharacter $\mu\colon \C^*\to\prod_{i\in
    \vertex}GL(\C^{w_i})$ which acts on the $k$th basis vector in
  $\C^{w_i}$ with weight $\lfloor a+ \hat\beta_{i,k}\rfloor$; note that
  this is locally constant in terms of $a$, only changing when $a$
  passes one of the red lines.
The subspace $U_{\eta_a}$ is given by $t^{-\mu} V[[t]]$, and so
multiplication by $t^\mu$ induces a $G[[t]]$-equivariant  isomorphism
between the spaces ${}_{\tau}\EuScript{X}_{\tau}\cong
{}_{\eta_a}\EuScript{X}_{\eta_a}$. However, this action does not
commute with the action of the cocharacter $\varphi$ (since it doesn't
commute with the loop action); it intertwines the action of $\varphi$
with its product with $\mu$, so this gives the necessary shift of
flavor.  
We have an isomorphism $\End_{\scrB^+}(\tau,\tau)\cong \Asph$ by \cite[(3.2)]{websterKoszulDuality2019}.
\end{proof}
This construction is closely related to the flag Yangian introduced in \cite[Def. 4.12]{KTWWYO}.  In that paper, we assumed that $\quiver $ was bipartite with the sets of nodes called {\bf even} and {\bf odd} such
that for each edge $i\to j$, the vertex $i$ is even and $j$ is odd.  Furthermore, the
definition depended on a polynomial $p_i$.  If we have that
$h=2$,   $b_e=0, \hat\beta_e=1/2$ for all edges in $\Gamma$,  $\hat\beta_{i,k}=0$ for all $i,k$, and the scalars $b_{i,k}$ are the roots (with
multiplicity) of $p_i(2u-1)$, then the cylindrical  KLR category is
closely related to the flag Yangian category,
via the transformation of diagrams sending all odd strands to their ghosts.  Since there are some other minor differences of convention between
these categories, we will not make a precise statement about the
relationship between them.

Let us give a simple example. Consider  $\gaugeG=GL(2)$ and $\matterV\cong \C^2\oplus \mathfrak{gl}_2$.  We have a natural isomorphism $\ft_{\R}\cong \R^2$ with the coordinates given by $z_{1},z_{2}$.  The unrolled matter hyperplanes are $z_1,z_2,z_1-z_2\in \Z-\frac{1}{2}$ and the unrolled root hyperplanes are $\alpha= z_1-z_2\in \Z$.  

With these conventions, we match morphisms of the extended category with cylindrical
  KLRW diagrams.  We'll draw these on a cylinder sliced open at $x=\frac{1}{2}$.  
\begin{equation*} 
       \tikz[very thick,scale=.8,baseline]{
\draw (1.2,2.5)-- (1.2,-2.5) node[at start,above,scale=.8]{$z_1=\frac{1}{2}$}; \draw (-1.2,2.5)--
(-1.2,-2.5) node[at start,above,scale=.8]{$z_1=-\frac{1}{2}$};
\draw (2.5,1.2)-- (-2.5,1.2) node[at start,right,scale=.8]{$z_2=\frac{1}{2}$}; \draw (2.5,-1.2)--
(-2.5,-1.2) node[at start,right,scale=.8]{$z_2=-\frac{1}{2}$}; 
 \draw[dotted] (-2.5,-2.5) -- node[above right,at
        end,scale=.8]{$\alpha=0$}(2.5,2.5); 
        \draw(-2.5,-1.3) -- node[left,at
        start,scale=.8]{$\alpha=\frac{1}{2}$}(1.3,2.5); 
         \draw (-1.3,-2.5) -- node[left,at
        start,scale=.8]{$\alpha=-\frac{1}{2}$}(2.5,1.3); 
 \draw[dotted] (-2.5,-.1) -- node[left ,at
        start,scale=.8]{$\alpha=-1$}(.1,2.5); 
 \draw[dotted] (-.1,-2.5) -- node[right,at
        end,scale=.8]{$\alpha=1$}(2.5,.1); 
\draw[->,dashed] (-.8,.8) to (.3,-.3);
}\qquad \leftrightarrow \qquad
       \tikz[very thick,xscale=1.5,baseline]{
          \draw[fringe] (-1,-1)-- (-1,1);
          \draw[fringe] (1,1)-- (1,-1);
           \draw (.8 ,-1) to (-.3,1);
        \draw (-.8 ,-1) to (.3,1);
        \draw[dashed] (-.2,-1) -- (-1,0.45454545454);
        \draw[dashed] (.2,-1) -- (1,0.45454545454);
        \draw[dashed] (-.7,1) -- (-1,0.45454545454);
        \draw[dashed] (.7,1) -- (1,0.45454545454);
        }
\end{equation*}
\begin{equation*} 
       \tikz[very thick,scale=.8,baseline]{
\draw (1.2,2.5)-- (1.2,-2.5) node[at start,above,scale=.8]{$z_1=\frac{1}{2}$}; \draw (-1.2,2.5)--
(-1.2,-2.5) node[at start,above,scale=.8]{$z_1=-\frac{1}{2}$};
\draw (2.5,1.2)-- (-2.5,1.2) node[at start,right,scale=.8]{$z_2=\frac{1}{2}$}; \draw (2.5,-1.2)--
(-2.5,-1.2) node[at start,right,scale=.8]{$z_2=-\frac{1}{2}$}; 
 \draw[dotted] (-2.5,-2.5) -- node[above right,at
        end,scale=.8]{$\alpha=0$}(2.5,2.5); 
                \draw (-2.5,-1.3) -- node[left,at
        start,scale=.8]{$\alpha=\frac{1}{2}$}(1.3,2.5); 
         \draw (-1.3,-2.5) -- node[left,at
        start,scale=.8]{$\alpha=-\frac{1}{2}$}(2.5,1.3); 
 \draw[dotted] (-2.5,-.1) -- node[left ,at
        start,scale=.8]{$\alpha=-1$}(.1,2.5); 
 \draw[dotted] (-.1,-2.5) -- node[right,at
        end,scale=.8]{$\alpha=1$}(2.5,.1); 
\draw[->,dashed] (-.7,0) to (-.1,1.8);
}\qquad \leftrightarrow \qquad
       \tikz[very thick,xscale=1.5,baseline]{
          \draw[fringe] (-1,-1)-- (-1,1);
          \draw[fringe] (1,1)-- (1,-1);
           \draw[dashed] (-1 ,-1) to(.8,1);
        \draw (-.7 ,-1) to (-.1,1);
        \draw[dashed] (.3 ,-1) to (.9,1);     
        \draw (0 ,-1) to (1,0.1111111111);
            \draw (-1 ,0.1111111111) to (-.2,1);
        \draw[dashed] (.3 ,-1) to (.9,1); }
    \end{equation*}
 
\subsection{The classical limit}
\label{sec:classical-limit}

Now, let us consider the classical limit where we set $h=0$.  In this
case, the relations (\ref{eq:dot-migration}--\ref{b-triple-smart2})
become exactly the relations
(\ref{c-first-QH}--\ref{w-triple-point2}).  As the name suggests, we
thus have:
\begin{lemma}\label{lem:h=0}
  The cylindrical KLRW algebra $\Rring$ of Definition \ref{def:cKLRW} is the
  specialization of $\hRring$ at $h=0$.  
\end{lemma}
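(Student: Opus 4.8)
\textbf{Proof plan for Lemma \ref{lem:h=0}.} The plan is to compare the two presentations term-by-term. Both $\Rring$ (Definition \ref{def:cKLRW}) and $\hRring$ (Definition \ref{def:qcKLRW}) are quotients of the \emph{same} free module, namely the $\K$-span (resp. $\K[b_*,h]$-span) of cylindrical flavored KLRW diagrams with the prescribed flavored sequences at top and bottom; the generators-by-stacking multiplication is literally identical. So the only thing to check is that setting $h=0$ carries the defining relations of $\hRring$ onto the defining relations of $\Rring$, and that nothing is lost or gained in the process.

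First I would set up the specialization map. Write $\hRring_0 = \hRring \otimes_{\K[b_*,h]} \K$, where $\K$ is the $\K[b_*,h]$-algebra obtained by $h\mapsto 0$ (the $b_*$ survive, but they play no role in the classical relations, so one may further base change to kill them or simply note they do not appear). Since both algebras are presented on the same set of diagram generators with the same multiplication, there is a canonical surjection $\hRring_0 \twoheadrightarrow \Rring$ provided every relation of $\hRring$ specializes at $h=0$ to a relation holding in $\Rring$; conversely there is a surjection $\Rring \twoheadrightarrow \hRring_0$ provided every relation of $\Rring$ is the $h=0$ specialization of a relation of $\hRring$. Showing both gives the isomorphism.

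The core of the argument is the dictionary between the two relation lists, which the surrounding text has already largely laid out. The relations of $\hRring$ are those induced by (\ref{eq:dot-migration})--(\ref{b-triple-smart2}), read as relations on cylindrical diagrams via $\sum a_i D_i = 0 \iff \sum a_i \mathbbm r_{D_i}=0$, together with the dot-slide relation (\ref{a-dot-slide}). The relations of $\Rring$ are (\ref{c-first-QH})--(\ref{w-triple-point2}). I would go down the list: the dot/crossing commutation relations (\ref{b-first-QH})--(\ref{b-third-QH}) are independent of $h$ and $b_*$ and match (\ref{c-first-QH}), (\ref{c-third-QH}) verbatim; the double-crossing relations (\ref{b-psi2}) match (\ref{c-psi2}); the nilHecke relations (\ref{b-nilHecke-1})--(\ref{b-nilHecke-2}) reduce at $h=0$ to (\ref{c-nilHecke-1}) once one observes the green line $y_s$ acts as the bare permutation isotopy in the classical limit; the cost relations (\ref{x-cost-1})--(\ref{x-cost-2}) lose their $h(b_{i,k}+m)$ correction term and become (\ref{w-cost-1})--(\ref{w-cost-2}); the black-ghost bigon relations (\ref{b-black-bigon1})--(\ref{b-black-bigon3}) lose their $h(b_e-m)$ terms and become (\ref{w-black-bigon1})--(\ref{w-black-bigon2}); and the triple-point relations (\ref{b-frame-triple-smart})--(\ref{b-triple-smart2}) become (\ref{red-triple-correction}), (\ref{w-triple-point}), (\ref{w-triple-point2}) after setting $h=0$ (which again trivializes the $y_s$). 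Finally the dot-slide relation (\ref{a-dot-slide}) becomes, at $h=0$, the statement that a dot slides freely across the $x=\tfrac12$ fringe line, which is exactly the isotopy invariance built into cylindrical diagrams in Definition \ref{def:cylindrical-diagram} (there is no such seam relation in $\Rring$ because cylindrical diagrams in $\Rring$ carry no distinguished seam). This last point — confirming that the $h\to 0$ degeneration of the winding/dot-migration bookkeeping produces \emph{no} residual relation beyond the isotopies already imposed on $\Rring$ — is the step I expect to need the most care, since it is where the ``quantum'' affine structure (the $\What$-action with its $nh$ shifts) genuinely collapses; everything else is a mechanical comparison of local pictures. Once the dictionary is verified both ways, the two surjections $\hRring_0 \rightleftarrows \Rring$ are mutually inverse on generators, hence inverse isomorphisms, which is the claim.
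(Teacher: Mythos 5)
Your proof is correct and follows the same approach as the paper, which treats the lemma as essentially immediate from the observation (in the paragraph preceding it) that the cylindrical translations of (\ref{eq:dot-migration}--\ref{b-triple-smart2}) become exactly (\ref{c-first-QH}--\ref{w-triple-point2}) at $h=0$; your two-sided surjection scaffolding and explicit term-by-term dictionary are a useful elaboration of the same idea. The one small imprecision is the claim that $h=0$ ``trivializes the $y_s$'': the green lines never appear among the cylindrical-diagram generators of either $\hRring$ or $\Rring$ (they belong only to the unrolled lifts), so the nilHecke and triple-point relations already match as cylindrical relations for \emph{every} $h$, and what the specialization actually kills is the $nh$-shift in the dot-migration bookkeeping, which is precisely what makes (\ref{a-dot-slide}) degenerate to a pure isotopy as you correctly observe at the end.
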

This observation allows us to finally begin proving results from Section \ref{sec:cylindr-klrw-algebr}:
\begin{proof}[Proof of Thm. \ref{coulomb-idempotent}]
  \label{proof-coulomb-idempotent}
  Specialize Lemma \ref{lem:quantum-coulomb} at $h=0$.  
\end{proof}
Note that after this specialization, the shift of flavor that was
needed in the quantum version this theorem disappears; since the classical
Coulomb branch can be written as $G[[t]]$-equivariant homology of
${}_{\tau}\EuScript{X}_{\tau}$, the map in the proof of Lemma
\ref{lem:quantum-coulomb} induces an algebra isomorphism on the nose.

Note that this implies a basis theorem for the cylindrical KLRW
algebras.   Given a cylindrical loading with
$ {v_i}$ elements mapping to $i\in {\vertex}$, we have a unique way of lifting to
real numbers $z_{i,1}, \dots, z_{i,v_i}$ in the fundamental region
(that is, satisfying $  -\frac{1}{2}< z_{i,1}<\cdots < z_{i,v_i}<\frac{1}{2}
$), and the extended affine Weyl group $\widehat{W}$ acts freely
transitively on the set of possible lifts.  Having fixed two
cylindrical loadings $S$ and $T$, there is an unrolled diagram with a
minimal number of crossings with 
the bottom given by this lift of $S$ and the image of this  lift of
$T$ under $w\in \widehat{W}$; this diagram is not unique, but as usual, any
two choices differ by the diagram for a shorter permutation by the
relations (\ref{red-triple-correction}--\ref{w-triple-point2}).  The
image of this diagram $D_w$ on the cylinder $\R/\Z\times [0,1]$ gives a cylindrical KLRW diagram.  

From \cite[Cor. 3.15]{websterKoszulDuality2019}, we find that: 
\begin{lemma}\label{lem:cyl-basis}
The Hom space between two objects in the cylindrical KLRW category is a
free module for the left action of polynomials in the dots, with 
basis $D_w$ for $w\in \widehat{W}$.
\end{lemma}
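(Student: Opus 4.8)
The plan is to transfer the corresponding basis statement for the extended BFN category $\scrB^+$ through the diagrammatic equivalence established in Proposition \ref{prop:KLR-B}, after reducing the classical ($h=0$) case to the quantum case. Concretely, I would argue as follows. By Proposition \ref{prop:KLR-B}, the quantum cylindrical KLRW category is equivalent to $\scrB_{\hat\beta}$ via $D\mapsto \mathbbm{r}_D$, and by Lemma \ref{lem:unrolled-B} the Hom spaces $\Hom_{\scrB^+}(\eta,\eta')$ between generic objects are described by unrolled diagrams modulo the local relations (\ref{eq:dot-migration}--\ref{b-triple-smart2}). The key external input is \cite[Cor. 3.13]{WebSD}, which (as already invoked in the proof of Lemma \ref{lem:unrolled-B}) gives an explicit basis of $\Hom_{\scrB^+}(\eta,\eta')$: it is free as a left module over the polynomial ring $\Cth=\Sym\ft^*[h]$ in the dots, with basis indexed by a minimal-length coset representative together with a choice of reduced word, i.e.\ by $w\in\What$ once we fix the minimal-crossing unrolled diagram $D_w$ realizing each $w$.

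First I would make the translation between cylindrical loadings and objects of $\scrB_{\hat\beta}$ precise: a cylindrical loading $S$ with $v_i$ strands of label $i$ lifts uniquely to a point $z_{i,1}<\cdots<z_{i,v_i}$ of the fundamental alcove $(-\tfrac12,\tfrac12)$, and $\What$ acts freely transitively on all lifts, so the pairs $(S,T)$ of loadings correspond to pairs $(\eta,\eta')$ of generic objects up to the $\What$-ambiguity in the lift of $T$. Fixing that ambiguity by insisting the bottom is the fundamental-alcove lift of $S$, the morphisms $\mathbbm{r}_{D_w}$ for $w\in\What$ are exactly the images under the equivalence of the basis elements of $\Hom_{\scrB^+}(\eta, w\cdot\eta')$ from \cite[Cor. 3.13]{WebSD}; any two choices of minimal-crossing unrolled diagram for the same $w$ differ by lower-order terms via (\ref{red-triple-correction}--\ref{w-triple-point2}), so $D_w$ is well-defined up to the span of shorter $D_{w'}$, which is all that is needed. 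Rolling these unrolled diagrams up (the quotient by $\Z$ acting on the $x$-coordinate, as in Definition \ref{def:cylindrical KLRW-r}) and invoking the compatibility with dots built into that definition, I obtain the stated basis of $\Hom$ in the quantum cylindrical KLRW category over $\Cth$, i.e.\ free as a left module over polynomials in the dots with basis $\{D_w\mid w\in\What\}$.

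Finally, to land the statement as phrased — for the cylindrical KLRW category $\Rring$ of Definition \ref{def:cKLRW}, not the quantum one — I would specialize $h=0$ using Lemma \ref{lem:h=0}, which identifies $\Rring$ with $\hRring|_{h=0}$. Since the quantum basis is free over $\K[b_*,h]$ (or over $\Cth$) on the set $\{D_w\}$, the specialization $h=0$ remains free over the $h=0$ polynomial ring on the same set, giving Lemma \ref{lem:cyl-basis}. I expect the main obstacle to be bookkeeping rather than conceptual: one must check that the ``roll-up'' functor sends the \cite[Cor. 3.13]{WebSD} basis of each $\scrB^+$-Hom space to a genuine basis of the cylindrical Hom space without double-counting or missing elements — i.e.\ that the $\What$-orbit structure on lifts of loadings matches the indexing set of the $\scrB^+$ basis exactly, and that the subtlety about placing dots on the preimage partner strand with $x$-value in $(-\tfrac12,\tfrac12)$ (Definition \ref{def:cylindrical KLRW-r}) is precisely what makes the module structure over the dots well-defined and free. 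Once that dictionary is pinned down, freeness and the explicit basis follow immediately from the corresponding facts in \cite{WebSD}.
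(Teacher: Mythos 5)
Your proposal takes essentially the same route as the paper: lift each cylindrical loading to the fundamental region, observe that $\widehat{W}$ acts freely transitively on lifts, construct the minimal-crossing diagram $D_w$ (unique modulo shorter-length terms via (\ref{red-triple-correction}--\ref{w-triple-point2})), and then import the freeness and basis from \cite[Cor.\ 3.13]{WebSD} through the equivalence $D\mapsto\mathbbm{r}_D$ of Proposition~\ref{prop:KLR-B}, finally specializing at $h=0$ via Lemma~\ref{lem:h=0}. The paper's treatment is terser but uses exactly this dictionary; the bookkeeping you flag (fundamental-alcove lift fixing the $\widehat{W}$-ambiguity, dot placement on the partner strand with $x$-value in $(-\tfrac12,\tfrac12)$) is precisely what Definition~\ref{def:cylindrical KLRW-r} and the surrounding discussion are set up to resolve.
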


There is a second sense in which the cylindrical flavored KLRW algebra describes the structure of the Coulomb branch: it also encodes the representation theory of the quantum Coulomb branches $A_1$.  This algebra contains a commutative subalgebra $\Cft^W$.  We call a point in the spectrum of this ring {\bf integral} if it corresponds to a cocharacter of groups; most importantly for us, if $G=GL_n$, we can think of a point in $\Cft^W$ as fixing the characteristic polynomial of an $n\times n$-matrix, and integrality means that the roots of this polynomial are integral (which is stronger than requiring its coefficients to be).  We call a finite-dimensional representation {\bf integral} if the spectrum of the action of $\Cft^W$ on lies in the integral points of the spectrum.  
We can restate  Theorem \ref{I-thm:pStein-equiv} as follows:  
\begin{theorem}\label{thm:pStein-2}
The category of integral finite-dimensional representations of the quantum Coulomb branch
over $\Fp$  is equivalent to the category of representations of  $\Rring_{\beta}$ with $\Ab=\Fp$ for the parameters \begin{equation}
 {\beta_e}=\frac{ {b_e}}p\qquad
   {\beta_{i,k}}=\frac{ {b_{i,k}}}p.\label{eq:pthroot}
\end{equation}
\end{theorem}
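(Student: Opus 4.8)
\textbf{Proof proposal for Theorem \ref{thm:pStein-2}.}

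The plan is to deduce this statement from the already-established equivalence of Theorem \ref{I-thm:pStein-equiv} by identifying both sides with module categories over explicitly related algebras. First I would recall the structure behind Theorem \ref{I-thm:pStein-equiv}: the category of integral finite-dimensional representations of the quantum Coulomb branch $A_1$ over $\Fp$ is controlled by the extended BFN category $\scrB^+$ (or rather its subcategory cut out by integral central characters), since $\Asph = A_1$ specialized appropriately and $\Asph = \End_{\scrB^+}(\tau)$ by Lemma \ref{lem:quantum-coulomb}. The key bridge is Proposition \ref{prop:KLR-B}, which identifies the qcKLRW category $\hRring$ with $\scrB_{\hat\beta}$ for generic $\hat\beta$; combined with Lemma \ref{lem:quantum-coulomb} we get $e(a)\hRring e(a) \cong \Asph$ for a flavor depending on $a$. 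So the representation theory of $A_1$ over $\Fp$ gets translated into module theory over $\hRring$, and the content of the theorem is to see that, after reduction mod $p$ in the right sense, $\hRring$ collapses onto $\Rring_\beta$ with $\Ab = \Fp$ and $\beta$ given by \eqref{eq:pthroot}.

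The main step is the ``$p$th root'' reduction. Here I would invoke the conventions of Definition \ref{I-def:pth-root} and the $\delta = \frac{1}{2p}$ normalization fixed at the start of Section \ref{sec:diagrams}: the real parameters $\hat\beta_*$ in $\scrB^+$ live over $\R$, but when we pass to characteristic $p$ and restrict to integral central characters, the relevant data is the image of $\hat\beta_*$ under $\R \to \R/\Z$ followed by the identification with $\tfrac 1p \Fp \subset \R/\Z$, i.e. $\beta_e = b_e/p$, $\beta_{i,k} = b_{i,k}/p$ exactly as in \eqref{eq:pthroot}. Meanwhile the quantum parameter $h$ must be set so that the equivariant cohomology grading degenerates the way it does over $\Fp$ — concretely, one checks that the relations (\ref{eq:dot-migration}--\ref{b-triple-smart2}) of $\hRring$, when the base is $\Fp$ and $h$ is specialized, become precisely the cylindrical KLRW relations (\ref{c-first-QH}--\ref{w-triple-point2}) but now with the equivalence relation $\sim$ on $\Ab = \Fp$ being ``$a-b \in \Fp$'' trivially, and with the cyclic order on $\Fp$ induced by $a \mapsto a/p \colon \Fp \to \R/\Z$ as described in Section \ref{sec:cyl-flav-seq}. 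This is the analogue, in the cylindrical setting, of Lemma \ref{lem:h=0}, except that instead of literally setting $h=0$ we are working over a field where the arithmetic of $p$th roots of unity does the degeneration for us; the mechanism is the same comparison of the geometric side ($G[[t]]$-equivariant homology in char $p$, with its Frobenius/Artin–Schreier structure) with the combinatorial side.

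Concretely the step-by-step order would be: (1) restate Theorem \ref{I-thm:pStein-equiv} and extract that the target category is modules over $\End_{\scrB^+}(\tau)$ with integral central character constraints; (2) use Lemma \ref{lem:quantum-coulomb} and Proposition \ref{prop:KLR-B} to rewrite this as modules over $e(a)\hRring e(a)$, and then use a Morita / idempotent argument (the nilHecke idempotent $e(a)$ generates, exactly as in the passage from $A_0$ to $\Rring$ via $e(a)\Rring e(a)$ in Theorem \ref{coulomb-idempotent}) to reduce to modules over all of $\hRring$; (3) perform the base change to $\Fp$ and the specialization of $h$ and $b_*$ dictated by Definition \ref{I-def:pth-root}, and verify term-by-term that the resulting algebra is $\Rring_\beta$ with $\Ab = \Fp$ and $\beta$ as in \eqref{eq:pthroot} — the cyclic-order data matching because $\Fp \hookrightarrow \R/\Z$ via $a \mapsto a/p$ is exactly the structure singled out in Section \ref{sec:cyl-flav-seq}; (4) conclude the equivalence of categories. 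The hard part will be step (3): one must track carefully how the real flavor $\hat\beta_*$, the base-field flavor $b_*$, and the quantum parameter $h$ interact under the $p$th-root convention, and confirm that the integrality condition on the central character of $A_1$ corresponds precisely to the finite-dimensionality / flavored-sequence bookkeeping on the $\Rring_\beta$ side. Everything else is either a direct appeal to already-proven statements or a routine comparison of relations.
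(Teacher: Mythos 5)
Your plan diverges from the paper's actual proof at step (2), and the divergence is not harmless: the Morita reduction you invoke is incorrect. You want to first pass from the integral finite-dimensional representations to modules over the single endomorphism ring $e(a)\hRring e(a)\cong\Asph$ (via Lemma \ref{lem:quantum-coulomb} and Proposition \ref{prop:KLR-B}), and then recover the full cylindrical KLRW algebra by arguing that $e(a)$ ``generates,'' citing Theorem \ref{coulomb-idempotent}. But Theorem \ref{coulomb-idempotent} only asserts the \emph{isomorphism} $A_0 \cong e(a)\Rring e(a)$; it does not assert that $e(a)$ is a full idempotent, and indeed it cannot be. If $\Rring e(a)\Rring = \Rring$, then $\Rring$ and $\K[\Coulomb]$ would be Morita equivalent, which would contradict Theorem \ref{thm:NCSR}: the derived category of $\Rring$-modules is $D^b(\Coh(\tM))$ for a symplectic resolution $\tM\to\Coulomb$, and this is not equivalent to $D^b(\K[\Coulomb]\mmod)$ unless $\Coulomb$ is already smooth. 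So the ``passage from $A_0$ to $\Rring$ via $e(a)$'' is a genuine enlargement, not a Morita equivalence, and your detour through $\End_{\scrB^+}(\tau)$ forgets precisely the weight-space bookkeeping that makes the theorem true.

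The paper's proof avoids this by never collapsing to a single object: Theorem \ref{I-thm:pStein-equiv} already produces an equivalence with representations of the full subcategory $\widehat{\mathsf{A}}_p(\mathbb{F}_p)$ of the completed extended BFN category, whose objects run over $\ft_{1,\Fp}$ --- one object per choice of Gelfand-Tsetlin weight, i.e. per flavored sequence. From there the proof is short: the $h=0$ specialization of Proposition \ref{prop:KLR-B} identifies that BFN category with the cylindrical flavored KLRW category with $\Ab=\Fp$, and the $p$th-root conventions (Definition \ref{I-def:pth-root}) are exactly what makes the longitudes come out as $\beta_e = b_e/p$ and $\beta_{i,k}=b_{i,k}/p$. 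Relatedly, your phrasing of step (3) --- that the degeneration happens because of ``the arithmetic of $p$th roots of unity'' rather than by literally setting $h=0$ --- misreads the source: Definition \ref{I-def:pth-root} sets $h=0$ outright, so the comparison with the KLRW relations really is the one of Lemma \ref{lem:h=0}, not an analogue of it. If you drop the Morita step entirely and instead match the objects of $\widehat{\mathsf{A}}_p(\mathbb{F}_p)$ directly with flavored sequences, the rest of your outline recovers the paper's argument.
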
 
\begin{proof}
	As originally stated, Theorem \ref{I-thm:pStein-equiv} gives an equivalence of the integral finite-dimensional representations of the quantum Coulomb branch 
to the representations of a
subcategory $\widehat{\mathsf{A}}_p(\mathbb{F}_p)$ in the completed extended BFN
category (Definition \ref{I-def:sfA}) with the ``$p$th
root'' parameters (Definition \ref{I-def:pth-root}), in particular, $h=0$. The objects in this category exactly correspond to the elements of $\ft_{1,\Fp}$; the coordinates of these elements in turn give the different possible choices of flavored sequences.   The $h=0$ limit of Proposition \ref{prop:KLR-B} shows that this latter category is exactly the flavored cylindrical KLRW category.  
\end{proof}
Under this equivalence:
\begin{enumerate}
    \item The image of the idempotent $e'(\boldsymbol{\alpha})$ corresponds to the weight space for the corresponding maximal ideal of $S_h$.
    \item The dots correspond to the action of the nilpotent part of elements of $\Cft^W$. 
    \item The action of a diagram $D$ corresponds roughly with the action of the path $pD$, where we map $\R/\Z\to \R/\Z$ by multiplication by $p$.  The appearance of $p$th root parameters is exactly that taking image under this map will scale the distance between strands and ghosts by $p$.
\end{enumerate}

\subsection{Change of flavor}
\label{sec:change-flavor}

 In this
context,  we can interpret twisted diagrams as objects in twisting bimodules,  defined in \cite[Def. 3.20]{websterKoszulDuality2019}.  These bimodules are defined by considering the BFN category $\mathscr{B}^{\To}$ attached to the group
$\To$ acting on $V$; as defined in Section \ref{I-sec:background}, $\To$ is the preimage of a maximal torus in $\No/\gaugeG$, or equivalently, the subgroup of $\No$ generated by $\gaugeG$ and a maximal torus of $\No$.  \notation{$\To$}{The subgroup of $\No$ generated by $\gaugeG$ and a maximal torus of $\No$.}

For two different choices of parameters $ {b_e}, {b_{i,k}}\in \Fp$ for $\phi$, and  $b_e',b_{i,k}'\in
\Fp$ for $\phi'$, we have twisting bimodules corresponding to each choice of integers
\[\nu_e\equiv b_e'-b_e\pmod p\qquad \nu_{i,k}\equiv
  b_{i,k}'-b_{i,k}\pmod p.\]
We can interpret $\nu$ here as a real cocharacter of the flavor torus
$T_H/T_G$ (with the usual caveats about redundancy).  To match the notation of \cite{WebcohI}, we choose corresponding flavors $\phi$ and $\phi'=\phi+\nu$, in which case, we let ${}_{\phi+\nu}\scrT{}_{\phi}$ denote the twisting bimodule     over the categories
$ {\mathscr{B}_{\phi+\nu}}$ and $ {\mathscr{B}_{\phi}}$, defined in (\ref{I-eq:aXnua}). This bimodule arises from the grading on $\Hom_{\mathscr{B}^{\To}}(\second,\second)$ by the cocharacter lattice of $F$ (or equivalently the action of $T_F^{\vee}$).  We consider the elements of grading $\nu$, and define ${}_{\phi+\nu}\scrT{}_{\phi}$ to be the quotient by the left action of the maximal ideal in $\Sym(\mathfrak{t}_{F}^*)$ vanishing at $h(\phi+\nu)$, or equivalently, the right action of those vanishing at $h\phi$.  

Thus, applying  Proposition \ref{I-prop:B-equiv} (or equivalently, Theorem \ref{thm:pStein-2}) to 
$\mathscr{B}_{\phi+\nu}, \mathscr{B}_{\phi}$ and $\mathscr{B}^{\To}$,
we obtain that the twisting bimodule
${}_{\phi+\nu}\scrT{}_{\phi}$  is intertwined with the
corresponding a similar bimodule  ${}_{\phi'_{1/p}}\mathsf{T}_{\phi_{1/p}}$
with $p$th root conventions (Definition \ref{I-def:pth-root}).  Since $\frac{1}{p}\nu$ might not be
integral, we cannot apply the definition of
${}_{\phi+\nu}\scrT{}_{\phi}$ directly, and we take the
description above to be the definition, but let us say a few words
about why the fact that $h=0$
allows us to extend this definition to arbitrary cocharacters of $\ft_{F}$.

We will spare the reader the blizzard of notation required to say this
carefully, but in brief the categories with $p$th root conventions $\mathsf{B}_{\phi_{1/p}}$ and
$\mathsf{B}_{\phi'_{1/p}}$ can be realized as subcategories of $ {\mathsf{B}
^{\To}}$ modulo the action of polynomial morphisms $\ft_F^*$.
This quotient is only well-defined because $h=0$ as the definition above illustrates: in this case, the unique graded maximal ideal in $\Sym \ft_F^*$ is preserved by all morphisms at $h=0$, whereas maximal ideals are shifted if $h\neq 0$.  We identify the
object sets of $\mathsf{B}_{\phi_{1/p}}$ and
$\mathsf{B}_{\phi'_{1/p}}$ with orbits of $\ft$ in $\ft_{1;\To}$ which differ
by $\frac{1}{p}\nu$, and note that morphisms in these subcategories are precisely those in the larger category
generated by paths, polynomials, and $u_{\al}$ and the extended affine
Weyl group of $\gaugeG$ (as opposed to the affine Weyl group of $\To$, which
has more translations).  We can define
${}_{\phi'_{1/p}}\mathsf{T}_{\phi_{1/p}}$ as the space of morphisms in $\Hom_{\mathsf{B}
^{\To}}(\second,\second)/(\ft_F^*)$ generated by paths, polynomials, and $u_{\al}$ and elements of the extended affine
Weyl group of $G$ whose image in the cocharacter lattice of $F$ is $\nu$.  

In the quiver case, we can also recover the twisting bimodules
${}_{\phi+\nu}\scrT{}_{\phi}$  and
${}_{\phi'_{1/p}}\mathsf{T}_{\phi_{1/p}}$  using the appropriate
modification of Proposition \ref{prop:KLR-B} and Lemma \ref{lem:h=0}.
Let $\hat{b}_e\in\Z$ and $\hat{b}_{i,k}\in\Z$ be preimages of $b_e,b_{i,k}$ and let 
\begin{equation}
\hat{\bbeta}_e(t)=\frac{ {\hat{b}_e}+\nu_et}p\qquad
  \hat{\bbeta}_{i,k}(t)=\frac{ {\hat{b}_{i,k}}+\nu_{i,k}t}p.\label{eq:pthroot-functions}
\end{equation} and let ${\inter}_e^{(p)}$ be the vector defined by the number of intersections of a strand of label $h(e)$ with the ghost for $e$ in a diagram that uses these functions as the distance between corporeal and ghost strands and the position of red strands, respectively.  Note that if $\frac{1}{p}\nu_e\in \Z$, then ${\inter}_e^{(p)}=\frac{1}{p}\nu_e$.  

We can define a bimodule over qcKLRW algebras given by the span of twisted
  cylindrical KLRW diagrams for the functions 
  $\mathbf{\inter}_e=\nu_e$ modulo the local relations
  (\ref{eq:dot-migration}--\ref{b-triple-smart2}).
\begin{proposition}\label{prop:KLRW-bimodule}
  The isomorphism of Proposition \ref{prop:KLR-B} extends to a 
  bimodule isomorphism of ${}_{\phi+\nu}\scrT{}_{\phi}$ with the span of twisted
  cylindrical KLRW diagrams for the functions $\mathbf{\inter}_e=\nu_e$ modulo the local relations
  (\ref{eq:dot-migration}--\ref{b-triple-smart2}); 

 With $p$th root conventions, this means that 
  ${}_{\phi'_{1/p}}\mathsf{T}_{\phi_{1/p}}$ is isomorphic to the span
  of twisted cylindrical KLRW diagrams with $h=0$ for ${\inter}_e^{(p)}$.
\end{proposition}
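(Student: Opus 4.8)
The plan is to obtain both halves of Proposition~\ref{prop:KLRW-bimodule} by the same mechanism that gave Proposition~\ref{prop:KLR-B} and Lemma~\ref{lem:h=0}, namely by identifying twisted cylindrical KLRW diagrams with unrolled diagrams in which the red strands and ghosts trace out the prescribed affine-linear paths, and then invoking the description of the twisting bimodule in terms of morphisms in $\mathsf{B}^{\To}$ modulo $(\ft_F^*)$. First I would set up the unrolled picture: given $\hat b_e,\hat b_{i,k}\in\Z$ lifting $b_e,b_{i,k}$, the functions $\hat{\bbeta}_e(t)$, $\hat{\bbeta}_{i,k}(t)$ of \eqref{eq:pthroot-functions} define, for each path $\pi_{i,k}$ of corporeal strands, an unrolled diagram whose ghost strands sit at $\pi_{h(e),k}+\hat{\bbeta}_e$ and whose red strands sit at $\hat{\bbeta}_{i,k}$; the only difference from Section~\ref{sec:rolled-diagrams} is that the red-strand and ghost offsets are no longer constant in $t$, so a ghost now has $\mathbf{\inter}_e=\nu_e$ signed intersections with its corporeal. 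I would then observe that Lemma~\ref{lem:unrolled-B} applies verbatim: the morphism space $\Hom_{\scrB^{\To}}(\second,\second)$ is spanned by $\mathbbm{r}_D$ for such unrolled diagrams, and the only relations are the local ones (\ref{eq:dot-migration}--\ref{b-triple-smart2}). Quotienting by the action of $\ft_F^*$ (legitimate exactly because $h=0$, as explained in Section~\ref{sec:change-flavor}) and restricting to the graded piece of weight $\nu$ picks out precisely the twisted diagrams whose ghost windings are governed by $\nu$; this is the statement that ${}_{\phi+\nu}\scrT{}_{\phi}$ is the span of twisted cylindrical KLRW diagrams for $\mathbf{\inter}_e=\nu_e$ modulo the local relations.

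The key steps in order would be: (1) fix integral lifts and record that the paths \eqref{eq:pthroot-functions} do define genuine unrolled diagrams with the asserted intersection numbers $\mathbf{\inter}_e=\nu_e$ (resp. ${\inter}_e^{(p)}$ in the $p$th-root convention); (2) extend the cylindricalization/rolling-up dictionary of Definition~\ref{def:cylindrical KLRW-r}--Proposition~\ref{prop:KLR-B} to the $\To$-category, noting that the fundamental domain and lifting conventions are unchanged since $\gaugeG$ is the same — only the ambient torus $\To\supset\gaugeG$ is larger, which affects only the translations by $\ft_F$; (3) match the defining description of ${}_{\phi+\nu}\scrT{}_{\phi}$ from \cite[Def. 3.16]{WebSD} — morphisms in $\mathsf{B}^{\To}$ of $F$-grading $\nu$, modulo the maximal ideal of $\Sym\ft_F^*$ at the appropriate point — with the span of $\mathbbm{r}_D$ over unrolled diagrams $D$ whose affine-Weyl-group displacement has image $\nu$ in the cocharacter lattice of $F$; (4) transport across the rolling-up equivalence to conclude the cylindrical statement; (5) specialize the base ring and parameters to the $p$th-root conventions of Definition~\ref{I-def:pth-root} — where $\hat{\bbeta}_e,\hat{\bbeta}_{i,k}$ are the functions divided by $p$ and $h=0$ — and read off that ${}_{\phi'_{1/p}}\mathsf{T}_{\phi_{1/p}}$ is the span of twisted diagrams with $h=0$ and intersection data ${\inter}_e^{(p)}$, using the observation that ${\inter}_e^{(p)}=\tfrac1p\nu_e$ when the latter is integral and is otherwise determined by how the red strands cyclically order relative to $x=\tfrac12$.

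The main obstacle I expect is step~(3): one must be careful that taking the quotient by $(\ft_F^*)$ on the bimodule level really does commute with the identification of morphisms with diagrams, i.e. that the $F$-grading on $\Hom_{\mathsf{B}^{\To}}(\second,\second)$ is computed diagrammatically by the net winding $\nu_e$ of the ghosts, and that the maximal ideal one quotients by on the left (at $h(\phi+\nu)$) versus on the right (at $h\phi$) genuinely give the same bimodule — this is where the $h=0$ hypothesis is doing real work, since for $h\neq 0$ these ideals are shifted relative to one another and the quotient is not even well-defined. I would handle this by citing the already-established fact (Section~\ref{sec:change-flavor}, and \cite[Def. 3.16]{WebSD}) that ${}_{\phi+\nu}\scrT{}_{\phi}$ is well-defined precisely because of the $h=0$ specialization, and then noting that under the diagrammatic dictionary the left and right $\Sym\ft_F^*$-actions are both realized by placing dots on the appropriate partner strands at the top versus bottom of the diagram, so the two quotients manifestly agree. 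The remaining verifications — that composition of twisted diagrams matches composition of morphisms in the bimodule, and that no relations are lost or gained — are routine given the locality of (\ref{eq:dot-migration}--\ref{b-triple-smart2}) and the argument already used in Lemma~\ref{lem:bimod-mult}.
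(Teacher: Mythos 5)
Your overall strategy — realizing the twisting bimodule as rolled-up diagrams whose ghosts wind $\nu_e$ times, via the unrolled-diagram description of $\Hom_{\mathscr{B}^{\To}}(\second,\second)$ together with the rolling-up dictionary — is the same as what the paper intends (the proposition is stated without a formal proof and is meant to follow from the setup of Section~\ref{sec:change-flavor} combined with Proposition~\ref{prop:KLR-B} and Lemma~\ref{lem:unrolled-B}). Steps (1), (2), (4), (5) are fine, as is the observation that the $F$-grading of a morphism is precisely the net winding of the ghosts.

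However, your account of \emph{where} $h=0$ matters is backwards for the first half of the proposition, and this is a genuine conceptual error. You write that ``for $h\neq 0$ these ideals are shifted relative to one another and the quotient is not even well-defined.'' But the first part of the proposition concerns ${}_{\phi+\nu}\scrT{}_{\phi}$ as a bimodule over \emph{quantum} cylindrical KLRW algebras — Proposition~\ref{prop:KLR-B}, which you are extending, is about $\hRring$ with $h$ generic — and the quotient there \emph{is} well-defined for all $h$, precisely because $\nu$ is an integral cocharacter. The point of the paper's phrase ``the quotient by the left action of the maximal ideal in $\Sym(\mathfrak{t}_F^*)$ vanishing at $h(\phi+\nu)$, or equivalently, the right action of those vanishing at $h\phi$'' is that when you restrict to the graded piece of integral weight $\nu$, the left and right $\Sym\ft_F^*$-actions differ by the shift $h\nu$, so the two quotients \emph{coincide} for any $h$. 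The $h=0$ hypothesis is only doing work in the second half of the statement, with $p$th-root conventions, where $\tfrac{1}{p}\nu$ need not be an integral cocharacter: there one cannot appeal to the integral $F$-grading, the bimodule must be redefined as the span of morphisms in $\mathsf{B}^{\To}$ of the right ``length'' modulo $(\ft_F^*)$, and it is only at $h=0$ that the augmentation ideal in $\Sym\ft_F^*$ is preserved under the (non-integral) shift so that the quotient makes sense. So your step (3) should distinguish these two cases; as written, it asserts a failure of well-definedness in the quantum case that does not occur, and would leave a reader wondering how the first half of the proposition can be stated over $\K[b_*,h]$ at all.
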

This now establishes Theorem \ref{thm:D-equivalence}(3) by Theorem \ref{I-ithm:Schobers}. 

\begin{proof}[Proof of Thm. \ref{thm:partial-resolution}]
  \refstepcounter{dummy} \label{proof-thm:partial-resolution}
By the $h=0$
special case of Proposition \ref{prop:KLRW-bimodule} above, we can
rewrite $e(a)  {\rif^{\mathbf{d}}}e(a)$ as the bimodule
${}_{\phi+n\nu}\mathscr{T}_{\phi}(\eta_a,\eta_a)$. By the definition
(\ref{I-eq:aXnua}), we thus have
\[e(a) B_{\bbeta^{(k)}}e(a) \cong
  H^{BM}_*({}_{\eta_a}\EuScript{X}^{(k\nu)}_{\eta_a}).\] Since the
space ${}_{\eta_a}\EuScript{X}^{(k\nu)}_{\eta_a}$ is precisely the
same as the quotient by $G[[t]]$ of that denoted
$\mathcal{\tilde{R}}^{(k\nu)}$ in \cite{BFNline}, this shows that our
definition matches exactly the projective coordinate ring of the
partial resolution attached to the cocharacter $\nu$ in
\cite{BFNline}.
  \end{proof}

\subsection{Connection to tilting bundles}

Studying the $h=0$ case is particularly important because of its
connection to coherent sheaves, as shown in \cite{WebcohI}.    
Thinking about Theorem \ref{thm:pStein-2} in terms of 
quantizations in characteristic $p$, we use this isomorphism to give a derived equivalence between the category of representations of  $\Rring_{\Fp}$ and the coherent
sheaves $\Coh(\tilde{\fM}_{\Fp})$ as long as $p$ is sufficiently large, and the parameters are generic (Theorem \ref{I-th:Q-equiv}). Note that $\Fp$ plays two roles here. It is both:
\begin{enumerate}
	\item the coefficients $\K$ of the algebra $\Rring$ and the base ring of the variety $\tilde{\fM}_{\Fp}$, and
	\item the group $\Ab$ from which we choose longitudes.  This reflects the fact that we are diagonalizing the action of $\Cft^W$ in a representation of $A_1$ over $\Fp$.
\end{enumerate} 
If our primary interest is in coherent sheaves, we can vary the coefficient ring $\K$ and abelian group $\Ab$ separately, by giving a definition of the functor from $\Rring_{\K}\mmod$ to  $\Coh(\tilde{\fM}_{\K})$ which is independent of the choice of $\K$.  

Given a module $M$, we can consider the $\Rring_{\K}$-modules 
$$M^{(k)}={}_{\phi_{1/p}+k\nu }\mathsf{T}_{\phi_{1/p}}\otimes_{\Rring}M.$$ For a fixed $a\in \frac{1}{p}\Z/\Z$, we can construct a module $M^{\geq 0}=\bigoplus_{k\geq 0} e(a)M^{(k)}$ over the projective coordinate ring $\mathbf{A}^{\mathbf{\inter}}$.

\begin{definition}
	Let $\cQ_{M}$ be the localization of $M^{\geq 0}$ to a coherent sheaf on the variety $\tilde{\fM}_{\K}$.  
\end{definition}   
An important thing to note about this sheaf: for each $\boldsymbol{\alpha}\in \mathbb{L}$, we have an idempotent $e(\boldsymbol{\alpha})$ defined by Definition \ref{def:prefered}, and thus a projective module which defines a sheaf $\cQ_{\boldsymbol{\alpha}}=\cQ_{\Rring e(\boldsymbol{\alpha})}$.

If the set $\Ab$ is finite, then the most natural $M$ to consider is the algebra $\Rring$ itself.   If the set $\Ab$ is infinite, then we choose the finitely generated left projective $\Rring$-module generated by one representative of each equivalence class of idempotents; this module is finitely generated and every graded indecomposable projective module is a summand of it.  Let $\cQ_{\mathbf{b}}=\cQ_{M}$ for this  choice.

  We say that the choice of $\mathbf{b}\in \Ab$ is {\bf generic} if the number of different isomorphism classes of flavored sequences is equal to the maximal number possible amongst all choices of $\mathbf{b}'\in \R/\Z$ with the same quiver $\Gamma$ and dimension vectors $\Bw,\Bv$.  
Theorem
\ref{I-th:Q-equiv} shows that: 
\begin{theorem}\label{th:Q-equiv-2}\hfill
\begin{enumerate}
	\item The sheaf $\cQ_{\mathbf{b}}$ is always tilting. 
	\item 
 If a BFN resolution $\tM$ exists and $\K$ is characteristic 0,  then for generic $\mathbf{b}$, the vector bundle 
	 $\cQ_{\mathbf{b}}$ is a tilting generator.
	 \item  We have an isomorphism $\Hom(\cQ_{\boldsymbol{\alpha}},\cQ_{\boldsymbol{\alpha}'})\cong e(\boldsymbol{\alpha}')\Rring e(\boldsymbol{\alpha})$ compatible with multiplication.  If $\Ab$ is finite, then $\End(\cQ_{\mathbf{b}})$ is the corresponding flavored KLRW algebra $\Rring$ with parameters $\mathbf{b}$. 
\end{enumerate}
\end{theorem}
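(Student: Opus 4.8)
\emph{Strategy.} The statement is a translation of Theorem~\ref{I-th:Q-equiv} through the dictionary of \S\ref{sec:diagrams}, upgraded from characteristic $p$ to an arbitrary coefficient ring by base change, so I would first treat $\K=\Ab=\Fp$ with $p\gg 0$. For $M=\Rring e(\boldsymbol\alpha)$ the graded module $M^{\geq 0}=\bigoplus_{k\ge 0}e(a)M^{(k)}$ is, by the $h=0$ case of Proposition~\ref{prop:KLRW-bimodule} together with Lemma~\ref{lem:h=0}, identified with $\bigoplus_{k\ge 0}e(a)\,\rif^{k\mathbf{\inter}}(\sim)\,e(\boldsymbol\alpha)$ as a module over $\mathbf A^{\mathbf{\inter}}=e(a)\ProjC^{\mathbf{\inter}}e(a)$, whose $\Proj$ is $\tilde\fM^{\mathbf{\inter}}$ by Theorem~\ref{thm:partial-resolution}. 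Hence $\cQ_{\boldsymbol\alpha}$ is precisely the coherent sheaf on $\tilde\fM_{\Fp}$ produced in Part~I from the twisting bimodules ${}_{\phi_{1/p}+k\nu}\mathsf T_{\phi_{1/p}}$, and $\cQ_{\mathbf b}=\bigoplus_{\boldsymbol\alpha}\cQ_{\boldsymbol\alpha}$ (one summand per equivalence class of flavored sequences) is the object Theorem~\ref{I-th:Q-equiv} asserts to be a tilting generator for generic $\mathbf b$, inducing $D^b(\Coh\tilde\fM_{\Fp})\cong D^b(\Rring_{\Fp}\mmod)$ and $\Hom(\cQ_{\boldsymbol\alpha},\cQ_{\boldsymbol\alpha'})\cong e(\boldsymbol\alpha')\Rring_{\Fp}e(\boldsymbol\alpha)$ compatibly with composition. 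This is (1)--(3) over $\Fp$, $p\gg 0$.

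\emph{Passage to general $\K$.} All of the data is defined over $\Z[b_*]$: the graded pieces $e(a)\rif^{k\mathbf{\inter}}(\sim)e(a)$ and the modules $M^{\geq 0}_{\boldsymbol\alpha}$ are free with a combinatorial basis independent of the coefficients (the twisted version of the basis theorem Lemma~\ref{lem:cyl-basis}, for the bimodules $\rif^{\mathbf d}(\corre)$), so $\tilde\fM^{\mathbf{\inter}}_{\K}$ and $\cQ_{\mathbf b,\K}$ are flat base changes of their $\Z$-forms. The multiplication maps \eqref{eq:k-m-mult} make $\bigoplus_{\boldsymbol\alpha}\cQ_{\boldsymbol\alpha}$ a module over $\Rring$ and so give an integral map $e(\boldsymbol\alpha')\Rring e(\boldsymbol\alpha)\to\Hom_{\tilde\fM_{\Z}}(\cQ_{\boldsymbol\alpha},\cQ_{\boldsymbol\alpha'})$, compatible with composition by construction (stacking of diagrams). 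The source is $\Z$-free of known rank; the target and the higher cohomology of $\sHom(\cQ_{\boldsymbol\alpha},\cQ_{\boldsymbol\alpha'})$ are computed by a complex of coherent sheaves on the projective $\Z$-scheme $\tilde\fM^{\mathbf{\inter}}_{\Z}$, and by the $\Fp$ case that higher cohomology vanishes after reduction at all large $p$; semicontinuity then kills it over $\Q$, so the $\Hom$ is $\K$-flat of constant rank and commutes with base change, and $\Ext^{>0}_{\tilde\fM_{\K}}(\cQ_{\mathbf b},\cQ_{\mathbf b})=0$ --- giving (1). An integral map of free modules of equal rank that is an isomorphism modulo a Zariski-dense set of primes is an isomorphism over $\Z$, hence over every $\K$ --- giving (3). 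For (2) in characteristic $0$: since $\tilde\fM$ is a symplectic resolution $H^{>0}(\mathcal O_{\tilde\fM})=0$, so the tilting generator deforms flatly over the $\Z$-base, and an object of $D^b(\Coh\tilde\fM_{\K})$ killed by $\RHom(\cQ_{\mathbf b},-)$ would specialise, after a suitable twist and reduction, to a nonzero object annihilated by the equivalence of Theorem~\ref{I-th:Q-equiv}, which is absurd; alternatively (2) is immediate from Theorem~\ref{I-th:Q-equiv} itself once the sheaves are matched as above.

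\emph{Main difficulty.} The real work is the bookkeeping in the second paragraph: checking that the diagrammatic $\Rring$-action via \eqref{eq:k-m-mult} on the $\Z$-form of $\cQ_{\mathbf b}$ genuinely coincides with the module structure coming from Part~I's construction, that the genericity condition on $\mathbf b$ used in characteristic $0$ is the reduction of the one under which Theorem~\ref{I-th:Q-equiv} holds over $\Fp$, and that the set of primes $p$ for which that theorem applies is dense enough for the semicontinuity comparison to be conclusive. If one wants the tilting assertion (1) for \emph{every} coefficient ring rather than only for characteristic $0$ or large characteristic, one also needs a torsion-free refinement of the cohomology-vanishing argument, or Theorem~\ref{I-th:Q-equiv} in more generality than it is stated here.
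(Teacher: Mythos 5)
Your proposal takes essentially the same route as the paper: the paper's proof of Theorem~\ref{th:Q-equiv-2} is literally the single phrase ``Theorem~\ref{I-th:Q-equiv} shows that,'' with all the work having gone into setting up the dictionary in \S\ref{sec:diagrams} (Proposition~\ref{prop:KLRW-bimodule}, Lemma~\ref{lem:h=0}, Theorem~\ref{thm:partial-resolution}) so that $\cQ_{\boldsymbol\alpha}$ and $\cQ_{\mathbf b}$ are visibly the sheaves that Part~I constructs from the twisting bimodules. Your first paragraph reproduces exactly that identification.

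The extra content you add in the second paragraph --- first establish the result over $\Ab=\K=\Fp$, $p\gg 0$, then descend to $\Z$ and spread out via flatness of the combinatorially-graded pieces and semicontinuity --- is not spelled out in this paper but is exactly the base-change pattern the author uses for the neighbouring Proposition~\ref{prop:DE}. So it is a legitimate filling-in of an implicit step rather than a genuinely different argument; whether it is strictly necessary depends on how generally Theorem~\ref{I-th:Q-equiv} is stated in Part~I. Two small points to keep in mind: (i) your semicontinuity argument delivers (1) only in characteristic $0$ and large characteristic, while the theorem asserts it ``always''; you flag this correctly, and the paper's Remark after the theorem acknowledges the same boundary, so this is not a gap in your understanding but a place where the paper is leaning on Part~I for a stronger statement (or is slightly overclaiming); (ii) for (3) the assertion that an integral map of free $\Z$-modules of equal rank which is an isomorphism mod a dense set of primes is an isomorphism over $\Z$ is correct, but you should be explicit that the rank of $\Hom$ is being controlled by the $\Fp$ isomorphisms together with base-change for cohomology, since a priori only an inequality is free. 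Neither issue undermines the proposal; both are within the tolerance of the paper's own level of detail.
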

In the case where $\Ab$ is infinite, the endomorphism algebra $\End(\cQ_{\mathbf{b}})$ is not precisely $\Rring$, since we have to put a restriction on the flavored sequences we'll allow in order to get a finite rank vector bundle.  However, since we have considered one representative of each equivalence class, adding in more flavored sequences will just add more copies of the same summands to $\cQ_{*}$. In particular, we can consider the  infinite rank vector bundle $\cQ_{\Rring}$, which  is still equiconstituted with $\cQ_{\mathbf{b}}$.  Thus $\End(\cQ_{\mathbf{b}})$ is Morita equivalent to $\Rring$.  

This gives us a very concrete combinatorial understanding how small values of $p$ can give us trouble:  since there is a relatively small number of longitudes possible in $\Ab=\Fp$, there may be no generic choice of $\mathbf{b}$.  Similarly, it explains a phenomenon already noted in the tilting bundles constructed using the approach of \cite{KalDEQ}: as $p$ grows, the rank of the vector bundle grows, but only by adding more and more copies of the same summands.

\begin{remark}
	Theorem \ref{th:Q-equiv-2}(2) also holds for $\K$ having sufficiently large positive characteristic by semi-continuity; it seems likely that it holds for arbitrary $\K$ (for a generic choice of $\mathbf{b}$ in $\Ab=\R/\Z$), but at the moment, we have no strategy for proving this.
\end{remark}

This establishes Theorem \ref{thm:NCSR} and 
allows to us to prove two of the claimed results in the introduction:
Theorem \ref{thm:D-equivalence}(2) \label{proof-E2} now follows from Theorem
\ref{I-th:Q-equiv} and Theorem \ref{ith:NCCR} from Corollary
\ref{I-cor:A-nccr}. \label{proof-NCCR}

\begin{example}\label{example:NZ}
Consider the case where $\gaugeG=\C^*$ acting on $\C^2$ by scalars.  In this case, the Coulomb branch is $T^*\mathbb{P}^1$.  The corresponding cylindrical KLRW algebra has two red strands, and one black strand, all with the same label. 
There are two idempotents in this algebra, corresponding to the two
cyclic orders of the 3 strands.  Since the corresponding quiver has no
edges, the black strand has no ghosts.
\begin{equation*}
        \tikz[xscale=.9]{
      \node[label=below:{$ x$}] at (-4.5,0){ 
       \tikz[very thick,xscale=1]{
          \draw[fringe] (-.7,-.5)-- (-.7,.5);
          \draw[fringe] (1.7,.5)-- (1.7,-.5);
          \draw[wei] (.3,-.5)-- (.3,.5);
          \draw[wei] (1.5 ,-.5)-- (1.5,.5);
\draw (-.7,0) to[out=0,in=-90] (-.2,.5);
           \draw (.9 ,-.5) to[out=90,in=180] (1.7,0);
        }
      };
      \node[label=below:{$ x^*$}] at (0,0){ 
       \tikz[very thick,xscale=1, yscale=-1]{          
       \draw[fringe] (-.7,.5)-- (-.7,-.5);
          \draw[fringe] (1.7,-.5)-- (1.7,.5);
          \draw[wei] (.3,-.5)-- (.3,.5);
          \draw[wei] (1.5 ,-.5)-- (1.5,.5);
\draw (-.7,0) to[out=0,in=-90] (-.2,.5);
           \draw (.9 ,-.5) to[out=90,in=180] (1.7,0);
        }
      };
       \node[label=below:{$ y $}] at (4.5,0){ 
        \tikz[very thick,xscale=1]{
          \draw[fringe] (-.7,-.5)-- (-.7,.5);
          \draw[fringe] (1.7,.5)-- (1.7,-.5);
          \draw[wei] (.3,-.5)-- (.3,.5);
          \draw[wei] (1.5 ,-.5)-- (1.5,.5);
\draw (.9 ,-.5)  to[out=90,in=-90] (-.2,.5);
       }
      };
      \node[label=below:{$ y^* $}] at (9,0){ 
        \tikz[very thick,xscale=1, yscale=-1]{
           \draw[fringe] (-.7,.5)-- (-.7,-.5);
          \draw[fringe] (1.7,-.5)-- (1.7,.5);
          \draw[wei] (.3,-.5)-- (.3,.5);
          \draw[wei] (1.5 ,-.5)-- (1.5,.5);
\draw (.9 ,-.5)  to[out=90,in=-90] (-.2,.5);
       }
      };
      }
\end{equation*}

These satisfy the quadratic relations 
\begin{equation}
    xx^*=yy^*\qquad x^*x=y^*y,
\end{equation}
and it's easy to check that these are a complete set of relations.  
This corresponds to the endomorphisms of the tilting generator $\mathcal{O}\oplus \mathcal{O}(-1)$ on $T^*\mathbb{P}^1$, via the isomorphism sending $x,y$ to the corresponding multiplication maps $\mathcal{O}(-1)\to \mathcal{O}$, and $x^*,y^*$ to the maps $\frac{\partial}{\partial x}, -\frac{\partial}{\partial y}\colon \mathcal{O}\to \mathcal{O}(-1)$ (thinking of vector fields as functions on $T^*\mathbb{P}^1$).  
This algebra is Koszul and its Koszul/quadratic dual is easily seen to be defined by
\begin{equation}
    xx^*=-yy^*\qquad x^*x=-y^*y\qquad y^*x=x^*y=yx^*=xy^*=0.
\end{equation}
This latter set of relations defines an 8-dimensional algebra studied by Nandakumar and Zhao in \cite{nandakumarCategorificationBlocks2021}, which appears as the endomorphisms of a projective generator for exotic sheaves on $T^*\mathbb{P}^1$.  
\end{example}

\begin{example}
The most important example of a case where weighting is useful is the Jordan quiver where we only have a single node in our
quiver, which carries a loop, equipped with the weight
$ {\hat\beta_e}=\vartheta$, and a single red strand which we can put at $x=0$ (of course, labeled with
this node). The corresponding Coulomb branch is $\mathfrak{M}=\Sym^n(\C^2)$, and the resolution of $\tilde{\fM}^{d}$ for any $d\neq 0$ is the Hilbert scheme of $n$ points in $\mathbb{C}^2$ by \cite[Prop. 3.2]{BFNline}.  

A non-commutative resolution of singularities on this Hilbert scheme was constructed by Bezrukavnikov, Finkelberg and Ginzburg in \cite[Th. 1.4.1]{bezrukavnikovCherednikAlgebras2006}.  We expect that this will be Morita equivalent to the resolution $A$ we have constructed; this should follow from the appearance of the rational Cherednik algebra of $S_n$ as a morphism space in $\mathscr{B}^+$, shown independently in \cite[Th. 4.2]{BEF} and \cite[Lem. 4.2]{Webalt}.  Under the equivalence of Proposition \ref{prop:KLR-B}, this induces an isomorphism \[\mathsf{H}_{h,b}\cong e(\boldsymbol{\gamma})\Rring^h e(\boldsymbol{\gamma})\]
where $\boldsymbol{\gamma}$ is the point $(\frac{1}{2n},\frac{3}{2n},\dots, \frac{-1}{2n})$ where the points are evenly spaced around the circle, and $0<\vartheta<1/n$.

The objects in the cylindrical KLRW category are thus $n$-tuples of distinct points in $S^1$, where each point has a ghost $\vartheta$ units to its right, which the other points avoid.  This information can be recorded by listing the order in which one encounters dots and ghosts; the set of possible configurations for a given $\vartheta$ corresponds to the set $\bar \Lambda$ discussed earlier.  

Note that the set of possible configurations is locally constant, and
will only change at values of $\theta$ where one has a non-simple
hyperplane arrangement. This can only be the case if there is a loop of equations 
\begin{align*}
    z_1-z_2&\equiv\vartheta\pmod \Z\\
    z_2-z_3&\equiv\vartheta\pmod \Z\\
    \vdots&\\
    z_k-z_1&\equiv\vartheta\pmod \Z
\end{align*}
for $k\leq n$.  This implies that $k\vartheta\in \Z$, i.e. that $\vartheta$ is rational with denominator $\leq n$.  Of course, this same set of values has shown up in the structure of Hilbert schemes and Cherednik algebras in other contexts. 
\end{example}

\begin{proof}[Proof of Prop. \ref{prop:DE}]
  \refstepcounter{dummy}\label{proof-prop:DE}
  By Theorem \ref{I-thm:asymptotic-derived}, we can approximate $ {\beta}$ and $\beta'$ by choices of $ {\mathbf{b}},\mathbf{b}'$ in $\Ab=\Fp$ as in \eqref{eq:pthroot-functions} such that derived localization holds at these parameters for sufficiently large $p$.  Furthermore, the choice of lifts $\hat\beta$ and $\hat\beta'$ induce integral lifts of $\mathbf{b},\mathbf{b}'$, and thus a corresponding wall-crossing functor, induced by $\nu={\widehat{\mathbf b}}'-{\widehat{\mathbf b}}$.  
  Thus, the corresponding wall-crossing functor is a derived
  equivalence by Lemma \ref{I-lem:localize-twist}, and so Proposition
  \ref{prop:KLRW-bimodule} shows the same is true for tensor product
  with the change-of-charge bimodule $ {\rif_{\hat\beta',\hat\beta}} $ with coefficients in $\mathbb{F}_p$.

  For any
  base ring $\K$, 
  derived tensor product with $\rif_{\hat\beta',\hat\beta}$ is an equivalence if and
  only if the natural map $\Rring_{\beta}\to
  \RHom_{\Rring_{\beta'} }(\rif_{\hat\beta',\hat\beta}, \rif_{\hat\beta',\hat\beta})$ is an
  isomorphism, and similarly with $\beta, \beta'$ reversed.
  First consider the cone $K^\bullet$ of this map with $\K=\Z$.  The result is a
  complex whose cohomology is a finitely generated graded bimodule
  over $\Rring_{\beta}$ in each degree.  Since each graded degree of
  $\Rring_{\beta}$ is a finitely generated abelian group, the
  same is true of $H^k(K^\bullet)$ for each homological degree $k$.
  In particular, we can use the universal coefficient theorem to
  compute the cohomology of $K^\bullet\otimes_{\Z}\K$ for any ring
  $\K$. We have shown above that $K^\bullet$ is exact
  after base change to $\mathbb{F}_p$ for some $p$; this shows that
  the cohomology of $H^k(K^\bullet)$ is torsion of order coprime to $p$, and so becomes
  trivial for $\K=\Q$.  
\end{proof}

\section{Proofs from Sections \ref{sec:cylindr-klrw-algebr} and \ref{sec:tangle}}

We deferred the proofs of a number of results in Sections
\ref{sec:cylindr-klrw-algebr} and \ref{sec:tangle} which required the
results of Section \ref{sec:diagrams}; in this section we will cover
these and any preliminary lemmata needed for them.

\begin{proof}[Proof of Lem. \ref{lem:R-exact}]
  \refstepcounter{dummy}\label{proof-lem:R-exact}
By Lemma~\ref{lem:cyl-basis}, the module $e(\Bi)  {\Rring^{\Bj}}$ has a basis as a free right module over the action of the dots indexed by affine permutations, with labels on strands fixed by the labeling on the top given by $\Bi$.  This basis is given by diagrams that trace out this affine permutation on the cylinder with a minimal number of crossings and no dots.  We can easily check that the set of these diagrams in a fixed left coset\footnote{As always confuses the author, this means an orbit for right multiplication of a subgroup.} of the finite permutation group spans a projective right $ \tilde{T}^{\bla}$ module, freely generated by the unique shortest element of this coset.  This is isomorphic to $e(\Bi') \tilde{T}^{\bla}$, where $\Bi'$ is determined by the bottom of this shortest coset diagram (the one with a minimal number of crossings).  
\end{proof}

In order to prove Lemma~\ref{lem:Z-match}, we need to give a lemma comparing the central charge $Z$ with a representation-theoretic central charge similar to the ones considered in \cite{annoStabilityConditions2015}.

   Fix a choice of parameters $ {\hat{b}_e}, {\hat{b}_{i,k}}\in \Z$ so that the
   associated parameters of the form \eqref{eq:pthroot-functions} lie
   in our preferred alcove $C_0$.  Consider the associated sheaf of
   algebras $ {\hat{\psalg}_\phi}$ for the
   corresponding flavor defined by Definition \ref{I-def:psalg}.  For any other $\hat{b}_e',\hat{b}_{i,k}'\in \Z$, we can
   consider the quantized line bimodule
   ${}_{\phi+\nu}\mathscr{T}_{\phi}$ with
   $\nu_e=\hat{b}_e'-\hat{b}_e,\nu_{i,k}=\hat{b}_{i,k}'-\hat{b}_{i,k}$.  Tensor product $
   {}_{\phi+\nu}\mathscr{T}_{\phi}\otimes -$ gives an equivalence of
   abelian categories $\hat{\psalg}_\phi\mmod
   \cong \hat{\psalg}_{\phi+\nu}\mmod$, so we can implicitly identify
   all these categories.  Let ${\psalg}_\phi\mmod_0$ be the
   subcategory of sheaves of modules that are set-theoretically
   supported on the fiber over the cone point in $\Coulomb$; note
   that we have left out the completion here, since the action on any such module
   automatically extends to the completion.
   \begin{definition}
     The representation theoretic central charge on $K^0(\hat{\psalg}_\phi\mmod_0)$ is defined by 
     \[\mathcal{Z}_{\mathbf{b}'}([\mathcal{M}])=\frac{1}{p^{V}}\chi(\R\Gamma(
       \mathcal{M}))\]
   \end{definition}
    Since the parameters \eqref{eq:pthroot-functions} lie in the alcove $C_0$,  we
    have an equivalence 
    \[ \mathcal{G}\colon D^b({\psalg}_\phi\mmod_0)\cong
    D^b( {\Rring_{C_0}}\fdmod).\] This is effectively Lemma
    \ref{I-lem:A-H} but also follows from applying  
   Theorem \ref{th:Q-equiv-2}, together with equivalence of
   $\hat{\psalg}_\phi\mmod_0$ with coherent sheaves with the same
   support; the characteristic assumption of Lemma
   \ref{th:Q-equiv-2} is unnecessary by Theorem
   \ref{I-thm:asymptotic-derived}.  Using the
   equivalence, we can interpret
   $\mathcal{Z}_{\mathbf{b}'}$ as a function on $K^0(\Rring_{C_0}\fdmod)$.
   
\begin{lemma}\label{lem:RT-charge}
     The RT central charge $\mathcal{Z}_{\mathbf{b}'}$ is:
     \begin{enumerate}
     \item polynomial in the variables $\frac{\hat{b}_e'}{p}$ and
       $\frac{\hat{b}_{i,k}'}{p}$.
     \item for $(\hat\beta_*)\in C_0$, this function limits
       to $ {Z_{\hat\beta }}$ in the sense that if $p^{(m)}$ is a
       series of primes with $\lim_{m\to \infty} p^{(m)}=\infty$, and $\hat{b}_e^{(m)},c^{(m)}_{i,k}$ a series of
       parameters such that $\frac{\hat{b}_e^{(m)}}{p^{(m)}}$ and
       $\frac{\hat{b}_{i,k}^{(m)}}{p^{(m)}}$ converge to $\hat\beta_e$ and
       $\hat\beta_{i,k}$, then
       $\mathcal{Z}_{\mathbf{b}^{(m)}}$ converges to
       $Z_{\hat\beta}$ as $m\to \infty$.
     \end{enumerate}
   \end{lemma}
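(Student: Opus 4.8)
The plan is to unwind the definition of $\mathcal{Z}_{\mathbf{b}'}$ via the chain of equivalences connecting $\hat{\psalg}_\phi\mmod_0$ with $\Rring_{C_0}\fdmod$ and with coherent sheaves on $\tilde{\fM}^{\mathbf{\inter}}$, and to express the Euler characteristic $\chi(\R\Gamma(\mathcal{M}))$ in terms of weight-space dimensions for the action of the commutative subalgebra $\Cft^W$ (equivalently, the images of the idempotents $e(\boldsymbol{\alpha})$). For part (1), the key observation is that $\R\Gamma(\mathcal{M})$ for $\mathcal{M}$ supported over the cone point is computed by the Bezrukavnikov--Mirkovi\'c / Bezrukavnikov--Kaledin machinery as the hypercohomology of a sheaf of modules over a quantization in characteristic $p$, and by the localization theorem (available here since the parameters lie in $C_0$, cf.\ Theorem \ref{I-thm:asymptotic-derived}) this agrees, up to the normalization $p^{-V}$, with the dimension of the corresponding $\Rring_{C_0}$-module counted with the grading by maximal ideals of $S_h$. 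Concretely, I would show
\[
\mathcal{Z}_{\mathbf{b}'}([\mathcal{M}]) = \frac{1}{p^{V}}\sum_{\boldsymbol{\alpha}\in \frac1p\Z^n/\Z^n \cap \mathbb{L}} \dim e(\boldsymbol{\alpha})\,\mathcal{G}(\mathcal{M}),
\]
where the set of $\boldsymbol{\alpha}$ ranging over $\frac1p$-integral points of $\mathbb{L}$ is exactly the set of closed points of the spectrum of $\Cft^W$ that are integral in the sense discussed before Theorem \ref{thm:pStein-2}. The dependence of which idempotent $e(\boldsymbol{\alpha})$ a given $\frac1p$-integral point falls into is governed by the chamber structure of $\mathbb{L}$, which is cut out by the affine hyperplanes $\alpha_{j,m}-\alpha_{i,k}=\beta_e$ and $\alpha_{i,m}=\beta_{i,k}$; the position of these hyperplanes depends linearly (indeed affinely, with coefficients $\frac1p$) on $\hat{b}_e'$ and $\hat{b}_{i,k}'$, so the combinatorial type of the sum, and hence $\mathcal{Z}_{\mathbf{b}'}$, is piecewise polynomial. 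On $C_0$ it is genuinely polynomial because no hyperplane is crossed, giving (1).

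For part (2), the statement is essentially a Riemann-sum convergence argument. As $p^{(m)}\to\infty$ with $\hat{b}_*^{(m)}/p^{(m)}\to\hat\beta_*$, the finite set of $\frac1{p^{(m)}}$-integral points of $\mathbb{L}$ becomes equidistributed with respect to Haar measure on $\mathbb{L}\cong(\R/\Z)^n$, and the number of such points lying in the chamber $P_{\boldsymbol{\alpha}}$ associated to a fixed flavored sequence is asymptotically $(p^{(m)})^{n}\cdot\operatorname{Vol}(P_{\boldsymbol{\alpha}})$ — here $n=\sum_i v_i$, and I will need to check this exponent matches the normalization $p^V$ in the definition (this is where I must be careful: $V$ in the exponent should be $\dim\matterV$ or $\dim\gaugeG$ depending on the precise convention in Definition \ref{I-def:sfA}, and I expect after accounting for the Euler-characteristic normalization it reduces to $n = \dim\gaugeG/B$, i.e.\ the number of $z$-coordinates). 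Since $\mathcal{G}$ intertwines the weight-space decomposition with the idempotent decomposition and $\dim e(\boldsymbol{\alpha})M$ depends only on the equivalence class of $\boldsymbol{\alpha}$ (not the point itself), the sum $\frac{1}{(p^{(m)})^{n}}\sum_{\boldsymbol{\alpha}}\dim e(\boldsymbol{\alpha})M$ converges to $\sum_{[\boldsymbol{\alpha}]}\operatorname{Vol}(P_{\boldsymbol{\alpha}})\dim e(\boldsymbol{\alpha})M = \int_{\mathbb{L}}\dim e(\boldsymbol{\alpha})M = Z_{\hat\beta}(M)$, which is exactly \eqref{eq:central-charge}.

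The main obstacle I anticipate is making the first displayed identity — expressing the global-section Euler characteristic as the idempotent-weighted dimension sum — fully rigorous, since it requires knowing that $\R\Gamma$ of a module supported over the cone point is concentrated in a single degree (or at least that its Euler characteristic is computed by the naive weight-dimension count), and that the normalization constant is precisely $p^{V}$ rather than some polynomial in $p$ of the same degree with a nontrivial leading coefficient. This is where I would lean hardest on Theorem \ref{I-thm:asymptotic-derived} and the identification in Theorem \ref{th:Q-equiv-2}(3) of $\End(\cQ_{\mathbf{b}})$ with $\Rring$, together with the characteristic-$p$ localization results of \cite{BMR} quoted in part I: the point is that for $p\gg 0$ derived localization holds, $\R\Gamma$ is exact on the relevant category, and the rank of the localized bundle over each chamber is exactly $1$ times the nilHecke idempotent multiplicity, so that counting $\frac1p$-integral points in each chamber with multiplicity recovers the stalk dimensions. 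Once that bookkeeping is pinned down, both (1) and (2) follow formally; the second is then just the observation that a convergent sequence of sample averages over an equidistributing lattice converges to the integral, applied coordinatewise to the finitely many summands $\dim e(\boldsymbol{\alpha})M$.
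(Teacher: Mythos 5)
Your argument for part (2) is essentially the paper's: both approaches express $\mathcal{Z}_{\mathbf{b}'}$ as a normalized count of $\tfrac1p$-integral points falling into each chamber, weighted by the idempotent dimensions $\dim e(\boldsymbol{\alpha})M$, and then observe that $\tfrac{1}{p^V}\cdot(\text{lattice count})$ converges to the volume via the leading coefficient of the Ehrhart polynomial. The $\prod v_i!$ bookkeeping you flag is a real issue but a minor one; the substance of (2) is correct.

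Your argument for part (1), however, has a genuine gap. You want to deduce polynomiality of $\mathcal{Z}_{\mathbf{b}'}$ in $\hat{b}'_*/p$ from the lattice-count formula by observing that the chamber walls move affinely with the parameters. But the count of $\tfrac1p\Z^n$-points in a polytope whose walls depend on a parameter is not a polynomial function of that parameter: it is at best a quasi-polynomial and generically has discontinuous jumps whenever a lattice point crosses a wall. Staying in $C_0$ does not save you — varying $\hat{b}'$ within $C_0$ still slides the chamber walls relative to the fixed lattice $\tfrac1p\Z^n$, and points cross them. So the lattice-count formula gives the \emph{values} $\mathcal{Z}_{\mathbf{b}'}$ for each integral $\hat{b}'$, but does not by itself show that those values are interpolated by a polynomial.

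The paper's proof of (1) is a separate, geometric argument: reduce the $\hat{\psalg}_\phi$-module $\mathcal{M}$ modulo $h$ to obtain a coherent sheaf $\mathcal{N}$ on $\tilde{\fM}$ whose derived global sections have the same Euler characteristic (via the spectral sequence of the $h$-filtration), observe that shifting the parameter by $\nu$ replaces $\mathcal{N}$ with $\mathcal{N}\otimes\mathcal{O}(\nu)$, and then apply Grothendieck--Hirzebruch--Riemann--Roch: $\chi(\mathcal{N}\otimes\mathcal{O}(\nu))$ is polynomial in $\nu$, and after the $p^{-V}$ normalization this becomes a polynomial in $\hat{b}'_*/p$. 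The lattice-point interpretation via Theorem \ref{I-thm:which-polytope} is then used only for part (2). In short, the two ingredients play different roles: GHRR proves polynomiality, Ehrhart proves the limit; you have supplied the second but not the first.
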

 Note any point in $\R^\ell$ is a limit of the desired form, so $
 Z_{\hat\beta}$ is determined by this property.  
 \begin{proof}
  Given an $\hat{\psalg}_\phi$-module $\mathcal{M}$, we can
  consider its Rees module as a module over the usual microlocalization $W$ of $\Asph$ to $\tilde{\fM}$, as discussed in point (1) below Corollary \ref{I-cor:BFN-split}.  We can then reduce
     this sheaf modulo $h$, and obtain a coherent sheaf $\mathcal{N}$.  The important property of this sheaf is that  $\R\Gamma(\mathcal{N})$ has the same Euler
     characteristic of $\R\Gamma(\mathcal{M})$, since the former is
     the limit of a spectral sequence with $E_1$ page given by the
     latter.  
This means that we can also compute the RT central charge using $\R\Gamma(\mathcal{N})$.

     Of course, if we consider the tensor product
     $\mathcal{M}'={}_{\phi+\nu}\mathscr{T}_{\phi}\otimes
     \mathcal{M}$, then the resulting sheaf $\mathcal{N}'$ is simply
     $\mathcal{N}'\cong \mathcal{N}\otimes \mathcal{O}(\nu)$.  This
     shows that the polynomiality in $\hat{b}_e'/p$ and $\hat{b}_{i,k}'/p$, since
     changing these parameters is just tensoring $\mathcal{N}$ with a
     line bundle; the Euler characteristic of  $\R\Gamma(\mathcal{N}\otimes \mathcal{O}(\nu))$ depends polynomially on $\nu$ by Grothendieck-Hirzebruch-Riemann-Roch.

     By Theorem \ref{I-thm:which-polytope},
     the central charge $p^{V}\prod v_i! \cdot \mathcal{Z}_{\mathbf{b}'}$
     is equal to a sum over the set $\Lambda$ of the dimension of the
     image of the corresponding idempotent $eM$ times the number of
     $p$-torsion points in the corresponding polytope.

     On the other hand, the $ {Z}_{\hat\beta}$ is the sum over the
     same set, weighted by volume, and divided by $\prod v_i! $ to
     account for the fact that in our set of longitudes $a_1<
     \cdots<a_n$ is increasing.

     Thus, the agreement of these in the limit is just the fact that
     as $p\to \infty$, the number of $p$-torsion points in a
     polytope inside of
     $(\R/\Z)^n$ times $\frac{1}{p^n}$ limits to the volume of this
     set, since this is the leading coefficient of the Erhart polynomial.    
   \end{proof}

   \begin{remark}\label{rem:Z-match}
     Applying Grothendieck-Hirzebruch-Riemann-Roch, we can also compute this representation-theoretic central charge as an integral over the zero fiber of the resolution $\tilde{\fM}\to
     \fM$.  Up to normalization, this should match \cite[(5.18)]{aganagicKnotCategorification2020}.
   \end{remark}
\begin{proof}[Proof of Lem. \ref{lem:Z-match}]
  \refstepcounter{dummy}\label{proof-lem:Z-match}
  Consider
  two alcoves $C^+$ and $C^-$ with $C^+$ above $C^-$ across a single
  hyperplane, and the algebras $ {\Rring_{C^\pm}}$ for choices of parameters in these alcoves.

  This implies that for any sufficiently large prime $p$ there are parameters $\hat{b}_e,\hat{b}_{i,k}$ and a integer
  $m>0$ such that taking $\nu=m\chi$, 
the path  \eqref{eq:pthroot-functions} begins in $C^-$, ends in $C^+$
and passes through no other alcoves.  By Lemma
\ref{I-lem:localize-twist}, we have a commutative diagram:
\begin{equation}
 \tikz[->,thick,baseline]{
\matrix[row sep=12mm,column sep=35mm,ampersand replacement=\&]{
\node (d) {$D^b(\psalg_{\phi}\mmod_0)$}; \& \node (e)
{$D^b(\psalg_{\phi+\nu}\mmod_0)$}; \\
\node (a) {$D^b(\Rring_{C^-}\fdmod)$}; \& \node (b)
{$D^b(\Rring_{C^+}\fdmod)$}; \\
};
\draw (a) -- (b) node[below,midway]{$ {\rif_{\hat\beta',\hat\beta}}\otimes-$}; 
\draw (d) -- (a) node[left,midway]{$\mathcal{G}$} ; 
\draw (e) -- (b) node[right,midway]{$\mathcal{G}$}; 
\draw (d) -- (e) node[above,midway]{${}_{\phi+\nu}\mathscr{T}_{\phi}\otimes-$}; 
}\label{eq:wall-cross}
\end{equation}
Thus, we have that the RT central charge defined with respect to $C^-$ for an
$\Rring_{C^-}$-module $M$ is the same as that defined with
respect to $C^+$ for $\rif_{\hat\beta',\hat\beta}\Lotimes M$. 
Since the equivalence $\mathbb{B}_{C,C_0}$ is uniquely defined by the
relation that
\[\mathbb{B}_{C^-,C_0}(M)=\mathbb{B}_{C^+,C_0}(\rif_{\hat\beta',\hat\beta}\Lotimes M),\]
for any such pair, the RT central charge with respect to
$C_0$ for $\mathbb{B}_{C,C_0}(M)$ is the same as this charge for $M$
with respect to $C$.  This establishes the analog of Lemma
\ref{lem:Z-match} for the RT central charge, and thus taking limit as
in Lemma \ref{lem:RT-charge} gives the desired result.
\end{proof}
Before proving Theorem \ref{thm:real-variation}, we need to establish
some facts about the relationship of Gelfand-Kirillov dimension and Gelfand-Tsetlin modules, that is, modules locally finite under the ring $S_1^W$.  

Let us just remind the reader of a few facts: the ring ${S}_1$ is a polynomial ring generated by
variables $z_{i,k}$ for $k=1,\dots, v_k$.  Thus, a maximal ideal of
this ring is described by choosing scalars $a_{i,k}\in \C$, and a maximal ideal in $S_1^W$ corresponds to an orbit under $W$.  There is
a natural notion of equivalence of maximal ideals: consider the left-hand sides of (\ref{eq:unroll-matter1}--\ref{eq:unroll-matter2}) as linear functions; we will call these the {\bf matter functions}.
\begin{definition}
    We say that $\Ba,\Ba'$ are equivalent if $\Ba-\Ba'\in \prod_{i\in \quiver} \Z^{v_i}$ and there is {\em  no} matter function $f$ such that $f(\Ba)\in \Z_{>0} $ and $f(\Ba')\in \Z_{\leq 0}$.   
\end{definition}
Note, in particular,  (Weil) generically, none of these functions will have integer values, and if this is the case, $\Ba$ will be equivalent to any ideal that has integral difference from it.  There is another equivalence relation on maximal ideals which relates $\Ba,\Ba'$ if the weight spaces $\Wei_{\Ba}(M)$ and $\Wei_{\Ba'}(M)$ are naturally isomorphic as functors.  The equivalence classes of this relation are often called {\bf clans}.
By \cite[Lem. 4.15]{WebGT}, we have that:
\begin{lemma} If $\Ba,\Ba'$ define equivalent maximal ideals in the sense above, then they lie in the same clan.  
  \end{lemma}
This follows by the
same logic as \cite[Prop. 5.4]{WebGT}, though that result is only stated in a special case.  

Recall that the support $\supp(M)$ of a Gelfand-Tsetlin module $M$ is defined by 
 \[\supp(M)=\{\Ba\in \MaxSpec(S_1^W) \mid \Wei_{\Ba}(M)\neq 0\}\subset \MaxSpec(S_1^W)\}.\]
Let $M$ be a simple GT module over $\EuScript{A}_{\phi}^{\operatorname{sph}, \K}.  $
 By \cite[Lem. 4.11]{kamnitzerLieAlgebra2024}, the dimension of the Zariski closure $\dim \overline{\supp(M)}$ is the Gelfand-Kirillov dimension of $M$ itself.  

 Since $M$ is simple, if $\Ba$ is chosen so that $\Wei_{\Ba}(M)\neq 0$, all other ideals where $\Wei_{\Ba'}(M)\neq 0$ have integral difference with $\Ba'$.  Thus, since there are only finitely many linear functions we consider the sign of, this shows that there are finitely many clans in $\supp(M)$.  This shows that $\overline{\supp(M)}$ is the union of the Zariski closures of the clans in the support.  These are always formed by the integral points of a (possibly unbounded) polytope, and thus a finite union of affine subspaces by \cite[Prop. 7.1.2]{MVdB}.  This subspace is defined by the zero-set of the matter functions which take a finite number of values on the clan.  Just as with the polytope $P_{\hat\beta}$ in Section \ref{sec:real-variation}, you can think of this as sets of strands becoming ``locked'' to each other by the vanishing of matter functions.
 
There is another appearance of this statistic which is relevant for us.  Consider the image of $ \EuScript{A}_{\phi}^{\operatorname{sph}, \K}$ in $\End(M)$: 
\begin{lemma}
	Let $M$ be a Gelfand-Tsetlin module for $\EuScript{A}_{\phi}^{\operatorname{sph}, \K}$.  Then 
	\[\operatorname{GK-dim}(\EuScript{A}_{\phi}^{\operatorname{sph}, \K}/\operatorname{ann}(M))=2\operatorname{GK-dim}(M)=2\dim \overline{\supp(M)}.\]
\end{lemma}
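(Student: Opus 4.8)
The plan is to prove the two asserted equalities separately, reducing the second to known results and the first to a standard filtration argument. The equality $\operatorname{GK-dim}(M)=\dim\overline{\supp(M)}$ is exactly \cite[Lem. 4.11]{KWWYred} applied to the algebra $\EuScript{A}_{\phi}^{\operatorname{sph}, \K}$, which we recalled above, so nothing new is needed there; it remains to establish $\operatorname{GK-dim}(\EuScript{A}_{\phi}^{\operatorname{sph}, \K}/\operatorname{ann}(M))=2\operatorname{GK-dim}(M)$. First I would observe that the inequality $\operatorname{GK-dim}(\EuScript{A}_{\phi}^{\operatorname{sph}, \K}/\operatorname{ann}(M))\ge 2\operatorname{GK-dim}(M)$ is formal: a simple module $M$ over any algebra $A$ with $B=A/\operatorname{ann}(M)$ a prime (or at least a domain-like) quotient satisfies $\operatorname{GK-dim}(B)\ge 2\operatorname{GK-dim}_B(M)$ whenever $M$ is faithful over $B$, by the Bernstein-type inequality available for $\EuScript{A}_{\phi}^{\operatorname{sph}}$ as a filtered quantization of the symplectic variety $\Coulomb$ — the associated graded of $B$ is a Poisson-stable subvariety of $\Coulomb$, the associated graded of $M$ is a coherent sheaf on it, and the support of the module is coisotropic in a component of $\operatorname{supp}(\operatorname{gr}B)$.

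For the reverse inequality, I would use the weight-space decomposition. Filter $\EuScript{A}_{\phi}^{\operatorname{sph},\K}$ by the order filtration (equivalently, work with the Rees construction and $\operatorname{gr}$). A simple Gelfand-Tsetlin module $M$ is a sum of its finite-dimensional weight spaces $\Wei_{\Ba}(M)$ over $\Ba\in\supp(M)$; by the lemma recalled above, weight spaces at equivalent ideals are isomorphic, and by simplicity there are finitely many equivalence classes in $\supp(M)$, each an integral-points-of-a-polytope set whose Zariski closure is an affine subspace cut out by matter functions. The key point is that the image of $\EuScript{A}_{\phi}^{\operatorname{sph},\K}$ in $\End(M)$ is controlled by how the generators (the elements $\mathbbm{r}_D$ for paths $D$ and the polynomial dots) move weight vectors between these finitely many affine subspaces in $\MaxSpec(S_1^W)$, and by dots acting within a weight space. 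Counting, for large $n$, the dimension of the span of words of length $\le n$ in the generators: the "position" part of such an operator lives in the set of integral translates within $\overline{\supp(M)}$, contributing $\dim\overline{\supp(M)}$ to the growth rate, and the "dot" part contributes another $\dim\overline{\supp(M)}$ — because on a weight space $\Wei_{\Ba}(M)$ the polynomial subalgebra acting is (up to the locking relations) a polynomial ring in $\dim\overline{\supp(M)}$ variables, those being precisely the coordinates not pinned down by vanishing matter functions. Multiplying these two contributions of degree-$\dim\overline{\supp(M)}$ growth gives total polynomial growth of degree $2\dim\overline{\supp(M)}$, hence $\operatorname{GK-dim}(\EuScript{A}_{\phi}^{\operatorname{sph},\K}/\operatorname{ann}(M))\le 2\dim\overline{\supp(M)}$.

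I would make the dot-count precise by the following device: pick $\Ba$ with $\Wei_{\Ba}(M)\ne 0$, and note that the subalgebra of $\EuScript{A}_{\phi}^{\operatorname{sph},\K}/\operatorname{ann}(M)$ preserving $\Wei_{\Ba}(M)$ surjects onto a subalgebra of $\End(\Wei_{\Ba}(M))$ containing the image of $S_1^W$; since the matter functions which vanish identically on the equivalence class of $\Ba$ cut out exactly the affine span, the image of $S_1^W$ in $\End(\Wei_{\Ba}(M))$ has Krull dimension $\dim\overline{\supp(M)}$, and this gives the lower bound on the growth matching the upper bound from the coarse count — so the inequality is in fact an equality and the estimate above is sharp. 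The forward-direction "locked strands" picture from Section \ref{sec:real-variation} and the discussion of $P_{\hat\beta'}$ is the right bookkeeping language here: a corporeal strand locked to a red strand has fixed longitude and contributes no free dot-variable, while the free dot-variables are in bijection with the locking equivalence classes not containing a red, which is $\dim\overline{\supp(M)}$.

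\textbf{Main obstacle.} The hard part will be making the growth estimate on $\EuScript{A}_{\phi}^{\operatorname{sph},\K}/\operatorname{ann}(M)$ genuinely rigorous rather than heuristic — specifically, controlling the interaction between the "move to another affine subspace" operators and the "multiply by a dot" operators, since these do not commute, and ensuring that words of length $n$ do not secretly span more than the expected $O(n^{2\dim\overline{\supp(M)}})$-dimensional space. The cleanest route is probably to pass to $\operatorname{gr}$ with respect to a filtration for which $\operatorname{gr}\bigl(\EuScript{A}_{\phi}^{\operatorname{sph},\K}/\operatorname{ann}(M)\bigr)$ is visibly a quotient of a polynomial ring of the right dimension (a filtered–graded Bernstein inequality argument), citing the relevant dimension theory of quantizations of Coulomb branches, rather than attempting a bare-hands word count; I would look to whether \cite[Lem. 4.11]{KWWYred} or \cite{WebGT} already packages the needed Bernstein inequality, in which case the whole lemma is essentially a two-line corollary.
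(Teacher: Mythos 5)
Your overall skeleton (second equality by citing \cite[Lem.~4.11]{KWWYred}, first equality by a separate lower and upper bound) is the paper's plan, but both of the bound arguments differ from the paper's and the lower-bound step as you've written it has a genuine problem.

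For the lower bound, the Bernstein/coisotropic-support inequality points the \emph{wrong} way for you. Gabber's theorem plus Bernstein gives, for a faithful module $M$ over a quantization, $\operatorname{GK-dim}(M)\geq \tfrac12\operatorname{GK-dim}(B)$, i.e.\ $\operatorname{GK-dim}(B)\leq 2\operatorname{GK-dim}(M)$ — that is the upper bound, not the lower one. What the paper actually does is pass to the Morita-equivalent algebra $\EuScript{A}_{\phi}^{\K}$, embed the abelianized Coulomb branch $\EuScript{A}_{\phi}^{\operatorname{ab},\K}$ (the Coulomb branch for the maximal torus, with the same matter) inside it, and quote the result $\operatorname{GK-dim}(A_{\operatorname{ab}})=2\operatorname{GK-dim}(M')$ for that much tamer algebra from \cite{MVdB}. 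The inequality $\operatorname{GK-dim}(A)\geq \operatorname{GK-dim}(A_{\operatorname{ab}})$ then follows from the subalgebra inclusion $A_{\operatorname{ab}}\subset A$. Your coisotropy observation is not wrong as geometry, but it does not deliver this inequality.

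For the upper bound, your ``positions contribute $d$, dots contribute $d$'' heuristic misses the key structural point that makes the count close. As you observe, words in the generators can move you around $\overline{\supp(M')}$ by essentially $2d$ independent translation directions (not $d$), so the naive product would over-count. The paper instead filters by length in $\widehat{W}$, considering $A(\leq w)/A(<w)$ for each $w$, and uses the crucial fact that on this subquotient the left and right $S_h$-actions differ by $w$ (via \cite[(3.6c),(3.9d)]{WebSD}), so the left-support sits inside $\overline{\supp(M')}\cap w\cdot\overline{\supp(M')}$. The number of $w$ of length $\leq nq$ for which this intersection is $\geq k$-dimensional grows like $q^{2d-k}$, and the polynomial weight space each such $w$ contributes has growth $O(q^k)$; summing over $k$ gives $O(q^{2d})$. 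Your proposed repair of passing to an associated graded algebra and invoking a Bernstein-type bound there is not what the paper does and would face the same directionality issue as in the lower bound; the paper really does carry out the ``bare-hands word count,'' but with the $S_h$-bimodule structure on the $\widehat{W}$-filtration quotients doing the bookkeeping that your position/dot dichotomy cannot.
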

\begin{proof}
	Let $d=\dim \overline{\supp(M)}$.  
	
	We can consider the  module $M'$ corresponding to $M$ over the Morita equivalent
	algebra $\EuScript{A}_{\phi}^{\K}$.  By \cite[Prop. 2.5]{kamnitzerLieAlgebra2024}, the algebra $\EuScript{A}_{\phi}^{\K}$  contains as a subalgebra the Coulomb branch algebra $\EuScript{A}_{\phi}^{\operatorname{ab}, \K}$ associated to the maximal torus $T\subset G$ with the same matter representation.  Let $A=\EuScript{A}_{\phi}^{\K}/\operatorname{ann}(M')$ and $A_{\operatorname{ab}}=\EuScript{A}_{\phi}^{\operatorname{ab},\K}/\operatorname{ann}_{\EuScript{A}_{\phi}^{\operatorname{ab},\K}}(M')$. By \cite[Cor. 8.2.5]{MVdB}, we have that  \[\operatorname{GK-dim}(A_{\operatorname{ab}})=2\operatorname{GK-dim}(M')=2d.\] Note that the proof of \cite[Lem. 4.11]{kamnitzerLieAlgebra2024} shows that $d$ is the GK dimension of $M'$ as an $A_{\operatorname{ab}}$-module, in addition to being its GK dimension as an $A$-module.
	
	Furthermore we have an injective map $A_{\operatorname{ab}}\subset A$, so we have an inequality
	\[\operatorname{GK-dim}(A)\geq \operatorname{GK-dim}(A_{\operatorname{ab}}).\]
	
Thus, we need only prove the opposite inequality.  For $w\in \widehat{W}$, let $\EuScript{A}_{\phi}^{\K}(\leq w)$ be the span as a left $S_{h}$-module of the elements $\mathbbm{r}_{w'}$ for $w'\leq w$; this is the same as the span of these elements as a right module, and thus an  $S_{h}\operatorname{-}S_{h}$-subbimodule.  
Using the geometric model for this algebra (following the notation of \cite[Def. 2.2]{kamnitzerLieAlgebra2024}), this is the homology of  $R_{G,N}^B(\leq w)$, the preimage of the Schubert variety $\overline{IwI}/I$ in $R_{G,N}^B$. 

 Consider $\EuScript{A}_{\phi}^{\K}(\leq w)/\EuScript{A}_{\phi}^{\K}(< w)$.  This is spanned by the image of $\mathbbm{r}_w$, as a free module of rank 1 over $S_{h}$ as a left module or as a right module, with the two actions differing by the action of $w$ by \cite[(3.6c) \& (3.9d)]{websterKoszulDuality2019}.  For $n\geq 0$, let  $\EuScript{A}_{\phi}^{\K}(\leq n)$ be the span of $\EuScript{A}_{\phi}^{\K}(\leq w)$ for all $w$ of length $\ell(w)\leq n$ 
 
 Taking the corresponding quotient $A(\leq w)/A(<w)$, we thus obtain a $S_h\operatorname{-}S_h$-bimodule whose support as a left and a right module must be in $\overline{\supp(M')}$.  Since these actions differ by $w$, the support as a left $S_h$-module must lie in $\overline{\supp(M')}\cap w\cdot \overline{\supp(M')}$.  The affine Weyl group elements where this intersection is $\geq k$ dimensional have translation parts that lie in a $2d-k$ dimensional variety, since all the components of $\overline{\supp(M')}$ are affine subspaces which as $\leq d$ dimensional.  Now, consider the span  $\EuScript{A}_0$  of
\begin{enumerate}
	\item the degree 1 elements $\ft^*\subset S_h $ and 
	\item generators of $ \EuScript{A}_{\phi}^{\K}(\leq n)$ as a left $S_h$-module for a fixed $n$.
\end{enumerate} 
We will choose $n$ sufficiently large that this will be a set of generators of $\EuScript{A}_{\phi}^{\K}$ as an algebra.  
In $\EuScript{A}_0^k$ we will only obtain elements of $ \EuScript{A}_{\phi}^{\K}(\leq nk)$.  Furthermore, if we let $d(w)$ be the degree of the unique generator of $\EuScript{A}_{\phi}^{\K}(\leq w)/\EuScript{A}_{\phi}^{\K}(< w)$, then this depends at worst linearly on $\ell(w)$: $|d(w)|\leq C\ell(w)$ for some constant $C$.  Similarly, the degree of any length $q$ monomial is $\leq  C'q$ for some $C'$.  Thus, the length $m$ monomials can only hit the span of the polynomials of degree $\leq C'm + C\ell(w)$ in $\EuScript{A}_{\phi}^{\K}(\leq w)/\EuScript{A}_{\phi}^{\K}(< w)$.  

Let $A_0$ be the image of $\EuScript{A}_0$ in $A$.  
If $\dim \overline{\supp(M')}\cap w\cdot \overline{\supp(M')} \leq k$, then we must have that the dimension of the span of the elements of degree $\leq p$ in $S_h$ times the cyclic generator in $A(\leq w)/A(<w)$ must be bounded by $D'' p^k$ for some constant $D''$; since this intersection is a union of affine spaces, whose number of components is bounded by the number of pairs of components, we can choose one $C''$ which works for all $w$.  To simplify our estimate below, we assume choose $D$ is chosen so if $\ell(w)\leq nq$, the span of the elements of degree $\leq C'm + C\ell(w)$ in $S_h$ times the cyclic generator in $A(\leq w)/A(<w)$ has dimension $\leq Dq^k$; taking $D=\max (D''(C'+Cn)^{2m}, D'')$ will suffice.  Combining our estimates, we find:
\begin{enumerate}
	\item The number of $w\in \widehat{W}$ of length $\leq nq$ such that $\dim \overline{\supp(M')}\cap w\cdot \overline{\supp(M')}=k$ is bounded above by $D'q^{2d-k}$.
	\item The dimension of $(A(\leq w)\cap  A_0^q)/(A(< w) \cap A_0^q)$ is bounded above by $Dq^k$ if $\ell(w)\leq nq$ .  
\end{enumerate}
Thus, summing over $k=1,\dots, 2d$,  we have  $\dim A_0^q\leq 2dDD'q^{2d}$.  Thus, we have
\[\log_m(\dim A_0^m)\leq \log_m(2dDD'm^{2d})=2d +\frac{\log(2DD')}{\log m} \]
so taking the limit, we have $\operatorname{GK-dim}(A)\leq 2d$, 
completing the proof.   
\end{proof}

\begin{lemma}\label{lem:GT-annihilator}
  There is a faithful GT module over any  quotient $\EuScript{A}_{\phi}^{\operatorname{sph}, \C}/I$.
  \end{lemma}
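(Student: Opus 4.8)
The plan is to build the faithful module as a direct sum of universal weight modules, one for each point of the Gelfand--Tsetlin spectrum of the quotient, using that $\EuScript{A}_{\phi}^{\operatorname{sph}, \C}$ is controlled by its commutative GT subalgebra $S_1^W$ in the Galois/Harish--Chandra sense developed in \cite{WebGT} (with the reduction to the torus via \cite[Prop.~2.5]{KWWYred}). Write $A=\EuScript{A}_{\phi}^{\operatorname{sph}, \C}/I$, a Noetherian algebra, and let $C$ be the image of $S_1^W$ in $A$, so $C=S_1^W/(I\cap S_1^W)$. Since $I$ is two-sided, for each maximal ideal $\mathfrak{m}$ of $S_1^W$ the module $N_{\mathfrak{m}}:=A/A\mathfrak{m}$ is a well-defined left $A$-module killed by $I$, and it is nonzero exactly when $\mathfrak{m}$ lies over $\MaxSpec C$. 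By the Harish--Chandra property of the pair $(\EuScript{A}_{\phi}^{\operatorname{sph}, \C},S_1^W)$ --- equivalently, the Galois-order structure of $\EuScript{A}_{\phi}^{\operatorname{sph}, \C}$ over $S_1^W$ --- the universal weight module $\EuScript{A}_{\phi}^{\operatorname{sph}, \C}/\EuScript{A}_{\phi}^{\operatorname{sph}, \C}\mathfrak{m}$ is locally finite over $S_1^W$, hence so is its quotient $N_{\mathfrak{m}}$, which is therefore a GT module over $A$; if necessary one uses the same argument for the thickened modules $A/A\mathfrak{m}^{n}$, which are iterated extensions of the $N_{\mathfrak m}$ and so again GT.

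Now set $M=\bigoplus_{\mathfrak{m}\in\MaxSpec C}\bigoplus_{n\geq 1}A/A\mathfrak{m}^{n}$ (the plain sum of the $N_{\mathfrak m}$ suffices when $I\cap S_1^W$ is radical). A direct sum of $S_1^W$-locally finite modules is $S_1^W$-locally finite, so $M$ is a GT module over $A$, annihilated by $I$. It remains to check $\operatorname{ann}_A(M)=\bigcap_{\mathfrak m,n}\operatorname{ann}_A(A/A\mathfrak m^{n})=0$. Since the annihilator of $A/A\mathfrak{m}^{n}$ is the largest two-sided ideal contained in the left ideal $A\mathfrak{m}^{n}$, we get $\operatorname{ann}_A(M)\subseteq\bigcap_{\mathfrak m,n}A\mathfrak{m}^{n}$, and the content is the vanishing of this intersection. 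After inverting $S_1^W$ the algebra $\EuScript{A}_{\phi}^{\operatorname{sph}, \C}$ becomes a skew group algebra over the field $\operatorname{Frac}(S_1^W)$, which is simple; hence any nonzero two-sided $I$ fills it out there and $A$ is $S_1^W$-torsion, so $\MaxSpec C$ is a proper closed subvariety of $\MaxSpec S_1^W$. Localizing $A$ at a Weil-generic point $\mathfrak{m}_c$ of each irreducible component of $\MaxSpec C$ (generic in that $\mathfrak m_c$ avoids every matter and root hyperplane, so the localized algebra is as simple as possible and acts faithfully on its universal weight module), every nonzero $a\in A$ has nonzero image in the localization at some $\mathfrak{m}_c$ and is therefore nonzero on $A/A\mathfrak{m}_c^{n}$ for $n$ large. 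Thus already $\bigoplus_{c}A/A\mathfrak{m}_c^{n_c}$ --- a fortiori $M$ --- is faithful over $A$.

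The genuinely delicate point is this last step: because $\EuScript{A}_{\phi}^{\operatorname{sph}, \C}$ is \emph{not} module-finite over its GT subalgebra and $C$ may carry nilpotents, the naive bound $\operatorname{ann}_A(A/A\mathfrak m)\subseteq A\mathfrak m$ is too weak, and one has to use the finer Galois-order picture --- the embedding into a skew group algebra over a field together with the description of supports of GT modules (the ``locked'' matter functions) already exploited in the preceding Gelfand--Kirillov dimension lemma --- to conclude that finitely many thickened weight modules at generic points suffice. Everything else is bookkeeping on top of the GT-module theory of \cite{WebGT}, so I expect the actual write-up to be short. (As a cross-check, an alternative organization is an induction on $\operatorname{GK-dim}(A)$, whose base case $\operatorname{GK-dim}(A)\le 0$ is immediate since then $A$ is finite-dimensional, hence automatically locally finite over $S_1^W$ and faithful over itself; but making the inductive step clean seems to require the same support analysis, so I would run the localization argument directly.)
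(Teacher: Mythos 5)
Your proposal has the right raw materials---the modules $A/A\mathfrak m^n$ at Weil-generic $\mathfrak m$, and the observation that genericity should be the key---but you explicitly defer the decisive step to ``the finer Galois-order picture'' without carrying it out, and that step is exactly where the paper's argument lives. So this is a gap, not a different route.

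Concretely: you want to show that $\bigcap_{\mathfrak m,n}\operatorname{ann}_A(A/A\mathfrak m^n)=0$ by saying every nonzero $a\in A$ survives in the localization at some generic $\mathfrak m_c$, hence lies outside $A\mathfrak m_c^n$ for $n$ large. But because $A$ is not module-finite over the image $S'$ of $S_1^W$, there is no Krull intersection argument available, and a nonzero element $a$ could in principle lie in $A\mathfrak m^n$ for all $n$ and all generic $\mathfrak m$ on a given top-dimensional component while being detected only on a lower-dimensional stratum of $\Spec S'$. Your one-shot sum $M=\bigoplus_{\mathfrak m,n}A/A\mathfrak m^n$ does not handle this stratification, and the skew-group-algebra/torsion remark does not repair it (and fails outright when $I=0$, a case the lemma must cover). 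The paper fixes this with two ideas you are missing. First, a translation argument: for a generic maximal ideal $\mathfrak m$ on a top-dimensional component $V(\mathfrak p)$ of $\Spec S'$, one can partition the coordinates into blocks $\Omega_1,\dots,\Omega_d$ on which integer translations preserve the equivalence class of $\mathfrak m$; hence the GT module $A/A\mathfrak m^N$ has support Zariski-dense in all of $V(\mathfrak p)$, and for $N$ large the annihilator $I'$ satisfies that $I'\cap S'$ has trivial $\mathfrak p$-primary component. Second, an induction: pass to $S''=S'/(I'\cap S')$, which has either smaller dimension or fewer top-dimensional components, and repeat. This terminates in finitely many steps, producing $M=\bigoplus_q A/A\mathfrak m_q^{N_q}$ with zero annihilator. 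Without the translation lemma and the component-by-component descent, the claim that finitely many thickened weight modules at generic points suffice is unjustified.

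Your ``alternative organization by induction on $\operatorname{GK-dim}(A)$'' is in fact close to the paper's spirit, but the paper does not induct on GK dimension per se; it inducts on the pair (dimension of $\Spec S'$, number of top-dimensional components), and uses the GK-dimension lemma proved just before only to relate support dimension to two-sided-ideal GK dimension. If you want to salvage your draft, you should replace the localization hand-wave with the translation argument and the explicit descent.
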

\begin{proof}
  Let $S'$ denote the image of $S$ in $A=\EuScript{A}_{\phi}^{\operatorname{sph}, \C}/I$.  Let $\mathfrak{p}\subset S'$ be the ideal of a component of $\Spec S'$ of maximal dimension
  $d=\dim V(\mathfrak{p})=\dim\Spec S'$, and let $\mathfrak{m}$
  be a generic maximal ideal containing $\mathfrak{p}$.  This ideal is
  defined by scalars $a_{i,k}\in \C$ as above, and in particular,
  since it is generic, we minimize the number of pairs of  $(a_{i,k},a_{j,m})$ or
  $(\hat{b}_{i,j},a_{j,m})$ with integral difference.  We can thus have at
  most $V-d$ independent equations of the form $a_{i,k}=a_{j,m}+p$ or
  $\hat{b}_{i,k}=a_{j,m}+p$ which are satisfied.   That is, we can find $d$ disjoint subsets
  $\Omega_1,\dots, \Omega_{d}$
  of  $\Omega$ such that if $(i,k)$ in $\Omega_q$, then $a_{i,k}$ does not
  have integral difference with any $\hat{b}_{j,m}$ or with $a_{j,m}$
  outside of $\Omega_q$.  Thus, for any $(x_1,\dots, x_{d})\in
  \Z^{d}$, we can add $x_q$ to all elements of $  \Omega_q$ and
  obtain an equivalent maximal ideal, since we do not flip the sign of any of the matter functions that take on an integral value.  Let $X$ be the Zariski closure
  of these points, which is a $d$-dimensional affine subspace.  

Now consider the module $A/A
  \mathfrak{m}^N$, which is a GT module.  Note that this has non-zero
  multiplicity at all points in $\Spec S$ that lie in the equivalence
  class of $\mathfrak{m}$; in particular, at the points obtained by
  translation as above.   This shows that $X\subset \Spec S'$, and for
  dimension reasons, it must be a component of this variety. Since $X$
  contains $V(\mathfrak{m})$, this is only possible if $X=V(\mathfrak{p})$.

Thus, the annihilator of $A/A
  \mathfrak{m}^N$ is a 2-sided ideal $I'$  of $A/I$; for $N$
  sufficiently large, the ideal $I'\cap
  S'$  has trivial $\mathfrak{p}$-primary component.  Thus, $I'\cap
  S'$ has strictly fewer associated primes of dimension $d$ than $S'$.

  Now, apply the same logic to $I'$ as a left $A$-module: let $S''$ be
  the quotient of $S_1$ by the elements annihilating $A/A
  \mathfrak{m}^N$.
  Note that $S''$ is a quotient of $S'$, and as noted above, it either
  has dimension $<d$ or fewer components of dimension $d$. Choose
  $\mathfrak{m}_2$ generic in a component of $\Spec S''$, etc.

  We can inductively define $I_k$ to be the annihilator of
  $\bigoplus_{q=1}^{k-1}A/A
  \mathfrak{m}_q^{N_q}$, the ring $S_{k}$ be $S_1$ modulo the
  annihilator of this module, and $\mathfrak{m}_k$ as a generic
  maximal ideal in a component of $\Spec S_k$.  Since at each step,
  the number of components or dimension of $\Spec S_k$ drops,
  eventually, this process will terminate at a module $M$ which is
  faithful, since its annihilator is killed by all elements of $S'$.  
 \end{proof}

We can now apply this result to understand
the perverse structure on wall-crossing
functors (also called translation functors) given by Losev \cite[Prop. 7.3]{losevModularCategories2021};  we are interested in
this filtration in the characteristic $p$ case, but since it is
defined by starting with ideals in characteristic $0$, we will need to
consider that case as well.

Every wall is associated to a function \begin{equation}
\label{eq:circuit-hyperplane2}
     f({\widehat{\mathbf b}})=\hat{b}_{j,\ell}-\hat{b}_{i,k}+\sum_{p=1}^n \varepsilon_i \hat{b}_{e_i}
  \end{equation} 
for an unoriented path \[i=i_0\overset{e_1}\longrightarrow i_1\overset{e_2}\longrightarrow i_2\overset{e_3}\longrightarrow\cdots \overset{e_n}\longrightarrow i_n=j,\] exactly as in \eqref{eq:circuit-hyperplane}.     
  The perverse
structure for the functor of crossing this wall is based on a chain of ideals $\mathcal{I}^{0}\subset \cdots
\subset \mathcal{I}^{V}$, defined as follows for parameters such that 
$f({\widehat{\mathbf b}})=r$ for $r$ a generic integer: consider 
$ {\EuScript{A}_{\phi}^{\operatorname{sph}, \C}}$, and
let $\mathcal{I}^k$ be the minimal ideal such
that $\EuScript{A}_{\phi}^{\operatorname{sph}, \C}/\mathcal{I}^k$ has Gelfand-Kirillov
dimension $2(V-k)$.  Lemma \ref{lem:GT-annihilator} shows that this is the same as the common annihilator of all Gelfand-Tsetlin modules with dimension of support $\leq V-k$.  

Having constructed the ideal $\mathcal{I}^k$, we then extend it to arbitrary integral $ {\widehat{\mathbf b}}$
by continuity.  Note this means that at these less generic values, there may be Gelfand-Tsetlin modules whose GK dimension is $\leq V-k$ that are not annihilated by $\mathcal{I}^k$; however, these don't deform to our more generic parameters.  

We've now defined $\mathcal{I}^k$ for arbitrary parameters over $\C$.  This defines an ideal in $\EuScript{A}_{\phi}^\Z$
by intersection, and then over any other base field by base change.

\begin{proof}[Proof of Thm. \ref{thm:real-variation}]
    \refstepcounter{dummy}\label{proof-thm:real-variation}
  By Lemma \ref{lem:Z-match}, it's enough to check these properties for the chamber $C_0$.  In this case $ {Z_{\hat\beta}}(M)$ is the integral of a positive function, and thus is positive; this shows \hyperref[RV1]{(1)}.

Now we turn to \hyperref[RV2a]{(2a)}.  The standard $t$-structure on $D^b( {\Rring_{C_0}}\fdmod)$  is compatible with  the
filtration by $\mathcal{D}_n$ by definition, so this is clear.

Finally, consider \hyperref[RV2b]{(2b)}; this requires us to show the equivalence
$\mathbb{B}_{C,C_0}$ for $C$ below $C_0$ is perverse with respect to
this filtration.  By Proposition \ref{prop:KLRW-bimodule}, it is
equivalent to the same question as whether the functor
${}_{\phi+\nu}\mathscr{T}_{\phi}\Lotimes-$ is perverse.  This is
proven in \cite[Prop. 7.3]{losevModularCategories2021}, and so we need only
prove that our filtration of the category agrees with Losev's.

We have already described Losev's filtration above.  It is based on
the ideals $\mathcal{I}^q$, which by Lemma \ref{lem:GT-annihilator}
is the annihilator of all GT modules with GK dimension $\leq V-q$ when we have chosen $\hat{b}$ so that $f({\widehat{\mathbf b}})=r$ for $r$ a generic integer, and that $\hat{b}$ is chosen to be generic with respect to this constraint.  Given a weight $\Ba$ in this support, let 
\[ F_{\Ba}=\{-f | f\text{ a matter function, } f(\Ba)\in \Z_{\geq 0}\}\cup \{f | f\text{ a matter function, } f(\Ba)\in \Z_{< 0}\}.\]
By \cite[Prop. 7.1.2]{MVdB}, the support of $M$ will have Zariski closure of dimension $\leq V-q$ if only if for every $\Ba$ in this support, the positive span of $F_{\Ba}$ contains a $q$-dimensional subspace $H$.  

This also means that the elements of $H$ vanish on the 
polytope corresponding to this clan at a generic point where $f=0$.  This in turn implies that this polytope lies in the $V-q$ dimensional subspace $H^{\perp}$ It follows that a GT module $M$ at this generic point is killed by $\mathcal{I}^q$ if and only if its support entirely lies in polytopes that become $\leq V-q$ dimensional at a generic point where $f=0$, i.e. that satisfy $d(\Bi, \Ba)\geq q$.  By continuity, this is true not just at ${\widehat{\mathbf b}}$ where $f({\widehat{\mathbf b}})=r\in \Z$, but at all points, since this is a Zariski dense locus.   By Lemma \ref{lem:vanishing-order}, this is the same as saying that its central charge vanishes to order $q$ as we pass to a generic point of the wall defined by $f$.

Thus, we have that the module $M$ is in $\mathcal{C}_q$ with respect to a given wall if and only $Z_{\hat \beta}(M)$ vanishes to order $q$ as $\hat \beta$ approaches a generic point of the wall.
\end{proof}

\begin{proof}[Proof of Thm. \ref{thm:braid-action}]
   \refstepcounter{dummy} \label{proof-thm:braid-action}
  This follows immediately from Proposition \ref{I-prop:pi-action}: the
  space $\mathring{T}_{1,F}$ is the torus minus the toric braid
  arrangement, so we can write the affine braid groupoid inside the
  fundamental groupoid of this space as usual, and the action of the
  functors $\Phi^{\phi',\phi''}_w$ match the bimodules
  $\mathbb{B}_{\tau}$ by Proposition \ref{prop:KLRW-bimodule}.
\end{proof}
\begin{proof}[Proof of Lem. \ref{lem:eat-cup}]
   \refstepcounter{dummy} \label{proof-lem:eat-cup}
Recall that in \cite[\S
  7.3]{Webmerged}, we defined a $\tilde{T}^{\Bj'}\operatorname{-}\tilde{T}^{\Bj}$
  bimodule $\mathfrak{k}^{\Bj'}_{\Bj}$ where the red strands trace out
  a cap with an element of a particular simple right module $L_0$ over $T^{\varpi_j,\varpi_{j^*}}$.
  \begin{equation*}
  \begin{tikzpicture}[very thick,xscale=1.4,yscale=-1.4]

\node (v) at (0,-1) [fill=white!80!gray,draw=white!80!gray, thick,rectangle,inner xsep=10pt,inner ysep=6pt, outer sep=-2pt] {$v$};

\begin{pgfonlayer}{background} \begin{scope}[very thick]
\draw[wei] (-4.5,-1) -- +(0,2) node[at start,above]{$j_1$} node[at end,below]{$j_1$};
    \draw (-3.75,-1) -- +(0,2) node[at start,above]{$i$} node[at end,below]{$i$};
    \node at (-3,0){$\cdots$};
    \draw[wei] (v.170) to[in=280,out=170] node[at end, below]{$j$} (-2.5,1);
\draw[wei] (v.10) to[out=10,in=260] node[at end, below]{$j^*$} (2.5,1) ;
\draw (v.55) to[in=270,out=55] (2.1,1);
    \draw (v.65) to[in=270,out=65] (1.7,1);
    \draw (v.75) to[in=270,out=75] (1.3,1) ;
    \draw (v.125) to[in=270,out=125](-2.1,1) ;
    \draw (v.115) to[in=270,out=115](-1.7,1);
    \draw (v.105) to[in=270,out=105] (-1.3,1);
    \draw[ultra thick,loosely dotted,-] (-.35,.5) -- (.35,.5);
  \end{scope}
\end{pgfonlayer}
  \end{tikzpicture}
\end{equation*}
 This bimodule is strongly equivariant, and we let
 $\mathring{\mathfrak{k}}$ be the corresponding cylindrical bimodule
 as in Lemma \ref{lem:affinize-commute}.  We let
 $\mathring{\mathfrak{k}}'$ be the reflection of this bimodule through
 a horizontal line.  

These bimodules will induce the Morita equivalence we desire. By the basis theorem \cite[Lemma 7.17]{Webmerged}, we see that this module is killed by $ {e_z}\in \Rring^{\Bj}$ for $z$ the number of strands inside the cup above. Thus, the
  left $\Rring^{\Bj}$-module structure on the cup bimodule
  factors through the quotient $R^{(z)}=\Rring^{\Bj}/\Rring^{\Bj}e_z\Rring^{\Bj}$. We now show that this bimodule induces a Morita equivalence between $\Rring^{\Bj}$ and $R^{(z)}$. 

  {\bf Construction of a Morita context}: The
  difficult step in this is to show that it forms a Morita
  context with the bimodule $\mathfrak{\mathring{k}}'$ given by reflecting
  these diagrams in the horizontal axis (and keeping all relations the same).  Since we've reflected, at the center of the cup, we must have elements of the left module $\dot{L}_0$, using the notation of \cite[\S 5.2]{Webmerged}.

  That is, we need to define maps
  \begin{equation}\label{eq:morita-context}
  	\alpha\colon \mathfrak{\mathring{k}}'\otimes_{\Rring^{\Bj} }\mathfrak{\mathring{k}}\to
    \Rring^{\Bj'}\qquad \omega\colon \mathfrak{\mathring{k}}\otimes_{\Rring^{\Bj'} }\mathfrak{\mathring{k}}'\to
    \Rring^{\Bj}
  \end{equation} that make the matrix space $
  \Big[\begin{smallmatrix}
    \Rring^{\Bj}&\mathfrak{\mathring{k}}\\ \mathfrak{\mathring{k}}'& \Rring^{\Bj'}
  \end{smallmatrix}\!\Big]
$
into an associative algebra.

By \cite[Prop. 5.11]{Webmerged}, we can choose an isomorphism of $\dot{L}_0$ to the dual of $L_0$, that is, a non-degenerate pairing $\alpha
\colon L_0\otimes
L_0\to \K$ satisfying
$\alpha(va,v')=\alpha(v,v'\dot{a})$.  We can extend this to define  $\alpha(d,d')$ as in \eqref{eq:morita-context} by 
stacking the diagrams $d\in \mathfrak{\mathring{k}}$ on top of $d'\in \mathfrak{\mathring{k}}'$, which creates a closed red circle, with strands springing from the top and bottom.  We can
simplify so that none of these strands cross the red circle using the relations \cite[(7.24)]{Webmerged}.  Thus, the strands from the top of the circle all attach to the bottom of the diagram, tracing out a KLR diagram in between.  By acting at the top or bottom of the circle, we can reduce to the case where
a circle has an element $v$ at the top, an element $v'$ at the bottom and only
straight strands between; in this case, we define $\alpha(d,d')$ to be the diagram with the circle deleted multiplied by the scalar $\alpha(v,v')$.

Now we turn to defining $\omega$.  First, consider the
planar KLRW algebra $T$ with $j,j*$ as labels, $z$ black strands of any
label, and consider the quotient $T^{(z)}=T/T {e_z}T$ by every idempotent with
black strands outside the two red.  The module $L_0$ is the unique right simple
module which is highest weight for the categorical
action by \cite[Prop. 7.5]{Webmerged}. This also shows that it is the unique simple that factors through the quotient $T^{(z)}$.   Furthermore,  \cite[Lemma 7.3]{Webmerged} shows that $L_0$ is
projective over this quotient:  The standard module $S$
appearing in that result is the quotient by all idempotents where a black
strand is left of all reds, and the kernel of
the map $S\to L_0$ is generated by the image of $e_z$.  Thus, $T^{(z)}$
is a matrix algebra equipped with an isomorphism
\[T/Te_zT\cong L_0\otimes_{\K}\dot{L}_0\cong \End_{\K}(L_0).  \] Thus, we
can define the map $\omega$ by using the relations \cite[\S
7.5-6]{Webmerged} to remove any strands from between the cup and cap,
and then replacing the pair of elements $v$ at the bottom of the cup
and $v'$ at the top of the cap with the linear map
$w\mapsto \alpha(v',w)v$, thought of as an element $T/Te_zT$.
\excise{

  By \cite[Lemma 7.16]{Webmerged}, we have that for some scalar
  $\epsilon$, we have an equality modulo  \begin{equation}
   \begin{tikzpicture}[very thick,yscale=-1,baseline]
\node at (-3.4,0) {$\epsilon$};
\begin{pgfonlayer}{background} \begin{scope}[very thick]
    \draw[wei] (-2.5,-1.5) to node[at end, below]{$\mu$} (-2.5,1.5);
\draw[wei] (2.5,-1.5) to node[at end, below]{$\mu^*$}(2.5,1.5) ;
    \draw (3,-1.5) to node[at start,above]{$j$} node[at end,below]{$j$} (3,1.5)  ;
    \draw (2.1,-1.5) to node[below,at end]{}(2.1,1.5);
    \draw (1.7,-1.5) to node[below,at end]{} (1.7,1.5);
    \draw (1.3,-1.5) to node[below,at end]{} (1.3,1.5);
    \draw (-2.1,-1.5) to node[below,at end]{} (-2.1,1.5);
    \draw (-1.7,-1.5) to node[below,at end]{} (-1.7,1.5);
    \draw (-1.3,-1.5) to node[below,at end]{}  (-1.3,1.5);
    \draw[ultra thick,loosely dotted,-] (-.35,0) -- (.35,0);
\end{scope}
\end{pgfonlayer}
\node at (3.5,0){=};
  \end{tikzpicture}
    \begin{tikzpicture}[very thick,yscale=-1,baseline]
\node (v) at (0,-1.3) [fill=white!80!gray,draw=white!80!gray, thick,rectangle,inner xsep=10pt,inner ysep=6pt, outer sep=-2pt] {$v$};
\begin{pgfonlayer}{background} \begin{scope}[very thick]
    \draw (3,-1.5) to[in=-20,out=150]   node[at start,above]{$j$}
    (-2.8,-.2) to[out=160,in=-90] (-3,0) to[out=90,in=-160] (-2.8,.2)  to [in=-150,out=20] node[at end,below]{$j$} (3,1.5)  ;
    \draw[wei] (-2.5,-1.5) to node[at end, below]{$\mu$} (-2.5,1.5);
\draw[wei] (2.5,-1.5) to node[at end, below]{$\mu^*$}(2.5,1.5) ;
    \draw (3,-1.5) to node[at start,above]{$j$} node[at end,below]{$j$} (3,1.5)  ;
    \draw (2.1,-1.5) to node[below,at end]{}(2.1,1.5);
    \draw (1.7,-1.5) to node[below,at end]{} (1.7,1.5);
    \draw (1.3,-1.5) to node[below,at end]{} (1.3,1.5);
    \draw (-2.1,-1.5) to node[below,at end]{} (-2.1,1.5);
    \draw (-1.7,-1.5) to node[below,at end]{} (-1.7,1.5);
    \draw (-1.3,-1.5) to node[below,at end]{}  (-1.3,1.5);
    \draw[ultra thick,loosely dotted,-] (-.35,0) -- (.35,0);
\end{scope}
\end{pgfonlayer}
\end{tikzpicture}
\end{equation}}
This shows that we have a Morita context. 

{\bf The context is an equivalence}:  A Morita context is a Morita equivalence if and only if both multiplication maps are surjective by \cite[II.3.4]{BassK}.  Since the image is a 2-sided ideal, it's enough to show that 1 is in the image in both cases. This is easy to see from the non-degeneracy of
the pairing $\alpha$.  
\end{proof}
\begin{proof}[Proof of Thm. \ref{thm:tangle-action}]
   \refstepcounter{dummy} \label{proof-thm:tangle-action}
   To prove that we have an action of affine ribbon tangles, we use the
  annular version of the formalism of \cite[Ch. 3]{ohtsukiQuantumInvariants2002}.
  
 {\bf Functors for sliced affine tangles}:  We define a
  sliced affine tangle diagram to be such a diagram where we have cut the tangle at a finite list of heights $h_1,\dots, h_k$, such that between $h_{q-1}$ and $h_q$ there is one crossing, or one minimum or one maximum of the tangle.  The assignment in the theorem gives a
  well-defined functor to the tangle between heights $h_{q-1}$ and $h_q$.  Thus, we can define a functor for each sliced affine tangle by composing these.
  
 {\bf Independence of slicing}: We need
  only show that any two ways of doing the slicing will result in the
  same functor.  By \cite[Th. 3.1]{ohtsukiQuantumInvariants2002}, this requires showing the ribbon
  Turaev moves: the ribbon Reidemeister moves, the $S$-move, the
  pitchfork move, and the commutation of distant tangles.  Each one
  of these is proven in the planar case in \cite[Th. 8.6]{Webmerged},
  and thus follows in the annular case by Lemma
  \ref{lem:affinize-commute}.  
\end{proof}
\begin{proof}[Proof of Thm. \ref{th:knot-invariant}]
   \refstepcounter{dummy} \label{proof-th:knot-invariant}
  Note that when we have no red or black strands, the algebra
  $\Rring^{\emptyset}_{\mathbf{0}}$ is just $\C$; in this case, the planar and
  cylindrical KLRW algebras coincide.    
  If a link arises from an inclusion of the 3-ball, then it can be
  presented as a sliced tangle in $\R^2\times [0,1]$, and then hit
  with the map compactifying one of the $\R$-directions to $S^1$.
Its value $\Phi(K)$ on a link is the cylindricalization of the functor
$\Phi_{\mathsf{L}}(K)$, since this is true for each individual slice,
and   Lemma \ref{lem:affinize-commute} implies the compatibility of
affinization with composition.  Since the planar and cylindrical KLRW
algebras are the same in the source and target categories, the functor
is tensor product with the same vector spaces in either case, and the
theories coincide.  The correspondence with other knot homologies
follows from \cite[Th. A]{mackaayCategorifiedSkew2018}.
\end{proof}

 \appendix

\section{Slodowy slices in type A}
\label{sec:slodowy-slices-type}

One particularly interesting special case of the constructions we have discussed is the {\bf S3 varieties for $\mathfrak{sl}_n$}.  These are resolutions of the intersections of Slodowy slices and nilpotent orbits in $\mathfrak{sl}_n$.  Each of these varieties can be written as a Nakajima quiver variety and as an affine Grassmannian slice (both in type $A$).  That is, they have a realization both as Higgs branches and as Coulomb branches of quiver gauge theories.  

Let us remind the reader of the combinatorics underlying this realization.  Given a partition $\lambda=(\la_1\geq \la_2\geq \cdots)$ of $N$ with $n$ parts, we can consider $\lambda$ as a (co)weight of $\mathfrak{sl}_n$, in the usual way.  Given $\mu$, another partition of $N$, 
we let \[ {w_i}=\lambda_i-\lambda_{i+1}\qquad  {v_i}=\sum_{k=1}^i \la_k-\mu_k.\]
The significance of these is more easily seen from the familiar formulae \[\la=\sum_{i=1}^n {w_i}\omega_i\qquad \mu=\la-\sum_{i=1}^n {v_i}\al_i.\]
This can be visualized as follows: if we start with a grid with $n$ columns and $e\geq \lambda_1$ rows where the number of dots in the $i$th column is given by $\la_i$.  The statistic $w_i$ is the number of rows with $i$ dots.  If $\mu\leq\la$ in the dominance order, then we can move the dots right (keeping them in the same row) so that there are $\mu_i$ in the $i$th column; in this case, we obtain a diagram of dots where we obtain the diagram for $\la$ (transposed from French notation) by pushing left, and that for $\mu$ by pushing down.   For example, for $\la=(3,3,1)$ and $\mu=(2,2,1,1,1)$, one example of such a diagram is:
\[   \begin{tikzpicture}[very thick,scale=.5]
      \draw[thick] (-2.5,1.5) -- (-2.5,-1.5);
      \draw[thick] (2.5,1.5) -- (2.5,-1.5);
      \draw[thick] (-2.5,1.5) -- (2.5,1.5);
      \draw[thick] (-2.5,-1.5) -- (2.5,-1.5);
      \draw[thick] (-1.5,1.5) -- (-1.5,-1.5);
      \draw[thick] (1.5,1.5) -- (1.5,-1.5);
      \draw[thick] (-0.5,1.5) -- (-0.5,-1.5);
      \draw[thick] (0.5,1.5) -- (0.5,-1.5);
      \draw[thick] (-2.5,0.5) -- (2.5,0.5);
      \draw[thick] (-2.5,-0.5) -- (2.5,-0.5);
      \node[circle, draw, inner sep=3pt] at (0,0){}; 
      \node[circle, draw, inner sep=3pt] at (0,1) {}; 
      \node[circle, draw, inner sep=3pt] at (1,0) {}; 
      \node[circle, draw, inner sep=3pt] at (1,1) {}; 
      \node[circle, draw, inner sep=3pt,fill=black] at (-1,-1) {}; 
      \node[circle, draw, inner sep=3pt,fill=black] at (-1,0) {}; 
      \node[circle, draw, inner sep=3pt,fill=black] at (-1,1) {};
      \node[circle, draw, inner sep=3pt] at (2,0) {}; 
      \node[circle, draw, inner sep=3pt] at (2,1) {}; 
      \node[circle, draw, inner sep=3pt,fill=black] at (-2,0) {}; 
      \node[circle, draw, inner sep=3pt,fill=black] at (-2,1) {}; 
      \node[circle, draw, inner sep=3pt] at (2,-1) {}; 
      \node[circle, draw, inner sep=3pt,fill=black] at (-2,-1) {}; 
      \node[circle, draw, inner sep=3pt,fill=black] at (0,-1) {};
      \node[circle, draw, inner sep=3pt] at (1,-1) {}; 
    \end{tikzpicture}
    \qquad 
    \begin{tikzpicture}[very thick,scale=.5]
      \draw[thick] (-2.5,1.5) -- (-2.5,-1.5);
      \draw[thick] (2.5,1.5) -- (2.5,-1.5);
      \draw[thick] (-2.5,1.5) -- (2.5,1.5);
      \draw[thick] (-2.5,-1.5) -- (2.5,-1.5);
      \draw[thick] (-1.5,1.5) -- (-1.5,-1.5);
      \draw[thick] (1.5,1.5) -- (1.5,-1.5);
      \draw[thick] (-0.5,1.5) -- (-0.5,-1.5);
      \draw[thick] (0.5,1.5) -- (0.5,-1.5);
      \draw[thick] (-2.5,0.5) -- (2.5,0.5);
      \draw[thick] (-2.5,-0.5) -- (2.5,-0.5);
      \node[circle, draw, inner sep=3pt,fill=black] at (0,0){}; 
      \node[circle, draw, inner sep=3pt] at (0,1) {}; 
      \node[circle, draw, inner sep=3pt] at (1,0) {}; 
      \node[circle, draw, inner sep=3pt] at (1,1) {}; 
      \node[circle, draw, inner sep=3pt] at (-1,-1) {}; 
      \node[circle, draw, inner sep=3pt,fill=black] at (-1,0) {}; 
      \node[circle, draw, inner sep=3pt,fill=black] at (-1,1) {};
      \node[circle, draw, inner sep=3pt] at (2,0) {}; 
      \node[circle, draw, inner sep=3pt] at (2,1) {}; 
      \node[circle, draw, inner sep=3pt] at (-2,0) {}; 
      \node[circle, draw, inner sep=3pt,fill=black] at (-2,1) {}; 
      \node[circle, draw, inner sep=3pt,fill=black] at (2,-1) {}; 
      \node[circle, draw, inner sep=3pt,fill=black] at (-2,-1) {}; 
      \node[circle, draw, inner sep=3pt] at (0,-1) {};
      \node[circle, draw, inner sep=3pt,fill=black] at (1,-1) {}; 
    \end{tikzpicture}\qquad\begin{tikzpicture}[very thick,scale=.5]
      \draw[thick] (-2.5,1.5) -- (-2.5,-1.5);
      \draw[thick] (2.5,1.5) -- (2.5,-1.5);
      \draw[thick] (-2.5,1.5) -- (2.5,1.5);
      \draw[thick] (-2.5,-1.5) -- (2.5,-1.5);
      \draw[thick] (-1.5,1.5) -- (-1.5,-1.5);
      \draw[thick] (1.5,1.5) -- (1.5,-1.5);
      \draw[thick] (-0.5,1.5) -- (-0.5,-1.5);
      \draw[thick] (0.5,1.5) -- (0.5,-1.5);
      \draw[thick] (-2.5,0.5) -- (2.5,0.5);
      \draw[thick] (-2.5,-0.5) -- (2.5,-0.5);
      \node[circle, draw, inner sep=3pt] at (0,0){}; 
      \node[circle, draw, inner sep=3pt] at (0,1) {}; 
      \node[circle, draw, inner sep=3pt] at (1,0) {}; 
      \node[circle, draw, inner sep=3pt] at (1,1) {}; 
      \node[circle, draw, inner sep=3pt,fill=black] at (-1,-1) {}; 
      \node[circle, draw, inner sep=3pt,fill=black] at (-1,0) {}; 
      \node[circle, draw, inner sep=3pt] at (-1,1) {};
      \node[circle, draw, inner sep=3pt] at (2,0) {}; 
      \node[circle, draw, inner sep=3pt] at (2,1) {}; 
      \node[circle, draw, inner sep=3pt,fill=black] at (-2,0) {}; 
      \node[circle, draw, inner sep=3pt] at (-2,1) {}; 
      \node[circle, draw, inner sep=3pt,fill=black] at (2,-1) {}; 
      \node[circle, draw, inner sep=3pt,fill=black] at (-2,-1) {}; 
      \node[circle, draw, inner sep=3pt,fill=black] at (0,-1) {};
      \node[circle, draw, inner sep=3pt,fill=black] at (1,-1) {}; 
    \end{tikzpicture}\]
The statistic $v_i$ is thus the number of dots pushed from the $i$th column to the $i+1$st for each $i$.     

Consider the S3 variety $\mathfrak{X}^\la_\mu$ given by the slice to nilpotent matrices of Jordan type $\mu$ in the closure of those of Jordan type $\la$.   Each such diagram of dots for $\lambda,\mu$ as above corresponds to a torus-fixed point in the resolved S3 variety. Note that all symplectic resolutions of the closure of the nilpotent orbit with generic Jordan type $\la$ are of the form $T^*G/P\to \bar{\mathscr{O}}$ for $P$ the parabolic of block upper-triangular matrices with block sizes given by the transpose partition to $\la$.  Different orders of block sizes can potentially give non-isomorphic resolutions; one can check $w_i$ as defined above is the number of blocks of size $i$ along the diagonal.
All resolutions of S3 varieties are given by the fiber product of $T^*G/P$ with a slice to the orbit with Jordan type $\mu$.
\begin{theorem}[\mbox{\cite[Th. 1.1.1]{MV22}, \cite[Th. 5.6]{BFNline}}]
  The S3 variety $\mathfrak{X}^\la_\mu$ is isomorphic to the affine Grassmannian slice to $\operatorname{Gr}^{\mu}$ inside $\operatorname{Gr}^{\bar \la}$, that is, to the Coulomb branch $
  \Coulomb$ of the quiver gauge theory with dimension vectors $\Bw$ and $\Bv$.   The resolution $\tM$ attached to a cocharacter $\xi \colon \C^*\to G_{\Bw}$ is isomorphic to the convolution resolution of $\operatorname{Gr}^{\bar \la}_\mu$ with order on fundamental coweights induced by the cocharacter $\xi$, and to the resolution $  T^*G/P\times_{\mathfrak{sl}_n^*}\mathfrak{X}^\la_\mu$ where $\xi$ determines the order on blocks in $P$.
\end{theorem}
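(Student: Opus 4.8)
The plan is to deduce the statement by composing three already-established identifications and then making explicit the combinatorial dictionary attached to the cocharacter $\xi$; the substance of each step lies in the cited references, so this is a proof of assembly and bookkeeping rather than of new geometry.

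\emph{First}, I would invoke the Mirkovi\'c--Vybornov isomorphism in its resolved form \cite[Th. 1.1.1]{MV22} (refining the unresolved statement going back to the original work of Mirkovi\'c and Vybornov), which identifies the S3 variety $\fX^\la_\mu$ with the affine Grassmannian slice $\overline{\operatorname{Gr}}^{\mu}\subset\overline{\operatorname{Gr}}^{\bar\la}$ in type $A$. Its proof presents both sides through a common ``linear algebra'' model built from chains of maps among the spaces $\C^{v_i}$ with framings $\C^{w_i}$; the only thing to verify is that the dimension vectors $\Bw,\Bv$ defined in the excerpt via $w_i=\la_i-\la_{i+1}$, $v_i=\sum_{k\le i}(\la_k-\mu_k)$ --- equivalently $\la=\sum w_i\om_i$, $\mu=\la-\sum v_i\al_i$ --- are precisely those produced by this model, which is recorded in \cite{MV22}. \emph{Second}, composing with \cite[Th. 3.10]{BFNslices}, already used in the body of this paper, which identifies $\overline{\operatorname{Gr}}^{\bar\la}_\mu$ with the Coulomb branch $\Coulomb$ of the quiver gauge theory with gauge group $\prod GL(\C^{v_i})$ and matter recorded by $\Bw$, yields the first assertion $\fX^\la_\mu\cong\Coulomb$.

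\emph{Third}, for the resolution I would use \cite[Th. 5.6]{BFNline}: the BFN resolution $\tM=\tilde\fM^{\mathbf{d}}$ attached to $\xi\colon\C^*\to G_{\Bw}=\prod GL(\C^{w_i})$ is the convolution resolution of $\overline{\operatorname{Gr}}^{\bar\la}_\mu$, with $\xi$ prescribing the order of the convolution product. The dictionary I would spell out is this: up to the Weyl group of $G_{\Bw}$, the cocharacter $\xi$ is a choice of integer weight on each basis vector of each $\C^{w_i}$; listing these weights in decreasing order refines the decomposition of $\bar\la$ dictated by $\Bw$ --- a multiset of fundamental coweights with $w_i$ copies of the $i$-th one --- into an ordered sequence $(\om_{i_1},\dots,\om_{i_k})$, and the associated convolution space is $\overline{\operatorname{Gr}}^{\om_{i_1}}\,\widetilde{\times}\,\cdots\,\widetilde{\times}\,\overline{\operatorname{Gr}}^{\om_{i_k}}$ intersected with the slice. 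On the Slodowy side the \emph{same} data --- a weight on each of the $w_i$ diagonal blocks of size $i$ --- orders the blocks of the parabolic $P$, whose block sizes form the transpose partition $\la^{t}$, so that $T^*G/P\times_{\mathfrak{sl}_n^*}\fX^\la_\mu$ is the corresponding Springer-type resolution. The equality of these two resolutions, compatibly with Step~1 over the base $\fX^\la_\mu$, is again established in \cite{MV22} (worked out in families over the space of orderings), and I would cite it rather than repeat it.

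The \emph{main obstacle} is not geometric but one of bookkeeping: one must check that the three orderings --- of fundamental coweights in the convolution product, of diagonal blocks of $P$, and of the weights of $\xi$ on $\prod GL(\C^{w_i})$ --- are matched under a \emph{single} dictionary, and that the three isomorphisms of resolutions commute with one another and with the maps down to $\fX^\la_\mu$. A secondary point worth flagging is the degenerate case: when the weights of $\xi$ are not pairwise distinct one obtains only a partial resolution, so I would phrase the result (as in \cite{BFNline}) for partial convolution spaces and note that $\tM$ is an honest resolution exactly when $\xi$ is generic, in which case it is automatically symplectic by \cite[Thm. 5]{Weekesres} and \cite[Prop. 19]{Namiflop}, consistent with Definition \ref{def:BFN-resolution}.
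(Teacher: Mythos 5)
Your proposal is correct and matches the paper's approach: the theorem is stated purely as a citation of \cite[Th.~1.1.1]{MV22} and \cite[Th.~5.6]{BFNline}, with no independent proof offered, so the assembly you describe --- Mirkovi\'c--Vybornov for the slice/Slodowy identification, Braverman--Finkelberg--Nakajima for the Coulomb branch and its resolutions (with \cite[Th.~3.10]{BFNslices}, which the paper itself invokes elsewhere for the same purpose, supplying the Coulomb-branch step), plus the check that the three combinatorial dictionaries of orderings agree --- is exactly what the citation delegates to the references. Your caveat about non-generic $\xi$ is consistent with the paper's Definition~\ref{def:BFN-resolution}, which presumes $\xi$ generic enough to give an honest (hence symplectic) resolution.
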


Thus, Theorems \ref{ith:NCCR} and \ref{thm:D-equivalence} give us a noncommutative resolution of the S3 variety which is D-equivalent to any symplectic resolution of this variety. This is given by a cylindrical KLRW algebra with $w_i$ red strands and $v_i$ black strands of label $i$.  Below, we'll discuss in a bit more detail what we see in different cases of interest.  

\subsection{Kleinian singularities}

The simplest special case is the Kleinian singularity $\C^2/(\Z/\ell\Z)$.  This is isomorphic to the slice to the subregular orbit of $\mathfrak{sl}_\ell$ in the full nilcone, that is, Jordan types $\la=(\ell,0)$ and $\mu=(\ell-1,1)$.  Thus, this corresponds to the case where $w_1=\ell$ and $v_1=1$.  That is, we have $\ell$ red strands with the same label and a single black strand.

We have $\ell$ different idempotents depending on the position of the black strand, which we think of as positioned in a cycle.  The algebra of endomorphisms is generated by these idempotents, and by the degree 1 maps joining adjacent chambers by crossing the red strand:
\begin{equation*}
    \tikz{      \node at (2.5,0){ 
        \tikz[very thick,xscale=1]{
          \draw[fringe] (-1.7,-.5)-- (-1.7,.5);
          \draw[fringe] (1.7,.5)-- (1.7,-.5);
           \draw[wei] (-1,-.5)-- (-1,.5);
          \draw[wei] (.3,-.5)-- (.3,.5);
          \draw[wei] (1.5 ,-.5)-- (1.5,.5);
\draw (.9 ,-.5)  to[out=90,in=-90] (-.2,.5);
       }
      };
      \node at (9,0){ 
        \tikz[very thick,xscale=1, yscale=-1]{
           \draw[fringe] (-1.7,.5)-- (-1.7,-.5);
          \draw[fringe] (1.7,-.5)-- (1.7,.5);
         \draw[wei] (-1,-.5)-- (-1,.5);
          \draw[wei] (.3,-.5)-- (.3,.5);
          \draw[wei] (1.5 ,-.5)-- (1.5,.5);
\draw (.9 ,-.5)  to[out=90,in=-90] (-.2,.5);
       }
      };
      }
\end{equation*}
Thus, this algebra can be written as a quotient of the path algebra of the quiver with $\ell$ cyclically ordered nodes and edges joining adjacent pairs of edges in both directions.  The only relations needed are that the two paths of length two starting and ending at a given node are equal: they are both equal to multiplication by a single dot on the single black strand by (\ref{w-cost-1}).
Example \ref{example:NZ} covers the $\ell=2$ case.  In the $\ell=3$ case, we have the quiver shown below, with the diagrams above corresponding to a single pair of edges (with the others coming from rotations of these diagrams).
\[\tikz[very thick,scale=1.8]{
\node[circle,fill=black, inner sep=3pt,outer sep=2pt] (a) at (0,0){};
\node[circle,fill=black, inner sep=3pt,outer sep=2pt] (b) at (-.7,1){};
\node[circle,fill=black, inner sep=3pt,outer sep=2pt] (c) at (.7,1){};
\draw[->] (a) to[out=105, in=-45] (b);
\draw[<-] (a) to[out=135, in=-75] (b);
\draw[->] (a) to[out=45, in=-105] (c);
\draw[<-] (a) to[out=75, in=-135] (c);
\draw[->] (c) to[out=165, in=15] (b);
\draw[<-] (c) to[out=-165, in=-15] (b);
}\]

\subsection{2-row Slodowy slices}

Another case which has attracted considerable attention is that of 2-row Slodowy slices.  That is, for $k\leq \ell/2$, we consider the case $\la=(\ell,0)$ and $\mu=(\ell-k,k)$.  Thus, we have $w_1=\ell,v_1=k$.  The result is that all red and black strands are labeled by the same simple root.  This is thus a cylindrical version of the algebras $\tilde{T}^\ell_k$ defined in \cite[Def. 2.3]{WebTGK}.

Anno and Nandakumar \cite{ANexotic} show that there is an action of the category of affine tangles on the category of coherent sheaves in this case, which we expect to match ours.  In fact, it is virtually certain that this is the case. The same affine braid group action appears (up to reflection of braids), so we need only check that the cup and cap functors match.  From the fact that the composition of a cup and then a crossing is a cup shifted by $\pm 1$ (depending on the sign of the crossing) in both our tangle action by Theorem \ref{thm:real-variation} and in Anno-Nandakumar's action, as shown in \cite[Prop. 4.7]{ANexotic}, we see that the cup functors must have the same subcategory as image (the category of the eigenobjects for the crossing functor). However, this does not show that the functors are isomorphic.  We leave carefully matching these functors to another time or perhaps an industrious reader.

\subsection{Cotangent bundles to projective spaces and Grassmannians}

Dual to the examples of Kleinian singularities and 2-row Slodowy slices respectively are the cotangent bundles to projective spaces and Grassmannians.

The example of $T^*\mathbb{P}^n$  corresponds to thinking of this as the S3 variety for the minimal orbit in type A, that is, for the Jordan types $\la=(2,1,\dots, 1,0)$ and $\mu=(1,\dots, 1)$.  This corresponds to the quiver gauge theory attached to a linear quiver with $n-1$ nodes and vectors $\Bw=(1,0,\cdots,0,1)$ and $\Bv=(1,\dots, 1)$. 
One can easily check that the associated representation is that of 
$G=D\cap SL_n$, the diagonal matrices of determinant $1$ acting on $V=\C^n$, so indeed the associated Higgs branch is the Kleinian singularity $\C^2/(\Z/n\Z)$ by \cite[\S 4(vii)]{BFN}

To obtain the cotangent bundle to the Grassmannian $\operatorname{Gr}(n,k)$ of $k$ planes in $\C^n$ with $n\geq 2k$, we consider $\la=(2^{k},1^{n-2k},0^{k})$, and $\mu=(1,\dots, 1)$.  This gives $w_p=\delta_{k,p}+\delta_{n-k,p}$ and $\Bv=(1,2,3,\dots, k-1,k,\dots, k,k-1,\dots,2,1)$.  For example, $\operatorname{Gr}(4,2)$ corresponds to $\Bw=(0,2,0),\Bv=(1,2,1)$ and $\operatorname{Gr}(5,2)$ to $\Bw=(0,1,1,0),\Bv=(1,2,2,1)$.   This case is examined in more detail by the author and Suter in \cite{suterTiltingGenerator2024}.

\subsection{The noncommutative Springer resolution}
\label{sec:nonc-spring-resol}

One final variety of considerable interest that appears here is the cotangent bundle to the type A flag variety $T^*GL_n/B$.  This arises from the dimension vectors $\Bv=(1,2,3,\dots, n-1)$ and $\Bw=(0,\cdots, 0, n)$.  In this case, $\mu=\la^t=(1,\dots, 1)$ and $\la=\mu^t=(n)$.   It is a well-known theorem of Nakajima \cite[Th. 7.3]{nakajimaInstantonsALE1994} that the Higgs branch of this theory is $T^*G/B$.  
The equality $\mu=\la^t$ shows that this example is self-dual, and the Coulomb branch arises in the same way.

It's also well-known that the quantum Coulomb branch that arises this way is essentially the universal enveloping algebra of $\mathfrak{gl}_n$; if we fix the flavors to numerical values, then this is the quotient of this ring by a maximal ideal of its center, but keeping the flavors as variables, it is easy to construct $U(\mathfrak{gl}_n)$ on the nose (see, for example, \cite[Cor. 3.16]{weekesGeneratorsCoulomb2019}).

The construction of a tilting generator in Section \ref{I-sec:geometry} is thus just a rephrasing of the noncommutative Springer resolution as constructed by Bezrukavnikov, \Mirkovic, Rumynin, and Riche \cite{BMRR, BezNon}.
Recall that this construction operates by turning differential operators on the flag variety $X$ in characteristic $p$ into an Azumaya algebra $\mathcal{D}$ on $T^*X$, completing in a formal neighborhood of the zero section and finding a splitting of the resulting Azumaya algebra $\hat{\mathcal{D}}$ on this formal neighborhood. 

Thus, the constructions of Section \ref{I-sec:geometry} can be recast in this case in purely Lie theoretic terms.  The ring homomorphism $\sigma$ of Theorem \ref{I-thm:lonergan} is just the map $\sigma(X)= X^p-X^{(p)}$ for $X\in \mathfrak{gl}_n$.  The integrable system given by the equivariant parameters on the Coulomb branch is precisely the Gelfand-Tsetlin system as discussed in \cite[Th. 4.3]{WWY}. 
Identifying the Harish-Chandra center of $U(\mathfrak{gl}_n)$ with $\mathbb{F}_p[\mathfrak{t}]^W$, the homomorphism $\sigma$ is pullback by the Artin-Schreier map $\operatorname{AS}\colon \mathfrak{t}^*\to \mathfrak{t}^*$ (which is $W$-equivariant).  This describes the induced map on the full Gelfand-Tsetlin subalgebra $\Gamma$, which is the tensor product of the Harish-Chandra centers of $\mathfrak{gl}_k$ for $k=1,\dots, n$.

That is, we can think of a point in $\MaxSpec(\Gamma\otimes \overline{\mathbb{F}}_p)$ as a choice $\mathbf{a}_{1},\dots, \mathbf{a}_{n}$ with $\mathbf{a}_k$ an unordered $k$-tuple in $\overline{\mathbb{F}}_p$, and $\sigma$ the map that sends $a_{i,j}\mapsto a_{i,j}^p-a_{i,j}$.  In particular, when we complete in a formal neighborhood of the zero section, the ideal in $\sigma(\Gamma)$ generated by $a_{i,j}^p-a_{i,j}$ acts topologically nilpotently.  

The sections of $\mathcal{D}$ are identified with a completion of the cylindrical KLRW algebra by Lemma \ref{I-lem:A-H} and Proposition \ref{prop:KLR-B}.  By Sun-Tzu's Remainder Theorem applied to the image of $\Gamma$, this completion also contains idempotents attached to each maximal ideal lying over that in $\sigma(\Gamma)$, that is, those with $a_{i,j}\in \mathbb{F}_p$.  
These correspond to the idempotents in the KLR algebra, given by $e'(\Ba)$ where the labels $i$ have longitude $a_{i,j}/p\in \R/\Z$ (following Definition \ref{def:i-a}); as before we let $e(a)$ denote when we have fixed $a_{i,j}=a$ for all $i,j$, and multiplied by a primitive idempotent in the nilHecke algebra.  The image $\mathcal{D}e(a)$ is a splitting bundle for this Azumaya algebra by Lemma \ref{I-lem:0-split}.

Thus, while we obtain a familiar object, we obtain a new perspective on it, since this KLR presentation is not at all obvious from the Lie theoretic perspective.  Developing its consequences will have to wait for future work.

\bigskip
\IndexOfNotation

{\renewcommand{\markboth}[2]{}\printbibliography}
\end{document}